\documentclass[preprint,11pt]{amsart}
\usepackage[left=1.35in,top=1.25in,right=1.35in,bottom=1.25in]{geometry}

\usepackage{amssymb}
\usepackage{amsmath}
\usepackage{mathrsfs}
\usepackage{xcolor}
\usepackage{mathtools}
\usepackage{enumerate}
\usepackage[colorlinks=true,citecolor=blue,linkcolor=blue]{hyperref}

\usepackage{hyperref}

\usepackage{epigraph}
\newcommand{\Sc}{\mathcal{S}}
\newcommand{\Uc}{\mathcal{U}}
\newcommand{\Bc}{\mathcal{B}}
\newcommand{\Lc}{\mathcal{L}}
\newcommand{\Kc}{\mathcal{K}}
\newcommand{\gf}{\mathfrak{g}}
\newcommand{\VN}{\mathrm{VN}}
\newcommand{\len}{{\rm len}}

\DeclareMathOperator{\supp}{supp}
\DeclareMathOperator{\lcm}{lcm}

\DeclareMathOperator{\Res}{Res}

\DeclarePairedDelimiter\floor{\lfloor}{\rfloor}

\newcommand{\R}{{\mathbb{R}}}
\newcommand{\Z}{{\mathbb{Z}}}
\newcommand{\C}{{\mathbb{C}}}

\newcommand{\vol}{\mathrm{vol}}

\numberwithin{equation}{section}

\newtheorem{theorem}{Theorem}[section]
\newtheorem{proposition}[theorem]{Proposition}
\newtheorem{corollary}[theorem]{Corollary}
\newtheorem{definition}[theorem]{Definition}
\newtheorem{lemma}[theorem]{Lemma}

\newtheorem{example}[theorem]{Example}
\newtheorem{remark}[theorem]{Remark}

\newcommand{\reva}{\color{blue}}

\title{Weyl asymptotic formulas in the nilpotent Lie group setting} 

\author{Shiqi Liu} 
\address{UNSW Sydney, Kensington, NSW, 2031, Australia}

\author{Edward McDonald}
\address{Universit\'e Paris-Est Cr\'eteil, Cr\'eteil, 94010, France}

\author{Fedor Sukochev}
\address{UNSW Sydney, Kensington, NSW, 2031, Australia}

\author{Dmitriy Zanin} 
\address{Central South University, Changsha, China}

\begin{document}

\maketitle

\makeatother


\begin{abstract}
The asymptotic properties of negative order pseudo-differential operators have been an important part of the spectral theory since H.Weyl's classical results.
In this paper, we derive a spectral asymptotic formula for the negative fractional powers of hypoelliptic operators on graded Lie groups. Such operators have anisotropically homogeneous principal symbols; for these, our results generalize known results of Birman and Solomyak from 1977. Additionally, our work implies a version of Connes' integration formula for hypoelliptic operators on graded Lie groups.

Our methods allow us to extend results from constant-coefficient operators to those with smoothly varying coefficients. 
The principal technique is to adapt the singular value perturbation arguments of Birman and Solomyak to the setting of nilpotent Lie groups. 
The decomposing of graded Lie groups is inspired by Folland and Stein in their development of harmonic analysis on homogeneous groups.
\end{abstract}

\tableofcontents


\section{Introduction}\label{sec-intro}

This paper is a continuation of a series of works on spectral asymptotics on non-compact manifolds, originating with Birman and Solomyak \cite{BirmanSolomyakAnisotropic1977}, reemphasized in \cite{SZ22-connes-integration, Rozenblum22-eigenvalue-Connes}, {and} extended to sub-Laplacians on stratified  Lie groups in \cite{MSZ-stratified-23}. 
Here, we generalize this theme to any uniformly Rockland differential operator on a graded Lie group (See Definition \ref{definition of ellipticity}). Such operators are hypoelliptic and constitute a much larger class than elliptic operators on Lie groups in the classical sense. As a direct application, we prove Connes' integral formula for uniformly Rockland differential operators on {graded} Lie groups.


Weyl-type asymptotics have a long history dating back to H.Weyl \cite{Weyl1911}. For readers interested in this history, we recommend  \cite[Ch7, Notes and Historical Remarks]{Simon-course-IV}, \cite{Ivrii-100yearsWeyl}, and \cite{ArendtNittkaPeterSteiner2009}.
For non-compact manifolds, Birman and Solomyak considered a negative order compact pseudodifferential operator and derived a spectral asymptotic formula \cite{BirmanSolomyakAnisotropic1977}. 
Due to the non-ellipticity of our operator and the non-commutativity of the {graded} Lie group,  we use operator theory in place of the Fourier analysis of Birman and Solomyak. These methods were first introduced in \cite{MSZ-stratified-23} and are fully developed in this paper.
On the other hand, Weyl-type asymptotics describe the behaviour of the eigenvalues directly without taking a Dixmier trace. This implies Connes' integration formula \cite{SZ22-connes-integration, Rozenblum22-eigenvalue-Connes, Ponge23-Connes-Weyl}.

Since the work of Birman and Solomyak, many authors have studied the singular value asymptotics of negative order pseudodifferential operators from various perspective. This includes the recent work of Xiong \cite{MSX2, SXZ2023}, Frank \cite{Frank_Cwikel_CLR_2014, FSZ2023, FSZ2024} and Rozenblum \cite{RozenblumShargorodsky-critical-case, Rozenblum-LiebThirring,RozenblumTashchiyan, RozenblumTashchiyan-noncritical-case}.


In the setting of {graded} Lie groups, we define a global version of the ``maximal hypoellipticity'' of Helffer and Nourrigat \cite[Definition 1.1]{HelfferNourrigat1985}. That is, a differential operator is said to be uniformly Rockland (Definition \ref{definition of ellipticity}) if its top degree part at each point, as a constant coefficient differential operator, obeys elliptic estimates with constant uniform over the group. This has an equivalent description in terms of the representation theory \cite[Lemma 4.8.]{LMSZ25_elliptic}, which recalls the Rockland condition \cite{Rockland1978}. Such operators are hypoelliptic but not always elliptic in the classical sense \cite[Theorem 2.1]{HelfferNourrigat1979}, also see \cite[Example 4.31]{LMSZ25_elliptic}. We recommend \cite[Chapter 4]{FischerRuzhansky2016} and \cite{LMSZ25_elliptic} for more detail.

The main result of this paper is an explicit formula for the spectral asymptotic associated with  uniformly Rockland differential operators on {graded} Lie group.

We continue the philosophy of our previous work \cite{LMSZ25_elliptic}: passing from pointwise information to global conclusions. Starting with results for constant-coefficient operators, we extend them to operators with smoothly varying coefficients. Inspired by work of Folland and Stein \cite{FollandStein1982}, we decompose the Lie group into translates of small balls, establish the local estimates in each ball, and assemble the local estimates into one that holds globally. Techniques from noncommutative analysis \cite{LSZ2021-singular-trace-v1} play a central role throughout.
Although we do not employ pseudodifferential calculus, we expect that our results could be recovered using those methods. For reference, see \cite{BealsGreiner1988}, \cite{Christ-Geller-Glowacki-psudo-on-group-w-dila}, \cite{PongeAMS2008}, and the more recent contributions \cite{vanErpAnn1, vanErpYuncken2019} and \cite{FischerM_Semiclassical2025}.
Same spirit of using operator algebra methods instead of pseudodifferential calculus also appears in Voiculescu's early work on the Heisenberg group \cite{Voicu_extension_Heisenberg1981}.

To illustrate our result, we show our formula resembles a version of Connes' integral formula for uniformly Rockland differential operators on {graded} Lie group (see Theorem \ref{thm-Connes-trace-constant}). In 1984, Wodzicki studied the noncommutative residue \cite{Wodzicki1984}. Shortly thereafter, Connes revealed that, for classical pseudodifferential operators with order $-d$ on a compact $d-$manifold, the Wodzicki residue is the same as the Dixmier trace. Recently, within the framework of \cite{vanErpYuncken2019},  Couchet and  Yuncken extended the definition of the Wodzicki residue to filtered manifolds \cite{CouYuncken24}. In Appendix \ref{sec-Connes-Trace}, we express some of our results in terms of Couchet and Yuncken's residue.

\subsection{History of  Weyl-type asymptotic}
On a closed Riemannian manifold $X$, Weyl's law describes the asymptotic behaviour of $N(\lambda)$, the number of eigenvalues of the positive Laplace-Beltrami operator $-\Delta_X$ on $X$ which are less than $\lambda$ as follows: 
\begin{equation*}
N(\lambda) = (2\pi)^{-d}\omega_d \vol(X) \lambda^{d/2} (1 + o(1)) \quad \text{as} \ \lambda \to \infty.
\end{equation*}
Here, $\omega_d = \frac{\pi^{\frac{d}{2}}}{\Gamma(\frac{d}{2}+1)}$ is the volume of the unit ball in $\R^d$.
The analogous statement for planar domains with Dirichlet boundary conditions was originally proved by H.~Weyl in 1912 \cite{Weyl1912}. Several modern expositions are available, for the Euclidean domain case see e.g. \cite[Theorem 7.5.29]{Simon-course-IV} and for the compact manifold or domain with compact closure case see e.g. \cite{Chavel-eigenvalues-1984}.

The inverse of $1-\Delta_X$ is compact, and in fact belongs to a weak Schatten class. Weyl's law can equivalently be stated in terms of the eigenvalues of the inverse $(1-\Delta_X)^{-1},$ as
\begin{equation}\label{laplacian_weyl_law_intro}
\mu(k, (1-\Delta_{X})^{-1}) \sim \frac{(\omega_d \vol(X))^{2/d}}{(2\pi)^2} k^{-2/d} \quad \text{as} \ k \to \infty.
\end{equation}
Here,   $\sim$ is asymptotic equivalence (the limit of the ratio is unity) and $\mu(k, (1-\Delta_{X})^{-1})$ denotes the $k$-th largest singular value of $(1-\Delta_{X})^{-1}$. Since $(1-\Delta_X)^{-1}$ is positive, these are also the eigenvalues.

One of the standard methods to prove Weyl's law \eqref{laplacian_weyl_law_intro} and its generalisations involves analysis of the zeta function
\[
\zeta_X(s) := \mathrm{Tr}((1-\Delta_X)^{-s}),\quad \Re(s)>\frac{\mathrm{dim}(X)}{2}.
\]
It can be proved that $\zeta_X$ has meromorphic continuation to the complex plane, with a finite number of poles. The presence of a simple pole at $s=\frac{d}{2}$ of the meromorphic continuation of $\zeta_X$ implies the Weyl law via the Wiener-Ikehara Tauberian theorem \cite[Chapter II]{Shubin-psido-2001}. 

This restatement of Weyl's law in \eqref{laplacian_weyl_law_intro} is a very special case of a general result concerning the asymptotic behaviour of the eigenvalues of negative order pseudodifferential operators. The first such result was proved by Birman and Solomyak \cite{BirmanSolomyakAnisotropic1977}. Later proofs and generalisations were obtained by {many others, including Birman and Solomyak, Dauge and Robert \cite{Dauge86}, Ponge \cite{PongeAMS2008, Ponge23-Connes-Weyl}, and Ivrii \cite[Section 11.8]{Ivrii_microlocal_II}}.

Weyl-type asymptotics for operators on $\R^d,$ and on other non-compact manifolds, have been investigated by many authors.
Since a negative order pseudodifferential operator $P$ on $\R^d$ is not necessarily compact, one option is to consider the product $M_fP,$ where $f$ is a smooth compactly supported function and $M_f$ is the operator of pointwise multiplication. Already Birman and Solomyak derived spectral asymptotics for operators of the form $M_fPM_g,$ where $f$ and $g$ are compactly supported and $P$ is a pseudodifferential operator on $\R^d$ of negative order. 

A typical result is for $P = (1-\Delta_{\R^d})^{-\frac{\gamma}{2}},$ where $\Delta_{\R^d} = \sum_{j=1}^d \partial_j^2$ is the standard Euclidean Laplacian and $\gamma>0.$ Then Birman and Solomyak's work implies that if $0\leqslant f\in C^\infty_c(\R^d)$ then
\begin{equation}\label{strong_weyl_law}
\lim_{k\to\infty} k^{\frac{\gamma}{d}}\mu(k,M_f(1-\Delta_{\R^d})^{-\frac{\gamma}{2}}) = \left(\frac{\omega_d}{(2\pi)^{\frac{d}{2}}}\int_{\R^d}{f(x)}^{\frac{d}{\gamma}}\,dx\right)^{\frac{\gamma}{d}}.
\end{equation}
See \cite[Theorem 2]{BirmanSolomyakAnisotropic1977}.
The conditions on $f$ can be weakened, and when $2\gamma<d$ we can assume that merely $f \in L_{\frac{d}{\gamma}}(\R^d).$ In that case we can also replace $1-\Delta_{\R^d}$ by $-\Delta_{\R^d}.$

{Some of the interest in expressions similar to \eqref{strong_weyl_law} has been due to the connection to counting bound states of Schr\"odinger operators. 
}

Special attention has been paid to the case $\gamma=1$ and $d>2$. {In this case,  a  slight modification of \eqref{strong_weyl_law} yields} an asymptotic
formula for the number of bound states of the Schr\"odinger operator $-\hbar^2\Delta_{\R^d}-M_f^2$ as $\hbar\to 0.$ This is due to the well-known Birman-Schwinger principle \cite{Birman-birman-schwinger-prip, Schwinger-birman-schwinger-prip}, more details can be found in \cite[Section 5]{Simon-bound-states-1976}.
{
The Cwikel-Lieb-Rozenblum inequality \cite{Cwikel-estimate-1977, Lieb-CLR-1976, Rozenbljum-CLR-1972} allows this result to be extended to less regular functions by giving an upper bound for the number of negative eigenvalues of $-\Delta+M_V,$ on $\mathbb{R}^d$ in $d>2.$  The strongest results in this direction, by Frank \cite{Frank2023}, make only very weak assumptions  of $V_+$, that is, $V_- \in L_{\frac{d}{2}}(\R^d)$, while $V_{+}\in L_{1}^{loc}(\R^d)$.  For $V \leqslant 0$, passing from a limit of the form \eqref{strong_weyl_law} to an asymptotic formula for the number of bound states of $-\hbar^2 \Delta + M_V$ is straightforward, but this is no longer the case for general $V$. The extension to general $V$ involves significant work in the $\mathbb{R}^d$ case. In the general graded group case, we expect it to be similarly difficult and hence we refrain from doing so in this text.
}

Taking $\gamma=\frac{d}{2},$ \eqref{strong_weyl_law} can be understood as a strong version of Connes' integration formula, as we discuss below in Section \ref{sec-main-results}. {Remarkably, Birman and Solomyak's pioneering work was done decades before Connes' integration formula.}.

It is instructive to see how \eqref{strong_weyl_law} can be proved by following Birman and Solomyak's argument. The idea of their proof, applied to this special situation, is to suppose initially that $f$ is constant on the $d$-cubes $Q_{n,N} = \frac{n+[0,1)^d}{N}$ for some $N\geqslant 1,$ that is
\[
f = \sum_{n\in\mathbb{Z}^d} a_n \chi_{Q_{n,N}}
\]
for some finitely supported scalar sequence $a_n.$ Then
\[
M_f(1-\Delta_{\R^d})^{-\gamma}M_f = \sum_{n,m\in \mathbb{Z}^d} a_{n}a_mM_{\chi_{Q_{n,N}}}(1-\Delta_{\R^d})^{-\gamma}M_{\chi_{Q_{m,N}}}.
\]
The off-diagonal terms with $n\neq m$ have smooth kernel, 
which make their contribution to the singular value asymptotics negligible. The diagonal terms with $n=m$ are pairwise orthogonal operators, and for the purposes of computing spectral asymptotics they can be dealt with individually. This reduces the problem to analysing the singular value asymptotics of the operator
\[
M_{Q_{n,N}}(1-\Delta_{\R^d})^{-\gamma}M_{Q_{n,N}}.
\]
This can be achieved by an explicit computation involving Fourier decomposition. The final step is to remove the assumption that $f$ is constant on cubes of side length $N$ using certain \emph{a priori} estimates on the magnitude of the singular values of operators of the form $M_f(1-\Delta_{\R^d})^{-\gamma}M_f.$

At least from this high-level sketch, we can see that {in Birman and Solomyak's original proof}, the basic strategy of  is to analyse the behaviour of $(1-\Delta_{\R^d})^{-\frac{\gamma}{2}}$ on small cubes using an explicit Fourier decomposition. {Later proofs used other methods to prove the asymptotics for sufficiently regular $f$.}

This raises the question of how we can prove similar results in settings where analogies of the Fourier transform may be unavailable or inconvenient. 

One setting where analogies to \eqref{strong_weyl_law} make sense but where Fourier analytic methods are much more difficult to apply is that of {graded} Lie groups.


\subsection{Graded Lie groups}\label{sec-intro-graded-group}

For definitions and preliminaries on graded Lie group, see Section \ref{subsec-graded-Lie}.  We fix a choice of a preferred generating set $\{X_j\}_{j=1}^{n'}$ (see, Definition \ref{def-generator-basis}) for the remainder of this paper. We also denote $\{v_j\}_{j=1}^{n'}$ for their degrees, and the least common multiple $v = \lcm(\{v_j\}_{j=1}^{n'})$.
    We define an associated operator $\Delta_{G}$:
    \begin{equation}\label{eq-def-new-Laplacian}
        \Delta_{G} = - \sum_{j=1}^{n'} (-1)^{\frac{v}{v_j}}X_j^{\frac{2v}{v_j}}.
    \end{equation}

Mostly we will write $\Delta$ for $\Delta_G$ without ambiguity. 
{The operator $\Delta_G$ is a homogeneous differential operator of order $2v$.} It is typically not elliptic, but hypoelliptic in the classical sense, as in e.g. \cite[Section 11.1]{Hormander-II}.
{A graded Lie algebra $\gf$ is said to be stratified if $\gf_1$ generates $\gf$.
If $G$ is stratified, then the set of preferred generators can be selected to consist entirely of homogeneous elements of order 1. Thus, $v=1$ and $\Delta_G$ becomes the sub-Laplacian.  The paper \cite{MSZ-stratified-23} is based on this setting, where we established an analogous result to \eqref{strong_weyl_law} for  the sub-Laplacian on a stratified Lie group $G$.} Theorem 1.4 there implies that there is a constant $c_G$ such that if {$f \in C^\infty_c(G)$} then for all $\gamma>0$ we have
\begin{equation}\label{our_strong_weyl_law}
\lim_{k\to\infty} k^{\frac{\gamma}{d_{\hom}}}\mu(k,M_f(1-\Delta_{G})^{-\frac{\gamma}{2}}) = \left(c_G\int_{\R^d}|f(x)|^{\frac{d_{\hom}}{\gamma}}\,dx\right)^{\frac{\gamma}{d_{\hom}}}.
\end{equation}
Strictly speaking, in \cite[{Theorem 1.4}]{MSZ-stratified-23}, the above statement was proved assuming that $\gamma$ is a positive integer or half-integer.  One of the results of the present paper is that this restriction is unnecessary.
Fourier analysis on $G$ involves the unitary dual, which is computed by the Kirillov co-adjoint orbit theory. In each irreducible unitary representation, $\Delta_G$ acts a differential operator on some Euclidean space and there may not be any explicit form of its spectrum. This makes methods based on a Fourier decomposition impractical, and for this reason \eqref{our_strong_weyl_law} was proved in \cite[Theorem 1.4]{MSZ-stratified-23} using methods totally different to that of Birman and Solomyak. As the Fourier transform on $G$ is much less useful than on $\R^d,$ the method in \cite{MSZ-stratified-23} was based on the $\zeta$-function proofs of Weyl's law for compact manifolds. There, {for a function $0 \leqslant f \in C_c^{\infty}(G)$}, we defined a $\zeta$-function
\[
\zeta_{f,G}(s) = \mathrm{Tr}((M_{f}(1-\Delta_G)^{-\gamma}M_f)^s),\quad \Re(s)>\frac{d_{\hom}}{2\gamma}
\]
and proved that $\zeta_{f,G}$ has a pole with explicitly computable residue at $s = \frac{d_{\hom}}{2\gamma}.$ This is not a straightforward analogy of the $\zeta$-function proof of the Weyl law on compact manifolds, because $M_f(1-\Delta_G)^{-\gamma}M_f$ is not invertible or elliptic.

Identifying $G$ with $\gf,$ the derivations $\{X_1,\ldots,X_{n'}\}$ are first order differential operators with polynomial coefficients. Therefore, \eqref{our_strong_weyl_law} can be stated in a form making no explicit reference to Lie groups. For example, there exists a stratified Lie group structure $G$ on $\R^{2+N}$ where $N\geqslant 1$ such that
\[
\Delta_G = \partial_{x_0}^2+(\partial_{x_{N+1}}+M_{x_0} \partial_{x_1}+M_{x_0^2}\partial_{x_2}+M_{x_0^3}\partial_{x_3}+\cdots+M_{x_0^N}\partial_{x_N})^2.
\]
Here, $(x_0,\ldots,x_{N+1})$ denote the coordinates of $\R^{2+N}.$ The homogeneous dimension in this case is $d_{\hom} = 1+\frac12(N+1)(N+2).$ This operator is clearly not elliptic in the traditional sense, and is outside the class of operators considered by Birman and Solomyak. In each infinite-dimensional irreducible representation, $\Delta_G$ acts as an anharmonic oscillator on $L_2(\R),$ and its spectrum is not explicitly computable. For details on the Lie group behind this example see \cite[Section 4.3.3]{BLU-stratified-2007}.

%

\subsection{Differential operators}\label{sec-intro-differential-graded}
Birman and Solomyak's version of \eqref{strong_weyl_law} also applies to the situation where $\Delta_{\R^d}$ is replaced by a uniformly elliptic differential operator. We write differential operators on $\R^d$ as
\[
P = \sum_{|\alpha|\leqslant m} M_{a_{\alpha}}\partial^{\alpha}.
\]
Here, $\alpha \in \mathbb{Z}_{\geqslant 0}^d$ is a multi-index, and $|\alpha| = \alpha_1+\cdots+\alpha_d.$ The functions $a_{\alpha}$ are smooth functions on $\R^d,$ and $\partial^{\alpha} = \partial_1^{\alpha_1}\cdots \partial_d^{\alpha_d}.$ We say that $P$ has order $m$ if at least one of $a_{\alpha}$ for $|\alpha|=m$ is non-zero.

The principal symbol $\sigma_P(x,\xi)$ of $P$ is defined as
\[
\sigma_P(x,\xi) := i^m\sum_{|\alpha|=m} a_{\alpha}(x)\xi^{\alpha}.
\]
Here, $\xi = (\xi_1,\ldots,\xi_d)\in \R^n$ and $\xi^{\alpha} = \xi_1^{\alpha_1}\cdots \xi_d^{\alpha_d}.$ These definitions and notations are standard.

The differential operator $P$ is called uniformly elliptic if the derivatives of every $a_{\alpha}$ to all orders are uniformly bounded, and there exists a constant $c_P>0$ such that
\begin{equation}\label{eq-def-uniformly-elliptic}
	|\sigma_P(x,\xi)| \geqslant c_P|\xi|^m,\quad (x,\xi) \in \R^d\times \R^d.
\end{equation}
Here, $|\xi| = (\sum_{j=1}^d |\xi_j|^2)^{\frac12}.$

Birman and Solomyak's result in \cite{BirmanSolomyakAnisotropic1977} implies that if $P$ is uniformly elliptic and $P\geqslant 0$, then for every $\gamma>0$ we have
\begin{equation}\label{strong_weyl_law_variable_coefficients}
\lim_{k\to\infty} k^{\frac{\gamma}{d}}\mu(k,M_f(1+P)^{-\frac{\gamma}{m}}) = \Big(\frac{m}{d(2\pi)^d}\int_{\mathbb{R}^d\times\mathbb{S}^{d-1}}|f(x)|^{\frac{d}{\gamma}} \sigma_P(x,\xi)^{-\frac{d}{m}}dx d\xi\Big)^{\frac{\gamma}{d}}.
\end{equation}
The same method of proof outlined above for \eqref{strong_weyl_law} works here, because $P$ is, near the point $x\in \R^d,$ approximated by the constant-coefficient differential operator
\begin{equation}\label{principal_cosymbol}
\sum_{|\alpha|= m} a_{\alpha}(x)\partial^{\alpha}
\end{equation}
which is easily understandable as a Fourier multiplier.

If we attempt to pose the same problem for operators on a {graded} Lie group $G,$ we need to find analogies of differential operators. {The stratified case was long-established, for example, in \cite{Folland1975}. For the general graded case, see \cite{FischerRuzhansky2016}.}  Given a word $\alpha = \alpha_1\ldots\alpha_k$
in the alphabet $\alpha_i \in \{1,2,\ldots,n'\},$ write
\begin{equation}\label{eq-def-diff-op}
    X^{\alpha} = X_{\alpha_1}X_{\alpha_2}\cdots X_{\alpha_k}.
\end{equation}
{
Recall that throughout $\{X_i\}_{j=1}^{n'}$, with degrees $\{v_j\}_{j=1}^{n'}$, is a choice of preferred generators fixed after Definition \ref{def-generator-basis}.
Additionally, we define
\begin{equation}\label{eq-def-len}
    \mathrm{len}(\alpha)= \sum_{j=1}^{k}\deg(X_{\alpha_j})=\sum_{j=1}^{k} v_{\alpha_j}.
\end{equation}
Note that $\mathrm{len}(\alpha)$ is not the usual length $k$ of the word $\alpha,$ but is weighted by the degrees $v_j.$ In the literature, $\len(\alpha)$ has been called a \textit{weighted length} \cite[Page 2]{terElstRobinson97}.
}

\begin{definition}\label{def-differential-op}
A differential operator of order at most $m$ on $G$ is a linear operator given by
    \begin{equation}\label{eq-P-expression}
        P = \sum_{\len(\alpha)\leqslant m} M_{a_{\alpha}}X^{\alpha}:\Sc(G)\to \Sc(G).
    \end{equation}
    Here, every $a_{\alpha}$ is a smooth function on $G$, and $\Sc(G)$ is the set of Schwartz class functions on $G$. In this paper, we ask all coefficient functions $a_{\alpha}$, together with all its derivatives, to be bounded. See Section \ref{subsec-3.2} below.
\end{definition}

If $P$ has a representation where {all of the coefficient functions $a_{\alpha}$ are constants}, we say that $P$ is a constant coefficient differential operator. In this case, $P$ may be identified with an element of the universal enveloping algebra $\Uc(\gf)$. For more background on the universal enveloping algebra, see  \cite[Section 2.2]{LMSZ25_elliptic}.

As a replacement for the notion of principal symbol, we consider the constant coefficient operator obtained from $P$ as follows.
\begin{definition}\label{def-P-top-g} 
Let $P$ be a differential operator as in \eqref{eq-P-expression}. For $g \in G,$ denote
    \[
    P_g^{{\rm top}}=\sum_{\len(\alpha)=m}a_{\alpha}(g)X^{\alpha}.
    \]
{Since each $a_{\alpha}(g)$ is constant, thus $P_g^{\rm top} \in \Uc(\gf)$.}
\end{definition}

The operator $P_g^{{\rm top}}$ is the analogy for $G$ of the operator \eqref{principal_cosymbol}, the Fourier transform of the principal symbol.

In the nilpotent group setting, several different notions of ellipticity have been proposed, such as the maximal hypoellipticity of Helffer and Nourrigat \cite[Definition 1.1]{HelfferNourrigat1985} and the Rockland condition \cite{Rockland1978}. 
For our purposes, the following notion is most useful, recall \cite[Definition 1.10]{LMSZ25_elliptic}. 
\begin{definition}\label{definition of ellipticity} 
Let $P$ be a differential operator of order $m.$ We say that $P$ is uniformly Rockland, if there exists a constant $c_P>0$ such that
$$\|P_g^{{\rm top}}u\|_{L_2(G)}\geqslant c_P\|(-\Delta_G)^{\frac{m}{2{v}}} u\|_{L_2(G)},\quad u\in\Sc(G),\quad g\in G.$$
Recall that $\Delta_G$ is defined by \eqref{eq-def-new-Laplacian}, and $v$ is the least common multiple of degrees of the preferred generators, see Section \ref{sec-intro-graded-group} and Definition \ref{def-generator-basis}.
\end{definition}
This definition extends the classical notion of \emph{uniformly elliptic} to the realm of graded Lie groups. In Euclidean case, Definition \ref{definition of ellipticity} reduces to \eqref{eq-def-uniformly-elliptic}.

\subsection{Main Results}\label{sec-main-results}
The main result of this paper generalises \eqref{our_strong_weyl_law} to the case of operators with variable coefficients.
Our results focus on operators that are positive definite, in the sense that
\begin{equation*}
	\langle Pu,u\rangle_{L_2(G)} \geqslant 0, \quad u\in \Sc(G).
\end{equation*}
This implies $P$ is formally symmetric, that is $P^{\dagger}=P$, see Definition \ref{def-P-dagg-ext} for $P^{\dagger}$. If $P$ is also uniformly Rockland, then $P$ is self-adjoint, see Theorem \ref{gee_introduction}.
\begin{theorem}\label{main theorem general case}
Let $\gamma > 0$, and let $P\geqslant 0$ be an uniformly Rockland operator of order $m.$ For all {$f \in C^\infty_c(G)$} we have
\[
\lim_{k\to\infty}k^{\frac{\gamma}{d_{\hom}}}\mu(k,M_f(P+1)^{-\frac{\gamma}{m}})= \Big(\frac1{\Gamma(\frac{d_{\hom}}{m}+1)}\int_G |f(g)|^{\frac{d_{\hom}}{\gamma}} \tau(\exp(-P^{\mathrm{top}}_g))\,dg\Big)^{\frac{\gamma}{d_{\hom}}}
\]
where $\tau$ is the standard trace on the group von Neumann algebra $\mathrm{VN}(G)$ (see Section \ref{sec_vna} below).
\end{theorem}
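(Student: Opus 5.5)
Our plan follows the Birman--Solomyak scheme, with operator-theoretic substitutes for the Fourier analysis, in three stages: a constant-coefficient asymptotic, localization to small balls with frozen coefficients, and an approximation argument in $f$ and in the coefficients.

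\emph{Step 1: constant coefficients.} Let $Q\in\Uc(\gf)$ be a positive homogeneous Rockland operator of order $m$. For $f\in C_c^\infty(G)$ we first prove
\[
\lim_{k\to\infty}k^{\frac{\gamma}{d_{\hom}}}\mu(k,M_f(Q+1)^{-\frac{\gamma}{m}})=\Big(\tfrac{1}{\Gamma(\frac{d_{\hom}}{m}+1)}\int_G|f|^{\frac{d_{\hom}}{\gamma}}\,dg\;\tau(e^{-Q})\Big)^{\frac{\gamma}{d_{\hom}}}.
\]
The $\zeta$-function route of \cite{MSZ-stratified-23} gives a candidate. Homogeneity under the dilations $\delta_t$ gives $\tau(e^{-tQ})=t^{-d_{\hom}/m}\tau(e^{-Q})$. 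A Mellin transform then gives
\[
\tau\big(M_{|f|^2}^{1/2}(1+Q)^{-s}M_{|f|^2}^{1/2}\big)\sim \int|f|^2\cdot \frac{\tau(e^{-Q})}{\Gamma(s)}\int_0^\infty t^{s-1-d_{\hom}/m}e^{-t}\,dt .
\]
Combining this with the Wiener--Ikehara theorem, or equivalently a Karamata argument on the heat trace $\mathrm{Tr}(M_f e^{-tQ}M_{\bar f})$ as $t\to0$, yields the counting function of $M_{\bar f}(1+Q)^{-1}M_f$. We raise this to the power $\gamma/m$ by monotonicity of the spectral counting under powers for positive operators. The constant $1/\Gamma(d_{\hom}/m+1)$ is exactly what Karamata produces.

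Removing the restriction on $\gamma$ needs care. We apply Karamata to $|M_f(Q+1)^{-\gamma/m}|^2=(Q+1)^{-\gamma/m}M_{|f|^2}(Q+1)^{-\gamma/m}$. The needed heat-trace asymptotic $\mathrm{Tr}(M_{|f|^2}^{p}\cdots)$ for non-integer exponents is reached through Birman--Solomyak type commutator estimates: $[M_f,(Q+1)^{-\gamma/m}]$ lies in a smaller weak Schatten ideal $\Lc_{d_{\hom}/(\gamma+1),\infty}$. This lets us replace $(M_{\bar f}(Q+1)^{-2\gamma/m}M_f)$ by $(M_{|f|^{2/p}}(Q+1)^{-2/m}M_{|f|^{2/p}})^{\gamma}$ modulo operators with negligible asymptotics, i.e.\ elements of $(\Lc_{d_{\hom}/(2\gamma),\infty})_0$.

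\emph{Step 2: localization and freezing.} Following Folland--Stein, cover $\supp f$ by finitely many left translates $g_jB_r$ of a small ball, and take a subordinate disjoint partition $f=\sum_j f\chi_j$ with $\chi_j=\chi_{g_jB_r}$ essentially. The off-diagonal terms $M_{\chi_i}(P+1)^{-\gamma/m}M_{\chi_j}$, for separated pieces, are handled by Cwikel-type estimates plus a Birman--Solomyak approximation in $f$. This lets us treat step functions $f$ and reduces to showing that the asymptotic behaves additively on orthogonal pieces; the additivity itself is the standard lemma on $\limsup/\liminf$ of $k^{\gamma/d_{\hom}}\mu$ for direct sums. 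On each ball we compare $M_{\chi_j}(P+1)^{-\gamma/m}$ with $M_{\chi_j}(P^{\rm top}_{g_j}+1)^{-\gamma/m}$. Uniform Rockland gives $\|(-\Delta_G)^{m/2v}u\|\lesssim\|(P+1)u\|$ globally, by Theorem~\ref{gee_introduction} and the elliptic estimates of \cite{LMSZ25_elliptic}. Hence
\[
M_{\chi_j}\big((P+1)^{-1}-(P^{\rm top}_{g_j}+1)^{-1}\big)=M_{\chi_j}(P+1)^{-1}\big(P^{\rm top}_{g_j}-P\big)(P^{\rm top}_{g_j}+1)^{-1}.
\]
On $g_jB_r$ the top coefficients differ by $O(r)$, and the lower-order terms are relatively compact of lower order. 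This yields a quasi-norm bound in $\Lc_{d_{\hom}/m,\infty}$ of size $O(r)+o(1)$. We then pass from the resolvent to the power $\gamma/m$ using Birman--Koplienko--Solomyak type operator-concave/Lipschitz estimates in weak Schatten quasinorms, or via the integral representation of fractional powers.

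\emph{Step 3: assembling.} Summing the contributions gives a Riemann sum
\[
\sum_j |f(g_j)|^{d_{\hom}/\gamma}|B_r|\,\tau(e^{-P^{\rm top}_{g_j}}),
\]
up to errors that vanish as $r\to0$. Continuity of $g\mapsto\tau(e^{-P^{\rm top}_g})$ follows from the uniform Rockland bound. Passing to the limit uses the continuity of the functional $\lim k^{\gamma/d_{\hom}}\mu(k,\cdot)$ with respect to the $\Lc_{d_{\hom}/\gamma,\infty}$ quasinorm (Birman--Solomyak's lemma). This requires the a priori Cwikel estimate
\[
\|M_f(P+1)^{-\gamma/m}\|_{\Lc_{d_{\hom}/\gamma,\infty}}\lesssim\|f\|_{L_{d_{\hom}/\gamma}}
\]
for $\gamma<d_{\hom}$, or an $\ell_{d_{\hom}/\gamma,\infty}(L_2)$ variant otherwise.

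I expect the main obstacle to be the freezing estimate in Step 2 for non-integer $\gamma/m$ and variable coefficients. The difference of fractional powers must be shown small in the weak ideal uniformly over balls, using only the uniform Rockland lower bound rather than a symbol calculus. I would first try the Balakrishnan integral representation together with the resolvent identity above.
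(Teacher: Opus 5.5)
\textbf{Verdict: genuine gap.} Your plan leaves the freezing step at non-integer $\gamma/m$ open. The way to avoid it is to reorder: freeze only at the resolvent, then change the exponent once, globally.

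\textbf{The gap in Step 2.} You flag it yourself: freezing at fractional powers. The localization needs, for every cutoff $\phi$ supported in a ball of radius $r$ about $g$, a bound
\[
{\rm dist}_{\Lc_{p,\infty}}\big(M_{\phi}[(1+P)^{-\frac{\gamma}{m}}-(1+P^{\rm top}_g)^{-\frac{\gamma}{m}}],(\Lc_{p,\infty})_0\big)\lesssim \|f\phi\|_{L_p(G)}\,\omega(a,r),\qquad p=\tfrac{d_{\hom}}{\gamma}.
\]
The factor $\|f\phi\|_{L_p}$ is essential; an unspecified ``$O(r)+o(1)$'' is not enough. There are about $r^{-d_{\hom}}$ pieces, and Lemma~\ref{lemma-orthogonal-Pythagoras-2} adds the $p$-th powers of these distances.

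For the resolvent, your identity gives exactly this bound, as in Lemma~\ref{s7 p minus pg locally}:
\begin{enumerate}[{\rm (i)}]
\item commute $M_\phi$ past $(1+P)^{-1}$; the commutator lies in $\Lc_{\frac{d_{\hom}}{m+1},\infty}$ (Lemma~\ref{q second lemma});
\item split $P^{\rm top}_g-P=(P^{\rm top}_g-P_g)+(P_g-P)$;
\item use the distance bound of Lemma~\ref{q first lemma} together with the uniform estimate of Lemma~\ref{uniform approximation lemma}.
\end{enumerate}

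For $(1+P)^{-\gamma/m}$ you give no argument, and the two tools you suggest do not readily supply one.
\begin{itemize}
\item Balakrishnan formula: for $s=\gamma/m\in(0,1)$, the natural scaling bound on $\lambda^{-s}M_\phi(\lambda+1+P)^{-1}(P^{\rm top}_g-P)(\lambda+1+P^{\rm top}_g)^{-1}$ in $\Lc_{p,\infty}$ is $\lambda^{-s}\cdot\lambda^{s-1}=\lambda^{-1}$ at infinity. The integral therefore diverges logarithmically. Moreover, when $p\leqslant 1$ the quasinorm is not equivalent to a norm and cannot be integrated at all.
\item Operator concavity (BKS/Ricard): starting from the resolvent-level estimate, it controls $(M_\phi(1+P)^{-1}M_\phi)^{\theta}-(M_\phi(1+P^{\rm top}_g)^{-1}M_\phi)^{\theta}$. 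To return to $M_{\phi^\theta}(1+P)^{-\theta}M_{\phi^\theta}$ you must still solve the non-integer power-change problem.
\end{itemize}

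\textbf{How the paper proceeds instead.}
\begin{itemize}
\item It never freezes at fractional powers. The localization (Theorem~\ref{abstract asymptotics theorem}) is run only for $A=M_f(1+P)^{-1}$ and $A_g=M_f(1+P^{\rm top}_g)^{-1}$. This yields Theorem~\ref{main theorem special case} with variable coefficients.
\item The exponent is then changed once, globally. Theorem~\ref{ultimate_asterisque_esque_theorem} with $A=M_{f_\epsilon^2}$ and $B=(1+P)^{-1}$ gives
$(M_{f_\epsilon}(1+P)^{-1}M_{f_\epsilon})^{\frac{\gamma}{m}}-M_{f_\epsilon}^{\frac{\gamma}{m}}(1+P)^{-\frac{\gamma}{m}}M_{f_\epsilon}^{\frac{\gamma}{m}}\in(\Lc_{\frac{d_{\hom}}{\gamma},\infty})_0$.
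Its hypotheses are the commutator estimates of Theorem~\ref{big_fractional_power_theorem}, which hold for variable coefficients.
\item Consequently, the constant-coefficient input is needed only at $\gamma=m$.
\end{itemize}

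\textbf{The power change is itself a genuine theorem.} For non-integer $q$, the statement $(A^{1/2}BA^{1/2})^q-A^{q/2}B^qA^{q/2}\in(\Lc_{p/q,\infty})_0$ requires real work. The paper uses Araki--Lieb--Thirring, dyadic root extraction, the Potapov--Sukochev commutator estimate and the Sukochev--Zanin integral formula (Lemmas~\ref{alt_lemma}--\ref{PS_Lipschitz_lemma} and Theorem~\ref{asterisque_integral_formula}). Only for integer powers is it H\"older's inequality plus commutators (Lemma~\ref{constant asymp aux lemma}). Your sentence ``this lets us replace \dots'' therefore hides the main technical step.

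\textbf{Step 1 also needs correcting.} The heat trace $\mathrm{Tr}(M_fe^{-tQ}M_{\bar f})=t^{-d_{\hom}/m}\|f\|_2^2\,\tau(e^{-Q})$ is an exact identity, but it is not a spectral function of $M_{\bar f}(1+Q)^{-1}M_f$. A Karamata argument applied to it says nothing about that operator's eigenvalues, so it is not ``equivalent'' to the $\zeta$-route. The $\zeta$-route works only through Theorem~\ref{asterisque key result}: analyticity of $\mathrm{Tr}((A^{1/2}BA^{1/2})^z)-\mathrm{Tr}(A^zB^z)$ on $\Re z>p-1$ for $p>2$. This is why the paper takes $B=(1+Q)^{-1/m}$ and $p=d_{\hom}$, and reaches $(1+Q)^{-1}$ through the integer power $2m$.
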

We emphasise that Theorem \ref{main theorem general case} is not a trivial generalisation. The nature of Birman and Solomyak's proof is that \eqref{strong_weyl_law_variable_coefficients} is not substantially more difficult to prove than \eqref{strong_weyl_law}, but in our setting allowing the coefficients to vary poses substantial difficulties.

The quantity $\tau(\exp(-P^{\mathrm{top}}_g))$ appearing in Theorem \ref{main theorem general case} is a replacement for the function
\[
x\mapsto \frac{\Gamma(\frac{d}{m})}{(2\pi)^d}\int_{\mathbb{S}^{d-1}} \sigma_P(x,\xi)^{-\frac{d}{m}}\,d\xi.
\]
appearing in \eqref{strong_weyl_law_variable_coefficients}.
Observe that since $\sigma_P$ is homogeneous of order $m$, by changing variable to polar coordinates and definition of Gamma function, the function above is the same as
\[
x\mapsto (2\pi)^{-d}\int_{\R^d} \exp(-\sigma_P(x,\xi))\,d\xi.
\]
More detailed computation can be found in Appendix \ref{appdx-alg-trace}, such as Example \ref{example-Rd-alg-trace}.
Hence, \eqref{strong_weyl_law_variable_coefficients} can be restated as
\begin{align*}
&\lim_{k\to\infty} k^{\frac{\gamma}{d}}\mu(k,M_f(1+P)^{-\frac{\gamma}{m}})\\
&= \Big(\frac{1}{\Gamma(\frac{d}{m}+1)}\int_{\R^d\times \R^d} |f(x)|^{\frac{d}{\gamma}} \exp(-\sigma_P(x,\xi)) (2\pi)^{-d}d\xi dx\Big)^{\frac{\gamma}{d}}.
\end{align*}
We can put Theorem \ref{main theorem general case} into a similar form using the Plancherel theorem, which in this case states
\[
\tau(A) = \int_{\widehat{G}} \mathrm{Tr}_{H_{\pi}}(\pi(A))\,d\pi,\quad A \in L_1(\mathrm{VN}(G),\tau).
\]
Here, $\widehat{G}$ is the space of unitary irreducible unitary representations $(\pi,H_{\pi})$ of $G,$
and $d\pi$ denotes the correctly normalised Plancherel measure. This is a straightforward consequence of the corresponding assertion for $L_2(G),$ as in \cite[Section 18.8]{DixmierCStar1977}.
Hence, the result of Theorem \ref{main theorem general case} is that
\begin{align*}
&\lim_{k\to\infty}k^{\frac{\gamma}{d_{\hom}}}\mu(k,M_f(P+1)^{-\frac{\gamma}{m}})\\
&= \Big(\frac1{\Gamma(\frac{d_{\hom}}{m}+1)}\int_{\widehat{G}\times G} |f(g)|^{\frac{d_{\hom}}{\gamma}} \mathrm{Tr}_{H_{\pi}}(\exp(-\pi(P^{\mathrm{top}}_g)))\,d\pi dg\Big)^{\frac{\gamma}{d_{\hom}}}
\end{align*}
The Plancherel measure on $\widehat{\R^d} = \R^d$ is $(2\pi)^{-d}d\xi,$ and we can see that Theorem \ref{main theorem general case} recovers \eqref{strong_weyl_law_variable_coefficients}.

Using the Cwikel-type estimates proved in {Appendix \ref{cwikel_appendix}}, we can easily deduce the following strengthening of Theorem \ref{main theorem general case}.
\begin{corollary}\label{weaked_smoothness_corollary}
Let $P,\gamma$ and $m$ be as in Theorem \ref{main theorem general case}.
\begin{enumerate}[{\rm (i)}]
\item{} If $\frac{d_{\hom}}{\gamma}>2,$ then the result of Theorem \ref{main theorem general case} holds for all {$f \in L_{\frac{d_{\hom}}{\gamma}}(G).$}
\item{} If $\frac{d_{\hom}}{\gamma}\leqslant 2,$ then the result of Theorem \ref{main theorem general case} holds if {$f \in \ell_{\frac{d_{\hom}}{\gamma}}(L_q)(G)$} for some $q>2.$
\end{enumerate}
\end{corollary}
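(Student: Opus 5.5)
We obtain the corollary from Theorem \ref{main theorem general case} by a density argument. The two ingredients are the Cwikel-type estimates of Appendix \ref{cwikel_appendix} and the standard continuity of the relevant asymptotic functionals with respect to the weak Schatten quasi-norm. Throughout we write $p=\frac{d_{\hom}}{\gamma}$ and, for a compact operator $T$,
\[
\Lambda^{+}(T)=\limsup_{k\to\infty}k^{\frac1p}\mu(k,T),\qquad \Lambda^{-}(T)=\liminf_{k\to\infty}k^{\frac1p}\mu(k,T).
\]

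\textbf{Step 1: the a priori bound.} Since $P$ is uniformly Rockland, Theorem \ref{gee_introduction} and the elliptic estimate of Definition \ref{definition of ellipticity} give that
\[
(P+1)^{-\frac{\gamma}{m}}(1-\Delta_G)^{\frac{\gamma}{2v}}
\]
is bounded on $L_2(G)$. Hence
\[
\|M_f(P+1)^{-\frac{\gamma}{m}}\|_{\Lc_{p,\infty}}\leqslant C\,\|M_f(1-\Delta_G)^{-\frac{\gamma}{2v}}\|_{\Lc_{p,\infty}}.
\]
The Cwikel estimates of Appendix \ref{cwikel_appendix} then bound the right-hand side:
\begin{itemize}
\item by $C\|f\|_{L_p(G)}$ when $p>2$;
\item by $C\|f\|_{\ell_p(L_q)(G)}$ with $q>2$ when $p\leqslant 2$.
\end{itemize}
In both cases $f\mapsto M_f(P+1)^{-\gamma/m}$ is a bounded linear map from the relevant function space $X$ into $\Lc_{p,\infty}$.

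\textbf{Step 2: approximation.} Take $f_n\in C_c^\infty(G)$ with $f_n\to f$ in $X$. This is possible because $C_c^\infty$ is dense in $L_p$ and in $\ell_p(L_q)$ for finite exponents. Set $T=M_f(P+1)^{-\gamma/m}$ and $T_n=M_{f_n}(P+1)^{-\gamma/m}$. Using $\mu(j+k,A+B)\leqslant\mu(j,A)+\mu(k,B)$ with $j\approx(1-\epsilon)k$, one gets the standard inequality
\[
\Lambda^{\pm}(T)\leqslant (1-\epsilon)^{-\frac1p}\Lambda^{\pm}(T_n)+\epsilon^{-\frac1p}\|T-T_n\|_{\Lc_{p,\infty}},
\]
together with the symmetric inequality with $T$ and $T_n$ swapped. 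By Step 1, $\|T-T_n\|_{\Lc_{p,\infty}}\leqslant C\|f-f_n\|_X\to0$. By Theorem \ref{main theorem general case}, $\Lambda^{+}(T_n)=\Lambda^{-}(T_n)=\Phi(f_n)^{1/p}$, where
\[
\Phi(f)=\frac1{\Gamma(\frac{d_{\hom}}{m}+1)}\int_G|f(g)|^p\,\tau(\exp(-P_g^{\rm top}))\,dg.
\]
Letting $n\to\infty$ and then $\epsilon\to0$ gives $\Lambda^{+}(T)=\Lambda^{-}(T)=\lim_n\Phi(f_n)^{1/p}$.

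\textbf{Step 3: continuity of the right-hand side.} We must show that $\Phi(f_n)\to\Phi(f)$, and for this it suffices that $g\mapsto\tau(\exp(-P_g^{\rm top}))$ is bounded on $G$. Two facts give this.
\begin{itemize}
\item Each $P_g^{\rm top}$ is a positive Rockland operator that is homogeneous of degree $m$.
\item The coefficients $a_\alpha(g)$ range over a bounded set, and the uniform Rockland constant $c_P$ controls $P_g^{\rm top}$ from below by $(-\Delta_G)^{m/2v}$.
\end{itemize}
Combining these with operator monotonicity of $t\mapsto t^{\theta}$ for suitable $\theta$, or directly through the heat kernel bound at the identity, gives $\tau(\exp(-P_g^{\rm top}))\leqslant C\,\tau(\exp(-c(-\Delta_G)^{m/2v}))<\infty$ uniformly in $g$. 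Then $\big||f_n|^p-|f|^p\big|\to0$ in $L_1$, since $\ell_p(L_q)\subset L_p$ when $q\geqslant p$ and trivially for $L_p$, and this yields the convergence.

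\textbf{Main obstacle.} The real difficulty is the Cwikel estimate in Step 1, especially in the range $p\leqslant2$. We take that estimate from Appendix \ref{cwikel_appendix}. The comparison between $P+1$ and $1-\Delta_G$ at fractional powers of order $\gamma/m$ is less routine and must be handled with care: the bounded inverse estimate from uniform Rockland-ness is needed, followed by interpolation (Heinz inequality) to reach non-integer exponents.
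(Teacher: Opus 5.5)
Your proposal is correct and follows essentially the paper's argument. You approximate $f$ by $C_c^\infty$ functions, transfer the Cwikel bound from $(1-\Delta)^{-\gamma/2v}$ to $(1+P)^{-\gamma/m}$ as in Lemma \ref{improved_cwikel_estimate} to get $\Lc_{\frac{d_{\hom}}{\gamma},\infty}$-convergence, pass the asymptotics to the limit via your hand-proved version of Lemma \ref{bs convergence lemma}, and use boundedness of $g\mapsto\tau(e^{-P^{\rm top}_g})$ for the integrals. The only differences are cosmetic: you apply the Cwikel bound directly to $f-f_n$ (legitimate, since Lemma \ref{quantitative_cwikel_estimate} is stated for arbitrary locally integrable $f$) instead of identifying the limit with $M_f(1+P)^{-\gamma/m}$ via Sobolev embedding, and you bound the weight by form comparison with $c_P(-\Delta)^{m/2v}$ instead of citing Lemma \ref{continuity of the weight}.
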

The space $\ell_{p}(L_q)(G)$ was defined in \cite[{ Definition 6.3}]{MSZ-stratified-23} and is recalled in Definition \ref{decomposition_spaces_definition} below. {
In the abelian case, $G=\mathbb{R}^d,$ when $\frac{d_{\hom}}{\gamma}=2$,
the condition that $f \in \ell_{2}(L_q)(\mathbb{R}^d)$ for some $q>2$ can be weakened to an Orlicz condition \cite{Solomyak1994,SZ2022-eigenvalue-estimate}. We still do not know if the same can be done in general.}


In 1966, J.~Dixmier found a non-zero trace on an ideal of compact operators differing from the classical operator trace \cite{Dixmier}. Originally, he defined a trace on the ideal of compact operators $A\in \Bc(H)$ satisfying:
\begin{equation*}
\sup_{n \geqslant 0 }\frac{1}{\log(n+2)}\sum_{k=0}^{n}\mu(k, A) < \infty.
\end{equation*}

One of the consequences of Connes' trace formula is Connes' integration formula, which says that if $\varphi$ is a continuous normalised trace on the weak-trace class ideal then
\[
\varphi(M_f(1-\Delta)^{-\frac{d}{2}}) = \frac{\mathrm{Vol}(\mathbb{S}^{d-1})}{d(2\pi)^d}\int_{\R^d} f(x)\,dx.
\]
We can restate this as
\[
\varphi((1-\Delta)^{-\frac{d}{4}}M_f(1-\Delta)^{-\frac{d}{4}}) = \frac{\mathrm{Vol}(\mathbb{S}^{d-1})}{d(2\pi)^d}\int_{\R^d} f(x)\,dx.
\]
{Notice that the right hand side does not depend on $\varphi$}.
The connection between Birman--Solomyak's asymptotic formulae and Connes' integration formula was first publicised by
Rozenblum \cite{Rozenblum22-eigenvalue-Connes}, and later in \cite{SZ22-connes-integration}.

Theorem \ref{main theorem general case} together with Corollary \ref{weaked_smoothness_corollary} implies that if $0\leqslant f \in \ell_1(L_q)(G)$ for some $q>1,$ then
\begin{multline*}
\lim_{k\to\infty}k\mu(k,(1+P)^{-\frac{d_{\hom}}{2m}}M_f(P+1)^{-\frac{d_{\hom}}{2m}})\\
    = \frac1{\Gamma(\frac{d_{\hom}}{m}+1)}\int_G f(g) \tau(\exp(-P^{\mathrm{top}}_g))\,dg.
\end{multline*}

{
This represents a strong form of Connes' integration formula for positive functions $f.$ To extend  this to real-valued functions, we need to extract the positive and negative parts. This is the essence of the following theorem:
\begin{theorem}\label{positive_negative_parts_theorem}
    Let $f$ be a real-valued locally integrable function on $G.$ Let $\gamma>0.$ 
    \begin{enumerate}[{\rm (i)}]
        \item{} If $\gamma<d_{\hom},$ assume that $f \in L_{\frac{d_{\hom}}{\gamma}}(G).$ 
        \item{} If $\gamma\geqslant d_{\hom},$ assume that $f \in \ell_{\frac{d_{\hom}}{\gamma}}(L_q)(G)$ for some $q>1.$
    \end{enumerate}
    Let $P\geqslant 0$ be an uniformly Rockland operator of order $m.$ We have
    \begin{multline*}
    \lim_{k\to\infty}k^{\frac{\gamma}{d_{\hom}}}\mu(k,((1+P)^{-\frac{\gamma}{2m}}M_{f}(P+1)^{-\frac{\gamma}{2m}})_{\pm}) \\
    = \Big(\frac1{\Gamma(\frac{d_{\hom}}{m}+1)}\int_G f(g)_{\pm}^{\frac{d_{\hom}}{\gamma}} \tau(\exp(-P^{\mathrm{top}}_g))\,dg\Big)^{\frac{\gamma}{d_{\hom}}}.
    \end{multline*}
    Here, $x_{\pm} = \frac12(|x|\pm x).$
\end{theorem}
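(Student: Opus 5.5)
The approach is to reduce the positive and negative parts of the self-adjoint operator $T_f := (1+P)^{-\frac{\gamma}{2m}}M_f(1+P)^{-\frac{\gamma}{2m}}$ to the positive case. Write $f = f_+ - f_-$, so that $T_f = T_{f_+} - T_{f_-}$ with $T_{f_\pm}\geqslant 0$. For nonnegative $h$ we have
\[
T_h = (M_{h^{1/2}}(1+P)^{-\frac{\gamma}{2m}})^*(M_{h^{1/2}}(1+P)^{-\frac{\gamma}{2m}}),
\]
hence $\mu(k,T_h) = \mu(k, M_{h^{1/2}}(1+P)^{-\frac{\gamma}{2m}})^2$. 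Apply Theorem \ref{main theorem general case}, together with Corollary \ref{weaked_smoothness_corollary}, with $\gamma/2$ in place of $\gamma$ and $h^{1/2}$ in place of $f$. The exponent $\frac{d_{\hom}}{\gamma/2}$ applied to $|h^{1/2}|$ gives $h^{\frac{d_{\hom}}{\gamma}}$, and squaring the limit yields exactly the asserted formula for $T_{f_\pm}$ taken separately.

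The integrability hypotheses match. If $\gamma<d_{\hom}$ and $f\in L_{d_{\hom}/\gamma}$, then $f_\pm^{1/2}\in L_{2d_{\hom}/\gamma}$ with $2d_{\hom}/\gamma>2$, so case (i) of the corollary applies. If $\gamma\geqslant d_{\hom}$ and $f\in\ell_{d_{\hom}/\gamma}(L_q)$ with $q>1$, then $f_\pm^{1/2}\in \ell_{2d_{\hom}/\gamma}(L_{2q})$ with $2q>2$, so case (ii) applies.

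The remaining step is to show that the positive part of $T_{f_+}-T_{f_-}$ has the same asymptotics as $T_{f_+}$, and similarly for the negative part. I would use the Birman--Solomyak principle that if $A,B\geqslant 0$ have weak-type asymptotics of order $\gamma/d_{\hom}$ and $A^{1/2}B^{1/2}$ (equivalently $AB$) is ``asymptotically orthogonal'', meaning it lies in the separable part of a smaller weak ideal, then $(A-B)_\pm$ has the asymptotics of $A$ and of $B$ respectively. Here $A^{1/2}B^{1/2}$ is handled via $M_{f_+^{1/2}}(1+P)^{-\frac{\gamma}{m}}M_{f_-^{1/2}}$, since $f_+f_-=0$. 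I expect this disjointness step to be the main obstacle. For smooth $f_\pm$ with disjoint supports, $M_\phi(1+P)^{-\gamma/m}M_\psi$ with $\phi,\psi$ having disjoint supports belongs to every Schatten class; this locality estimate would be imported from the machinery used to prove Theorem \ref{main theorem general case}.

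General $f$ then follows by approximation. Choose $\phi_\pm \in C_c^\infty$ with disjoint supports and $\phi_\pm$ close to $f_\pm^{1/2}$ in the relevant quasinorm; disjoint supports are possible after cutting away a small neighbourhood of the zero set and approximating. Then:
\begin{enumerate}[(a)]
\item use the Cwikel-type estimates of Appendix \ref{cwikel_appendix} to control $\|T_f - T_{\phi_+^2-\phi_-^2}\|_{\mathcal{L}_{d_{\hom}/\gamma,\infty}}$;
\item apply Ky Fan-type continuity of $\limsup$ and $\liminf$ of $k^{\gamma/d_{\hom}}\mu(k,\cdot)$ under weak-ideal perturbations, which transfers to positive and negative parts via the Birman--Solomyak inequalities for $(X+Y)_\pm$;
\item pass to the limit on the right-hand side by continuity of the integral in $f$.
\end{enumerate}
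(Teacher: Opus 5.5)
Your proposal is correct, but its key step differs from the paper's. Your reduction of $T_{f_\pm}$ to Theorem \ref{main theorem general case} with $\frac{\gamma}{2}$ and $f_\pm^{\frac12}$, including the matching of hypotheses (i)--(ii), is what the paper's ``immediate consequence'' tacitly uses.

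\textbf{Where the routes differ.} The paper proves an operator-level statement. For $f$ with $f_\pm\in C^\infty_c(G)$, Lemma \ref{cut_to_negative_part} (via Theorem \ref{asterisque_but_now_there_are_signs}) shows $(T_f)_\pm-T_{f_\pm}\in(\Lc_{\frac{d_{\hom}}{\gamma},\infty})_0$. It does this by comparing $|T_f|^2$ with $T_{|f|}^2$ and taking square roots. This needs the commutator estimates $[B^{\frac12},M_{f_\pm}]B,\ [B^{\frac32},M_{f_\pm}]\in(\Lc_{\frac{2d_{\hom}}{3\gamma},\infty})_0$ for $B=(1+P)^{-\frac{\gamma}{m}}$. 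That is why the paper regularises $f$ as $\psi_\epsilon\circ f$ (Lemma \ref{lem-dense-decomposition-f-matrix}). You compare only eigenvalue asymptotics, through an asymptotic-orthogonality lemma. Its single analytic input is $X_+X_-^*\in(\Lc_{p,\infty})_0$, where $X_\pm=M_{\phi_\pm}(1+P)^{-\frac{\gamma}{2m}}$ and $p=\frac{d_{\hom}}{\gamma}$.

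\textbf{Two points to tighten.}

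First, your claim that $M_{\phi_+}(1+P)^{-\frac{\gamma}{m}}M_{\phi_-}$ lies in every Schatten class is not established in the paper, and it is not needed. If $\phi_+\phi_-=0$ and $\phi_-\in C^\infty_c(G)$, then $M_{\phi_+}(1+P)^{-\frac{\gamma}{m}}M_{\phi_-}=M_{\phi_+}[(1+P)^{-\frac{\gamma}{m}},M_{\phi_-}]$. By Theorem \ref{big_fractional_power_theorem}(iii) this lies in $\Lc_{\frac{d_{\hom}}{\gamma+1},\infty}\subset(\Lc_{p,\infty})_0$. So even disjointness of the closed supports is unnecessary; $\phi_+\phi_-=0$ suffices.

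Second, you only invoke the abstract lemma, but it has a short proof.
\begin{itemize}
\item Set $Wu=(X_+u,X_-u)$, $D=1\oplus(-1)$ and $S=(WW^*)^{\frac12}$. Then $T=W^*DW$, whose nonzero eigenvalues (with multiplicity) coincide with those of $SDS$.
\item The off-diagonal entries of $WW^*$ are $X_\pm X_\mp^*\in(\Lc_{p,\infty})_0$. So \eqref{general_raising_powers_principle} with exponent $\frac12$ and H\"older's inequality give $SDS-\big(X_+X_+^*\oplus(-X_-X_-^*)\big)\in(\Lc_{p,\infty})_0$.
\item Weyl's inequality for positive eigenvalues then concludes.
\end{itemize}

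\textbf{What each route buys.} The paper's route gives the stronger conclusion that $(T_f)_\pm$ and $T_{f_\pm}$ agree modulo $(\Lc_{p,\infty})_0$. Yours needs only a first-order commutator. Your explicit Ky Fan/Weyl argument for positive and negative parts also makes precise the approximation step, which the paper only sketches by reference to Lemma \ref{easy_bs_lemma}.
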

}

This can be thought of as a version of Connes' integration formula, in the sense that it implies if $\varphi$ is any continuous normalised trace on the ideal $\mathcal{L}_{1,\infty}$ then for all $f \in \ell_1(L_q)(G)$ for some $q>1$ we have
\[
\varphi((1+P)^{-\frac{d_{\hom}}{2m}}M_f(1+P)^{-\frac{d_{\hom}}{2m}}) = \frac{1}{\Gamma(\frac{d_{\hom}}{m}+1)}\int_{G} f(g)\tau(\exp(-P^{\mathrm{top}}_g))\,dg.
\]
{
\begin{remark}
    We have stated our results so far for operators on $L_2(G).$ Much the same holds in the vector-valued case for operators on $L_2(G,\mathbb{C}^N),$ provided that the coefficients of the operator $P$ are scalar. 
    
    If $f$ is an locally integrable function on $G,$ valued in the self-adjoint $N$ by $N$ complex  matrices, then the matrix-valued analogy of Theorem \ref{positive_negative_parts_theorem} asserts that for $f$ belonging to an appropriate $L_p$ or $\ell_p(L_q)$ space depending on the value of $\frac{\gamma}{d_{\hom}},$ we have
    \begin{multline*}
        \lim_{k\to\infty}k^{\frac{\gamma}{d_{\hom}}}\mu(k,((1+1\otimes P)^{-\frac{\gamma}{2m}}M_{f}(1+1\otimes P)^{-\frac{\gamma}{2m}})_{\pm}) \\
        = \Big(\frac1{\Gamma(\frac{d_{\hom}}{m}+1)}\int_G \mathrm{Tr}(f(g)_{\pm}^{\frac{d_{\hom}}{\gamma}}) \tau(\exp(-P^{\mathrm{top}}_g))\,dg\Big)^{\frac{\gamma}{d_{\hom}}}.
    \end{multline*}
    The proof is the same. We restrict attention to $N=1$ for notational simplicity.
\end{remark}
}

In Appendix \ref{sec-Connes-Trace}, we relate the quantity in the limit formula in Theorem \ref{positive_negative_parts_theorem} with the residue on a filtered manifold recently defined by Couchet and Yuncken.
With this language, the formula is
\[
    \varphi((1+P)^{-\frac{d_{\hom}}{2m}}M_f(1+P)^{-\frac{d_{\hom}}{2m}}) = \frac{1}{d_{\hom}}\int_{G} f(g)\mathrm{Res}((1+P^{\mathrm{top}}_g)^{-\frac{d_{\hom}}{m}})\,dg.
\]

This brings the theorem into a form more closely resembling Connes' trace formula.

\subsection{Examples}\label{subsec-example}
We give a few examples where the quantities appearing in our main Theorem \ref{main theorem constant} are computable. For simplicity, we restrict ourselves to $\gamma=m,$ i.e. to the following result.
\begin{theorem}\label{main theorem special case}
Let $P\geqslant 0$ be an uniformly Rockland order $m$ differential operator on $G.$ For all $f \in C^\infty_c(G),$ we have
\begin{align*}
&\lim_{t\to\infty}t^{\frac{m}{d_{\hom}}}\mu(t,M_f(P+1)^{-1})\\
&=\Big(\frac1{\Gamma(\frac{d_{\hom}}{m}+1)}\int_{\widehat{G}\times G} |f(g)|^{\frac{d_{\hom}}{m}} {\rm Tr}_{H_{\pi}}(\exp(-\pi(P^{\mathrm{top}}_g)))\,d\pi dg\Big)^{\frac{m}{d_{\hom}}}.
\end{align*}
\end{theorem}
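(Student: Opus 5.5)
Theorem \ref{main theorem special case} is the special case $\gamma=m$ of Theorem \ref{main theorem general case}, rewritten with the Plancherel formula, so the plan is to specialise and then justify the rewriting of the trace.

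First, set $\gamma=m$ in Theorem \ref{main theorem general case}. Then $(P+1)^{-\frac{\gamma}{m}}=(P+1)^{-1}$, the exponent $\frac{\gamma}{d_{\hom}}$ becomes $\frac{m}{d_{\hom}}$, and the power of $|f|$ becomes $\frac{d_{\hom}}{m}$. Hence for $f\in C^\infty_c(G)$,
\[
\lim_{k\to\infty}k^{\frac{m}{d_{\hom}}}\mu(k,M_f(P+1)^{-1}) = \Big(\frac1{\Gamma(\frac{d_{\hom}}{m}+1)}\int_G |f(g)|^{\frac{d_{\hom}}{m}}\tau(\exp(-P^{\mathrm{top}}_g))\,dg\Big)^{\frac{m}{d_{\hom}}}.
\]
The limit over the continuous parameter $t$ is the same as the limit over integers $k$, since $\mu(t,\cdot)$ is constant on the intervals $[k,k+1)$ and $(k+1)/k\to1$.

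Second, rewrite $\tau(\exp(-P^{\mathrm{top}}_g))$ for each fixed $g$ using the Plancherel identity
\[
\tau(A)=\int_{\widehat G}\mathrm{Tr}_{H_\pi}(\pi(A))\,d\pi,
\]
valid for $A\in L_1(\mathrm{VN}(G),\tau)$. Two facts are needed, and this is the only real work.
\begin{enumerate}[{\rm (i)}]
\item $P^{\mathrm{top}}_g$ is a positive homogeneous Rockland element of $\Uc(\gf)$. Positivity follows from $P\geqslant 0$ by a dilation and translation limit at $g$. The heat operator $\exp(-P^{\mathrm{top}}_g)$ is then right convolution with a Schwartz kernel $h_g$ (Folland--Stein; \cite[Chapter 4]{FischerRuzhansky2016}). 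It is positive and equals $\exp(-\tfrac12P^{\mathrm{top}}_g)^2$ with Hilbert--Schmidt square root, so it is $\tau$-trace class, with $\tau(\exp(-P^{\mathrm{top}}_g))=h_g(e)$.
\item $\pi(\exp(-P^{\mathrm{top}}_g))=\exp(-\pi(P^{\mathrm{top}}_g))$ for almost every $\pi$. This holds by functional calculus: $\pi(P^{\mathrm{top}}_g)$ is essentially self-adjoint on smooth vectors, and $\pi$ of the heat semigroup is the heat semigroup of $\pi(P^{\mathrm{top}}_g)$.
\end{enumerate}
Both facts are presumably among the preliminaries on $\mathrm{VN}(G)$ in Section \ref{sec_vna}, or in Appendix \ref{appdx-alg-trace}.

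Third, substitute the Plancherel expression into the integral over $G$. The integrand is nonnegative, so Tonelli lets us write the result as a single integral over $\widehat G\times G$, which is exactly the claimed formula.

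The main obstacle is the second step, namely that $\exp(-P^{\mathrm{top}}_g)$ lies in $L_1(\mathrm{VN}(G),\tau)$ so that the Plancherel identity applies. I would handle it through the Schwartz heat kernel of a positive Rockland operator, as in (i). Measurability of $g\mapsto\tau(\exp(-P^{\mathrm{top}}_g))$ is already implicit in Theorem \ref{main theorem general case}.
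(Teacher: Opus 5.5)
\textbf{The reduction is circular.} You deduce Theorem \ref{main theorem special case} by specialising Theorem \ref{main theorem general case}, but the paper's logic runs the other way. Its proof of Theorem \ref{main theorem general case} in Section \ref{Section-general-case} begins by applying Theorem \ref{main theorem special case} to $f_\epsilon^2$. Only afterwards does it pass from $(1+P)^{-1}$ to $(1+P)^{-\gamma/m}$, using Theorem \ref{ultimate_asterisque_esque_theorem} and Lemma \ref{easy_bs_lemma}. So the case $\gamma=m$ is the foundation of the general statement, and your proposal contains no proof of the eigenvalue asymptotic at all.

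The steps you single out as ``the only real work'' are the routine part: that $e^{-P^{\rm top}_g}\in L_1(\mathrm{VN}(G),\tau)$, and the Plancherel rewriting of $\tau$. These are essentially Lemma \ref{lem:exponentials_are_trace_class} and the Plancherel identity the paper quotes, and your treatment of them is fine. However, you cannot borrow measurability of $g\mapsto\tau(e^{-P^{\rm top}_g})$ from Theorem \ref{main theorem general case} either. The paper proves continuity of this map directly by Duhamel's formula (Lemmas \ref{duhamel lemma} and \ref{continuity of the weight}).

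\textbf{What is missing} is the asymptotic itself for variable coefficients. The paper obtains it in two stages.

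First, the constant-coefficient case (Theorem \ref{main theorem constant}):
\begin{enumerate}[{\rm (i)}]
\item Compare $\mathrm{Tr}((M_f(P+1)^{-1/m}M_f)^z)$ with $\mathrm{Tr}(M_{f^{2z}}(P+1)^{-z/m})$ via Theorem \ref{asterisque key result}. Its hypotheses rest on the Cwikel and commutator estimates of Lemma \ref{asterisque pre-verification lemma} and Theorem \ref{asterisque pre-verification lemma 2}.
\item For homogeneous $P$, compute the second trace explicitly using the scaling \eqref{trace_scaling} of $\tau$ (Lemma \ref{wiener-ikehara compute lemma}). This exhibits a simple pole at $z=d_{\hom}$ with residue proportional to $\tau(e^{-P})$.
\item Apply the Wiener--Ikehara theorem.
\item Transfer the result to $M_f(P+1)^{-1}$ by raising to powers (Lemmas \ref{constant asymp aux lemma} and \ref{constant_asymptotics_power_k}), and discard $P-P^{\rm top}$ by the resolvent identity.
\end{enumerate}

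Second, freeze coefficients. Apply the Birman--Solomyak-type localisation Theorem \ref{abstract asymptotics theorem} to $A=M_f(1+P)^{-1}$ with local models $A_g=M_f(1+P^{\rm top}_g)^{-1}$. This requires:
\begin{enumerate}[{\rm (i)}]
\item the local estimate of Lemma \ref{s7 p minus pg locally}, which controls the distance of $A-A_g$ to $(\Lc_{\frac{d_{\hom}}{m},\infty})_0$ by the oscillation of the top-order coefficients on $\supp f$; this in turn uses a bound on $(1+P^{\rm top}_g)^{-1}:L_2(G)\to W^m_2(G)$ that is uniform in $g$;
\item the a priori bound of Lemma \ref{q first lemma};
\item the commutator estimate of Lemma \ref{q second lemma};
\item the patching by disjointly supported $\phi_{n,l}$ from Lemma \ref{function system lemma}, built on the covering Lemma \ref{quasi_metric_space_construction}, with orthogonality (Lemma \ref{third disjoint lemma}) and Lemma \ref{bs convergence lemma} to pass to the limit.
\end{enumerate}

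None of this appears in your proposal, so as written it does not establish the theorem.
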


The irreducible unitary representations of a nilpotent Lie group are classified by the Kirillov theory of coadjoint orbits. There are a few cases where the classification is simple enough that $\tau$ can be expressed in less abstract terms. Here we present the formulas, the detailed computations are in the Appendix.

\begin{example}\label{example-R-n}
{We consider the abelian case, where $G = \mathbb{R}^d$
is equipped with the trivial grading $\mathfrak{g}=\mathfrak{g}_1 =\mathbb{R}^d.$} In this case, $\widehat{G} = \mathbb{R}^d,$ where the irreducible representations $(\pi_{\xi},\mathbb{C})$ are labelled by $\xi \in \mathbb{R}^d$ with the Plancherel measure
\[
d\pi_{\xi} = (2\pi)^{-d}d\xi.
\]
Let
\[
P = \sum_{|\alpha|\leqslant m} M_{a_{\alpha}}\partial^{\alpha}
\]
be a differential operator with smooth and suitably uniformly bounded coefficients $a_{\alpha}\in C^\infty(\mathbb{R}^d).$
The top degree component, with coefficients frozen at $g \in \mathbb{R}^d$ is the constant-coefficient operator
\[
P^{\mathrm{top}}_g = \sum_{|\alpha|=m} a_{\alpha}(g)\partial^{\alpha},\quad g \in \mathbb{R}^d.
\]
We have
\[
\pi_{\xi}(P^{\mathrm{top}}_g) = \sigma_P(x,\xi) = \sum_{|\alpha|=m} i^{m}a_{\alpha}(g)\xi^{\alpha},\quad g \in \R^d.
\]
Hence, Theorem \ref{main theorem general case} in this case says that
\[
\lim_{t\to\infty}t^{\frac{m}{d}}\mu(t,M_f(P+1)^{-1})= \Big(\frac{1}{d(2\pi)^d}\int_{\mathbb{R}^d\times \mathbb{S}^{d-1}} |f(g)|^{\frac{d}{m}}(i^m\sum_{|\alpha|=m} a_{\alpha}(g)\theta^{\alpha})^{-\frac{d}{m}}\,d\theta dg\Big)^{\frac{m}{d}},
\]
where $d\theta$ is the standard measure on the unit sphere.

If $m=2,$ then the expression for $\tau(\exp(-P^{\mathrm{top}}_g))$ is a Gaussian integral. Writing $a(g) = \{a_{j,k}(g)\}_{j,k=1}^d,$ we evaluate
\[
\tau(\exp(-P^{\mathrm{top}}_g)) = (4\pi)^{-\frac{d}{2}}\det(a(g))^{-\frac12}.
\]
Hence in this case, Theorem \ref{main theorem general case} reduces to
    \begin{multline*}
        \lim_{t\to\infty} t^{\frac{2}{d}}\mu(t,M_f(1+\sum_{|\alpha|\leqslant 2} M_{a_{\alpha}}\partial^{\alpha})^{-1}) \\
        =(4\pi)^{-1}\left(\frac{1}{\Gamma(\frac{d}{2}+1)}\int_{\mathbb{R}^d} |f(g)|^{\frac{d}{2}}\det(a(g))^{-\frac12}\,dg\right)^{\frac{2}{d}}.
    \end{multline*}
\end{example}

\begin{remark}
    {Example \ref{example-R-n} concerns $\mathbb{R}^d$ with a trivial grading. If we instead chose a nontrivial grading, we are in a setting resembling Birman-Solomyak's anisotropically homogeneous operators \cite{BirmanSolomyakAnisotropic1977}. A similar computation in this case is given in Appendix \ref{appdx-alg-trace}.}
\end{remark}

\begin{example}
If $G = \mathbb{H}^n$ is the $2n+1$-dimensional Heisenberg group, then the dual of $G$ (up to a Plancherel null set) consists of representations on $L_2(\mathbb{R}^n).$ By a homogeneity argument, the integral over $\widehat{G}$ reduces to an evaluation at two particular representations, denoted $\pi_{\pm 1}.$ Explicit computation yields
\begin{align*}
&\tau(\exp(-P^{\mathrm{top}}_g))\\
&= (2\pi)^{-3n-1}\frac2m\Gamma(\frac{2n+2}{m})(\mathrm{Tr}_{L_2(\mathbb{R}^n)}(\pi_{+1}(P^{\mathrm{top}}_g)^{-\frac{2n+2}{m}})+\mathrm{Tr}_{L_2(\mathbb{R}^n)}(\pi_{-1}(P^{\mathrm{top}}_g)^{-\frac{2n+2}{m}})).  
\end{align*}

If $m=2,$ we can go further. Assuming that $P^{\mathrm{top}}_g$ has the form
\[
P^{\mathrm{top}}_g = \sum_{j,k=1}^{2n} B_{j,k}(g)X_{j}X_k
\]
where $B_{j,k}$ are smooth functions on $\mathbb{H}^n$ and $X_1,\ldots,X_{2n}$ are the generators of the Heisenberg Lie algebra, we can evaluate $\tau(\exp(-P^{\mathrm{top}}_g))$ as
\[
\tau(\exp(-P^{\mathrm{top}}_g)) = 2^{-n}(2\pi)^{-(3n+1)}\det(B(g))^{-\frac12}\int_{-\infty}^\infty \det(\frac{i\Omega B(g)s}{\sinh(i\Omega B(g)s)})^{\frac12}\,ds. 
\]
Here, $\Omega$ is the standard $2n\times 2n$ symplectic matrix.
\end{example}

\subsection{Structure of the paper}
\begin{itemize}
    \item In Section \ref{sec-Graded-Lie-Group},  We recall background material on analysis on graded Lie groups. In particular, we review results from \cite{LMSZ25_elliptic}, including a special covering lemma and global estimates for uniformly Rockland operators. These results play an important role in the proofs of the main theorems. 
    \item In Section \ref{sec-trace-ideal}, we exhibits  properties of singular values, trace ideals, and von Neumann algebras.
    \item In Section \ref{sec-5}, we prove a special case of Theorem \ref{main theorem general case}. Namely,  when the operator $P$ has constant coefficients and $\gamma = m$.
    \item In Section \ref{sec_abstract_asymptotics}, we prove the general case of Theorem \ref{main theorem general case}.
    \item In Appendix \ref{appdx-alg-trace}, we compute formulas for the von Neumann trace on the group algebra.
    \item In Appendix \ref{cwikel_appendix}, we extend the Cwikel-type estimates in \cite[Section 6]{MSZ-stratified-23} to the case of graded Lie groups.
    \item In Appendix \ref{psido_commutator_appendix}, we prove a generalised version of \cite[Section 5]{MSZ-stratified-23}.
    \item In Appendix \ref{sec-Connes-Trace}, we discuss the relation between our spectral asymptotic formula and the Wodzicki residue defined by Couchet and Yuncken.
\end{itemize}


\section{Analysis on Graded Lie Group}\label{sec-Graded-Lie-Group}

 In this section, we begin by reviewing preliminary material on graded Lie groups, as well as the function spaces and differential operators defined on them. See, subsection \ref{subsec-graded-Lie} and \ref{subsec-3.1}. Our main sources here are \cite{FischerRuzhansky2016,Folland1975,FollandStein1982,RothschildStein1976}.

Next, we recall a special covering Lemma from \cite{LMSZ25_elliptic}, see Lemma \ref{quasi_metric_space_construction}. It will play an essential role in later chapters, such as, Theorem \ref{abstract asymptotics theorem}, especially, Lemma \ref{function system lemma}.

At the end, we collect results from our previous work \cite{LMSZ25_elliptic} (see, subsection \ref{subsec-LMSZ25-result}), which will be essential for the developments in Section \ref{sec_abstract_asymptotics}.

\subsection{Graded Lie group}\label{subsec-graded-Lie}

\begin{definition}\label{def-stratfied-g}
    A Lie algebra $\gf$ is said to be graded if equipped with a direct sum decomposition
    \begin{equation}\label{eq-stratfied-decomp}
        \gf  = \bigoplus_{k=1}^\infty V_k
    \end{equation}
    such that
    \begin{equation}\label{eq-stratified-relation}
        [V_j, V_k] \subseteq V_{j+k}, \quad j,  k \geqslant 1,
    \end{equation}
    In this paper, we only consider finite dimensional Lie algebras, thus there exists $s\geqslant 1$ such that $V_{s+1}=V_{s+2}=\cdots=\{0\}.$ 
    
    A Lie group $G$ is called graded, if it is connected, simply connected, and its Lie algebra $\gf$ is graded.
\end{definition}

\begin{definition}
    A Lie algebra $\gf$ is nilpotent if for
    \begin{equation*}
        \gf_1 \coloneqq \gf, \quad \gf_{j} \coloneqq [\gf, \gf_{j-1}], \ j\geqslant 1,
    \end{equation*}
    there exists a $s\in \Z^{+}$, such that $\gf_j = 0$, $\forall j\geqslant s$. 
\end{definition}

Apparently, by definition, finite dimensional graded Lie algebra is nilpotent. While the reverse is not always true.
\begin{remark}
    Not all nilpotent Lie algebras can be equipped with grading structure. On the other hand, the grading structure is not always unique. \cite[Remark 3.1.6.]{FischerRuzhansky2016}
\end{remark}

\begin{remark}
    We use the term "graded Lie algebra" to maintain consistency with the literature, such as, \cite[Definition 3.1.1.]{FischerRuzhansky2016}. One can also call this algebra  $\Z_s$ graded, $\mathbb{N}$ graded, or $\Z$ graded with additional conditions. However, we caution the reader not to confuse this with the "super graded Lie algebra" commonly encountered in differential geometry, where the prefix "super" is sometimes omitted.
\end{remark}

The homogeneous dimension $d_{\hom}$ is defined as
\[
d_{\hom} = \sum_{k=1}^\infty k\cdot \mathrm{dim}(V_k).
\]
Notice that $d_{\hom}$ highly depends on the grading.
Since $G$ is connected, simply connected and nilpotent, it follows that the exponential map $\exp:\gf\to G$ is a diffeomorphism, and so we may assume that $G$ and $\gf$ coincide as sets. With this identification, the Lebesgue measure on $\gf$ is a bivariant Haar measure for $G$ \cite[Proposition 1.2]{FollandStein1982}. The Haar measure on $G$ is therefore written as $dx,$ so that $\int_{G} f(x)\,dx$ means the integral of a measurable function $f$ on $G$ with respect to the Haar measure of $G.$ The elements of $\gf$ can be identified with right-invariant derivations of compactly supported smooth functions on $G$ ({denote such function class as $C^\infty_c(G)$}), if we identify $X\in \gf$ as the generator of the group of \emph{left} translations
    \[
        Xu(g) := \frac{d}{dt}u(\exp(-tX)g)|_{t=0},\quad g \in G,\, u\in C^\infty_c(G).
    \]

Furthermore, $\gf$ can be equipped with an action of $\mathbb{R}^{\times}$ which we denote by $\delta$, with formula 
\begin{equation}\label{eq-dilation}
    \delta_{t}(y_j) = t^jy_j,\quad y_j \in V_j,\quad t>0.
\end{equation}

These are Lie algebra isomorphisms. Since $G$ is simply connected, by Lie theory, each $\delta_{t}$ will induce an unique Lie group isomorphism which we also denote by $\delta_{t}$. It satisfies  
\[
\delta_{t}\exp(y_1+\cdots+y_s) = \exp(t y_1+t^2 y_2+\cdots+t^s y_s),
\]
where $y_j \in V_j$ ({$y_j=0$, if $V_j = \{0\}$}), and $\exp(y_1+\ldots+y_s)$ is a generic element of $G$, {due to \eqref{eq-stratfied-decomp}}. 
Since $G$ is nilpotent, exponential map is isomorphic between Lie algebra and Lie group, thus every element in $G$ can be expressed in the form $\exp(y_1+\ldots+y_s)$ .
Given a measurable function $f$ on $G,$ we write
\begin{equation}\label{def-op-delta-lambda}
\delta_t f(x) := f(\delta_{t}x),\quad x \in G.
\end{equation}

The space $\Sc(G)$ of Schwartz class functions on $G$ is defined via the identification of $G$ with $\R^d$, and $\Sc(G)$ is the usual Schwartz space $\Sc(\R^d)$ with its canonical Fr\'echet topology. The space of tempered distributions $\Sc'(G)$ is the topological dual of $\Sc(G)$.
We define the dilation of a distribution $\omega$ on $G$ by
\[
(\delta_{t}\omega,f) = t^{-d_{\hom}}(\omega,\delta_{t^{-1}}f),
\]
which is consistent with the definition of $\delta_{t}$ on functions if we identify functions with distributions in the canonical way. A distribution $\omega\in \Sc'(G)$ is called homogeneous of degree $\alpha$ if 
\[
\delta_{t}\omega = t^{\alpha}\omega,\quad t>0.
\]

In this paper, we fix a choice of a set of preferred generators.
\begin{definition}\label{def-generator-basis}
    For a graded Lie algebra $\gf,$ we say that a set $\{X_j\}_{j=1}^{n'}$,$X_i \in \gf$
    is a \emph{set of preferred generators} if
    \begin{enumerate}[{\rm (i)}]
        \item\label{enu-generator-basis-1} $\{X_j\}_{j=1}^{n'}$ are linearly independent,
        \item\label{enu-generator-basis-2} $\{X_j\}_{j=1}^{n'}$ generates $\gf$,
        \item\label{enu-generator-basis-3} Each $X_j$ is homogeneous, that is, $X_j \in V_{v_j}$, for some $v_j \geqslant 1$. 
    \end{enumerate}
    We call $v_j$ the degree of $X_j$, denoted by $\deg(X_j) = v_j$. We also denote the least common multiple $v = \lcm (\{v_j\}_{j=1}^{n'})$.
\end{definition}
\begin{remark}
    By \cite[Lemma 2.2]{terElstRobinson97}, there always exists a preferred generating set for any graded Lie algebra.
\end{remark}

\subsection{Sobolev Spaces}\label{subsec-3.1}

In this Section, we define Sobolev spaces on graded Lie groups, and exhibits some essential properties. For further details, see \cite[Chapter 4]{FischerRuzhansky2016} or \cite{Folland1975} for the special case when $\gf$ is stratified. 

As in Definition \ref{def-generator-basis}, a set of preferred generators $\{X_j\}_{j=1}^{n'}$ is fixed throughout the paper, $\{v_j\}_{j=1}^{n'}$ are their degrees, and $v = \lcm(\{v_j\}_{j=1}^{n'})$. Recall \eqref{eq-def-new-Laplacian}, 
    \begin{equation*}
        \Delta_{G} := - \sum_{j=1}^{n'} (-1)^{\frac{v}{v_j}}X_j^{\frac{2v}{v_j}}.
    \end{equation*}
    This operator is Rockland, that is, for every non-trivial unitary irreducible representation $\pi$ of $G$, $\pi(\Delta)$ is injective on smooth vectors \cite[Definition 4.1.1. and Corollary 4.1.10.]{FischerRuzhansky2016}. The operator $-\Delta_G$ is positive definite and self-adjoint on $L_2(G)$ \cite[Proposition 4.1.15.]{FischerRuzhansky2016}.
    Knowing the self-adjointness of $\Delta_G,$ we define a scale of Sobolev spaces $\{W^s_2(G)\}_{s\in \mathbb{R}}$ with the norms
    \begin{equation}\label{eq-sobolev-norm}
        \|u\|_{W^s_2(G)} := \|(1-\Delta_G)^{\frac{s}{2v}}u\|_{L_2(G)}, \quad u \in C_c^{\infty}(G).
    \end{equation}
    Here, the power $(1-\Delta_G)^{\frac{s}{2v}}$ is understood in the sense of spectral theory.
    The Sobolev space is defined as the closure of $C^\infty_c(G)$ in $\Sc'(G)$ with the norm $\|\cdot\|_{W^s_2(G)}.$
    
    We will mostly abbreviate $\Delta_G$ as $\Delta.$

We list a few useful properties of these Sobolev spaces:
\begin{enumerate}[{\rm (i)}]
    \item Sobolev norms $W^s_2(G)$ are independent of the choice of preferred generating set $\{X_j\}_{j=1}^{n'}$ \cite[Theorem 4.4.20]{FischerRuzhansky2016}.
    \item The Schwartz space $\Sc(G)$ is dense in $W^s_2(G)$ for all $s\in\mathbb{R}$ \cite[Lemma 4.4.1]{FischerRuzhansky2016}.
    \item We have $W^{s_1}_2(G) \subseteq W^{s_2}_2(G)$ for $s_1 \geqslant s_2 \in \R$ \cite[Theorem 4.4.3]{FischerRuzhansky2016}.
    \item For all $s_0,s_1\in \mathbb{R}$ and $0 < \theta < 1$ we have (up to equivalence of norms)
        \begin{equation}\label{interpolation}
            (W^{s_0}_2(G),W^{s_1}_2(G))_{\theta} = W^{s_\theta}_2(G),\quad s_\theta=(1-\theta)s_0+\theta s_1,
        \end{equation}
        where $(\cdot,\cdot)_{\theta}$ is the functor of complex interpolation. 
        In particular,
        \begin{equation}\label{eq-interpolation-norm}
            \|u\|_{W^{s_\theta}_2(G)} \leqslant \|u\|_{W^{s_0}_2(G)}^{1-\theta}\|u\|_{W^{s_1}_2(G)}^{\theta},\quad u \in W^{\max\{s_0,s_1\}}_2(G).
        \end{equation}
        See \cite[Theorem 4.4.28]{FischerRuzhansky2016}.
    \item Recall that $v$ is the least common multiple of $\{v_j\}_{j=1}^{n'}$, the degrees of preferred generators. For $s \in 2v\cdot \Z_{+}$, the Sobolev norm $\Vert \cdot \Vert_{W^{s}_2(G)}$ is equivalent to the following norm:
        \begin{equation}\label{integer sobolev norm}
            u \mapsto \big(\sum_{\len(\alpha)\leqslant s}\|X^{\alpha}u\|_{L_2(G)}^2\big)^{\frac12},
            \quad u \in W^s_2(G).
        \end{equation}
        For $s \in 2v\cdot\Z^+$, we have
        \begin{equation}\label{eq-Sobolev-alt-def}
            W^s_2(G) = \left\{u\in L_2(G) \;:\; \ X^{\alpha}u\in L_2(G), \ \len(\alpha)\leqslant s \right\}.
        \end{equation}
        The equivalence of \eqref{eq-sobolev-norm} and \eqref{integer sobolev norm} was originally proved by Helffer and Nourrigat  \cite[Estimate (6.1)]{HelfferNourrigat1979}, and restated in \cite[Corollary 4.1.14.]{FischerRuzhansky2016}.
    \item For all $s \in 2v \cdot \mathbb{Z}_+,$ $W^{-s}_2(G)$ coincides with the Banach dual of $W^s_2(G).$ Concretely, $W^{-s}_2(G)$ is identified with the space of distributions $u\in \Sc'(G)$ such that there is a constant $C$ such that for all $\phi \in \Sc(G)$ we have
    \[
        |(u,\phi)|\leqslant C\|\phi\|_{W^s_2(G)}.
    \]
    The least constant $C$ is a norm equivalent to $\|\cdot\|_{W^{-s}_2(G)}.$ 
    For a stratified Lie group $G$, this follows from \cite[Theorem 3.15(v)]{Folland1975}, in particular see the Remark below the proof of \cite[Proposition 4.1]{Folland1975}.
\end{enumerate}


It is immediate from the above definitions that the order of a differential operator in $\Uc(\gf)$ coincides with its order as a mapping between Sobolev spaces.
\begin{lemma}\label{lemma-X_alpha-bdd}
For every $s \in \mathbb{R},$ an element $D \in \Uc_m(\gf)$ extends by continuity to a bounded linear map
\begin{equation*}
D:W^s_2(G)\to W^{s-m}_2(G).
\end{equation*}
\end{lemma}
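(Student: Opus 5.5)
It suffices to show that for every $s\in\mathbb{R}$ there is a constant $C$ with
\[
\|Du\|_{W^{s-m}_2(G)}\leqslant C\|u\|_{W^s_2(G)},\quad u\in \Sc(G);
\]
density of $\Sc(G)$ in $W^s_2(G)$ (property (ii)) then gives the extension. By linearity, and since $\Uc_m(\gf)$ is spanned by monomials $X^{\alpha}$ with $\len(\alpha)\leqslant m$, it is enough to treat $D=X^{\alpha}$. Writing $X^\alpha$ as a composition of the $X_j$, it is in turn enough to show that each single generator $X_j$ maps $W^s_2(G)\to W^{s-v_j}_2(G)$ boundedly for every $s$. A lower-order monomial ($\len(\alpha)<m$) then also lands in $W^{s-m}_2(G)$ by the inclusion (iii).

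The core estimate is that the operator $X_j(1-\Delta)^{-\frac{v_j}{2v}}$ is bounded on $L_2(G)$ and commutes with $(1-\Delta)^{\frac{r}{2v}}$ for every $r$. Commutation is the key point. $\Delta$ is built from the left-invariant-translation generators $X_k$, so all these operators lie in the algebra generated by left translations (the right convolution operators / $\VN(G)$ acting appropriately). Both $X_j$ and the spectral functions of $\Delta$ are affiliated with the same von Neumann algebra, but they need not commute with each other. Hence I will \emph{not} use commutation. Instead I argue by duality and interpolation:
\begin{enumerate}[{\rm (a)}]
\item For $s\in 2v\cdot\mathbb{Z}_+$ with $s\geqslant v_j$, boundedness $X_j:W^s_2\to W^{s-v_j}_2$ follows from the equivalent norm \eqref{integer sobolev norm}, as long as $s-v_j$ is also a multiple of $2v$. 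In general it follows by interpolating: $X_j:W^{s}_2\to W^{s-2v}_2$ and $X_j:W^{s}_2\to W^{s}_2$ fail to be available, so more carefully I would use that $\|X^\beta X_j u\|_{L_2}\leqslant \|u\|_{W^{s}_2}$-type bounds hold for $\len(\beta)\leqslant s-v_j$, and that \eqref{integer sobolev norm}-type characterisations extend to all $s\in v_{\min}\mathbb{Z}_+$ (Fischer--Ruzhansky, Theorem 4.4.16).
\item For negative integer levels $-s$, use the duality (vi): $(X_ju,\phi)=-(u,X_j\phi)$. Combined with (a) at level $s$, this gives $X_j:W^{-s+v_j}_2\to W^{-s}_2$.
\item Fill in all real $s$ by complex interpolation \eqref{interpolation} between neighbouring integer levels. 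This uses that the target spaces interpolate with the same parameter $\theta$ because the shift by $v_j$ is fixed.
\end{enumerate}

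The main obstacle is step (a) at non-aligned levels. The clean equivalence \eqref{integer sobolev norm} is only stated for multiples of $2v$, whereas $X_j$ shifts the order by $v_j$. I would handle this by proving $X_j:W^{2vk}_2\to W^{2vk-v_j}_2$ through interpolation. Using $W^{2vk}_2\to W^{2vk-2v}_2$ for the composite $X_j^{2v/v_j}$ is not directly helpful. Instead, I would cite the Fischer--Ruzhansky result that $(1-\Delta)^{a}$ composed with left-invariant operators of homogeneous degree $\nu$ is bounded as an order-$\nu$ map: any homogeneous left-invariant differential operator of degree $\nu$ is bounded $W^{s}_2\to W^{s-\nu}_2$ for all $s$ (their Theorem 4.4.16). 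That result is exactly this lemma for monomials, and the sum over monomials finishes the proof.
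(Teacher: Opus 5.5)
Stripped to what actually carries weight, your proposal reduces to a single generator $X_j$ and then quotes the Fischer--Ruzhansky theorem. The reduction uses composition, the inclusion $W^{s-\len(\alpha)}_2(G)\subseteq W^{s-m}_2(G)$ and density of $\Sc(G)$. The quoted theorem says that a homogeneous invariant differential operator of degree $\nu$ is bounded $W^s_2(G)\to W^{s-\nu}_2(G)$ for every $s$. That argument is correct, and it is in substance what the paper does. The paper gives no proof beyond calling the lemma immediate, and it rests on the same Fischer--Ruzhansky Sobolev theory. Since the $X_j$ here are right-invariant (they generate left translations), the left-invariant statement is transported by $u\mapsto u(\cdot^{-1})$.

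You should delete the self-contained route (a)--(c), because its load-bearing step (a) fails.

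\begin{itemize}
\item The ``aligned'' case is empty. Since $v_j$ divides $v=\lcm(\{v_k\})$, we have $0<v_j\le v<2v$, so $s$ and $s-v_j$ can never both lie in $2v\mathbb{Z}$.
\item The proposed repair is false: \eqref{integer sobolev norm} does not extend to all $s\in v_{\min}\mathbb{Z}_+$. Take $G=\mathbb{R}^2$ abelian with $X_1=\partial_1\in V_1$ and $X_2=\partial_2\in V_2$. Then $v=2$, and $-\Delta=\partial_1^4-\partial_2^2$ has symbol $\xi_1^4+\xi_2^2$, so $\|u\|_{W^1_2}\asymp\|(1+|\xi_1|+|\xi_2|^{1/2})\hat u\|_{L_2}$. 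The only words with $\len(\alpha)\le 1$ are the empty word and $X_1$, and $\|u\|_{L_2}+\|X_1u\|_{L_2}$ does not control $|\xi_2|^{1/2}$.
\item There is no cheap substitute. Interpolating and dualising the aligned-level facts \eqref{integer sobolev norm}, \eqref{interpolation} yields only the rounded-up order $2v\lceil m/(2v)\rceil$, not $m$.
\item You were right to drop the commutation claim. Commutator arguments do not lower order here either, since $[\Delta,X_j]$ is homogeneous of the full degree $2v+v_j$.
\item What is elementary is only a small window. From $(-X_j^2)^{v/v_j}\le-\Delta$ and the L\"owner--Heinz inequality you get $\|X_ju\|_{L_2}\le\|u\|_{W^{v_j}_2}$. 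Duality and interpolation then give $X_j:W^s_2\to W^{s-v_j}_2$ for $0\le s\le v_j$, but not beyond.
\end{itemize}

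The sharp order at non-aligned levels is exactly the non-trivial content of the Fischer--Ruzhansky theorem. So the proof should consist of the reduction plus that citation, and nothing else.
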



\begin{definition}\label{C_infty_definition}
Let $C_b(G)$ denote the space of bounded continuous functions on $G.$
For $k\geqslant 0,$ let $C^k_b(G)$ denote the space of $f \in C_b(G)$ such that for all words $\alpha$ with $\len(\alpha)\leqslant k$ we have
\[
X^{\alpha}f \in C_b(G).
\]
Define $\|f\|_{k,b} = \sup_{\len(\alpha) \leqslant k} \|X^{\alpha}f\|_{L_{\infty}(G)}$ and let $C^\infty_b(G) = \bigcap_{k\geqslant 0} C^k_b(G).$
\end{definition}

\begin{lemma}\label{multiplication_lemma}
If $\phi \in C^\infty_b(G),$ then the multiplier operator $M_{\phi}$ is bounded from $W^s_2(G)$ to $W^s_2(G)$ for all $s\in\mathbb{R}$ with norm no greater than $C_s \|\phi\|_{\lceil|s|\rceil,b}.$
\end{lemma}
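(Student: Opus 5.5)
The plan is to prove the bound first for integer orders $s=k\geqslant 0$, then extend it to all $s\geqslant0$ by interpolation and to $s<0$ by duality. The tools are the Leibniz rule for the preferred generators $X_j$, which are derivations, together with a ``lifting'' description of the Sobolev norms. Throughout, $C$ denotes a constant depending only on $s$ and on the fixed generators.

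\emph{Step 1 (lifting norm).} From the spectral definition \eqref{eq-sobolev-norm}, the Rockland property of $\Delta$, and the results of \cite[Chapter 4]{FischerRuzhansky2016} behind \eqref{integer sobolev norm}, I would extract the following equivalence, valid for every real $s>0$:
\[
\|u\|_{W^s_2(G)}\asymp \|u\|_{W^{s-1}_2(G)}+\sum_{j=1}^{n'}\|X_ju\|_{W^{s-v_j}_2(G)}.
\]
The upper bound is Lemma \ref{lemma-X_alpha-bdd}. For the lower bound, I would write $(1-\Delta)$ as $1$ plus a sum of products $X_j^{2v/v_j}$. Each product contains at least one factor $X_j$, and the remaining factors have order $2v-v_j$. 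I then apply $(1-\Delta)^{\frac{s}{2v}-1}$ and use Lemma \ref{lemma-X_alpha-bdd}.

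\emph{Step 2 (integer orders, by induction on $k=\lceil s\rceil$).} The case $s=0$ is $\|\phi u\|_{L_2}\leqslant\|\phi\|_{L_\infty}\|u\|_{L_2}$. Assume the bound holds, for all $\phi$ and all orders $<s$, with the constant $\|\phi\|_{\lceil\cdot\rceil,b}$. Using Step 1 and $X_j(\phi u)=(X_j\phi)u+\phi X_ju$, I get
\[
\|\phi u\|_{W^s_2}\lesssim \|\phi u\|_{W^{s-1}_2}+\sum_j\big(\|(X_j\phi)u\|_{W^{s-v_j}_2}+\|\phi\,X_ju\|_{W^{s-v_j}_2}\big).
\]
The terms are handled as follows.
\begin{enumerate}[{\rm (i)}]
\item The first term is bounded by the induction hypothesis, giving $\|\phi\|_{\lceil s\rceil-1,b}\|u\|_{W^{s-1}_2}$.
\item The third term is bounded by the induction hypothesis at order $s-v_j$ together with Lemma \ref{lemma-X_alpha-bdd}. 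This gives $\|\phi\|_{\lceil |s-v_j|\rceil,b}\|u\|_{W^s_2}$.
\item For the second term, $X_j\phi$ has $\|X_j\phi\|_{k-v_j,b}\leqslant\|\phi\|_{k,b}$. The induction hypothesis at order $s-v_j$ then gives the correct count $\lceil s\rceil$ of derivatives.
\end{enumerate}
Whenever $s-v_j<0$, the order drops below zero. These negative orders are handled inside the same induction through the duality step below, applied at orders strictly below $s$ in absolute value.

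\emph{Step 3 (interpolation and duality).} For $s\in(k-1,k)$ I would interpolate using \eqref{interpolation} between $W^{k-1}_2$ and $W^k_2$, both bounded with constant $\|\phi\|_{k,b}$. For $s<0$, I use that $M_\phi^*=M_{\bar\phi}$ and $\|\bar\phi\|_{k,b}=\|\phi\|_{k,b}$. By duality of $W^{-s}_2$ and $W^{s}_2$ (which follows directly from \eqref{eq-sobolev-norm} via the $L_2$-pairing), the bound on $W^{s}_2$ follows from the bound on $W^{-s}_2=W^{|s|}_2$, with the same constant.

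\emph{Main obstacle.} The delicate point is the bookkeeping in Step 2 when some generator has degree $v_j>1$. There, $s-v_j$ can be negative, so the induction must run simultaneously over $[-K,K]$ with $K$ increasing. I also need to check that the number of derivatives of $\phi$ consumed never exceeds $\lceil|s|\rceil$. To do this, I would track the weighted length of the words hitting $\phi$: each Leibniz step moves one $X_j$ onto $\phi$ while lowering the Sobolev order by exactly $v_j$. Hence the total weighted length placed on $\phi$ is at most the total drop in order, which is at most $\lceil|s|\rceil$. This is the source of the constant $\|\phi\|_{\lceil|s|\rceil,b}$ in the statement.
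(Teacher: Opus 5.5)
The paper lists this lemma among the preliminaries without proof, so your argument has to stand on its own. Your Step 1 is correct, in fact for every real $s$. When every preferred generator has degree $v_j=1$, your Steps 2--3 are a complete proof: for integer $k\geqslant 1$ only the order $k-1\geqslant 0$ is invoked, and interpolation and duality finish.

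The gap is exactly the point you call the main obstacle, and the bookkeeping you propose there is wrong. Suppose some $v_j\geqslant 2$. Then already for $s=1$ the terms $\|(X_j\phi)u\|_{W^{1-v_j}_2}$ and $\|\phi X_ju\|_{W^{1-v_j}_2}$ sit at the negative order $1-v_j$. By duality, the bound there is the bound at order $v_j-1\geqslant 1$, so the induction over $[-K,K]$ is circular and never starts. The derivative count fails as well. Duality at order $s-v_j<0$ costs $\|\phi\|_{\lceil v_j-s\rceil,b}$ in (ii). In (iii) it costs $\|X_j\phi\|_{\lceil v_j-s\rceil,b}$, i.e.\ up to $\lceil v_j-s\rceil+v_j$ weighted derivatives of $\phi$. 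Your claim that the total drop in order is at most $\lceil|s|\rceil$ is false. A single Leibniz step lowers the order by $v_j$, which may exceed $s$, and duality then turns the resulting negative order into further derivatives on $\phi$.

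This cannot be patched: with $\|\cdot\|_{k,b}$ defined through the weighted length, the statement itself fails once some $v_j>1$. Take $G=\R$ graded by $\gf=V_3$, with the single preferred generator $X_1u=-u'$ of degree $v_1=v=3$.
\begin{itemize}
\item[(a)] Then $\Delta_G=X_1^2=\partial_x^2$, so $W^s_2(G)=H^{s/3}(\R)$ isometrically.
\item[(b)] On the other hand $\|\phi\|_{1,b}=\|\phi\|_{L_\infty}$, because no nonempty word has weighted length $\leqslant 1$.
\item[(c)] The lemma at $s=1$ would therefore give $\|M_\phi\|_{H^{1/3}\to H^{1/3}}\leqslant C\|\phi\|_{L_\infty}$.
\item[(d)] But for $\phi_n(x)=e^{inx}\in C^\infty_b(G)$, modulation shifts frequencies by $n$, so $\|M_{\phi_n}\|_{H^{1/3}\to H^{1/3}}\geqslant (1+n^2)^{1/6}\to\infty$.
\end{itemize}

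What does hold in the graded case is the bound with $\lceil|s|\rceil$ replaced by $N_s=2v\lceil |s|/(2v)\rceil$. This is all that the paper's applications (Lemma \ref{differential_operators_are_bounded} and Lemma \ref{specific_cwikel_estimates_from_jfa_lemma}) require. The proof runs as follows.
\begin{itemize}
\item[(1)] For $s\in 2v\Z_+$, use \eqref{integer sobolev norm}.
\item[(2)] Expand $X^\alpha(\phi u)$ as a sum of terms $(X^\beta\phi)(X^\gamma u)$ over order-preserving splittings of $\alpha$ into two subwords, so that $\len(\beta)+\len(\gamma)=\len(\alpha)\leqslant s$.
\item[(3)] Bound $\|X^\gamma u\|_{L_2(G)}\lesssim\|u\|_{W^s_2(G)}$ by Lemma \ref{lemma-X_alpha-bdd}.
\item[(4)] Interpolate between consecutive multiples of $2v$ via \eqref{interpolation}, and dualize as in your Step 3.
\end{itemize}
Your sharper count $\lceil|s|\rceil$ is what your argument actually establishes when all $v_j=1$.
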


\subsection{Covering Lemma}\label{subsec-3.2}

The following is proved in \cite{LMSZ25_elliptic}. The idea was used by Folland and Stein \cite[Lemma 7.14]{FollandStein1982}, see also \cite[Lemma 5.7.5]{FischerRuzhansky2016} and \cite[Lemma 6.2]{MSZ-stratified-23}.
\begin{lemma}{\cite[Lemma 3.11]{LMSZ25_elliptic}}\label{quasi_metric_space_construction}
{Let $(X,d)$ be a {separable} metric space, thus $X$ possesses a Borel measure $\mu$. If there exists a constant $\delta>0$ such that}
$$\mu(B(x,r))=r^{\delta},\quad x\in X,\quad r > 0.$$
{Then,} for every $\epsilon>0,$ there exists a set $\{x_i\}_{i\in I}\subset X$ such that
\begin{enumerate}[{\rm (i)}]
\item{} $\{B(x_i,\epsilon)\}_{i\in I}$ covers $X,$ and 
\item{} A fixed ball $B(x_i,\epsilon)$  {intersects} at most $5^{\delta}$ of balls $\{B(x_i,\epsilon)\}_{i\in I}.$
\item{} Moreover, for every $N\in\mathbb{N}$, a fixed ball $B(x_i, N\epsilon)$ {intersects} at most $(4N+1)^{\delta}$ of balls $\{B(x_i,N\epsilon)\}_{i\in I}.$
\end{enumerate}
\end{lemma}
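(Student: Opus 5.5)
The plan is to use a maximal $\epsilon$-separated set and a volume-counting argument. By Zorn's lemma, or by a greedy construction along a countable dense subset since $X$ is separable, choose a set $\{x_i\}_{i\in I}\subset X$ that is maximal subject to $d(x_i,x_j)\geqslant \epsilon$ for $i\neq j$.

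\emph{Covering, (i).} Maximality gives the covering directly. If some $x\in X$ lay outside every $B(x_i,\epsilon)$, then $d(x,x_i)\geqslant\epsilon$ for all $i$, and we could add $x$ to the set. This contradicts maximality.

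\emph{Disjointness.} Next, the balls $B(x_i,\epsilon/2)$ are pairwise disjoint. A common point $y$ would give $d(x_i,x_j)\leqslant d(x_i,y)+d(y,x_j)<\epsilon$, which contradicts separation.

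\emph{Counting for (iii), which contains (ii).} Fix $i$ and $N\geqslant 1$, and let $J$ be the set of indices $j$ with $B(x_j,N\epsilon)\cap B(x_i,N\epsilon)\neq\emptyset$. For $j\in J$ we have $d(x_i,x_j)<2N\epsilon$. Hence
\[
B(x_j,\epsilon/2)\subseteq B\bigl(x_i,2N\epsilon+\epsilon/2\bigr)\quad\text{for } j\in J.
\]
These small balls are disjoint and each has measure $(\epsilon/2)^{\delta}$. Comparing measures gives
\[
|J|\,(\epsilon/2)^{\delta}\leqslant \bigl((2N+\tfrac12)\epsilon\bigr)^{\delta}.
\]
Therefore $|J|\leqslant (4N+1)^{\delta}$, which proves (iii). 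Taking $N=1$ gives the bound $5^{\delta}$, which is (ii).

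The only step needing care is the measure comparison, since it relies on the hypothesis $\mu(B(x,r))=r^{\delta}$ holding exactly for every centre and radius. A related point is whether the balls are open or closed; I take open balls throughout so that the strict inequalities above are valid. A minor technical point is the existence of a maximal separated set. The separation argument already shows that $I$ is countable, because the disjoint balls $B(x_i,\epsilon/2)$ each contain a point of a countable dense subset. Zorn's lemma then applies without difficulty.
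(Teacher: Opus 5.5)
Your proof is correct and follows the standard argument behind the cited lemma. You take a maximal $\epsilon$-separated set (equivalently, a maximal disjoint family of balls $B(x_i,\epsilon/2)$), get the covering from maximality, and use the volume comparison $B(x_j,\epsilon/2)\subseteq B(x_i,(2N+\frac12)\epsilon)$, which gives exactly the constants $5^{\delta}$ and $(4N+1)^{\delta}$.

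One small correction concerns your parenthetical alternative of a greedy selection along a countable dense subset. That construction does not in general give a set that is maximal in $X$. For example, a point outside the dense subset at distance exactly $\epsilon$ from every chosen centre can be left uncovered by the open balls. So you should rely on Zorn's lemma, as you do at the end.
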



\subsection{Uniformly Rockland operators}\label{subsec-LMSZ25-result}

In this subsection, we recall some definitions and main theorems from \cite{LMSZ25_elliptic}.
We start with definition of formal adjoint and extension of a differential operator.
\begin{definition}\label{def-P-dagg-ext}
For differential operator $P$ as in Definition \ref{def-differential-op} with expression \eqref{eq-P-expression}, its formal adjoint is denoted by $P^\dagger$, with formula:
    \begin{equation}\label{eq-P-dagg-expre}
        P^\dagger = \sum_{\len(\alpha)\leqslant m} (X^{\alpha})^\dagger M_{\overline{a_{\alpha}}} :\Sc(G)\to \Sc(G) ,
    \end{equation}
    where if $\alpha=\alpha_1\cdots\alpha_k,$ we have $(X^{\alpha})^\dagger = (-1)^kX_{\alpha_k}\cdots X_{\alpha_1}.$ \\
    Let $\widetilde{P}:\Sc'(G)\to\Sc'(G)$ denote the extension of $P$ to distributions, defined on $\omega \in \Sc'(G)$ by
    \begin{equation}\label{eq-P-tilde-expre}
        \langle \widetilde{P}\omega,\phi\rangle = \langle \omega,P^\dagger\phi\rangle,\quad \phi\in \Sc(G).
    \end{equation}
\end{definition}

\begin{remark}
    Since the monomials $X^{\alpha}$ are not linearly independent, the expression \eqref{eq-P-expression} for $P$ might not be unique, so it is not immediate that $P^{\dagger}$ is well-defined by \eqref{eq-P-dagg-expre}. However $P^\dagger$ is related to $P$ by the adjoint relation
    \begin{equation*}
        \langle P\omega,\phi\rangle = \langle \omega,P^\dagger\phi\rangle,\qquad \omega, \phi\in \Sc(G).
    \end{equation*}
    Hence, as an operator $P^\dagger$ is uniquely determined by the operator $P.$
\end{remark}

We present several properties of differential operators. By combining Lemmas \ref{lemma-X_alpha-bdd} and \ref{multiplication_lemma}, we obtain the boundedness of differential operators between Sobolev spaces.
\begin{lemma}\label{differential_operators_are_bounded} 
The distributional extension $\widetilde{P}$ of a differential operator $P$ of order at most $m$ restricts to a bounded linear operator from $W^{m+s}_2(G)$ to $W^s_2(G)$ for every $s\in \mathbb{R}.$
\end{lemma}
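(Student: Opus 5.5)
The plan is to reduce to the two ingredients already available, Lemma \ref{lemma-X_alpha-bdd} and Lemma \ref{multiplication_lemma}, by treating each term $M_{a_\alpha}X^\alpha$ of \eqref{eq-P-expression} separately. First I will check that $\widetilde{P}$ acts on $W^{m+s}_2(G)$ in the natural way. Fix $\omega \in \Sc(G)$, which is dense in $W^{m+s}_2(G)$. Then $\widetilde{P}\omega = P\omega = \sum_{\len(\alpha)\leqslant m} a_\alpha X^\alpha\omega$, and the distributional extension is obtained term by term. Indeed, by the adjoint relation,
\[
\langle \widetilde{P}\omega,\phi\rangle = \sum_{\alpha}\langle X^\alpha \omega, \overline{a_\alpha}\phi\rangle ,
\]
and this identity extends by continuity once each term is shown to be bounded. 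So it suffices to bound each $M_{a_\alpha}X^\alpha$ on Schwartz functions in the appropriate norms and then extend by density.

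Now fix a word $\alpha$ with $\len(\alpha) = k \leqslant m$. Then $X^\alpha \in \Uc_k(\gf) \subseteq \Uc_m(\gf)$, and Lemma \ref{lemma-X_alpha-bdd} gives boundedness
\[
X^\alpha : W^{m+s}_2(G)\to W^{m+s-k}_2(G).
\]
By the embedding $W^{s_1}_2\subseteq W^{s_2}_2$ for $s_1\geqslant s_2$, this space sits inside $W^s_2(G)$. Alternatively one can apply Lemma \ref{lemma-X_alpha-bdd} directly with order $m$, if $\Uc_m$ denotes the elements of order at most $m$.

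Next, the coefficient $a_\alpha$ lies in $C^\infty_b(G)$ by the standing assumption in Definition \ref{def-differential-op}. By Lemma \ref{multiplication_lemma}, $M_{a_\alpha}$ is therefore bounded on $W^s_2(G)$ with norm at most $C_s\|a_\alpha\|_{\lceil |s|\rceil,b}$. Composing the two bounds and summing over the finitely many words gives
\[
\|\widetilde{P}\omega\|_{W^s_2}\leqslant C\|\omega\|_{W^{m+s}_2}, \qquad \omega\in\Sc(G).
\]

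It remains to identify the continuous extension with the restriction of $\widetilde{P}$. If $\omega_n\to\omega$ in $W^{m+s}_2(G)$, then $\omega_n\to\omega$ in $\Sc'(G)$. Since $\widetilde{P}$ is continuous on $\Sc'(G)$, being the transpose of $P^\dagger:\Sc(G)\to\Sc(G)$, the limit $\lim \widetilde{P}\omega_n$ in $W^s_2\subseteq \Sc'$ coincides with $\widetilde{P}\omega$. The only mildly delicate point is this consistency of distributional limits, together with the fact that $P^\dagger$ maps $\Sc(G)$ continuously into itself, which needs the coefficient bounds. Both are routine, so no step should present a real obstacle.
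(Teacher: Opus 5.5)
Your proposal is correct and follows the paper's route: the paper proves this simply by combining Lemma \ref{lemma-X_alpha-bdd}, applied to each $X^{\alpha}$, with Lemma \ref{multiplication_lemma}, applied to each $M_{a_{\alpha}}$, term by term. Your density argument identifying the continuous extension with the restriction of $\widetilde{P}$ (via continuity of $\widetilde{P}$ on $\Sc'(G)$) supplies a step the paper leaves implicit.
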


The following lemma shows two new operations $(\cdot)^{\dagger}$ (see, Definition \ref{def-P-dagg-ext}) and $(\cdot)^{\rm top}_g$ (see, Definition \ref{def-P-top-g}) commute.
\begin{lemma}{\cite[Lemma 4.7]{LMSZ25_elliptic}} \label{top dag commute}
$(P_g^{{\rm top}})^\dag = (P^{\dag})_g^{\rm top}$.
\end{lemma}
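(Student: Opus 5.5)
Lemma \ref{top dag commute} asserts $(P_g^{\rm top})^\dagger = (P^\dagger)_g^{\rm top}$. My plan is to compute both sides explicitly from a fixed representation \eqref{eq-P-expression} and compare them modulo lower-order terms.

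First, expand $P^\dagger$. By \eqref{eq-P-dagg-expre}, $P^\dagger = \sum_{\len(\alpha)\leqslant m}(X^\alpha)^\dagger M_{\overline{a_\alpha}}$. This is not yet in the normal form \eqref{eq-P-expression}, because the multiplication operators sit on the right. I will commute $M_{\overline{a_\alpha}}$ to the left using the Leibniz rule $X_j M_\phi = M_\phi X_j + M_{X_j\phi}$. Iterating this over the word $(X^\alpha)^\dagger = (-1)^k X_{\alpha_k}\cdots X_{\alpha_1}$ gives
\[
(X^\alpha)^\dagger M_{\overline{a_\alpha}} = M_{\overline{a_\alpha}}(X^\alpha)^\dagger + \sum_{\beta,\eta} c_{\beta,\eta} M_{X^\eta \overline{a_\alpha}} X^\beta .
\]
Here the remainder sum runs over subwords $\beta$ obtained by deleting a nonempty set of letters, and $\eta$ is the word formed by the deleted letters. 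Since each deleted letter has degree $\geqslant 1$, we get $\len(\beta) < \len(\alpha) \leqslant m$. The coefficients $X^\eta\overline{a_\alpha}$ are bounded together with all their derivatives, so $P^\dagger$ is again a differential operator of order at most $m$ in the sense of Definition \ref{def-differential-op}, with top-degree part $\sum_{\len(\alpha)=m} M_{\overline{a_\alpha}}(X^\alpha)^\dagger$. Expanding $(X^\alpha)^\dagger$ as $(-1)^k$ times the reversed word, which has the same $\len$, freezing coefficients at $g$ gives $(P^\dagger)_g^{\rm top} = \sum_{\len(\alpha)=m}\overline{a_\alpha(g)}(X^\alpha)^\dagger$.

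Second, compute the other side. $P_g^{\rm top} = \sum_{\len(\alpha)=m} a_\alpha(g)X^\alpha$ is a constant-coefficient operator, so its formal adjoint is simply $\sum \overline{a_\alpha(g)}(X^\alpha)^\dagger$, because constants commute with the $X_j$. This matches the expression from the first step.

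The main obstacle is well-definedness. Definition \ref{def-P-top-g} depends a priori on the chosen representation of $P$, and the representation of $P^\dagger$ used above is one particular choice. I would handle this by showing that the top part is intrinsic. Suppose $\sum_{\len(\alpha)\leqslant m} M_{b_\alpha}X^\alpha = 0$. Apply this identity to dilated, left-translated test functions $u(\delta_{t}(h^{-1}\cdot))$. Right-invariance of the $X_j$ together with homogeneity, namely $X^\alpha(\delta_t u) = t^{\len(\alpha)}\delta_t(X^\alpha u)$, lets us extract the leading term in $t$ as $t\to\infty$. Localizing near $g$ by continuity of the coefficients then yields $\sum_{\len(\alpha)=m} b_\alpha(g)X^\alpha = 0$ in $\Uc(\gf)$. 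Hence $P_g^{\rm top}$ is independent of the representation, and the computation in the first step is legitimate.
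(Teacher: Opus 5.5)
Your proof is correct and takes essentially the standard route. The paper gives no proof of its own and simply quotes \cite[Lemma 4.7]{LMSZ25_elliptic}. The content of that lemma is your Leibniz-rule computation: move $M_{\overline{a_\alpha}}$ to the left so that every remainder word has strictly smaller weighted length, then compare with the adjoint of the frozen constant-coefficient operator.

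Your extra paragraph on independence of the representation addresses a point this paper takes for granted, but it contains a slip. The $X_j$ are right-invariant (they generate left translations), so they commute with right translations, not left ones. If you test on $u(\delta_t(h^{-1}\,\cdot))$, you get $X_j\lambda(h)=\lambda(h)\,\mathrm{Ad}_{h^{-1}}X_j$. Under dilation, the components of $\mathrm{Ad}_{h^{-1}}X_j$ of degree larger than $v_j$ are multiplied by higher powers of $t$, which ruins the extraction of the leading term.

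Use instead $w(x)=u(\delta_t(xg^{-1}))$. Then $X^\alpha w(x)=t^{\len(\alpha)}(X^\alpha u)(\delta_t(xg^{-1}))$, and substituting $x=\delta_{t^{-1}}(y)g$ gives
\[
0=\sum_{\len(\alpha)\leqslant m} b_\alpha(\delta_{t^{-1}}(y)g)\,t^{\len(\alpha)-m}(X^\alpha u)(y).
\]
Letting $t\to\infty$ yields $\sum_{\len(\alpha)=m}b_\alpha(g)X^\alpha u=0$ for all $u$, as you intended.
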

 
Furthermore, the product of uniformly Rockland operators are also uniformly Rockland.
\begin{lemma}{\cite[Lemma 4.12]{LMSZ25_elliptic}} \label{ellipticity of the product}
Let $P, Q$ be defined as in Definition \ref{def-differential-op}.
If $P$ and $Q$ are both uniformly Rockland, then so is $PQ$.
\end{lemma}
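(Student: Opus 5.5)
The statement to prove is Lemma \ref{ellipticity of the product}: if $P$ and $Q$ are uniformly Rockland, then so is $PQ$. The plan is to identify the top part of $PQ$ and then chain the two Rockland estimates through the homogeneous Sobolev scale.

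\textbf{Step 1: the top part of $PQ$.} Let $P$ have order $m$ and $Q$ have order $m'$. Write
\[
PQ=\sum_{\alpha,\beta} M_{a_\alpha}X^\alpha M_{b_\beta}X^\beta .
\]
Commute each $X^\alpha$ past $M_{b_\beta}$ using the Leibniz rule. Each $X_j$ is a derivation, so moving it past $M_{b_\beta}$ either leaves $b_\beta$ untouched or produces a factor $M_{X_jb_\beta}$. In the second case one $X_j$ is lost from the monomial, and the weighted length drops by $v_j\geq 1$.

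Hence $PQ$ is a differential operator of order at most $m+m'$ with coefficients in $C^\infty_b(G)$. Its terms of weighted length exactly $m+m'$ are $M_{a_\alpha b_\beta}X^\alpha X^\beta$ with $\len(\alpha)=m$ and $\len(\beta)=m'$. Therefore
\[
(PQ)^{\rm top}_g=P^{\rm top}_g\,Q^{\rm top}_g .
\]
One should check this is well defined despite the non-uniqueness of the expression \eqref{eq-P-expression}. The cleanest way is to regard $(\cdot)^{\rm top}_g$ as the class in $\Uc_{m+m'}(\gf)/\Uc_{m+m'-1}(\gf)$, i.e.\ in the homogeneous component of degree $m+m'$, in the same way as is done in \cite{LMSZ25_elliptic}.

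\textbf{Step 2: chaining the estimates.} Fix $g$ and $u\in\Sc(G)$. Apply the estimate for $P$ to $w=Q^{\rm top}_g u\in\Sc(G)$, then try to commute the homogeneous power $(-\Delta)^{m/2v}$ with the constant-coefficient operator $Q^{\rm top}_g$:
\[
\|P^{\rm top}_gQ^{\rm top}_gu\|\geq c_P\|(-\Delta)^{\frac{m}{2v}}Q^{\rm top}_gu\|.
\]
These two operators do not commute, since $G$ is non-commutative. So I cannot simply move $(-\Delta)^{m/2v}$ across and then apply the estimate for $Q$ to $(-\Delta)^{m/2v}u$.

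The fix is to work in the homogeneous Sobolev norms $\|(-\Delta)^{s/2v}\cdot\|$ and show that $Q^{\rm top}_g$ is bounded below from $\dot W^{m+m'}$ to $\dot W^{m}$ uniformly in $g$. That is, I need
\[
\|(-\Delta)^{\frac{m}{2v}}Q^{\rm top}_g u\|\geq c\|(-\Delta)^{\frac{m+m'}{2v}}u\| .
\]
The equivalence of the norms $\|(-\Delta)^{s/2v}\cdot\|$ and $(\sum_{\len(\alpha)=s}\|X^\alpha\cdot\|^2)^{1/2}$ is known for homogeneous Rockland operators (Helffer--Nourrigat, Fischer--Ruzhansky). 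So it suffices to show that $Q^{\rm top}_g$ is uniformly Rockland in every homogeneous Sobolev scale.

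\textbf{Main obstacle: uniform lower bound at shifted regularity.} I would obtain this via the representation-theoretic characterisation of uniform Rocklandness \cite[Lemma 4.8]{LMSZ25_elliptic}, combined with a compactness argument.

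\begin{itemize}
\item The set $K=\{Q^{\rm top}_g\}_g$ lies in a bounded subset of the finite-dimensional space of homogeneous elements of degree $m'$, so its closure $\overline K$ is compact.
\item Uniform Rocklandness says the $L_2$ lower bound holds on all of $\overline K$ with the same constant. Each element of $\overline K$ is then Rockland, hence hypoelliptic with a homogeneous parametrix (Helffer--Nourrigat).
\item For a Rockland element, a Helffer--Nourrigat type argument shows that the lower bound at regularity $0$ transfers to every regularity $s$.
\item Finally, use continuity of the lower bound constant in the coefficients together with compactness of $\overline K$ to make the constant uniform in $g$.
\end{itemize}

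Combining this with Step 2 gives the estimate
\[
\|(PQ)^{\rm top}_gu\|\geq c_Pc\|(-\Delta)^{\frac{m+m'}{2v}}u\| ,
\]
uniformly in $g$, which is exactly the statement that $PQ$ is uniformly Rockland.
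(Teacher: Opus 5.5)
Your argument is correct, but it takes a longer route than necessary. The paper does not reprove this lemma; it imports it from \cite{LMSZ25_elliptic}, where uniform Rocklandness is tied to the Rockland condition through the representation-theoretic description \cite[Lemma 4.8]{LMSZ25_elliptic}. With that description, the difficulty you work around (moving $(-\Delta)^{\frac{m}{2v}}$ past $Q^{\rm top}_g$) never arises, because the Rockland condition is multiplicative. For $v\in H_\pi^\infty$ one has $\pi(R_1R_2)v=\pi(R_1)\pi(R_2)v$ with $\pi(R_2)v\in H_\pi^\infty$, so $R_1R_2$ is Rockland whenever $R_1$ and $R_2$ are. Your first two bullets, applied to both $P$ and $Q$, show that $\overline{\{P^{\rm top}_g\}_g}$ and $\overline{\{Q^{\rm top}_g\}_g}$ are compact sets of Rockland elements. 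Every element of $\overline{\{P^{\rm top}_gQ^{\rm top}_g\}_g}$ lies in their product, which is compact since multiplication is continuous on these finite-dimensional spaces. Each such element is therefore Rockland, and by \cite[Lemma 4.8]{LMSZ25_elliptic} it satisfies an $L_2$ lower bound of order $m+m'$ with some constant. Your openness-plus-compactness bullet, run at regularity $0$, then makes that constant uniform. This route uses only order-zero information about single homogeneous elements. Your route instead yields, as a by-product, uniform shifted estimates $\|(-\Delta)^{\frac{s}{2v}}Q^{\rm top}_gu\|_{L_2(G)}\gtrsim\|(-\Delta)^{\frac{s+m'}{2v}}u\|_{L_2(G)}$, which are useful in their own right. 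The cost is your third bullet, the only genuinely nontrivial step, which you assert rather than prove. It can be filled in as follows. If $R$ is Rockland and homogeneous of degree $m'$, then $S=R^\dagger R$ is a positive Rockland element of degree $2m'$. Homogeneous Sobolev norms do not depend on the choice of positive Rockland operator \cite{FischerRuzhansky2016}. Writing $\|u\|_{\dot W^s}=\|(-\Delta)^{\frac{s}{2v}}u\|_{L_2(G)}$, this gives
\[
\|u\|_{\dot W^{s+m'}}\asymp\|S^{\frac{s+m'}{2m'}}u\|_{L_2(G)}=\|S^{\frac{s-m'}{2m'}}R^\dagger Ru\|_{L_2(G)}\asymp\|R^\dagger Ru\|_{\dot W^{s-m'}}\lesssim\|Ru\|_{\dot W^{s}},
\]
where the last step holds because $R^\dagger$ is homogeneous of degree $m'$ and hence bounded from $\dot W^{s}$ to $\dot W^{s-m'}$. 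With this inserted, your proof is complete.
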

Now, we state the main theorems in \cite{LMSZ25_elliptic}.
\begin{theorem}{\cite[Theorem 1.12]{LMSZ25_elliptic}}\label{elliptic regularity theorem}
Let $P=P^{\dagger}$ be an uniformly Rockland order $m$ differential operator on $G.$
\begin{enumerate}[{\rm (i)}]
\item\label{erta} (Elliptic regularity) If $u\in L_2(G)$ is such that $\widetilde{P}u\in L_2(G),$ then $u\in W^m_2(G).$
\item\label{ertb} (Self-adjointness) {$\widetilde{P}$ is a self-adjoint operator on $L_2(G)$ with domain $W^m_2(G).$}
\end{enumerate}
\end{theorem}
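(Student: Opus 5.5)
The plan is to reduce both assertions to a global a priori (G\aa rding-type) estimate
\[
\|u\|_{W^{s+m}_2(G)}\leqslant C_s\big(\|\widetilde{P}u\|_{W^s_2(G)}+\|u\|_{W^{s+m-1}_2(G)}\big),\qquad u\in W^{s+m}_2(G),\ s\in\mathbb{R},
\]
and then run a Friedrichs mollifier bootstrap. Self-adjointness will then follow by identifying the adjoint domain.

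\textbf{Step 1: local estimate with frozen coefficients.} Fix $g_0\in G$ and let $u$ be supported in a quasi-ball $B(g_0,\epsilon)$. The uniform Rockland inequality of Definition \ref{definition of ellipticity}, applied to $P^{\rm top}_{g_0}$, gives $\|u\|_{W^m_2}\lesssim c_P^{-1}\|P^{\rm top}_{g_0}u\|_{L_2}+\|u\|_{L_2}$. Write
\[
P=P^{\rm top}_{g_0}+\sum_{\len(\alpha)=m}M_{a_\alpha-a_\alpha(g_0)}X^\alpha+(\text{order}\leqslant m-1).
\]
Since the coefficients lie in $C^\infty_b(G)$, on the ball we have $|a_\alpha-a_\alpha(g_0)|\lesssim \epsilon^{\kappa}$ for some $\kappa>0$, uniformly in $g_0$. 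The middle term is therefore bounded by a small multiple of $\|u\|_{W^m_2}$ and can be absorbed. The lower-order terms are controlled by $\|u\|_{W^{m-1}_2}$ via Lemma \ref{differential_operators_are_bounded}.

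\textbf{Step 2: globalisation.} Take the cover of Lemma \ref{quasi_metric_space_construction}; the hypothesis $\mu(B(x,r))=r^{d_{\hom}}$ holds for a homogeneous quasi-norm, after normalisation. Choose a partition $\{\psi_i\}$ of $1$ subordinate to it with $\sum_i\psi_i^2=1$ and $C^k_b$ bounds uniform in $i$; these can be obtained by translating and dilating a fixed bump. Apply Step 1 to each $\psi_iu$. The commutators $[P,M_{\psi_i}]$ have order $m-1$ with uniformly bounded coefficients. The bounded overlap $5^{d_{\hom}}$ then lets us sum the squares, which gives the estimate at $s=0$, using the norm \eqref{integer sobolev norm}. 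For general $s$, repeat the argument with $(1-\Delta)^{s/2v}$ applied after commuting it through, or equivalently apply the estimate to $PQ$ with $Q$ a power of $1-\Delta$. Lemma \ref{ellipticity of the product} is the tool here, combined with interpolation \eqref{interpolation} and duality. This uniform freezing is the main obstacle: the constants must not depend on $g_0$, and noncommutativity forces the use of quasi-balls and right translations, with the lower-order errors absorbed through \eqref{eq-interpolation-norm}.

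\textbf{Step 3: regularity (i).} Let $J_\delta u=u*\phi_\delta$ be right convolution with a dilated approximate identity. It commutes with every $X_j$, since the $X_j$ generate left translations. Hence $[\widetilde P,J_\delta]$ only involves differences of the coefficients $a_\alpha$, and a Friedrichs-type argument shows it is bounded $W^{t+m-1}_2\to W^{t}_2$ uniformly in $\delta$. Now suppose $u\in W^{k}_2$ with $k<m$ and $\widetilde P u\in L_2$. Apply the a priori estimate with $s=k+1-m$ to $J_\delta u$; this gives a $\delta$-uniform bound on $\|J_\delta u\|_{W^{k+1}_2}$. Weak compactness then yields $u\in W^{k+1}_2$. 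Iterating from $k=0$ gives $u\in W^m_2$.

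\textbf{Step 4: self-adjointness (ii).} Since $P=P^\dagger$, the restriction $\widetilde P|_{W^m_2}$ is symmetric by density of $\Sc(G)$ in $W^m_2$. It is closed, by the estimate with $s=0$ together with the boundedness in Lemma \ref{differential_operators_are_bounded}. Its adjoint has domain $\{u\in L_2:\widetilde{P^\dagger}u=\widetilde Pu\in L_2\}$, which equals $W^m_2(G)$ by (i). Hence the operator equals its adjoint.
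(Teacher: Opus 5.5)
The paper does not prove this statement; it quotes it from \cite{LMSZ25_elliptic}. Your Steps 1--2 follow the local-to-global strategy of that work, and Step 4 is correct once (i) is known. The genuine gap is Step 3.

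The operator $J_\delta u=u*\phi_\delta$ commutes with the $X_j$ precisely because it lies in the commutant $\mathrm{VN}(G)'=\rho(G)''$. For the same reason it commutes with $(1-\Delta)^{s/2v}$. It is therefore bounded on every $W^s_2(G)$, but it gains no regularity in this paper's Sobolev scale, which is built from the right-invariant fields $X_j$. Concretely, $X^\alpha(u*\phi_\delta)=(X^\alpha u)*\phi_\delta$, and only left-invariant derivatives can be moved onto $\phi_\delta$. Indeed, for nonabelian $G$ the operator $(1-\Delta)^{\frac{1}{2v}}J_\delta$ is unbounded on $L_2(G)$. In the Plancherel picture the two factors act on different tensor factors of $H_\pi\otimes\overline{H_\pi}$, and $\pi(1-\Delta)$ is unbounded on the infinite-dimensional $H_\pi$, which carry full Plancherel measure.

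Hence $J_\delta u$ is in general not in $W^{k+1}_2(G)$. Your a priori estimate, valid only on $W^{s+m}_2(G)$, cannot be applied to it, and the ``$\delta$-uniform bound'' bounds a quantity that may be infinite.

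The Friedrichs bound for $[\widetilde P,J_\delta]$ also breaks down. We have $[M_a,J_\delta]u(g)=\int_G(a(g)-a(gh^{-1}))u(gh^{-1})\phi_\delta(h)\,dh$. Membership $a\in C^\infty_b(G)$ controls $a$ only along left translations, whereas $gh^{-1}=(gh^{-1}g^{-1})g$ and $gh^{-1}g^{-1}$ is not small uniformly in $g$.

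Switching to left convolution $\lambda(\phi_\delta)u=\phi_\delta*u$ does smooth, but it destroys the commutation with the $X_j$. By homogeneity $[X_j,\lambda(\phi_\delta)]=\delta^{-v_j}\alpha_\delta([X_j,\lambda(\phi)])$, with $\alpha_\delta$ as in \eqref{conjugation_action}, and this is in general not of order $v_j-1$ uniformly in $\delta$. The Euclidean Friedrichs scheme needs convolution both to commute with derivatives and to smooth; on a nonabelian group these two properties are split between right and left convolution.

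The way around this is to avoid mollifiers and argue by invertibility and duality.
\begin{itemize}
\item[(a)] What Steps 1--2 must deliver is the estimate $\|u\|_{W^{s+m}_2}\leqslant C_s\|(P\pm ic)u\|_{W^s_2}$ for $u\in\Sc(G)$, every $s\in\R$, and a fixed $c$ with $|c|$ large (Theorem \ref{gee_introduction}).
\item[(b)] The shift $ic$ is needed because on non-compact $G$ the lower-order term in your estimate cannot be removed by compactness. At $s=0$ the estimate follows from yours, the identity $\|(P+ic)u\|_{L_2}^2=\|Pu\|_{L_2}^2+c^2\|u\|_{L_2}^2$, and \eqref{eq-interpolation-norm}.
\item[(c)] The cases $s<0$ must be proved directly, by running your local-to-global argument at index $s$. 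Duality from the cases $s\geqslant 0$ only yields surjectivity of $\widetilde P\mp ic$ at negative indices, not the injectivity needed below.
\item[(d)] By density and Lemma \ref{differential_operators_are_bounded}, the estimate extends to $W^{s+m}_2(G)$. So $\widetilde P+ic:W^{s+m}_2\to W^s_2$ is injective with closed range.
\item[(e)] An element $w\in W^{-s}_2=(W^s_2)^*$ annihilating the range satisfies $(\widetilde P-ic)w=0$, because $P=P^\dagger$. The estimate at index $-s-m$ then gives $w=0$. Thus $\widetilde P+ic:W^{s+m}_2(G)\to W^s_2(G)$ is an isomorphism for every $s$.
\item[(f)] Now let $u\in L_2$ with $\widetilde Pu\in L_2$, and pick $w\in W^m_2$ with $(\widetilde P+ic)w=(\widetilde P+ic)u$. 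Then $u-w\in W^0_2(G)$ is annihilated by $\widetilde P+ic$, and injectivity in the case $s=-m$ gives $u=w\in W^m_2(G)$.
\end{itemize}
This proves (i), and your Step 4 then yields (ii).
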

Further, we have a quantitative result.
\begin{theorem}{\cite[Theorem 1.13]{LMSZ25_elliptic}}\label{gee_introduction}
    Let $P=P^\dagger$ be an  uniformly Rockland  order $m$ differential operator on $G.$ Then for every $s \in \mathbb{R},$ there exists $c_{P,s}>0$ such that for all real $c$ with $|c|$ sufficiently large, we have
    \[
        \|u\|_{W^{s+m}_2(G)} \leqslant c_{P,s}\|(P+ic)u\|_{W^s_2(G)},\quad u \in \Sc(G).
    \]
\end{theorem}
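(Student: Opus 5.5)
The plan is to first prove the case $s=0$ in a parameter-dependent form, then move to general $s$ by commuting $P$ past powers of $1-\Delta$, using duality for negative $s$.

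\emph{Step 1: a priori estimate at $s=0$.} I would prove the G\aa rding-type bound
\[
\|u\|_{W^m_2(G)}\leqslant C\big(\|Pu\|_{L_2(G)}+\|u\|_{L_2(G)}\big),\quad u\in \Sc(G).
\]
Use Lemma \ref{quasi_metric_space_construction} with a small radius $\epsilon$ to get a covering by balls $B(x_i,\epsilon)$ of bounded overlap. Take a subordinate partition $\{\phi_i\}$ with $\sum_i\phi_i^2=1$ and $\sup_i\|\phi_i\|_{k,b}$ bounded for each fixed $k$. On each ball, freeze the coefficients of the top part at $x_i$ and write
\[
P\phi_i u=P^{\rm top}_{x_i}\phi_i u+(P^{\rm top}-P^{\rm top}_{x_i})\phi_i u+(\text{lower order})\phi_i u.
\]
The uniform Rockland condition controls $\|(-\Delta)^{m/2v}\phi_i u\|$ by $c_P^{-1}\|P^{\rm top}_{x_i}\phi_iu\|$, uniformly in $i$. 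The frozen-coefficient error is $O(\epsilon)\|\phi_i u\|_{W^m_2}$ up to lower-order terms, since the coefficients have bounded derivatives. Commutators $[P,M_{\phi_i}]$ have order $m-1$ and are handled by Lemmas \ref{differential_operators_are_bounded} and \ref{multiplication_lemma}. Summing over $i$ with bounded overlap, absorbing the $O(\epsilon)$ term, and controlling the order-$(m-1)$ terms by interpolation \eqref{eq-interpolation-norm} ($\|u\|_{W^{m-1}_2}\leqslant\delta\|u\|_{W^m_2}+C_\delta\|u\|_{L_2}$) gives the estimate.

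\emph{Step 2: the parameter.} Since $P=P^\dagger$ and $c$ is real, the cross terms cancel:
\[
\|(P+ic)u\|_{L_2}^2=\|Pu\|_{L_2}^2+c^2\|u\|_{L_2}^2 .
\]
So both $\|Pu\|$ and $|c|\|u\|$ are bounded by $\|(P+ic)u\|$. Combined with Step 1 this gives the stronger statement
\[
\|u\|_{W^m_2}+|c|\,\|u\|_{L_2}\leqslant C\|(P+ic)u\|_{L_2}\qquad\text{for } |c|\geqslant 1 .
\]

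\emph{Step 3: general $s$.} I would prove by induction the analogous estimate
\[
\|u\|_{W^{s+m}_2}+|c|\,\|u\|_{W^s_2}\leqslant C_s\|(P+ic)u\|_{W^s_2}\qquad\text{for } |c|\geqslant c_0(s).
\]
Let $\Lambda^s=(1-\Delta)^{s/2v}$ and apply the $s=0$ case to $\Lambda^s u$. The error is the commutator $[P,\Lambda^s]$, which should map $W^{s+m-1}_2\to L_2$. Its contribution $\|u\|_{W^{s+m-1}_2}$ is then absorbed by interpolation between $W^{s+m}_2$ and $W^s_2$: it is at most $\delta\|u\|_{W^{s+m}_2}+C_\delta\|u\|_{W^s_2}$, and the last term is absorbed into the left side once $|c|$ is large (this is where "$|c|$ sufficiently large" enters).

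For $s\in 2v\mathbb{Z}_+$ the commutator is a differential operator, so it is explicit by \eqref{integer sobolev norm}. For general $s\geqslant0$ I would either use a commutator estimate for $[M_a,\Lambda^s]$ of the kind developed in Appendix \ref{psido_commutator_appendix}, or interpolate between integer levels via \eqref{interpolation}; the latter needs care since the estimate involves the inverse of $P+ic$. For $s<0$, use duality: $W^{-s}_2$ is dual to $W^{s}_2$, and $(P+ic)^*=P-ic$ is covered by the positive case.

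\emph{Main obstacle.} I expect the main difficulty to be the uniformity of the freezing argument over infinitely many balls in Step 1. The remaining difficulty is the non-integer commutator bound in Step 3, which I would try first to avoid by interpolation.
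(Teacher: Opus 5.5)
Steps 1 and 2 are sound. The paper quotes this theorem from \cite{LMSZ25_elliptic} without proof; your Step 1 matches the covering-and-freezing approach the introduction attributes to that work, and Step 2 correctly yields the case $s=0$ with a constant independent of $c$.

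\textbf{The gap is in Step 3.} You claim that $[P,\Lambda^s]$ maps $W^{s+m-1}_2(G)$ to $L_2(G)$, and your plan to handle non-integer $s$ appeals only to commutators $[M_a,\Lambda^s]$. This implicitly treats $[X^\alpha,\Lambda^s]$ as zero, which is a Euclidean habit. It fails on a non-abelian graded group, already for constant-coefficient $P$. Write
\[
[P,\Lambda^s]=\sum_{\alpha}[M_{a_\alpha},\Lambda^s]X^{\alpha}+\sum_{\alpha}M_{a_\alpha}[X^{\alpha},\Lambda^s].
\]
Only the first sum gains an order. In the second sum, $X^\alpha$ and $\Delta$ are non-commuting elements of $\mathcal{U}(\mathfrak{g})$. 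Because $[V_j,V_k]\subseteq V_{j+k}$, the commutator of homogeneous elements of degrees $a$ and $b$ is homogeneous of degree $a+b$. If it is nonzero, testing on $u\circ\delta_t$ with $t\to\infty$ shows it is not bounded from $W^{a+b-1}_2$ to $L_2$.

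A concrete example: on the Heisenberg group with $[X_1,X_2]=T$ and $\Delta=X_1^2+X_2^2$, the operator $P=-(X_1^2+2X_2^2)$ is positive, formally self-adjoint and uniformly Rockland of order $2$. Yet
\[
[P,1-\Delta]=-[X_1^2,X_2^2]=-2T(X_1X_2+X_2X_1),
\]
which is a nonzero operator of order $4=s+m$ for $s=2$. So the error $\|[P,\Lambda^s]u\|_{L_2}$ is as large as $\|u\|_{W^{s+m}_2}$. It cannot be absorbed by interpolation or by taking $|c|$ large. This already happens at $s\in 2v\mathbb{Z}_+$, so interpolating between integer levels does not rescue the argument.

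\textbf{What is needed instead.} To reach $s\neq 0$ you must never commute the regularizer past $X^\alpha$. You may only commute it with multiplication operators such as $M_{\phi_i}$ and $M_{a_\alpha}$, where a gain of one order is available (compare Appendix~\ref{psido_commutator_appendix}). For example, you could run the localization of Step 1 directly at level $s$. Then you need, as a separate input, an estimate of the form
\[
\|w\|_{W^{s+m}_2}+|c|\,\|w\|_{W^s_2}\leqslant C_s\|(P^{\rm top}_g+ic)w\|_{W^s_2},\qquad w\in\Sc(G),
\]
uniformly in $g\in G$ and large $|c|$. Definition~\ref{definition of ellipticity} gives this only at $s=0$, and for the reason above it cannot be transported to other $s$ by conjugating with $\Lambda^s$. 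Establishing it requires genuinely non-commutative Rockland theory, for instance the representation-theoretic form of the uniform Rockland condition, or a homogeneous parametrix with bounds uniform in $g$.

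\textbf{A secondary issue with duality.} Your duality step for $s<0$ is incomplete as stated. Duality turns a lower bound for $P+ic$ at level $s$ into \emph{surjectivity} of $P-ic$ at level $-s-m$, not into a lower bound. So you need a bounded inverse $(P-ic)^{-1}$, which means an existence or self-adjointness argument, and then you dualize that inverse. Also, the map $s\mapsto -s-m$ sends $(-m,0)$ to itself, so that range still needs interpolation of the resolvent.
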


Further, the condition $P=P^{\dagger}$ could be removed.
\begin{theorem}{\cite[Theorem 1.14]{LMSZ25_elliptic}}\label{general elliptic estimate theorem 2} 
Let $P$ be a differential operator of order $m$. If $P$ is  uniformly Rockland, then for every $s\in \mathbb{R}$, there exist constants $c_{P,s,1}, c_{P,s,2}>0$ such that 
\begin{equation*}
	c_{P,s,1}\|u\|_{W^{s+m}_2(G)}\leqslant \|Pu\|_{W^s_2(G)} + c_{P,s,2}\|u\|_{L_2(G)} ,\quad u\in \Sc(G).
\end{equation*}
\end{theorem}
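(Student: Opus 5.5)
The plan is to reduce to the formally self-adjoint case, Theorem \ref{gee_introduction}, by passing to $Q := P^\dagger P$, which is a differential operator of order $2m$ with $Q^\dagger = Q$.

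First I would show that $Q$ is uniformly Rockland. By Lemma \ref{top dag commute} and the multiplicativity of top-degree parts, $Q^{\rm top}_g = (P^{\rm top}_g)^\dagger P^{\rm top}_g$. For $u\in\Sc(G)$ this gives
\[
\langle Q^{\rm top}_g u,u\rangle = \|P^{\rm top}_g u\|_{L_2(G)}^2 \geqslant c_P^2\|(-\Delta_G)^{\frac{m}{2v}}u\|_{L_2(G)}^2 .
\]
To turn this quadratic-form bound into the operator estimate $\|Q^{\rm top}_g u\|\geqslant c\|(-\Delta_G)^{\frac{m}{v}}u\|$ with $c$ independent of $g$, I would use the representation-theoretic characterisation of uniform Rocklandness \cite[Lemma 4.8]{LMSZ25_elliptic}. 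Take a nontrivial irreducible $\pi$ and a smooth vector $\xi$ with $\pi(Q^{\rm top}_g)\xi=0$. Then $\|\pi(P^{\rm top}_g)\xi\|^2=\langle \pi(Q^{\rm top}_g)\xi,\xi\rangle = 0$, and injectivity of $\pi(P^{\rm top}_g)$ (which follows from the hypothesis on $P$) forces $\xi=0$. Uniformity in $g$ comes from the uniform form bound above together with the compactness and homogeneity argument built into that lemma. This is the step I expect to be the main obstacle, because uniformity in $g$ must be tracked carefully. If the representation route gets awkward, my fallback is to prove directly that $P^\dagger$ is uniformly Rockland and then apply Lemma \ref{ellipticity of the product}.

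Granting this, Theorem \ref{gee_introduction} applied to $Q$ with Sobolev index $s-m$ gives, for some fixed large real $c$,
\[
\|u\|_{W^{s+m}_2(G)} \leqslant c_{Q,s-m}\|(Q+ic)u\|_{W^{s-m}_2(G)} \leqslant c_{Q,s-m}\big(\|P^\dagger Pu\|_{W^{s-m}_2(G)} + |c|\,\|u\|_{W^{s-m}_2(G)}\big).
\]
By Lemma \ref{differential_operators_are_bounded}, $P^\dagger$ is bounded from $W^s_2(G)$ to $W^{s-m}_2(G)$. Hence
\[
\|u\|_{W^{s+m}_2(G)}\leqslant C\|Pu\|_{W^s_2(G)} + C'\|u\|_{W^{s-m}_2(G)}.
\]

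Finally, I would absorb the lower-order term, splitting into two cases.

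If $s-m\leqslant 0$, then $\|u\|_{W^{s-m}_2(G)}\leqslant \|u\|_{L_2(G)}$ by the embedding property (iii), and we are done.

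If $s-m>0$, write $s-m=\theta(s+m)$ with $\theta\in(0,1)$. By \eqref{eq-interpolation-norm} and Young's inequality,
\[
\|u\|_{W^{s-m}_2(G)}\leqslant \|u\|_{L_2(G)}^{1-\theta}\|u\|_{W^{s+m}_2(G)}^{\theta}\leqslant \varepsilon\|u\|_{W^{s+m}_2(G)}+C_\varepsilon\|u\|_{L_2(G)}.
\]
Choosing $\varepsilon$ with $C'\varepsilon<\tfrac12$ and moving that term to the left gives
\[
c_{P,s,1}\|u\|_{W^{s+m}_2(G)}\leqslant \|Pu\|_{W^s_2(G)}+c_{P,s,2}\|u\|_{L_2(G)},\quad u\in\Sc(G),
\]
which is the claim.
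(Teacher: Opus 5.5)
The paper gives no proof of this statement; it is quoted from \cite{LMSZ25_elliptic}. Your reduction to the self-adjoint case is sound, and it fits the paper's remark that the condition $P=P^\dagger$ ``could be removed''. The individual steps check out:
\begin{itemize}
\item $Q=P^\dagger P$ is formally self-adjoint of order $2m$, and $Q^{\rm top}_g=(P^{\rm top}_g)^\dagger P^{\rm top}_g$.
\item Theorem~\ref{gee_introduction} at Sobolev index $s-m$, combined with $P^\dagger:W^s_2(G)\to W^{s-m}_2(G)$, gives $\|u\|_{W^{s+m}_2(G)}\lesssim\|Pu\|_{W^s_2(G)}+\|u\|_{W^{s-m}_2(G)}$.
\item Your two-case absorption of the last term, by embedding or by interpolation plus Young, is correct.
\end{itemize}

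The step that is not yet a proof is the uniformity in $g$ of the Rockland property of $Q$. Checking that $\pi(Q^{\rm top}_g)$ is injective for each fixed $g\in G$ only shows that each frozen operator is Rockland. Pointwise Rocklandness does not imply uniform Rocklandness; think of coefficients that decay at infinity. Nor can the form bound $\langle Q^{\rm top}_gu,u\rangle\geqslant c_P^2\|(-\Delta_G)^{\frac{m}{2v}}u\|^2$ be upgraded directly to $\|Q^{\rm top}_gu\|\geqslant c\|(-\Delta_G)^{\frac{m}{v}}u\|$, because $x\mapsto x^2$ is not operator monotone.

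What you must supply, whatever form the characterisation in \cite{LMSZ25_elliptic} takes, is the Rockland property at limit points:
\begin{enumerate}
\item The $P^{\rm top}_g$ lie in a bounded subset of the finite-dimensional span of $\{X^\alpha:\len(\alpha)=m\}$; let $K$ be its compact closure.
\item Every $R\in K$ still satisfies $\|Ru\|\geqslant c_P\|(-\Delta_G)^{\frac{m}{2v}}u\|$, so $R$ is Rockland.
\item By your inner-product argument, $R^\dagger R$ is then Rockland.
\item By Helffer--Nourrigat, $\|R^\dagger Ru\|\geqslant c(R)\|(-\Delta_G)^{\frac{m}{v}}u\|$ with $c(R)>0$.
\item The optimal constant $c(\cdot)$ is Lipschitz on this finite-dimensional space, because homogeneous elements of degree $2m$ are bounded from the homogeneous Sobolev space of order $2m$ into $L_2(G)$.
\item Hence $c$ has a positive minimum on the compact set $\{R^\dagger R:R\in K\}$, which contains every $Q^{\rm top}_g$.
\end{enumerate}

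Also drop your fallback. Uniform Rocklandness of $P^\dagger$ is a condition on the cokernels (density of range) of $\pi(P^{\rm top}_g)$. It does not follow formally from the hypothesis, so Lemma~\ref{ellipticity of the product} cannot be applied to $P^\dagger\cdot P$ without further input. Your main route correctly avoids needing it.
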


When $P\geqslant 0$ is positive definite, $i\cdot c$ in Theorem \ref{gee_introduction} can be replaced by $1$, since $-1$ is not in the spectrum of $P$. Thus, we have an adapted version 
\begin{theorem}\label{general elliptic estimate theorem}
	Let $P$ be a differential operator of order $m$. If $P$ is uniformly Rockland and $P\geqslant 0$, then for every $s\in \mathbb{R}$, there exist constants $c_{P,s}>0$ such that 
	\begin{equation*}
		\|u\|_{W^{s+m}_2(G)}\leqslant c_{P,s} \|(1+P)u\|_{W^s_2(G)}, \quad u\in \Sc(G).
	\end{equation*}
\end{theorem}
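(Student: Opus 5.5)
From Theorem \ref{gee_introduction}, with $P\geqslant 0$, $P=P^\dagger$ and $P$ uniformly Rockland, the point is to replace $ic$ by $1$. My plan is to first obtain the estimate for some large real $c$, then show that $(1+P)$ and $(P+ic)$ are comparable on $W^s_2(G)$ via the resolvent identity.

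Since $P\geqslant 0$ on $\Sc(G)$, $P$ is formally symmetric. By Theorem \ref{elliptic regularity theorem}, $\widetilde P$ is then self-adjoint with domain $W^m_2(G)$, and its spectrum lies in $[0,\infty)$. Fix $c$ large enough that Theorem \ref{gee_introduction} applies. For $u\in\Sc(G)$, write
\[
(P+ic)u=(P+ic)(1+P)^{-1}(1+P)u.
\]
The operator $B:=(P+ic)(1+P)^{-1}=1+(ic-1)(1+P)^{-1}$ is a function of $P$, so it commutes with $(1+P)^{-1}$. The task is therefore to show that $B$ is bounded on $W^s_2(G)$ for every $s$, since that gives
\[
\|u\|_{W^{s+m}_2}\leqslant c_{P,s}\|Bv\|_{W^s_2}\leqslant C\|v\|_{W^s_2},\qquad v=(1+P)u .
\]

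Boundedness of $B$ on $W^s_2$ reduces to showing that $R:=(1+P)^{-1}$ is bounded on $W^s_2(G)$ for all $s$. I would prove the stronger fact that $R:W^s_2\to W^{s+m}_2$ is bounded, by induction on $s$.

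For $s=0$: $R$ maps $L_2$ into the domain $W^m_2$, and by the closed graph theorem (or Theorem \ref{general elliptic estimate theorem 2} together with $\|Rf\|_{L_2}\leqslant\|f\|_{L_2}$) we get $\|Rf\|_{W^m_2}\leqslant C\|f\|_{L_2}$.

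To increase $s$, apply Theorem \ref{general elliptic estimate theorem 2} to $w=Rf$:
\[
\|w\|_{W^{s+m}_2}\leqslant C\bigl(\|Pw\|_{W^s_2}+\|w\|_{L_2}\bigr)\leqslant C\bigl(\|f\|_{W^s_2}+\|w\|_{W^s_2}+\|w\|_{L_2}\bigr),
\]
and the term $\|w\|_{W^s_2}$ is controlled by the previous step. The case $w\notin\Sc(G)$ is handled by density, or by working with $u\in\Sc(G)$ directly. Negative $s$ then follows by duality, since $R$ is self-adjoint, and intermediate values by interpolation \eqref{interpolation}.

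Alternatively, one can avoid this induction altogether: apply Theorem \ref{general elliptic estimate theorem 2} directly with $\|Pu\|_{W^s_2}\leqslant\|(1+P)u\|_{W^s_2}+\|u\|_{W^s_2}$, then absorb the lower-order terms $\|u\|_{W^s_2}$ and $\|u\|_{L_2}$. For this, use interpolation \eqref{eq-interpolation-norm} with a small parameter, and for $\|u\|_{L_2}$ use $\|u\|_{L_2}\leqslant\|(1+P)u\|_{L_2}$, which holds by positivity. For $s\geqslant 0$ this gives the result once we have $\|u\|_{L_2}\lesssim\|(1+P)u\|_{W^s_2}$; that bound requires the $L_2$-boundedness of $R$ relative to $W^s_2$, i.e.\ the induction again for negative $s$.

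The main obstacle is exactly this handling of negative $s$, where $\|u\|_{L_2}$ is not dominated by $\|(1+P)u\|_{W^s_2}$ without the mapping property of $R$. I would settle it through the duality argument for the self-adjoint resolvent $R$ described above.
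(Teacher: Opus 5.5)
Your detour through $B=(P+ic)(1+P)^{-1}$ does no work. The ``stronger fact'' you then set out to prove, boundedness of $R=(1+P)^{-1}$ from $W^s_2(G)$ to $W^{s+m}_2(G)$, already is the statement, since $u=R(1+P)u$ for $u\in\Sc(G)$. Once you prove it, $B$ and Theorem \ref{gee_introduction} are superfluous. What actually carries your argument is the bootstrap. For $s\geqslant 0$ it is sound as long as you keep to a priori estimates for $u\in\Sc(G)$: Theorem \ref{general elliptic estimate theorem 2}, the identity $Pu=(1+P)u-u$, the bound $\|u\|_{L_2}\leqslant\|(1+P)u\|_{L_2}$, and the previous step (or the case $s=0$) applied to $\|u\|_{W^s_2}$ together give $\|u\|_{W^{s+m}_2}\lesssim\|(1+P)u\|_{W^s_2}$.

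The gap is the passage from these a priori estimates to the operator statement that your duality argument for $s<0$ needs. For $s\leqslant -m$ and $t=-s-m\geqslant 0$, duality reads
\[
|\langle u,\phi\rangle|=|\langle (1+P)u,R\phi\rangle|\leqslant \|(1+P)u\|_{W^s_2}\,\|R\phi\|_{W^{t+m}_2},\qquad u,\phi\in\Sc(G).
\]
So you need $\|R\phi\|_{W^{t+m}_2}\lesssim\|\phi\|_{W^t_2}$, but $R\phi$ is not known to lie in $\Sc(G)$. An a priori estimate extends by density only to functions already known to belong to $W^{t+m}_2(G)$; it cannot prove that $R\phi$ belongs there. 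So the claim that ``the case $w\notin\Sc(G)$ is handled by density'' is wrong. What is missing is higher regularity of the resolvent, which you used at $s=0$ (via $\mathrm{dom}(\widetilde P)=W^m_2(G)$) but never beyond.

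It can be supplied as in Lemma \ref{easy complex powers lemma}. The operator $(1+P)^{k+1}$ is formally self-adjoint and, by Lemma \ref{ellipticity of the product}, uniformly Rockland. For $f\in W^{km}_2(G)$, the function $Rf\in L_2(G)$ satisfies $\widetilde{(1+P)^{k+1}}Rf=\widetilde{(1+P)^{k}}f\in L_2(G)$ by Lemma \ref{differential_operators_are_bounded}, so Theorem \ref{elliptic regularity theorem}\,\ref{erta} gives $Rf\in W^{(k+1)m}_2(G)$. Hence $R\,\Sc(G)\subset\bigcap_{r}W^r_2(G)$. Your estimate then applies to $R\phi$, and duality followed by interpolation on $-m<s<0$ completes the proof.

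Alternatively, keep Theorem \ref{gee_introduction}. Since it holds at every $s\in\mathbb{R}$ and for $\pm c$, it makes $P+ic:W^{s+m}_2(G)\to W^s_2(G)$ an isomorphism. The identity $R=(P+ic)^{-1}\bigl(1+(ic-1)R\bigr)$ then bootstraps $R:W^s_2\to W^{s+m}_2$ upward from $s=0$. This is the cheap route to the boundedness of $R$ on $W^s_2(G)$ that your $B$ reduction asked for.

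The paper stays with Theorem \ref{gee_introduction} throughout. It uses $\|u\|_{W^{km}_2}\simeq\|(P+ic)^ku\|_{L_2}$ for $k\in\mathbb{Z}$, commutes $1+P$ past $(P+ic)^k$, applies $\|(1+P)w\|_{L_2}^2\geqslant\|Pw\|_{L_2}^2+\|w\|_{L_2}^2$, and interpolates.
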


\begin{proof}
	Let $s = k m, k\in \Z$. 
	By combining Lemma \ref{differential_operators_are_bounded} with Theorem \ref{general elliptic estimate theorem}, for all positive integer $k,$ if $|c|$ is sufficiently large (depending on $k$), then
	\begin{equation}\label{eq-P-norm-equiv-Sobolev}
            \Vert u \Vert_{W^{km}_2(G)}
			\lesssim \Vert (P+ic)^k u \Vert_{L_2(G)}
			\lesssim \Vert u \Vert_{W^{km}_2(G)}, \quad u\in \Sc(G).
	\end{equation}
	where the implicit constants depend on $k.$
	By a duality argument we conclude the same for integer $k<0.$
	Since $P$ is positive definite, we have
	\begin{multline}\label{eq-P-pd-ineq}
		\Vert (P+1)u \Vert^2_{L_2(G)} 
		= \langle (P+1)^2 u, u \rangle \\
		= \langle P^2 u, u \rangle + 2\langle P u, u \rangle + \langle u, u \rangle
		\geqslant \Vert Pu \Vert^2_{L_2(G)} + \Vert u \Vert^2_{L_2(G)}, \quad u\in \Sc(G).
	\end{multline}
	Combining \eqref{eq-P-norm-equiv-Sobolev} and \eqref{eq-P-pd-ineq}, we obtain the following inequality
	\begin{align*}
		\|(1+P)u\|_{W^{km}_2(G)} 
		& \underset{\eqref{eq-P-norm-equiv-Sobolev}}{\gtrsim}  
			\Vert (P+ic)^k (P+1) u \Vert_{L_2(G)}
			=  \Vert (P+1)(P+ic)^k  u \Vert_{L_2(G)}\\
		& \underset{\eqref{eq-P-pd-ineq}}{\gtrsim}  
			\Vert P (P+ic)^k u \Vert_{L_2(G)} +  \Vert (P+ic)^k u \Vert_{L_2(G)}\\
		& \quad = \Vert (P+ic)^k P u \Vert_{L_2(G)} +  \Vert (P+ic)^k u \Vert_{L_2(G)}\\
		& \underset{\eqref{eq-P-norm-equiv-Sobolev}}{\gtrsim} \Vert  P u \Vert_{W^{km}_2(G)} +  \Vert u \Vert_{W^{km}_2(G)}, \qquad u\in \Sc(G).
	\end{align*}
	Applying Theorem \ref{general elliptic estimate theorem 2} yields
	\[
        \|(1+P)u\|_{W^{km}_2(G)} \gtrsim \|u\|_{W^{km+m}},\quad u\in \Sc(G).
	\]
	On the other hand since $P$ has order $m,$ we have
	\[
        \|u\|_{W^{km+m}} \gtrsim \|(1+P)u\|_{W^{km+m}_2(G)},\quad u\in \Sc(G).
	\]
	This proves the assertion for $s \in m\mathbb{Z}.$ The general case of $s\in \R$ then follows by an easy interpolation argument.
\end{proof}

\section{Trace Ideals}\label{sec-trace-ideal}

This section collects properties of trace ideals, trace norms, and noncommutative analysis, which will be useful in Section \ref{sec-5} and \ref{sec_abstract_asymptotics}. 

\subsection{Singular values and ideals}
The following terminology can be found in \cite[Chapter 1]{Simon-trace-ideals-2005}, \cite[Chapter 2 and 3]{GohbergKrein}, \cite[Chapter 1]{LSZ2021-singular-trace-v1} and \cite[Chapter 1]{LMSZ-SingularTrace-2023}.

For a Hilbert space $H$, denote by $\Bc(H)$ the algebra of all bounded linear operators on $H.$ The ideal of all compact
linear {operators} is denoted by $\Kc(H).$ The operator norm on $\Bc(H)$ is denoted by $\|\cdot\|_{\infty}.$ Given $T \in \Kc(H),$ the singular value sequence $\mu(T):=\{\mu(n,T)\}_{n=0}^\infty$ is defined in terms of the singular value function
\[
\mu(t,T) := \inf\{\|T-R\|_{\infty}\;:\; \mathrm{rank}(R)\leqslant t\},\quad t\geqslant 0.
\]
Note that $\mu(t, T)$ is a step function in the variable $t$. Equivalently, $\mu(n,T)$ is the $(n+1)$-st eigenvalue of $|T|$, arranged in decreasing order with multiplicities.

For any compact $A$ and bounded $B$ in $\Bc(H)$,
\begin{equation}\label{eq-singular-value-multi}
\mu(t,AB) \leqslant  \Vert B \Vert_{\infty} \mu(t,A),
\quad \mu(t,BA) \leqslant  \Vert B \Vert_{\infty} \mu(t,A).
\end{equation}

For $0 < p< \infty,$ the Schatten ideal $\Lc_p(H)$ consists of all compact linear operators $T$ such that
\begin{equation}\label{eq-def-p-norm}
\|T\|_p \coloneqq \left(\int_0^\infty \mu(t,T)^p dt\right)^{\frac1p}<\infty.
\end{equation}

We also make extensive use of the weak Schatten ideals, defined with the weak Schatten (quasi)-norms
\begin{equation}\label{eq-quasi-norm-L-p-infity}
\|T\|_{p,\infty} \coloneqq \sup_{t\geqslant 0} \ t^{\frac{1}{p}}\mu(t,T).
\end{equation}

We have the H\"older-type inequalities
\begin{equation}\label{eq-holder}
\|TS\|_r \leqslant \|T\|_p\|S\|_q,\quad \|TS\|_{r,\infty} \leqslant c_{p,q,r}\|T\|_{p,\infty}\|S\|_{q,\infty},\quad \frac{1}{r} = \frac{1}{p}+\frac{1}{q}.
\end{equation}
In this paper, the choice of Hilbert space is always clear from context; hence we abbreviate $\Lc_p(H)$ as $\Lc_p,$ similarly $\Lc_{p,\infty}(H)$ as $\Lc_{p,\infty}$ and so on.


For $p>0,$ we write $(\Lc_{p,\infty})_0$ for the closure of the finite rank operators in the $\Lc_{p,\infty}$ quasinorm. Equivalently, $T \in (\Lc_{p,\infty})_0$ if and only if
\begin{equation}\label{eq-singular-value-0-finite-rank-ideal}
\lim_{t\to\infty} t^{\frac1p}\mu(t,T) = 0.
\end{equation}
Combining with \eqref{eq-singular-value-multi}, we can see $(\Lc_{p,\infty})_0$ is also an ideal.
Note that if $\frac1p+\frac1q=\frac1r,$ then $(\Lc_{p,\infty})_0\cdot \Lc_{q,\infty} \subseteq (\Lc_{r,\infty})_0.$

\begin{remark}\label{remark-inclusion-Lpq}
\begin{equation*}
\Lc_{q, \infty} \subset \Lc_p \subset (\Lc_{p,\infty})_0 \subset \Lc_{p, \infty}, \quad \text{for} \ q<p. 
\end{equation*}
This comes from combining  \eqref{eq-def-p-norm}, \eqref{eq-quasi-norm-L-p-infity}, and \eqref{eq-singular-value-0-finite-rank-ideal}.
\end{remark}

\subsection{Auxiliary lemmas}

{This section was original 6.2, moved here.  We lists a few inequalities and formulas in weak Schatten ideals, which will be useful in our later proofs.}

Recall that two operators $A, B$ in $\Bc(H)$ are orthogonal if $AB = BA =0$.

\begin{lemma}\label{first disjoint lemma} If $(A_n)_{n\geqslant0}$ are pairwise orthogonal operators, then for $0<p<\infty$,
$$\|\sum_{n\geqslant0}A_n\|_{p,\infty}^p\leqslant\sum_{n\geqslant0}\|A_n\|_{p,\infty}^p.$$
\end{lemma}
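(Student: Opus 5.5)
The plan is to reduce everything to a statement about the spectral distribution of a single operator $A=\sum_{n\geqslant 0}A_n$, using the fact that pairwise orthogonal operators have orthogonal ``supports''. The convergence of $\sum_n A_n$ is taken in the strong operator sense. We may assume the right-hand side is finite, which forces $\|A_n\|_{p,\infty}\to 0$ and $\sup_n\|A_n\|_\infty<\infty$. Orthogonality $A_nA_m=A_mA_n=0$ for $n\neq m$ gives two facts. The ranges of the $A_m$ lie in the kernel of $A_n$, so $A_n^*A_m = 0$ is not automatic. However, $A_nA_m=0$ gives $\overline{\mathrm{ran}(A_m)}\subseteq\ker A_n$, and $A_mA_n=0$ gives $A_n^*A_m^*=0$. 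The cleanest route is to compute $|A|^2$ and $|A^*|^2$ directly.

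First, I would handle the self-adjoint (or normal) case, which is how the lemma will be applied. Here the orthogonality means that the support projections $p_n$ (projection onto $\overline{\mathrm{ran}A_n}=(\ker A_n)^\perp$) are pairwise orthogonal. Hence $A=\bigoplus_n A_n$ is a direct sum with respect to $H=\bigoplus_n p_nH\oplus(\cdot)$. For a direct sum, the distribution function is additive:
\[
n(s,A):=\#\{k:\mu(k,A)>s\}=\sum_n n(s,A_n).
\]
Since $\mu(t,A_n)\leqslant \|A_n\|_{p,\infty}t^{-1/p}$, we get $n(s,A_n)\leqslant \|A_n\|_{p,\infty}^p s^{-p}$. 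Summing gives $n(s,A)\leqslant s^{-p}\sum_n\|A_n\|_{p,\infty}^p$. Inverting, $\mu(t,A)\leqslant t^{-1/p}(\sum_n\|A_n\|^p_{p,\infty})^{1/p}$, which is the claim.

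For general (non-normal) pairwise orthogonal $A_n$, I would reduce to the above by considering $|A|^2=A^*A=\sum_{n,m}A_n^*A_m$. The cross terms need not vanish, so instead I use both conditions. Let $q_n$ be the left support of $A_n$ (the projection onto $\overline{\mathrm{ran} A_n}$) and $r_n$ its right support (the projection onto $(\ker A_n)^\perp$). From $A_nA_m=0$ we get $r_n q_m=0$. This alone does not make the $q_n$ mutually orthogonal, so the direct-sum trick fails in general. This is the step I expect to be the main obstacle.

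What I would try first: if orthogonality is intended in the sense used later (self-adjoint pieces $M_{\chi}TM_{\chi}$ with disjoint cutoffs, or operators with $A_n^*A_m=A_nA_m^*=0$), the supports are orthogonal and the direct-sum argument applies verbatim. So I would state and use the lemma in that form, noting that in all applications the $A_n$ are of the form $p_nTp_n$ with orthogonal projections $p_n$. With $A_n^*A_m=0=A_nA_m^*$ for $n\neq m$, we have $|A|^2=\sum_n|A_n|^2$ with orthogonal summands. Then $\mu(|A|^2)$ is the decreasing rearrangement of the union of the sequences $\mu(|A_n|^2)$, and the counting-function argument above finishes the proof.
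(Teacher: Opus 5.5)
Your argument is correct under the hypothesis you end up imposing, namely $A_n^*A_m=A_nA_m^*=0$ for $n\neq m$ (pairwise orthogonal left supports and pairwise orthogonal right supports). It is in substance the paper's proof. The paper starts from $\|\sum_n A_n\|_{p,\infty}=\|\sum_n\mu(A_n)\otimes\chi_{(n,n+1)}\|_{L_{p,\infty}(\mathbb{R}^2_+)}$, i.e.\ from the fact that $\mu(\sum_n A_n)$ is the decreasing rearrangement of the union of the sequences $\mu(A_n)$. It then bounds $\mu(A_n)\leqslant\|A_n\|_{p,\infty}z_p$ with $z_p(t)=t^{-1/p}$ and uses $\mu(z_p\otimes f)=\|f\|_pz_p$. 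Your counting-function estimate is the same computation. Your identity $|A|^2=\sum_n|A_n|^2$, with pairwise orthogonal summands, is exactly the justification of the paper's first step, which the paper leaves implicit. Your normal case is also fine, since for normal operators $AB=0$ already forces orthogonal supports.

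The obstacle you flag for non-normal operators is a defect of the statement, not of your method. With the paper's definition of orthogonality ($AB=BA=0$), the lemma is false for every $p>1$. Take orthonormal vectors $e_1,e_2$, let $A_0=A_1=\langle\cdot,e_2\rangle e_1$, and let $A_n=0$ for $n\geqslant2$. Then $A_0A_1=A_1A_0=A_0^2=0$. Both are rank-one operators of norm one, so $\|A_0\|_{p,\infty}=\|A_1\|_{p,\infty}=1$. But $A_0+A_1=2A_0$ gives $\|A_0+A_1\|_{p,\infty}^p=2^p>2$.

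So the rearrangement identity in the paper's first line also silently requires the stronger orthogonality, and you should not try to get around it. Making $A_n^*A_m=A_nA_m^*=0$ an explicit hypothesis is the correct repair, and it covers every use in the paper. The operators $M_{\phi_n}X_nM_{\phi_n}$ with disjointly supported $\phi_n$ have pairwise orthogonal left and right supports. This property is preserved by the compressions $\mathrm{supp}_l(A_n)B_n\,\mathrm{supp}_r(A_n)$ in Lemma \ref{lemma-orthogonal-Pythagoras-2}.

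Your description of the applications as $p_nTp_n$ is slightly too narrow: the middle operator varies with $n$, and the left and right compressions differ. This does not matter, since orthogonality of the left supports and of the right supports is all your argument uses.
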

\begin{proof} {Notice that}
$$\|\sum_{n\geqslant0}A_n\|_{p,\infty}=\|\sum_{n\geqslant0}\mu(A_n)\otimes \chi_{(n,n+1)}\|_{L_{p,\infty}(\mathbb{R}^2_+)}.$$
By definition, we have
$$\mu(A_n)\leqslant\|A_n\|_{p,\infty}z_p,\quad z_p(t)=t^{-\frac1p},\quad t>0.$$
Thus,
$$0\leqslant \sum_{n\geqslant0}\mu(A_n)\otimes \chi_{(n,n+1)}
    \leqslant \sum_{n\geqslant0}\|A_n\|_{p,\infty}z_p\otimes\chi_{(n,n+1)}
    =z_p\otimes(\sum_{n\geqslant0}\|A_n\|_{p,\infty}\chi_{(n,n+1)}).$$
Thus,
$$\|\sum_{n\geqslant0}A_n\|_{p,\infty}\leqslant \|z_p\otimes(\sum_{n\geqslant0}\|A_n\|_{p,\infty}\chi_{(n,n+1)})\|_{L_{p,\infty}(\mathbb{R}^2_+)}.$$
Recall that
$$\mu(z_p\otimes f)=\|f\|_pz_p,\quad f\in L_p(0,\infty).$$
The assertion follows immediately.
\end{proof}

\begin{lemma}\label{lemma-orthogonal-Pythagoras-2} 
If $(A_n)_{n=1}^{N}$ are pairwise orthogonal operators, then for $0<p<\infty$,
$${\rm dist}_{\mathcal{L}_{p,\infty}}^p(\sum_{n=1}^{N}A_n,(\mathcal{L}_{p,\infty})_0)\leqslant\sum_{n=1}^{N}{\rm dist}_{\mathcal{L}_{p,\infty}}^p(A_n,(\mathcal{L}_{p,\infty})_0).$$
\end{lemma}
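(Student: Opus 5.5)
The plan is to reduce the statement to Lemma \ref{first disjoint lemma} by a perturbation argument. Fix $\epsilon>0$. For each $n$, choose $B_n\in(\Lc_{p,\infty})_0$ with
\[
\|A_n-B_n\|_{p,\infty}^p\leqslant {\rm dist}^p_{\Lc_{p,\infty}}(A_n,(\Lc_{p,\infty})_0)+\epsilon .
\]
The difficulty is that the differences $A_n-B_n$ need not be pairwise orthogonal, so Lemma \ref{first disjoint lemma} cannot be applied to them directly. We therefore compress the $B_n$ so that they inherit orthogonality from the $A_n$.

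The key observation concerns the ranges of the $A_n$. Pairwise orthogonality ($A_nA_m=A_mA_n=0$ for $n\neq m$) does not by itself give orthogonal ranges. I would therefore work with the left and right support projections $l_n$ and $r_n$ of $A_n$. These are the projections onto $\overline{\mathrm{ran}(A_n)}$ and onto $\ker(A_n)^\perp=\overline{\mathrm{ran}(A_n^*)}$. From $A_nA_m=0$ we get $r_n l_m=0$, and from $A_mA_n=0$ we get $r_m l_n=0$.

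These relations do not make the $l_n$ mutually orthogonal. The cleaner route is to decompose through $B_n$ directly, replacing $B_n$ by $\widetilde B_n:=l_nB_nr_n$. Then $\widetilde B_n\in(\Lc_{p,\infty})_0$ because this is an ideal. Moreover $A_n-\widetilde B_n=l_n(A_n-B_n)r_n$, since $l_nA_nr_n=A_n$, so by \eqref{eq-singular-value-multi}
\[
\|A_n-\widetilde B_n\|_{p,\infty}\leqslant\|A_n-B_n\|_{p,\infty}.
\]
For $n\neq m$ we compute $(A_n-\widetilde B_n)(A_m-\widetilde B_m)=l_n(\cdots)r_nl_m(\cdots)r_m=0$, because $r_nl_m=0$. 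By symmetry the product in the other order also vanishes, so the operators $C_n:=A_n-\widetilde B_n$ are pairwise orthogonal.

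Now $\sum_n\widetilde B_n$ is a finite sum and hence lies in $(\Lc_{p,\infty})_0$. Lemma \ref{first disjoint lemma} applied to the $C_n$ gives
\[
{\rm dist}^p\Big(\sum_n A_n,(\Lc_{p,\infty})_0\Big)\leqslant\Big\|\sum_n C_n\Big\|_{p,\infty}^p\leqslant\sum_n\|C_n\|_{p,\infty}^p\leqslant\sum_n{\rm dist}^p(A_n,(\Lc_{p,\infty})_0)+N\epsilon .
\]
Letting $\epsilon\to0$ finishes the proof.

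The main point to verify carefully is the support-projection identity $r_nl_m=0$. It follows because $A_nA_m=0$ forces $\mathrm{ran}(A_m)\subseteq\ker A_n$, and hence $r_n$ kills $\overline{\mathrm{ran}A_m}$. This is exactly what makes the compression argument work.
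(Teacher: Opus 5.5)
Your proof is the paper's proof. You compress near-optimal $B_n\in(\mathcal{L}_{p,\infty})_0$ to $l_nB_nr_n$ using the left and right supports of $A_n$, use $A_n-l_nB_nr_n=l_n(A_n-B_n)r_n$, and conclude with Lemma~\ref{first disjoint lemma}. You also supply the orthogonality check (via $r_nl_m=0$) that the paper only asserts.

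There is, however, a problem that applies equally to the paper's argument. It sits in Lemma~\ref{first disjoint lemma}, not in your compression step.

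\textbf{Counterexample.} If orthogonality means only $AB=BA=0$, as defined just before Lemma~\ref{first disjoint lemma}, then that lemma is false for $p>2$. The matrix units $e_{12},e_{13}\in M_3(\mathbb{C})$ satisfy $e_{12}e_{13}=e_{13}e_{12}=0$. Yet $\|e_{12}+e_{13}\|_{p,\infty}=\sqrt2$, while $\|e_{12}\|_{p,\infty}=\|e_{13}\|_{p,\infty}=1$. Now tensor with $D=\mathrm{diag}((k+1)^{-1/p})_{k\geqslant 0}$ on $\ell_2$, and use $\mathrm{dist}_{\mathcal{L}_{p,\infty}}(T,(\mathcal{L}_{p,\infty})_0)=\limsup_{t\to\infty}t^{1/p}\mu(t,T)$. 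The orthogonal operators $D\otimes e_{12}$ and $D\otimes e_{13}$ then give $2^{p/2}$ on the left side of the statement and $2$ on the right. So the statement itself fails for $p>2$ under this reading.

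\textbf{What is actually needed.} The proof of Lemma~\ref{first disjoint lemma} uses that $\mu(\sum_n C_n)$ is the decreasing rearrangement of the union of the sequences $\mu(C_n)$. That requires mutually orthogonal left supports and mutually orthogonal right supports, i.e.\ $A_n^*A_m=A_nA_m^*=0$ for $n\neq m$. Your own remark that the $l_n$ need not be orthogonal shows this is exactly what $AB=BA=0$ fails to give.

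\textbf{Repair.} The stronger hypothesis is what actually holds in the application in Lemma~\ref{abstract final lemma}, where the operators are $M_{\phi_n}(A-A_{g_n})M_{\phi_n}$ with disjointly supported $\phi_n$. Under it your argument goes through verbatim. When checking that the compressed differences $C_n$ are again disjoint, use $l_nl_m=0$ and $r_nr_m=0$ in place of $r_nl_m=0$.
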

\begin{proof} Fix $\epsilon>0$ and choose $\delta>0$ so small that
$$\sum_{n=1}^{N}({\rm dist}_{\mathcal{L}_{p,\infty}}(A_n,(\mathcal{L}_{p,\infty})_0)+\delta)^p\leqslant \sum_{n=1}^{N}{\rm dist}_{\mathcal{L}_{p,\infty}}^p(A_n,(\mathcal{L}_{p,\infty})_0)+\epsilon.$$
For every $1\leqslant  n \leqslant  N,$ choose $B_n\in (\mathcal{L}_{p,\infty})_0$ such that
$$\|A_n-B_n\|_{p,\infty}\leqslant {\rm dist}_{\mathcal{L}_{p,\infty}}(A_n,(\mathcal{L}_{p,\infty})_0)+\delta.$$
Set
$$C_n={\rm supp}_l(A_n)\cdot B_n\cdot{\rm supp}_r(A_n)\in (\mathcal{L}_{p,\infty})_0.$$
{Here, ${\rm supp}_l(A_n)$ means the left support of operator $A_n$, namely, the smallest projection such that ${\rm supp}_l(A_n)\cdot A = A$; ${\rm supp}_r(A_n)$ means the right support, the smallest projection such that $A\cdot {\rm supp}_l(A_n) = A$.}
Clearly,
$$A_n-C_n={\rm supp}_l(A_n)\cdot (A_n-B_n)\cdot{\rm supp}_r(A_n).$$
Thus,
$$\|A_n-C_n\|_{p,\infty}\leqslant {\rm dist}_{\mathcal{L}_{p,\infty}}(A_n,(\mathcal{L}_{p,\infty})_0)+\delta,\quad 1\leqslant  n \leqslant  N.$$
It is immediate that $\sum_{n=1}^{N}C_n\in (\mathcal{L}_{p,\infty})_0.$ We, therefore, have
$${\rm dist}_{\mathcal{L}_{p,\infty}}^p(\sum_{n=1}^{N}A_n,(\mathcal{L}_{p,\infty})_0)\leqslant \|\sum_{n=1}^{N}(A_n-C_n)\|_{p,\infty}^p.$$
Since the operators $\{A_n-C_n\}_{n=1}^N$ are pairwise orthogonal, it follows from Lemma \ref{first disjoint lemma} that
\begin{multline*}
\|\sum_{n=1}^{N}(A_n-C_n)\|_{p,\infty}^p\leqslant \sum_{n=1}^{N}\|A_n-C_n\|_{p,\infty}^p\\
\leqslant \sum_{n=1}^{N}({\rm dist}_{\mathcal{L}_{p,\infty}}(A_n,(\mathcal{L}_{p,\infty})_0)+\delta)^p\leqslant \sum_{n=1}^{N}{\rm dist}_{\mathcal{L}_{p,\infty}}^p(A_n,(\mathcal{L}_{p,\infty})_0)+\epsilon.
\end{multline*}
Since $\epsilon>0$ is arbitrarily small, the assertion follows.
\end{proof}

The next three lemmas are available in \cite{SXZ2023} (see Lemmas 2.4 and 2.5 there, respectively).

\begin{lemma}\label{third disjoint lemma}
If $(A_n)_{n=0}^{N-1}$ are pairwise orthogonal operators are such that
$$\lim_{t\to\infty}t^{\frac1p}\mu(t,A_n)=c_n,\quad 1\leqslant  n \leqslant  N,$$
then for $0<p<\infty$,
$$\lim_{t\to\infty}t^{\frac1p}\mu(\sum_{n=1}^{N}A_n)=(\sum_{n=1}^{N}c_n^p)^{\frac1p}.$$
\end{lemma}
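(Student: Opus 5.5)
The strategy is to reduce the statement to a computation with a single positive operator, using the orthogonality of the $A_n$. Replacing each $A_n$ by $|A_n|$ does not change singular values, provided the sum is also handled correctly. Pairwise orthogonality $A_nA_m=A_mA_n=0$ gives $A_n^*A_m = 0$ for $n\neq m$. To see this, note that $A_m A_n^* A_n = 0$ implies the range of $A_n^*A_n$, and hence of $A_n^*$, lies in $\ker A_m$. Symmetrically, $A_nA_m=0$ says $\mathrm{ran}(A_m)\subseteq\ker A_n = \ker (A_n^*A_n)$. Using these range/kernel relations, we get
\[
\Big|\sum_n A_n\Big|^2=\sum_{n,m}A_n^*A_m=\sum_n |A_n|^2 .
\]
Moreover, $|A_n|^2|A_m|^2=A_n^*A_nA_m^*A_m$, and this vanishes because $A_nA_m^*=0$ (which holds since $\mathrm{ran}(A_m^*)\subseteq \ker A_n$, by the same relations). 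Hence the positive operators $B_n:=|A_n|^2$ are pairwise orthogonal, i.e. they have mutually orthogonal supports. Consequently the spectrum of $\sum_n B_n$, counted with multiplicity, is the disjoint union of the spectra of the $B_n$. In terms of the distribution function $n(s,T)=\#\{k:\mu(k,T)>s\}$, this means
\[
n\Big(s,\sum_n A_n\Big)=\sum_{n=1}^N n(s,A_n),\qquad s>0.
\]

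The asymptotic hypothesis is then translated into distribution functions. The statement $t^{1/p}\mu(t,A_n)\to c_n$ is equivalent to $s^p n(s,A_n)\to c_n^p$ as $s\downarrow 0$. I would check this equivalence by elementary inversion of monotone functions. Given $\epsilon>0$, we have $\mu(t,A_n)\le (c_n+\epsilon)t^{-1/p}$ for $t\ge t_0$, so $n(s,A_n)\le \max(t_0,((c_n+\epsilon)/s)^p)$. There is a similar lower bound, and the case $c_n=0$ is handled by the upper bound alone. Summing over the finitely many $n$ yields
\[
s^p\, n\Big(s,\sum_n A_n\Big)\to \sum_n c_n^p .
\]
Inverting again gives $t^{1/p}\mu(t,\sum_n A_n)\to(\sum_n c_n^p)^{1/p}$.

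The only genuinely delicate point is the inversion step between $\mu$ and $n$, because of the step-function nature and the non-strict inequalities. For the inverse direction, take $t$ large. Set $s=(C+\epsilon)t^{-1/p}$ with $C=(\sum_n c_n^p)^{1/p}$. For $t$ large we have $n(s)<t$, hence $\mu(t)\le s$. With $s=(C-\epsilon)t^{-1/p}$ we get $n(s)>t$, hence $\mu(t)>s$ (the latter only when $C>0$). Letting $\epsilon\to0$ concludes. The indexing discrepancy in the statement ($n=0,\dots,N-1$ versus $1,\dots,N$) is immaterial for a finite family.
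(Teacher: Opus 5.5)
\textbf{The gap is in your first step.} With the paper's definition of orthogonality, $A_nA_m=A_mA_n=0$, it does not follow that $A_n^*A_m=0$ or $A_nA_m^*=0$.

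Your justification does not hold up. The identity $A_mA_n^*A_n=0$ is asserted without derivation; it would follow from $A_mA_n^*=0$, which is part of what you are trying to prove. The relation $A_nA_m=0$ gives only $\mathrm{ran}(A_m)\subseteq\ker A_n$, whereas $A_n^*A_m=0$ requires $\mathrm{ran}(A_m)\subseteq\ker A_n^*=\mathrm{ran}(A_n)^{\perp}$. Similarly, what the hypotheses actually yield is $\mathrm{ran}(A_m^*)\subseteq(\ker A_m)^{\perp}\subseteq\mathrm{ran}(A_n)^{\perp}=\ker A_n^*$. That is $A_n^*A_m^*=0$, which is just the adjoint of $A_mA_n=0$, not $A_nA_m^*=0$.

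You are conflating $\ker A_n$ with $\ker A_n^*$. These coincide when $A_n$ is normal, so your argument is valid for normal (e.g.\ self-adjoint) $A_n$, but not in general.

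In fact no argument can close this gap, because under the weak definition the statement is false. Let $S\geqslant 0$ be diagonal on $\ell_2$ with $\mu(k,S)=(k+1)^{-1/p}$. Let $E\in M_2(\mathbb{C})$ be the nilpotent matrix unit, $Ee_1=0$, $Ee_2=e_1$. Put $A_1=S\otimes E$ and $A_2=-S\otimes E$ on $\ell_2\otimes\mathbb{C}^2$. Then:
\begin{itemize}
\item $A_1A_2=A_2A_1=-S^2\otimes E^2=0$, so the pair is orthogonal in the paper's sense;
\item $\mu(A_j)=\mu(S)$, so $c_1=c_2=1$;
\item $A_1+A_2=0$, so the limit is $0\neq 2^{1/p}$;
\item $A_1^*A_2=-S^2\otimes E^*E\neq 0$, so your claimed implication fails here.
\end{itemize}

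\textbf{What is needed} is the stronger orthogonality $A_n^*A_m=A_nA_m^*=0$ for $n\neq m$, i.e.\ pairwise orthogonal left supports and pairwise orthogonal right supports. The paper tacitly relies on this too: in the proof of Lemma \ref{first disjoint lemma}, $\mu(\sum_n A_n)$ is identified with the rearrangement of the disjoint union of the $\mu(A_n)$, which also requires it. The stronger condition does hold where the lemma is applied. For $M_{\phi_{n,l}}A_{g_{n,l}}M_{\phi_{n,l}}$ with disjointly supported $\phi_{n,l}$, both products vanish because $\phi_{n,l}\phi_{n',l}=0$.

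If you take this stronger condition as the hypothesis, the rest of your proof is correct:
\begin{itemize}
\item $|\sum_n A_n|^2=\sum_n|A_n|^2$, with the $|A_n|^2$ mutually orthogonal;
\item hence the counting functions $n(s,\cdot)$ add;
\item your two inversions between $\mu(\cdot)$ and $n(s,\cdot)$ are sound, including the cases $c_n=0$ and $(\sum_n c_n^p)^{1/p}=0$.
\end{itemize}
With that correction you have a complete, self-contained proof. The paper itself gives none and only cites \cite{SXZ2023}.
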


\begin{lemma}\label{bs convergence lemma} For $0<p<\infty$, let $(A_l)_{l\geqslant0}\subset\mathcal{L}_{p,\infty}$ be such that
$$\lim_{t\to\infty}t^{\frac1p}\mu(t,A_l)=c_l,\quad l\geqslant0.$$
If
$${\rm dist}_{\mathcal{L}_{p,\infty}}(A_l-A,(\mathcal{L}_{p,\infty})_0)\to0,\quad l\to\infty,$$
then the following limits exist and are equal
$$\lim_{t\to\infty}t^{\frac1p}\mu(t,A)=\lim_{l\to\infty}c_l.$$
\end{lemma}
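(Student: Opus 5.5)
The plan is to prove Lemma \ref{bs convergence lemma} directly from the Ky Fan type inequality for singular values, $\mu(t_1+t_2,S+T)\leqslant \mu(t_1,S)+\mu(t_2,T)$. The first step is to choose, for each $l$, an operator $B_l\in(\Lc_{p,\infty})_0$ such that $\|A-A_l-B_l\|_{p,\infty}\leqslant \epsilon_l$, where $\epsilon_l := {\rm dist}_{\Lc_{p,\infty}}(A_l-A,(\Lc_{p,\infty})_0)+1/l\to 0$. Note that $A=A_l+B_l+R_l$ with $\|R_l\|_{p,\infty}\leqslant\epsilon_l$.

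The second step is an upper bound. Fix $0<\theta<1$ and apply the inequality with a three-way splitting $t=(1-\theta)t+\theta t/2+\theta t/2$:
\[
\mu(t,A)\leqslant \mu((1-\theta)t,A_l)+\mu(\theta t/2,B_l)+\mu(\theta t/2,R_l).
\]
Multiplying by $t^{1/p}$ and taking $\limsup_{t\to\infty}$ uses three facts. The first term tends to $(1-\theta)^{-1/p}c_l$. The second term tends to $0$ because $B_l\in(\Lc_{p,\infty})_0$, by \eqref{eq-singular-value-0-finite-rank-ideal}. The third term is bounded by $(\theta/2)^{-1/p}\epsilon_l$. This gives
\[
\limsup_{t\to\infty} t^{1/p}\mu(t,A)\leqslant (1-\theta)^{-1/p}c_l+(2/\theta)^{1/p}\epsilon_l .
\]

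The third step is the lower bound, obtained symmetrically by writing $A_l=A-B_l-R_l$:
\[
\mu(t,A_l)\leqslant \mu((1-\theta)t,A)+\mu(\theta t/2,B_l)+\mu(\theta t/2,R_l).
\]
Substituting $s=(1-\theta)t$, this gives
\[
(1-\theta)^{1/p}c_l\leqslant \liminf_{s\to\infty}s^{1/p}\mu(s,A)+(1-\theta)^{1/p}(2/\theta)^{1/p}\epsilon_l .
\]

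The last step, which is the only delicate point, is to show that $\lim_l c_l$ exists. Write $L^+ := \limsup_{t\to\infty} t^{1/p}\mu(t,A)$ and $L^- := \liminf_{t\to\infty} t^{1/p}\mu(t,A)$. Taking $\liminf_l$ in the upper bound and $\limsup_l$ in the lower bound, with $\epsilon_l\to0$, yields
\[
L^+\leqslant (1-\theta)^{-1/p}\liminf_l c_l,\qquad \limsup_l c_l\leqslant (1-\theta)^{-1/p}L^- .
\]
Both quantities are finite, since $c_l\leqslant \|A_l\|_{p,\infty}$ and $A\in\Lc_{p,\infty}$ (for $l$ large, $A-A_l$ lies within finite distance of $(\Lc_{p,\infty})_0\subset\Lc_{p,\infty}$). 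Letting $\theta\to0$ gives the chain
\[
L^+\leqslant\liminf_l c_l\leqslant\limsup_l c_l\leqslant L^-\leqslant L^+ .
\]
All of these quantities are therefore equal. This shows both that $\lim_{t\to\infty}t^{1/p}\mu(t,A)$ exists and that it equals $\lim_l c_l$.
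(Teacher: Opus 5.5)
Your argument is correct: the three-way Ky Fan splitting gives exactly the stated upper and lower bounds. Letting $l\to\infty$ and then $\theta\to 0$ closes the chain $L^+\leqslant\liminf_l c_l\leqslant\limsup_l c_l\leqslant L^-\leqslant L^+$, and the paper gives no proof of its own, citing \cite{SXZ2023}, while your self-contained argument is the standard Birman--Solomyak perturbation argument built on $\mu(t_1+t_2,S+T)\leqslant\mu(t_1,S)+\mu(t_2,T)$. One small correction: finiteness of $\limsup_l c_l$ follows from your lower bound together with $L^-\leqslant\|A\|_{p,\infty}$, not from $c_l\leqslant\|A_l\|_{p,\infty}$, which is not uniform in $l$; nothing more is needed.
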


\begin{lemma}\label{easy_bs_lemma}
If $A-B \in (\Lc_{p,\infty})_0$ and if
$$\lim_{t\to\infty} t^{\frac1p}\mu(t,A)=c,$$
then also
$$\lim_{t\to\infty} t^{\frac1p}\mu(t,B)= c.$$
\end{lemma}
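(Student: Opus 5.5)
The statement to prove is Lemma \ref{easy_bs_lemma}: if $A-B\in(\Lc_{p,\infty})_0$ and $t^{1/p}\mu(t,A)\to c$, then $t^{1/p}\mu(t,B)\to c$. The plan is a direct Ky Fan argument using the subadditivity of singular values under dilation of the argument. The tool is the inequality
\[
\mu(t+s,X+Y)\leqslant \mu(t,X)+\mu(s,Y),\quad t,s\geqslant 0,
\]
which follows at once from the definition of $\mu(t,\cdot)$ as a distance to operators of rank at most $t$: if $\mathrm{rank}(R_1)\leqslant t$ and $\mathrm{rank}(R_2)\leqslant s$, then $R_1+R_2$ has rank at most $t+s$.

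Fix $\epsilon\in(0,1)$ and write $D=B-A$, so that $t^{1/p}\mu(t,D)\to0$.

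For the upper bound, apply the inequality with $X=A$, $Y=D$, and split $t$ as $(1-\epsilon)t+\epsilon t$. This gives
\[
t^{1/p}\mu(t,B)\leqslant t^{1/p}\mu((1-\epsilon)t,A)+t^{1/p}\mu(\epsilon t,D).
\]
The first term tends to $(1-\epsilon)^{-1/p}c$. The second equals $\epsilon^{-1/p}(\epsilon t)^{1/p}\mu(\epsilon t,D)$, which tends to $0$ because $\epsilon$ is fixed. Hence $\limsup_{t\to\infty} t^{1/p}\mu(t,B)\leqslant(1-\epsilon)^{-1/p}c$.

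For the lower bound, apply the inequality to $A=B+(-D)$ at the point $(1+\epsilon)t$, split as $t+\epsilon t$:
\[
\mu((1+\epsilon)t,A)\leqslant\mu(t,B)+\mu(\epsilon t,D).
\]
Multiplying by $t^{1/p}$ and letting $t\to\infty$ gives $\liminf_{t\to\infty} t^{1/p}\mu(t,B)\geqslant(1+\epsilon)^{-1/p}c$, since once again the $D$ term vanishes.

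Letting $\epsilon\to0$ in both bounds gives the claim. There is no real obstacle beyond verifying the rank-additivity inequality, which holds for all real $t,s\geqslant 0$ since $\mu(t,\cdot)$ is defined with $\mathrm{rank}\leqslant t$.
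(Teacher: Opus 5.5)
Your proof is correct. The rank inequality $\mu(t+s,X+Y)\leqslant\mu(t,X)+\mu(s,Y)$ holds for all real $t,s\geqslant 0$ with the paper's definition of $\mu(t,\cdot)$. The two splittings, $t=(1-\epsilon)t+\epsilon t$ for the upper bound and $(1+\epsilon)t=t+\epsilon t$ for the lower bound, give $(1+\epsilon)^{-1/p}c\leqslant\liminf\leqslant\limsup\leqslant(1-\epsilon)^{-1/p}c$, and letting $\epsilon\to0$ finishes the argument.

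The paper gives no proof of this lemma; it simply cites \cite{SXZ2023}. Within the paper, the statement is also the special case $A_l\equiv A$ of Lemma \ref{bs convergence lemma}. Here $A\in\Lc_{p,\infty}$ because $t^{\frac1p}\mu(t,A)$ converges, and $\mathrm{dist}_{\Lc_{p,\infty}}(A-B,(\Lc_{p,\infty})_0)=0$ by hypothesis. So in the paper's logic it costs nothing once that heavier lemma is accepted, whereas your argument is self-contained and elementary.
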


\subsection{The group von Neumann algebra and noncommutative measure theory}\label{sec_vna}
{
We will use some of the notation of semifinite traces on von Neumann algebras. For further details, see \cite{Fack-Kosaki,PX2003, DdPS-vapour, LSZ}. The only von Neumann algebras that concern us will be the algebra of bounded operators $\Bc(H)$ for a Hilbert space $H$, and the group von Neumann algebra $\mathrm{VN}(G)$ of a graded group $G.$

Given a von Neumann algebra $\mathcal{M}\subset \Bc(H)$ a closed operator $T:\mathrm{dom}(T)\to H$ is said to be affiliated with $\mathcal{M}$ if $T$ commutes with every unitary in the commutant of $\mathcal{M}.$ For self-adjoint operators $T,$ we can equivalently say that every spectral projection of $T$ belongs to $\mathcal{M}.$ 

Given a semifinite normal trace $\tau$ on $\mathcal{M},$ an operator $T:\mathrm{dom}(T)\to H$ affiliated to $\mathcal{M}$ is said to be $\tau$-measurable if for any $\varepsilon>0$ there exists a projection $p\in \mathcal{M}$ such that $\tau(p)< \varepsilon$ and $(1-p)H\subseteq \mathrm{dom}(T).$ 

The singular value function $t\mapsto \mu(t,T)$ of a $\tau$-measurable operator $T$ is defined as
\[
    \mu(t,T) := \inf\{\|T(1-p)\|_{\infty}\;:\; \tau(p)\leqslant t\}.
\]
This is compatible with the previous definition of the singular value function if $\tau$ is taken to be the classical trace $\mathrm{Tr}.$ For $0<p<\infty,$ the noncommutative $L_p$ space $L_p(\mathcal{M},\tau)$ is defined as the set of $\tau$-measurable operators $T$ such that
\[
    \|T\|_{p} := \left(\int_{0}^\infty \mu(t,T)^p\,dt\right)^{\frac1p} < \infty.
\]
Similarly, the weak $L_p$ space $L_{p,\infty}(\mathcal{M},\tau)$ is defined as the set of $\tau$-measurable operators $T$ such that
\[
    \|T\|_{p,\infty} := \sup_{t>0} t^{\frac1p}\mu(t,T) < \infty.
\]
Again, this is compatible with the Schatten and weak-Schatten norms when $\mathcal{M}=\Bc(H)$ and $\tau=\mathrm{Tr}.$

We will apply these notions when $\mathcal{M}$ is a group von Neumann algebra. We give a brief description of the relevant definitions here. For further details see e.g. \cite[Chapter 13]{DixmierCStar1977} and more specifically \cite{Stinespring-tams-1959}.}
Recall that every locally compact unimodular group $G$ has a distinguished representation, that is the left regular representation $\lambda$ on $L_2(G),$ where $L_2(G)$ is taken with respect to the bivariant Haar measure. For $u\in L_2(G)$ and any $g\in G$,  
define
\begin{equation*}
    \lambda(g)u(x) := u(g^{-1}x),\quad x\in G.
\end{equation*}
Since the Haar measure is left-invariant, $\lambda(g)$ is a unitary operator.
The same symbol is used to denote the map $\lambda: L_1(G) \to \mathcal{B}(L_2(G))$
defined by
\[
\lambda(f)u(x) = \int_{G} f(h)\lambda(h)u(x)dh =  \int_{G} f(h)u(h^{-1}x)dh.\quad x \in G,\quad u\in L_2(G).
\]
The group von Neumann algebra $\mathrm{VN}(G)$ is defined as the closure of $\lambda(L_1(G))$ in the weak operator topology of $\mathcal{B}(L_2(G)).$

We also consider the right regular representation, defined for $g\in G$ and $u\in L_2(G)$ by
\[
    \rho(g)u(x) := u(xg),\quad x \in G.
\]
Since the Haar measure is bivariant, $\rho(g)$ is also unitary.
The group von Neumann algebra $\mathrm{VN}(G)$ is the commutant of the right regular representation. That is,
\[
    \mathrm{VN}(G) = \rho(G)' = \{T\in \Bc(L_2(G))\,:\,\forall g\in G\, \rho(g)T=T\rho(g)\}.
\]

Let $C_c(G)$ denote the set of continuous functions on $G$ with compact support. On the algebra $\lambda(C_c(G))$, the Plancherel weight $\tau$ is the functional
\begin{equation}\label{eq-tau-lambda-f}
\tau (\lambda(f)) = f(1_G), \quad f\in C_c(G).
\end{equation}
Here, $1_G$ is the identity in $G.$ The Plancherel weight 
extends to a faithful normal semifinite trace on $\mathrm{VN}(G)$ \cite[Theorem 9.2]{Stinespring-tams-1959}.

In the case that concerns us, $G$ is equipped with a family of dilations $\{\delta_t\}_{t>0}.$ In this case, there is a corresponding action of $\mathbb{R}^{\times}$ on $\mathrm{VN}(G)$ by conjugation:
\begin{equation}\label{conjugation_action}
    \alpha_t(T)u(x) = T(u\circ \delta_t)(\delta_{t}^{-1}x),\quad t>0,\, T\in \mathrm{VN}(G),\, u\in L_2(G).
\end{equation}
Since the Lebesgue measure is rescaled by $t^{d_{\hom}}$ under $\delta_t,$ an easy computation shows that
\[
    \lambda(f\circ \delta_t^{-1}) = t^{d_{\hom}}\alpha_t(\lambda(f)),\quad t>0,\; f\in L_1(G).
\]
Since $\delta_t$ fixes the identity, we conclude that
\begin{equation}\label{trace_scaling}
    \tau(\alpha_t(T)) = t^{-d_{\hom}}\tau(T),\quad T \in L_1(\mathrm{VN}(G),\tau).
\end{equation}

\section{Asymptotics for operators with constant coefficients}\label{sec-5}

In this section, we prove a special case of our main Theorem \ref{main theorem special case}, that is, when the operator $P$ has constant coefficients and $\gamma = m$ . We will prove the general case in subsection \ref{Section-general-case}.
All the notations used here can be found in Sections \ref{sec-intro}, \ref{sec-Graded-Lie-Group}, \ref{sec-trace-ideal}, and \cite{LMSZ25_elliptic}.

\begin{theorem}\label{main theorem constant} If $P\geqslant 0$ is an uniformly Rockland, order $m$, differential operator with constant coefficients, then for all $0\leqslant f\in C^{\infty}_c(G),$
$$\lim_{t\to\infty}t^{\frac{m}{d_{\hom}}}\mu(t,M_f(P+1)^{-1})=\|f\|_{\frac{d_{\hom}}{m}}\cdot \left(\frac1{\Gamma(\frac{d_{\hom}}{m}+1)} \tau(e^{-P^{{\rm top}}})\right)^{\frac{m}{d_{\hom}}}.$$
\end{theorem}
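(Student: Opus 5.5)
Our approach follows Birman--Solomyak's reduction to piecewise constant symbols, with the Fourier computation on cubes replaced by a dilation argument in $\mathrm{VN}(G)$. First we reduce to $f$ a finite linear combination of indicators of small disjoint translated quasi-balls (or dilated ``cubes'' adapted to $\delta_t$). Since $P$ has constant coefficients, $(P+1)^{-1}$ commutes with right translations and hence belongs to (is affiliated with) $\mathrm{VN}(G)$. Moreover $(P+1)^{-1}$ is bounded from $L_2$ to $W^m_2$ by Theorem \ref{general elliptic estimate theorem}. The Cwikel-type estimates of Appendix \ref{cwikel_appendix} give the a priori bound
\[
\|M_f(P+1)^{-1}\|_{\frac{d_{\hom}}{m},\infty}\lesssim \|f\|_{\ell_{\frac{d_{\hom}}{m}}(L_q)}\ (\text{or } \|f\|_{\frac{d_{\hom}}{m}}).
\]
Combined with Lemma \ref{bs convergence lemma}, approximating a smooth $0\le f$ by step functions $f_l$ in this norm reduces the theorem to step functions. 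Note that the limit expression is continuous in $\|f\|_{d_{\hom}/m}$.

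For step functions $f=\sum_n a_n\chi_{E_n}$ with disjoint sets $E_n$, we write $|M_f(P+1)^{-1}|^2$ via $(P+1)^{-1}M_{f^2}(P+1)^{-1}$, or equivalently study $M_f(P+1)^{-2}M_f=\sum_{n,k}a_na_kM_{\chi_{E_n}}(P+1)^{-2}M_{\chi_{E_k}}$. The off-diagonal terms for non-adjacent sets have smooth kernels (the kernel of $(P+1)^{-2}$ is smooth away from the diagonal and rapidly decaying), so they lie in $(\Lc_{p,\infty})_0$. For adjacent sets, a boundary-layer argument is needed: use a commutator estimate of the type in Appendix \ref{psido_commutator_appendix}, showing $[(P+1)^{-1},M_\phi]\in \Lc_{\frac{d_{\hom}}{m+1},\infty}\subset(\Lc_{\frac{d_{\hom}}{m},\infty})_0$ locally, and choose smooth partitions. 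The diagonal terms are pairwise orthogonal, so by Lemma \ref{third disjoint lemma} and Lemma \ref{easy_bs_lemma} it suffices to treat a single $M_{\chi_E}(P+1)^{-1}$ and show that its asymptotic constant is $|E|^{m/d_{\hom}}$ times the claimed constant.

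For a single set $E$ we compute via the heat semigroup. The operator $M_{\chi_E}e^{-sP}M_{\chi_E}$ is trace class, with $\mathrm{Tr}(M_{\chi_E}e^{-sP}M_{\chi_E})=|E|\,K_s(1_G)=|E|\,\tau(e^{-sP})$, because the convolution kernel of $e^{-sP}$ evaluated at the identity is exactly $\tau$ by \eqref{eq-tau-lambda-f}. Dilation by \eqref{trace_scaling} gives $\tau(e^{-sP^{\rm top}})=s^{-d_{\hom}/m}\tau(e^{-P^{\rm top}})$, and the lower order terms of $P$ are negligible as $s\to0$. We therefore expect
\[
\mathrm{Tr}(M_{\chi_E}e^{-sP}M_{\chi_E})\sim |E|\,\tau(e^{-P^{\rm top}})s^{-d_{\hom}/m},\qquad s\to0.
\]
The key step is to transfer this heat-trace asymptotic to the operator $M_{\chi_E}e^{-sP}M_{\chi_E}$ versus $(M_{\chi_E}(P+1)^{-2}M_{\chi_E})$. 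We compare $M_\chi g(P)M_\chi$ with $g(M_\chi PM_\chi)$-type objects, or more simply use the Karamata Tauberian theorem applied to $\mathrm{Tr}(e^{-sA})$ where $A$ is the Dirichlet-type restriction. Instead, we plan to prove the convergence of $\mathrm{Tr}((M_\chi (P+1)^{-1}M_\chi)^{z})$ near $z=d_{\hom}/m$, as in \cite{MSZ-stratified-23}, via the Mellin transform $\Gamma(z)^{-1}\int s^{z-1}e^{-s}M_\chi e^{-sP}M_\chi\,ds$. Lieb/Araki--Lieb--Thirring type inequalities bound the defect between $(M_\chi(P+1)^{-1}M_\chi)^z$ and $M_\chi(P+1)^{-z}M_\chi$ modulo lower order. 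The residue is then $|E|\tau(e^{-P^{\rm top}})/\Gamma(d_{\hom}/m)$, and Wiener--Ikehara yields the counting function $N(\lambda)\sim |E|\tau(e^{-P^{\rm top}})\lambda^{d_{\hom}/m}/\Gamma(d_{\hom}/m+1)$. Inverting this gives the stated constant.

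The main obstacle is this last step. Because $M_\chi$ does not commute with $P$, relating $\mathrm{Tr}\,(M_\chi(P+1)^{-1}M_\chi)^z$ to $\mathrm{Tr}\,M_\chi(P+1)^{-z}M_\chi$ with an error that is holomorphic past the critical exponent requires the commutator estimates of Appendix \ref{psido_commutator_appendix}. Since $\chi$ is not smooth, we would first do this with smooth $0\le\phi$ in place of $\chi_E$, and then pass to indicators through the approximation in the first step.
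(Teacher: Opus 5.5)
\textbf{There is a genuine gap.} It sits in your final step, the one you flag as the main obstacle, and that step carries the whole content of the theorem. Wiener--Ikehara (Theorem~\ref{wiener-ikehara}) needs $z\mapsto \mathrm{Tr}(V^z)-\frac{c_V}{z-p}$ to extend continuously to $\{\Re(z)\geqslant p\}$. So, with $B$ a negative power of $P+1$ and smooth $0\leqslant\phi$, you must show that $\mathrm{Tr}((M_\phi BM_\phi)^z)-\mathrm{Tr}(M_{\phi^{2z}}B^z)$ continues holomorphically across the critical line. The second term is explicitly meromorphic by Lemma~\ref{wiener-ikehara compute lemma}.

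Araki--Lieb--Thirring inequalities only compare traces, or products of singular values, at real exponents. They give no analytic continuation in $z$, so they cannot produce this. The paper obtains it from the Sukochev--Zanin result, Theorem~\ref{asterisque key result}. Its hypotheses are checked in Lemma~\ref{asterisque verification lemma} via Lemma~\ref{asterisque pre-verification lemma} and the commutator bounds of Theorem~\ref{asterisque pre-verification lemma 2}.

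Moreover, this works for the order $-1$ operator $B=(P+1)^{-\frac1m}$ with $p=d_{\hom}>2$, not for $(P+1)^{-1}$ at $p=\frac{d_{\hom}}{m}$ as you propose. For $B$ of order $-m$, the operator $B^{q-2}[B,M_{\phi^2}]$ has order $-m(q-1)-1$. This cannot be expected to be trace class for $q$ close to $\frac{d_{\hom}}{m}$ once $m\geqslant 2$, and the requirement $\frac{d_{\hom}}{m}>2$ may fail.

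Passing back from $M_\phi(P+1)^{-\frac1m}M_\phi$ to $M_f(P+1)^{-1}$ is then not a mere inversion. It needs Lemma~\ref{constant asymp aux lemma}, which compares $(M_\phi BM_\phi)^k$ with $M_{\phi^k}B^kM_{\phi^k}$ modulo $(\Lc_{\frac{d_{\hom}}{k},\infty})_0$. It also needs $f$ replaced by $f+\epsilon\psi$, for a cutoff $\psi$, so that $(f+\epsilon\psi)^{\frac1{2m}}$ is smooth.

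Finally, the explicit residue uses homogeneity of $P$ through Lemma~\ref{lem:continuous_limit_formula}. In the zeta-function route you must therefore first replace $P$ by $P^{\mathrm{top}}$, using the resolvent identity, the fact that $P-P^{\mathrm{top}}$ has order $m-1$, and Lemma~\ref{easy_bs_lemma}. Your treatment of lower-order terms appears only in the heat-trace heuristic that you abandon.

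\textbf{The reduction to step functions is circular and does no work.} To treat a single $\chi_E$ you fall back on smooth $\phi$ and approximation. But obtaining the asymptotics of $M_\phi(P+1)^{-1}$ for smooth $\phi\to\chi_E$ already is the theorem for smooth functions, since the same argument applies to any smooth $0\leqslant f$. After that, the decomposition into the $E_n$, the off-diagonal and boundary-layer analysis, and the approximation back to smooth $f$ are all superfluous. Indicators are also exactly where the commutator estimates of Theorem~\ref{asterisque pre-verification lemma 2} are unavailable.

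Your identity $\mathrm{Tr}(M_{\chi_E}e^{-sP}M_{\chi_E})=|E|\tau(e^{-sP})$ is correct. However, $M_{\chi_E}e^{-sP}M_{\chi_E}$ is not a semigroup whose generator is spectrally related to $M_{\chi_E}(P+1)^{-1}M_{\chi_E}$, so no Karamata-type argument applies to it. Work with smooth $f$ from the outset. Once the holomorphy step is supplied as above, your outline becomes the paper's proof.
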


The idea behind the proof is essentially the same as the corresponding result in \cite{MSZ-stratified-23}, specifically Lemma 7.8 there. Essential for stating the theorem is that $M_f(1+P)^{-1}$ belongs to the correct operator ideal, namely $\mathcal{L}_{\frac{d_{\hom}}{m},\infty}.$ {This is proved below in Lemma \ref{asterisque pre-verification lemma} as a consequence of the following Cwikel-type estimate:
\begin{lemma}\label{specific_cwikel_estimates_from_jfa_lemma}
    If $\alpha+\beta>0,$ and $f \in C^\infty_c(G),$ then
\begin{equation}\label{specific_cwikel_estimates_from_jfa}
(1-\Delta)^{-\frac{\alpha}{2v}}M_f(1-\Delta)^{-\frac{\beta}{2v}} \in \mathcal{L}_{\frac{d_{\hom}}{\alpha+\beta},\infty}.
\end{equation}
If $\alpha+\beta=0,$ then 
\[
    (1-\Delta)^{-\frac{\alpha}{2v}}M_f(1-\Delta)^{-\frac{\beta}{2v}}\in \Bc(L_2(G)).
\]
\end{lemma}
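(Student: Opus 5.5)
The plan is to reduce everything to the symmetric case $\alpha=\beta$, handle the remaining cases by commuting the Sobolev weight past $M_f$, and then interpolate.

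\textbf{Case $\alpha+\beta=0$.} Write $\beta=-\alpha$. The operator $(1-\Delta)^{-\frac{\alpha}{2v}}M_f(1-\Delta)^{\frac{\alpha}{2v}}$ is bounded on $L_2(G)$ if and only if $M_f$ is bounded on $W^{-\alpha}_2(G)$. This follows from Lemma \ref{multiplication_lemma}, since $f\in C^\infty_c(G)\subset C^\infty_b(G)$. That settles the boundedness claim.

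\textbf{Case $\alpha+\beta>0$, symmetric part.} First take $\alpha=\beta=\frac{r}{2}$ with $r>0$. Choose $\psi\in C^\infty_c(G)$ with $\psi f=f$. Then
\[
(1-\Delta)^{-\frac{r}{4v}}M_f(1-\Delta)^{-\frac{r}{4v}}=\bigl[(1-\Delta)^{-\frac{r}{4v}}M_{\psi}\bigr]\,M_f\,\bigl[M_{\psi}(1-\Delta)^{-\frac{r}{4v}}\bigr].
\]
By the H\"older inequality \eqref{eq-holder}, it suffices to show $M_\psi(1-\Delta)^{-\frac{r}{4v}}\in\Lc_{\frac{2d_{\hom}}{r},\infty}$. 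This is the Cwikel-type estimate for $M_g(1-\Delta)^{-\frac{s}{2v}}$, which I would take from Appendix \ref{cwikel_appendix} (the extension of \cite[Section 6]{MSZ-stratified-23} to graded groups). For $\frac{d_{\hom}}{s}>2$ it applies to $g\in L_{d_{\hom}/s}$. For small exponents it applies to $g\in\ell_{d_{\hom}/s}(L_q)$; compactly supported smooth $\psi$ lies in every such space.

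The key input behind that appendix is the identity
\[
\|M_g(1-\Delta)^{-\frac{s}{2v}}\|_{\Lc_2}=\|g\|_2\,\|(1-\Delta)^{-\frac{s}{2v}}\|_{L_2(\mathrm{VN}(G))}.
\]
The second factor is computed through the heat kernel of $\Delta$ and the dilation scaling \eqref{trace_scaling}. The operator $(1-\Delta)^{-s/2v}$ lies in $L_{d_{\hom}/s,\infty}(\mathrm{VN}(G))$ because $\tau(\chi_{[0,\lambda]}(-\Delta))=c\lambda^{d_{\hom}/2v}$ by homogeneity.

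\textbf{Case $\alpha+\beta>0$, general $\alpha,\beta$.} Set $r=\alpha+\beta$ and $\theta=\frac{\alpha-\beta}{2}$. Choose $\psi$ as before and write $M_f=M_\psi M_f M_\psi$. Then
\[
(1-\Delta)^{-\frac{\alpha}{2v}}M_f(1-\Delta)^{-\frac{\beta}{2v}}=A\cdot\bigl[(1-\Delta)^{-\frac{r}{4v}}M_{f}(1-\Delta)^{-\frac{r}{4v}}\bigr]\cdot B,
\]
where
\[
A=(1-\Delta)^{-\frac{\theta}{2v}}\,M_{\psi}\,(1-\Delta)^{\frac{\theta}{2v}},\qquad B=(1-\Delta)^{\frac{\theta}{2v}}\,M_{\psi}\,(1-\Delta)^{-\frac{\theta}{2v}}.
\]
Here $\psi$ has been inserted on both sides of $f$. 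The operators $A$ and $B$ are bounded by the $\alpha+\beta=0$ case already proved, since both are of the form $(1-\Delta)^{-\frac{a}{2v}}M_\psi(1-\Delta)^{\frac{a}{2v}}$. The middle factor lies in $\Lc_{\frac{d_{\hom}}{r},\infty}$ by the symmetric case. The claim then follows from \eqref{eq-singular-value-multi}.

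The main obstacle is the Cwikel estimate itself in the regime $\frac{d_{\hom}}{s}\le 2$, where the Hilbert–Schmidt argument does not interpolate directly. There I would follow \cite{MSZ-stratified-23}: tile $G$ using Lemma \ref{quasi_metric_space_construction}, estimate the localized pieces in $\Lc_{p}$ for $p$ below $2$, and sum them with Lemma \ref{first disjoint lemma}. For compactly supported $\psi$ only finitely many tiles occur, so only the local estimate is needed.
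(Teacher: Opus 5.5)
The case $\alpha+\beta=0$ is correct and is exactly what the paper does (Lemma~\ref{multiplication_lemma}).

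\textbf{The reduction to the symmetric case rests on a false identity.} Put $S=(1-\Delta)^{-\frac{r}{4v}}M_f(1-\Delta)^{-\frac{r}{4v}}$. What actually holds is
\[
(1-\Delta)^{-\frac{\alpha}{2v}}M_f(1-\Delta)^{-\frac{\beta}{2v}}=(1-\Delta)^{-\frac{\theta}{2v}}\,S\,(1-\Delta)^{\frac{\theta}{2v}},
\]
which is conjugation by an \emph{unbounded} operator. In your $ASB$, the factor $M_\psi$ sits between $(1-\Delta)^{\frac{\theta}{2v}}$ and $(1-\Delta)^{-\frac{r}{4v}}$, so nothing telescopes. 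For instance, when $\beta=0$ (so $\theta=\frac r2$), $ASB$ collapses (using $\psi f=f$) to $S$ itself, not to $(1-\Delta)^{-\frac{r}{2v}}M_f$. Multiplying on the left and right by bounded order-zero operators can never move Sobolev weight across $M_f$. Moving that weight while keeping control in $\Lc_{\frac{d_{\hom}}{r},\infty}$ is the real content of the lemma. It is needed precisely when one exponent is negative, e.g.\ for the factor $(1-\Delta)^{\frac{d_{\hom}}{2vq}}M_\psi(1-\Delta)^{-\frac{d_{\hom}}{2vp}}$ in Lemma~\ref{rough_specific_cwikel}. (When $0\leqslant\alpha,\beta<\frac{d_{\hom}}{2}$, no reduction is needed: write $(1-\Delta)^{-\frac{\alpha}{2v}}M_\psi\cdot M_f(1-\Delta)^{-\frac{\beta}{2v}}$ and use Theorem~\ref{easy_cwikel} with H\"older.)

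\textbf{The large-order regime is circular.} The small-exponent estimates in Appendix~\ref{cwikel_appendix} are deduced \emph{from} the lemma you are proving. This covers Lemma~\ref{rough_specific_cwikel}, and hence Corollary~\ref{bs_space_corollary} and Lemma~\ref{quantitative_cwikel_estimate}(ii); the same holds in \cite{MSZ-stratified-23}. The tiling only globalizes a local estimate. As you note yourself, for compactly supported $\psi$ the whole content is that local estimate, and you do not supply it.

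\textbf{The missing idea} is the paper's proof, which fixes both problems using only the $p>2$ estimate of Theorem~\ref{easy_cwikel}. Fix $0<\sigma<\frac{d_{\hom}}{4v}$.
\begin{enumerate}
\item Prove $(1-\Delta)^{n}M_f(1-\Delta)^{-n-\sigma}\in\Lc_{\frac{d_{\hom}}{2v\sigma},\infty}$ for integer shifts $n$ by induction, using
$(1-\Delta)^{n+1}M_f=(1-\Delta)^nM_f(1-\Delta)+(1-\Delta)^n[-\Delta,M_f]$.
Here $[-\Delta,M_f]$ is a differential operator of order at most $2v-1$ with $C^\infty_c$ coefficients. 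So the new term is a sum of shifted Cwikel operators $(1-\Delta)^nM_g(1-\Delta)^{-n-\sigma}$ times bounded right-invariant factors.
\item Complex interpolation then gives $(1-\Delta)^{z}M_f(1-\Delta)^{-z-\sigma}\in\Lc_{\frac{d_{\hom}}{2v\sigma},\infty}$ for all $z\in\mathbb{C}$.
\item Write $f=f_1\cdots f_n$ with $f_k\in C^\infty_c(G)$. Then, with the product ordered from left to right,
\[
(1-\Delta)^{z}M_f(1-\Delta)^{-z-n\sigma}=\prod_{k=0}^{n-1}(1-\Delta)^{z+k\sigma}M_{f_{k+1}}(1-\Delta)^{-z-(k+1)\sigma}\in\Lc_{\frac{d_{\hom}}{2vn\sigma},\infty}
\]
by H\"older.
\end{enumerate}
The free shift $z$ gives an arbitrary split of the weight between the two sides, including negative exponents. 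The number of factors $n$ gives arbitrarily large total order, so the exponent can go below $2$ without any circularity.
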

} 
{
This is very similar to \cite[Theorem 5.1 (iii)]{MSZ-stratified-23}, although there the group was stratified and $v=1.$ The proof in the general case is not substantially different, an outline is given in Appendix \ref{cwikel_appendix}.
} 

The following is a special case of Theorem 5.4.2 in \cite{SZ-Asterisque}, combined with \cite[Corollary 5.4.11]{SZ-Asterisque}.
\begin{theorem}\label{asterisque key result} Let $p>2$ and $0\leqslant A,B\in \mathcal{L}_{\infty}  .$ Suppose that
\begin{enumerate}[{\rm (i)}]
\item\label{ast1} $B^pA\in\mathcal{L}_{1,\infty}.$
\item\label{ast2} $B^{q-2}[B,A]\in\mathcal{L}_1$ for every $q>p.$
\item\label{ast3} $A^{\frac12}BA^{\frac12}\in\mathcal{L}_{p,\infty}.$
\item\label{ast4} $[B,A^{\frac12}]\in\mathcal{L}_{\frac{p}{2},\infty}.$
\end{enumerate}
Under these assumptions, for $\Re(z)>p$, $(A^{\frac12}BA^{\frac12})^z$ and $A^zB^z$ are trace class, and the function
$$z\mapsto {\rm Tr}(A^zB^z)-{\rm Tr}((A^{\frac12}BA^{\frac12})^z)$$
admits an analytic continuation to the half-plane $\{\Re(z)>p-1\}.$
\end{theorem}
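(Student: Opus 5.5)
The statement is an abstract trace-ideal result, and I would follow the route of \cite[Section 5.4]{SZ-Asterisque}. The proof splits into three steps: show the two operator families are trace class for $\Re(z)>p$; reduce the comparison to a symmetrized product; and estimate the remaining difference through the commutator hypothesis \eqref{ast4}.

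\textbf{Step 1: trace class for $\Re(z)>p$.} Hypothesis \eqref{ast3} gives $(A^{\frac12}BA^{\frac12})^z\in \mathcal{L}_{p/\Re(z),\infty}\subset\mathcal{L}_1$. The bound is locally uniform in $z$, so $z\mapsto \mathrm{Tr}((A^{\frac12}BA^{\frac12})^z)$ is holomorphic there.

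For $A^zB^z$ I would write $A^zB^z=A^{z-1}\cdot A\cdot B^{p}\cdot B^{z-p}$ when $\Re(z)>p$. Hypothesis \eqref{ast1} says $B^pA\in\mathcal{L}_{1,\infty}$, but we need trace class, not just $\mathcal{L}_{1,\infty}$. To get it, I would combine \eqref{ast1} with \eqref{ast2}: commute a fractional power $B^{\epsilon}$, with $\epsilon=\Re(z)-p$, across $A$. Hypothesis \eqref{ast2} with $q=z$ makes the resulting commutator term trace class. The main term then behaves like $B^{z}A$, which is trace class since $B^{\epsilon}\cdot B^pA$ gains decay.

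Using \eqref{ast2} I would also justify the cyclicity
\[
\mathrm{Tr}(A^zB^z)=\mathrm{Tr}(A^{\frac z2}B^zA^{\frac z2}),\qquad \Re(z)>p,
\]
by moving $A^{z/2}$ across $B^z$ and showing the commutator contributes a trace-class error.

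\textbf{Step 2: comparing with the sandwiched power.} It remains to show that
\[
D(z):=A^{\frac z2}B^zA^{\frac z2}-(A^{\frac12}BA^{\frac12})^z
\]
is in $\mathcal{L}_1$ for $\Re(z)>p-1$, with locally uniform norm bounds, and is holomorphic in $z$. Given this, the difference of traces is holomorphic on $\Re(z)>p$ and extends to $\Re(z)>p-1$ as $\mathrm{Tr}(D(z))$.

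The heuristic comes from integer $z=n$. In that case $(A^{\frac12}BA^{\frac12})^n$ and $A^{\frac n2}B^nA^{\frac n2}$ differ by a telescoping sum. Each term has one commutator $[B,A^{\frac12}]\in\mathcal{L}_{p/2,\infty}$, from \eqref{ast4}, and about $n-1$ further factors, which together behave like $(A^{\frac12}BA^{\frac12})^{n-1}\in\mathcal{L}_{p/(n-1),\infty}$ by \eqref{ast3}. Hölder's inequality \eqref{eq-holder} places each term in $\mathcal{L}_{r,\infty}$ with
\[
\frac1r=\frac{n-1}{p}+\frac2p=\frac{n+1}{p}.
\]
This lies inside $\mathcal{L}_1$ exactly when $n>p-1$. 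That is precisely the claimed threshold.

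\textbf{Step 3: non-integer complex $z$ (the main obstacle).} For non-integer complex $z$ there is no finite telescoping. I would write $z=n+w$ with $n$ an integer and $0\leqslant\Re(w)<1$, and treat the fractional part with the integral representation
\[
x^{w}=\frac{\sin(\pi w)}{\pi}\int_0^\infty \lambda^{w-1}\,x(x+\lambda)^{-1}\,d\lambda.
\]
Differences of fractional powers then become resolvent differences. Each resolvent difference produces a commutator with $A^{\frac12}$ via the identity
\[
(X+\lambda)^{-1}-(Y+\lambda)^{-1}=(X+\lambda)^{-1}(Y-X)(Y+\lambda)^{-1}.
\]
The weak-ideal Hölder bookkeeping from Step 2 has to be made uniform in $\lambda$, and it must keep the gain of one extra factor of $\mathcal{L}_{p,\infty}$ supplied by \eqref{ast4}. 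I expect this uniformity in $\lambda$, along with the integrability near $\lambda=0$ and $\lambda=\infty$, to be the delicate part.

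If direct estimation fails, I would fall back on double operator integral estimates for $x\mapsto x^w$, which are operator-Lipschitz in weak Schatten ideals of index above $1$ after splitting off an integer power.

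Holomorphy of $\mathrm{Tr}(D(z))$ would then follow from locally uniform $\mathcal{L}_1$ bounds together with pointwise holomorphy in the weak operator topology.
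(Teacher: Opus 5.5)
Your integer-exponent count in Step 2 gives the right threshold, but the proposal does not prove the statement. The two places where the real work lies are handled by an argument that fails (Step 1) and by a plan that is not carried out and, as sketched, cannot work (Step 3). The paper gives no proof of its own; it imports the result from \cite[Theorem 5.4.2, Corollary 5.4.11]{SZ-Asterisque}.

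In Step 1, multiplying $B^pA\in\Lc_{1,\infty}$ by the bounded operator $B^{\epsilon}$ leaves you in $\Lc_{1,\infty}$. Nothing in (i)--(iv) lets a bare power of $B$ improve the ideal; in the application $B^{\epsilon}=(1+P)^{-\epsilon/m}$ is not even compact. Moreover, (ii) is a statement about $B^{q-2}[B,A]$ for real $q$, not about $[B^{\epsilon},A]$. So trace-classness of $A^zB^z$ is not established. The same difficulty is hidden in your identity $\mathrm{Tr}(A^zB^z)=\mathrm{Tr}(A^{z/2}B^zA^{z/2})$. Cyclicity is free by Lidskii once both products are trace class. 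But showing $A^zB^z-A^{z/2}B^zA^{z/2}=A^{z/2}[A^{z/2},B^z]\in\Lc_1$ means controlling commutators with complex powers of $A$ next to powers of $B$ that are not flanked by $A$. That is exactly where (ii) is needed, after a nontrivial transfer from $[B,A]$ to $[B,A^{w}]$. Your symmetrization pushes the whole asymmetric difficulty into Step 1 and then leaves it unaddressed; (ii) never enters your comparison where it matters.

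A smaller point: the words in your telescoping are not powers of $A^{1/2}BA^{1/2}$. Already for $n=2$ the term $AB\,[B,A^{1/2}]\,A^{1/2}$ appears. Hypothesis (iii) only gives $AB\in\Lc_{2p,\infty}$, hence $\Lc_{2p/5,\infty}$ overall. You need (i) with Lemma \ref{alt_lemma}: $BA^{1/p}\in\Lc_{p,\infty}$, so $AB=A^{1-1/p}(BA^{1/p})^{*}\in\Lc_{p,\infty}$.

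Step 3 is where the theorem actually lives, and the resolvent route does not fit it. The operators $A^{z/2}B^zA^{z/2}$ and $(A^{1/2}BA^{1/2})^z$ are not $f(X)$ and $f(Y)$ for one function and two operators, so there is no resolvent difference to expand. Suppose you represent $B^{w}$ and $V^{w}$, with $V=A^{1/2}BA^{1/2}$, by their own Stieltjes integrals. The integrands $A^{w/2}B(B+\lambda)^{-1}A^{w/2}$ and $V(V+\lambda)^{-1}$ then differ by a non-small amount at each fixed $\lambda$, even when $A$ and $B$ commute. They agree only after the $A$-dependent rescaling $\lambda\mapsto\lambda/a$, which has no noncommutative counterpart.

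The fallback via operator-Lipschitz estimates only yields membership in $\Lc_{r,\infty}$ with $r>1$, which is not contained in $\Lc_1$, and there is no pair $X,Y$ to apply it to anyway. For $p-1<\Re z\leqslant p$ you need the difference in $\Lc_1$; its natural home is $\Lc_{p/(\Re z+1),\infty}$, of index $<1$.

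The missing idea is an exact identity expressing the difference through commutators times bounded factors, with a Schwartz weight. This is the formula the paper borrows from \cite[Theorem 5.2.1]{SZ-Asterisque} in the proof of Theorem \ref{asterisque_integral_formula}, with $z$ in place of $q$:
\[
B^zA^z-(A^{\frac12}BA^{\frac12})^z=T_z(0)-\int_{-\infty}^{\infty}\widehat{g}_z(s)T_z(s)\,ds,\qquad T_z(s)=B^{z-1}[B,A^{z-\frac12+is}]A^{\frac12}V^{-is}+B^{is}[B,A^{\frac12+is}]A^{\frac12}V^{z-1-is}.
\]
The second term lies in $\Lc_{p/(\Re z+1),\infty}$ by (iii) and (iv) (via Lemma \ref{PS_Lipschitz_lemma}), with polynomial growth in $s$. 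This gives exactly the threshold $\Re z>p-1$. The first term has $B^{z-1}$ standing alone and is handled by (ii) with $q=\Re z+1>p$, after the transfer to powers of $A$.

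Trace-classness of $B^zA^z$ for $\Re z>p$, and hence of $A^zB^z=(B^{\bar z}A^{\bar z})^{*}$, then follows from this identity rather than being proved in advance. Finally, $\mathrm{Tr}(A^zB^z)=\mathrm{Tr}(B^zA^z)$ by Lidskii.
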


\begin{lemma}\label{easy complex powers lemma} Let $P\geqslant 0$ be an uniformly Rockland order $m$ differential operator with constant coefficients. For every $z$ with $\Re(z)\geqslant 0,$ the mapping $(P+1)^{-z}:L_2(G)\to W^{m\Re(z)}_2(G)$ is bounded.
\end{lemma}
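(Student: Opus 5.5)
The plan is to reduce to the Sobolev estimate in Theorem \ref{general elliptic estimate theorem} by writing $(P+1)^{-z}$ as a product of a negative integer power of $(P+1)$ and a unitary (imaginary) power. Write $z = k + \theta + i y$ with $k=\lfloor \Re(z)\rfloor\in\mathbb{Z}_{\geqslant 0}$, $0\leqslant\theta<1$ and $y\in\mathbb{R}$. Since $P\geqslant 0$ is self-adjoint with domain $W^m_2(G)$ (Theorem \ref{elliptic regularity theorem}), the operators $(P+1)^{-iy}$ are unitary on $L_2(G)$ by the functional calculus. Moreover,
\[
(P+1)^{-z} = (P+1)^{-k-\theta}(P+1)^{-iy}.
\]
It therefore suffices to treat real $z=s\geqslant 0$.

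\textbf{Integer exponents.} For $s=k\in\mathbb{Z}_{\geqslant0}$ we use induction on $k$, applying Theorem \ref{general elliptic estimate theorem} with Sobolev index $(k-1)m$. This gives
\[
\|u\|_{W^{km}_2}\leqslant c\,\|(1+P)u\|_{W^{(k-1)m}_2},\qquad u\in\Sc(G).
\]
To apply this to $u=(P+1)^{-k}w$, we need it for $u$ in the appropriate domain rather than only for Schwartz functions. We handle this by density: $\Sc(G)$ is dense in $W^{km}_2(G)$, and $(1+P)$ maps $W^{km}_2$ to $W^{(k-1)m}_2$ boundedly by Lemma \ref{differential_operators_are_bounded}, so the inequality extends to $u\in W^{km}_2(G)$. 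We also need $(P+1)^{-k}w\in W^{km}_2(G)$ for $w\in L_2$. This follows inductively from elliptic regularity applied to the uniformly Rockland operator $(1+P)^k$, which is uniformly Rockland by Lemma \ref{ellipticity of the product} and self-adjoint, so Theorem \ref{elliptic regularity theorem} applies to it. Given membership, iterating the estimate yields
\[
\|(P+1)^{-k}w\|_{W^{km}_2}\lesssim\|w\|_{L_2}.
\]

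\textbf{Fractional exponents.} For $s=k+\theta$ we interpolate. The analytic family $z\mapsto (P+1)^{-z}$ on the strip $k\leqslant\Re z\leqslant k+1$ is bounded $L_2\to W^{km}_2$ on the left edge, with norm $\|(P+1)^{-k}\|\cdot\|(P+1)^{-iy}\|\leqslant C$ uniformly in $y$ because the imaginary powers are unitary. On the right edge it is bounded $L_2\to W^{(k+1)m}_2$, again uniformly in $y$. Stein's complex interpolation theorem, together with \eqref{interpolation}, then gives boundedness $L_2\to W^{(k+\theta)m}_2$ at $z=k+\theta$.

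An alternative that avoids Stein interpolation is to compare operators directly. Writing the Sobolev norm as $\|(1-\Delta)^{sm/2v}u\|$, one applies Heinz–Kato/Löwner operator monotonicity to the comparison $(1-\Delta)^{km/v}\lesssim(P+1)^{2k}$ obtained above, which gives the fractional powers immediately.

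\textbf{Main obstacle.} The expected difficulty is the domain and density bookkeeping: extending the a priori estimate from $\Sc(G)$ to the ranges of $(P+1)^{-k}$. If the regularity route proves awkward, the fallback is to use the operator-monotonicity formulation. There we need only the quadratic form inequality $\|(1-\Delta)^{km/2v}u\|^2\lesssim\|(P+1)^ku\|^2$ on a core, namely $\Sc(G)$. The fact that $\Sc(G)$ is a core for $(P+1)^k$ follows from Theorem \ref{elliptic regularity theorem} applied to $(P+1)^k$.
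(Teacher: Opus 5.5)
Your proposal is correct and follows essentially the paper's route: integer real parts are handled via Lemma \ref{ellipticity of the product} and Theorem \ref{elliptic regularity theorem} applied to $(P+1)^n$, the imaginary powers are unitary, and the fractional case comes from interpolation of analytic families (the paper cites \cite[Theorem 4.2]{CCRSW_interpolation_1982}). Your a priori estimate/density bookkeeping just makes the integer bound explicit, and your L\"owner--Heinz alternative for the fractional step is also valid.
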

\begin{proof} If $\Re(z)=0,$ then the assertion is a consequence of the self-adjointness of $P$ and the functional calculus. If $\Re(z)=n\in\mathbb{N},$ then 
$$(P+1)^{-z}=(P+1)^{-n}\circ (P+1)^{-i\Im(z)}.$$
Since $P$ is uniformly Rockland, it follows from 
 Lemma \ref{ellipticity of the product} 
that so is $(P+1)^n$. 
Hence, by Theorem \ref{elliptic regularity theorem},  $(P+1)^n$ is self-adjoint with domain $W^{mn}_2(G).$ Thus, $(P+1)^{-n}:L_2(G)\to W^{mn}_2(G)$ is bounded. Since $(P+1)^{-i\Im(z)}:L_2(G)\to L_2(G),$ is unitary, the assertion for $\Re(z)=n$ follows. The assertion in full generality follows by interpolation of analytic families, see \cite[Theorem 4.2]{CCRSW_interpolation_1982}.
\end{proof}

\begin{lemma}\label{asterisque pre-verification lemma} Let $P\geqslant 0$ be an uniformly Rockland order $m$ differential operator with constant coefficients. Let $Q$ be an order $n$ differential operator with compactly supported coefficients. 
For any $\alpha, \beta \geqslant 0$, such that $m(\alpha + \beta) - n >0$, we have
$$(P+1)^{-\alpha}Q(P+1)^{-\beta}\in\mathcal{L}_{\frac{d_{\hom}}{m(\alpha+\beta)-n},\infty}.$$
\end{lemma}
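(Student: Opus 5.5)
The plan is to reduce to the Cwikel-type estimate of Lemma \ref{specific_cwikel_estimates_from_jfa_lemma} by factoring through Sobolev spaces. Write $Q=\sum_{\len(\gamma)\leqslant n} M_{q_\gamma}X^{\gamma}$ with $q_\gamma\in C^\infty_c(G)$. By linearity and the quasi-triangle inequality in $\Lc_{r,\infty}$, it suffices to treat a single term $(P+1)^{-\alpha}M_{q}X^{\gamma}(P+1)^{-\beta}$ with $q\in C^\infty_c(G)$ and $\len(\gamma)\leqslant n$. Choose $\phi\in C^\infty_c(G)$ with $\phi q=q$, so that $M_q=M_\phi M_q$. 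Set $a=m\alpha$, $b=m\beta-n$, so $a+b=m(\alpha+\beta)-n>0$.

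The key factorisation is
\[
(P+1)^{-\alpha}M_qX^{\gamma}(P+1)^{-\beta}
=\bigl[(P+1)^{-\alpha}(1-\Delta)^{\frac{a}{2v}}\bigr]\cdot\bigl[(1-\Delta)^{-\frac{a}{2v}}M_\phi(1-\Delta)^{-\frac{b}{2v}}\bigr]\cdot\bigl[(1-\Delta)^{\frac{b}{2v}}M_qX^{\gamma}(P+1)^{-\beta}\bigr].
\]
The middle factor lies in $\Lc_{\frac{d_{\hom}}{a+b},\infty}$ by Lemma \ref{specific_cwikel_estimates_from_jfa_lemma} (if $b<0$ this is still allowed, as only $a+b>0$ is required there). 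By \eqref{eq-singular-value-multi}, it remains to check the outer factors are bounded. For the first one, take adjoints: $(1-\Delta)^{\frac{a}{2v}}(P+1)^{-\alpha}$ is bounded on $L_2$ since $(P+1)^{-\alpha}:L_2\to W^{m\alpha}_2=W^a_2$ by Lemma \ref{easy complex powers lemma}, and $(1-\Delta)^{\frac{a}{2v}}:W^a_2\to L_2$ is bounded by definition of the Sobolev norm \eqref{eq-sobolev-norm}; $(P+1)^{-\alpha}$ is self-adjoint so the adjoint is the desired factor. For the third factor, Lemma \ref{easy complex powers lemma} gives $(P+1)^{-\beta}:L_2\to W^{m\beta}_2$, then Lemma \ref{lemma-X_alpha-bdd} gives $X^\gamma:W^{m\beta}_2\to W^{m\beta-\len(\gamma)}_2\subseteq W^{b}_2$, Lemma \ref{multiplication_lemma} gives $M_q$ bounded on $W^b_2$, and finally $(1-\Delta)^{\frac{b}{2v}}:W^b_2\to L_2$ is bounded.

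The one subtlety, which I expect to be the main (mild) obstacle, is justifying these compositions rigorously on a dense domain and making sure that the Sobolev scale and the functional calculus of $1-\Delta$ interact as claimed for arbitrary real (possibly negative) $b$; this is fine since $(1-\Delta)^{s/2v}$ is by definition an isometry $W^s_2\to L_2$ for every real $s$, and all identities can be verified on $\Sc(G)$, which is dense and invariant. One should also note that in the edge case where $m\beta<n$ the factor $X^\gamma(P+1)^{-\beta}$ only maps into a negative-order Sobolev space, but the argument above handles this uniformly. Combining the three factors gives the claimed membership in $\Lc_{\frac{d_{\hom}}{m(\alpha+\beta)-n},\infty}$.
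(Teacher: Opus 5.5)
Your proposal is correct and follows the paper's proof essentially verbatim: it uses the same three-factor splitting through $(1-\Delta)^{\frac{m\alpha}{2v}}$ and $(1-\Delta)^{\frac{m\beta-n}{2v}}$, with the middle factor handled by Lemma \ref{specific_cwikel_estimates_from_jfa_lemma} and the outer factors by Lemmas \ref{easy complex powers lemma} and \ref{lemma-X_alpha-bdd}. The differences are cosmetic: the cutoff $\phi$ is unnecessary, since the paper puts $M_{q}$ itself in the middle factor and so does not need Lemma \ref{multiplication_lemma}; on the other hand, your use of the inclusion $W^{m\beta-\len(\gamma)}_2\subseteq W^{m\beta-n}_2$ for words with $\len(\gamma)<n$ is slightly more careful than the paper's reduction to $\len(\alpha)=n$.
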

\begin{proof} Without loss of generality, we assume $Q$ is monomial, that is, $Q = M_{a_{\alpha}}X^{\alpha}$ with $\len(\alpha) = n$ and $a_{\alpha}\in C_c^{\infty}(G)$. 
By Lemma \ref{easy complex powers lemma}, $(P+1)^{-\beta}:L_2(G)\to W^{m\beta}_2(G)$ is a bounded mapping. 
By Lemma \ref{lemma-X_alpha-bdd}, $X^{\alpha}: W^{m\beta}_2(G)\to W^{m\beta-n}_2(G)$ is bounded.
Thus, $(1-\Delta)^{\frac{m\beta-n}{{ 2v}}} X^{\alpha} (P+1)^{-\beta}\colon L_2(G) \to L_2(G) $ is bounded.
For the same reason, $(1-\Delta)^{\frac{m\alpha}{{ 2v}}}(P+1)^{-\alpha}:L_2(G)\to L_2(G)$ is bounded,  due to the self-adjointness of $P$ and $\Delta$, we obtain that $(P+1)^{-\alpha}(1-\Delta)^{\frac{m\alpha}{{ 2v}}}$ {has a bounded extension}. We now write
\begin{multline*}
(P+1)^{-\alpha}Q(P+1)^{-\beta}\\
=(P+1)^{-\alpha}(1-\Delta)^{\frac{m\alpha}{{ 2v}}}
\cdot (1-\Delta)^{-\frac{m\alpha}{{ 2v}}}M_{a_{\alpha}}(1-\Delta)^{-\frac{m\beta-n}{{ 2v}}} 
\cdot (1-\Delta)^{\frac{m\beta-n}{{ 2v}}} X^{\alpha} (P+1)^{-\beta}.
\end{multline*}
Taking into account that $m(\alpha+\beta)-n>0,$ we apply Lemma \ref{specific_cwikel_estimates_from_jfa_lemma} to conclude that the second factor, $(1-\Delta)^{-\frac{m\alpha}{{ 2v}}}M_{a_{\alpha}}(1-\Delta)^{-\frac{m\beta-n}{{ 2v}}},$ belongs to $\mathcal{L}_{\frac{d_{\hom}}{m(\alpha+\beta)-n},\infty}$. The other two factors are already proved to be bounded. By H\"older's inequality  \eqref{eq-holder}, we prove the assertion.
\end{proof}

The following lemma is nothing but a minor adaptation of the results of \cite[Section 5]{MSZ-stratified-23}.
\begin{theorem}\label{asterisque pre-verification lemma 2}
Let $P\geqslant 0$ be an uniformly Rockland order $m$ differential operator with constant coefficients, let $f \in C^\infty_c(G)$ and let $\alpha,\beta,\gamma\in \mathbb{R}.$ 
    \begin{enumerate}[\rm (i)]
        \item{} If $\alpha+\gamma+1=\beta,$ then the operator
        \[
        (1+P)^{-\frac{\alpha}{m}}[(1+P)^{\frac{\beta}{m}},M_f](1+P)^{-\frac{\gamma}{m}}
        \]
        is bounded on $L_2(G).$
        \item{} If $\alpha+\gamma+1>\beta,$ then 
        \[
        (1+P)^{-\frac{\alpha}{m}}[(1+P)^{\frac{\beta}{m}},M_f](1+P)^{-\frac{\gamma}{m}} \in \mathcal{L}_{\frac{d_{\hom}}{\alpha+\gamma+1-\beta},\infty}.
        \]
    \end{enumerate}
{All the operators above are \emph{a priori} well-defined as endomorphisms of the Schwartz space $\Sc(G),$ but in fact have bounded extension to $L_2(G).$ Without ambiguity, we use the same notation to denote their bounded extension.}
\end{theorem}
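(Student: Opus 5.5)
We would follow the strategy of \cite[Section 5]{MSZ-stratified-23}, replacing the sub-Laplacian by $P$ and using the elliptic estimates of Theorem \ref{general elliptic estimate theorem} and Lemma \ref{easy complex powers lemma} to move between powers of $1+P$ and Sobolev norms. Write $\Lambda=(1+P)^{1/m}$, so that $\Lambda^{s}:W^{t}_2(G)\to W^{t-s}_2(G)$ is an isomorphism for all real $s,t$. For $s\in m\mathbb{Z}$ this comes from Theorem \ref{general elliptic estimate theorem} and Lemma \ref{differential_operators_are_bounded}. For general $s$ we use complex interpolation (Lemma \ref{easy complex powers lemma} and its dual form). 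The first step treats exponents $\beta\in m\mathbb{Z}_{>0}$. There $[(1+P)^{k},M_f]$ is, by the Leibniz rule, a differential operator of order $mk-1$ with coefficients in $C^\infty_c(G)$: the top-order part cancels because $P$ has constant coefficients and $X_j M_f=M_fX_j+M_{X_jf}$ lowers the weighted order by at least one. Part (i) then follows from the Sobolev mapping properties. Part (ii) follows from Lemma \ref{asterisque pre-verification lemma} with $n=\beta-1$ and exponents $\alpha/m,\gamma/m$; when one of these is negative, we first absorb it by writing the commutator as a sum of terms $M_{a}X^{\alpha'}$ and redistributing powers.

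Next we pass to general real $\beta$ through a resolvent integral. For $0<\beta<m$ (so $\theta=\beta/m\in(0,1)$),
\[
[(1+P)^{\theta},M_f]=\frac{\sin(\pi\theta)}{\pi}\int_0^\infty \lambda^{\theta}(\lambda+1+P)^{-1}[P,M_f](\lambda+1+P)^{-1}\,d\lambda .
\]
The integrand is handled by the first step: $[P,M_f]$ has order $m-1$ with compactly supported coefficients. The task is then to bound uniformly in $\lambda$
\[
\|\Lambda^{-\alpha}(\lambda+1+P)^{-1}\Lambda^{a}\|_{\infty}\lesssim(1+\lambda)^{-(1-\epsilon_1)},
\]
and likewise on the right. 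We choose the split of the derivatives of $[P,M_f]$ between the two sides so that the $\lambda$-integral converges. Integrability is exactly the statement that the total homogeneity is $\alpha+\gamma+1-\beta\geqslant 0$. In case (ii) we also apply the weak-Schatten quasi-norm estimate of Lemma \ref{asterisque pre-verification lemma} to the middle factor. A general $\beta$ is reduced to $\beta=km+\theta m$ by writing $(1+P)^{\beta/m}=(1+P)^{k}(1+P)^{\theta}$ and using the Leibniz rule for commutators, and negative $\beta$ is reduced to positive $\beta$ via $[A^{-1},M_f]=-A^{-1}[A,M_f]A^{-1}$.

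We expect the main obstacle to be the quasi-Banach setting in case (ii) when $\frac{d_{\hom}}{\alpha+\gamma+1-\beta}\leqslant 1$. Integrals do not converge in $\mathcal{L}_{p,\infty}$ for $p\leqslant1$, since the quasi-norm is not a norm. Our first attempt is to avoid the issue by factorization: peel off enough bounded factors of the form $\Lambda^{-s}(1-\Delta)^{s/2v}$, together with a compactly supported cutoff $\psi$ with $\psi f=f$, so that the operator becomes a product of several Cwikel factors of the form in Lemma \ref{specific_cwikel_estimates_from_jfa_lemma}, each in some $\mathcal{L}_{p_i,\infty}$ with $p_i>1$ where integration is legitimate. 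Hölder's inequality \eqref{eq-holder} then combines the factors. Commuting the cutoff $\psi$ past $\Lambda^{s}$ produces new commutators of strictly lower total order, so an induction on $\lceil \beta-\alpha-\gamma\rceil$ closes the argument. The base case of this induction is part (i), which involves only bounded operators.
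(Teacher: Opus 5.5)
Your integer case $\beta\in m\mathbb{Z}$, the reduction of general $\beta$ to $\beta=km+\theta m$, and the passage to negative $\beta$ are fine. The gap is in the step for $0<\beta<m$, and it sits exactly where the theorem is sharp.

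Sandwiching your resolvent formula gives the integrand
\[
\lambda^{\theta}\,\Lambda^{a-\alpha}(\lambda+\Lambda^m)^{-1}\cdot \Lambda^{-a}[P,M_f]\Lambda^{-b}\cdot \Lambda^{b-\gamma}(\lambda+\Lambda^m)^{-1}.
\]
The middle factor is bounded only if $a+b\geqslant m-1$. On the other hand, for large $\lambda$ one has $\sup_{x\geqslant 1}x^{c}(\lambda+x^m)^{-1}\gtrsim\lambda^{c/m-1}$ for every real $c$, with equivalence when $0\leqslant c\leqslant m$. So the best slice-wise operator-norm bound is $O(\lambda^{(\beta-1-\alpha-\gamma)/m-1})$. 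No way of splitting the derivatives between the two sides improves this, because the total homogeneity is fixed.

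In the critical case $\alpha+\gamma+1=\beta$ the integrand is therefore of size $\lambda^{-1}$, and the integral diverges logarithmically. Your assertion that integrability is equivalent to $\alpha+\gamma+1-\beta\geqslant 0$ is false at equality, so part (i) is not proved for $\beta\notin m\mathbb{Z}$.

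The same computation defeats the sharp form of (ii). Put the middle factor in $\Lc_{d_{\hom}/s,\infty}$ with $s=a+b+1-m$; the integrand is then $O(\lambda^{(\beta+s-1-\alpha-\gamma)/m-1})$, which is integrable only for $s<\alpha+\gamma+1-\beta$. So the real obstruction is this logarithmic divergence, which is present for every $p$. It is not the failure of the triangle inequality in $\Lc_{p,\infty}$ for $p\leqslant 1$. Moreover, your cutoff induction for (ii) takes as its base case precisely the critical bound that Step 2 does not deliver.

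The missing ingredient is a bound on the critical commutator that does not rely on absolute convergence. The paper (Appendix \ref{psido_commutator_appendix}) obtains it from the $\beta=m$ case by a double operator integral. With $B=\Lambda$,
\[
B^{-\alpha}[B^{\beta},A]B^{-\gamma}=T^{B,B}_{\phi_{\beta,m}}\big(B^{\frac{\beta-m}{2}-\alpha}[B^m,A]B^{\frac{\beta-m}{2}-\gamma}\big),\qquad \phi_{\beta,m}(e^t\mu,\mu)=\frac{\sinh(\beta t/2)}{\sinh(mt/2)}.
\]
Since $\phi_{\beta,m}(e^t\mu,\mu)$ is Schwartz in $t$, $T^{B,B}_{\phi_{\beta,m}}$ is an $L_1$-average of the conjugations $X\mapsto B^{is}XB^{-is}$, hence bounded on $\mathcal{B}(H)$.

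Alternatively, you could rescue your integral by a Cauchy--Schwarz square-function estimate with the same symmetric split $a-\alpha=b-\gamma=\frac{m-\beta}{2}$. The spectral theorem gives $\int_0^\infty\lambda^{\theta}\|\Lambda^{\frac{m-\beta}{2}}(\lambda+\Lambda^m)^{-1}u\|^2\,d\lambda=C_\theta\|u\|^2$, which is exactly what is needed.

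Once (i) holds for all real $\alpha,\gamma$, part (ii) requires no integration and no induction. Write $f=f_1f_2$; the Leibniz rule gives
\[
\Lambda^{-\alpha}[\Lambda^{\beta},M_f]\Lambda^{-\gamma}=\Lambda^{-\alpha}M_{f_1}\Lambda^{\beta-1-\gamma}\cdot\Lambda^{\gamma+1-\beta}[\Lambda^{\beta},M_{f_2}]\Lambda^{-\gamma}+\Lambda^{-\alpha}[\Lambda^{\beta},M_{f_1}]\Lambda^{\alpha+1-\beta}\cdot\Lambda^{\beta-1-\alpha}M_{f_2}\Lambda^{-\gamma}.
\]
Each term is a Cwikel factor in $\Lc_{\frac{d_{\hom}}{\alpha+\gamma+1-\beta},\infty}$ times a bounded critical commutator.
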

\begin{proof}
This can be proved by an almost verbatim repetition of the arguments of \cite[Section 5]{MSZ-stratified-23}, which proved the same assertions with $m=2$ and $P=-\Delta.$ Making minor modifications to take care of the case when $m>2,$ we can follow precisely the same arguments to conclude the result. {The argument is supplied in more detail, in wider generality, in Appendix \ref{psido_commutator_appendix}.}
\end{proof}

\begin{lemma}\label{asterisque verification lemma} Let $P\geqslant 0$ be an uniformly Rockland order $m$ differential operator with constant coefficients. Let $0\leqslant f\in C^{\infty}_c(G).$ If $d_{\hom}>2,$ then the assumptions in Theorem \ref{asterisque key result} are met for $p=d_{\hom}$, $A=M_{f^2}$, and $B=(P+1)^{-\frac{1}{m}}.$
\end{lemma}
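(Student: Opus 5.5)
We verify the four hypotheses of Theorem \ref{asterisque key result} in turn, with $p=d_{\hom}>2$, $A=M_{f^2}$ and $B=(P+1)^{-\frac1m}$. Both operators are positive and bounded: $A\geqslant 0$ because $f^2\geqslant 0$ is bounded, and $0\leqslant B\leqslant 1$ because $P\geqslant 0$ is self-adjoint by Theorem \ref{elliptic regularity theorem}. Each condition is then an instance of Lemma \ref{asterisque pre-verification lemma} or of Theorem \ref{asterisque pre-verification lemma 2}, with the exponents chosen appropriately. Throughout we use that $A^{\frac12}=M_f$, since $f\geqslant 0$.

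For \eqref{ast1}, we have $B^pA=(P+1)^{-\frac{d_{\hom}}{m}}M_{f^2}$. We apply Lemma \ref{asterisque pre-verification lemma} with $Q=M_{f^2}$ (so $n=0$), $\alpha=\frac{d_{\hom}}{m}$ and $\beta=0$. This gives membership in $\mathcal{L}_{\frac{d_{\hom}}{d_{\hom}},\infty}=\mathcal{L}_{1,\infty}$. For \eqref{ast3}, we have $A^{\frac12}BA^{\frac12}=M_f(P+1)^{-\frac1m}M_f$. Lemma \ref{asterisque pre-verification lemma} with $\alpha=0$, $\beta=\frac1m$ and $Q=M_f$ gives $(P+1)^{-\frac1m}M_f\in\mathcal{L}_{d_{\hom},\infty}$. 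Multiplying on the left by the bounded operator $M_f$ preserves this, by \eqref{eq-singular-value-multi}. For \eqref{ast4}, we have $[B,A^{\frac12}]=[(P+1)^{-\frac1m},M_f]$. Theorem \ref{asterisque pre-verification lemma 2}(ii) with $\alpha=\gamma=0$ and $\beta=-1$ gives $\alpha+\gamma+1-\beta=2$. Hence this commutator lies in $\mathcal{L}_{\frac{d_{\hom}}{2},\infty}=\mathcal{L}_{\frac p2,\infty}$.

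Condition \eqref{ast2} is where the real work lies. We need
\[
B^{q-2}[B,A]=(P+1)^{-\frac{q-2}{m}}[(P+1)^{-\frac1m},M_{f^2}]\in\mathcal{L}_1
\]
for every $q>d_{\hom}$. Theorem \ref{asterisque pre-verification lemma 2}(ii) with $\alpha=q-2$, $\gamma=0$ and $\beta=-1$ gives $\alpha+\gamma+1-\beta=q$. This places the operator in $\mathcal{L}_{\frac{d_{\hom}}{q},\infty}$. Since $\frac{d_{\hom}}{q}<1$, Remark \ref{remark-inclusion-Lpq} gives $\mathcal{L}_{\frac{d_{\hom}}{q},\infty}\subset\mathcal{L}_1$, which proves the claim.

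The main subtlety is that $q-2$ and $-1$ are real exponents, not integers. We must therefore check that the operators obtained from Theorem \ref{asterisque pre-verification lemma 2}, which are a priori defined only on $\Sc(G)$ and then extended by continuity, coincide with the products of the bounded operators appearing in Theorem \ref{asterisque key result}. For this we note that $(P+1)^{-s}$ is bounded and, by Lemma \ref{easy complex powers lemma}, maps $L_2(G)$ into Sobolev spaces. Hence on the dense subspace $\Sc(G)$ the two operators agree: multiplication by $f\in C_c^\infty(G)$ preserves $\Sc(G)$, and the identities hold there by the functional calculus. Since both sides are bounded, they agree on all of $L_2(G)$, and the proof is complete.
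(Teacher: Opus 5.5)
Your proposal is correct and follows the paper's proof: conditions \eqref{ast1} and \eqref{ast3} come from Lemma \ref{asterisque pre-verification lemma}, and conditions \eqref{ast2} and \eqref{ast4} come from Theorem \ref{asterisque pre-verification lemma 2}(ii) with exactly the exponents you chose. One small slip: in \eqref{ast3} the choice $\alpha=0$, $\beta=\frac1m$ yields $M_f(P+1)^{-\frac1m}$ rather than $(P+1)^{-\frac1m}M_f$, so you should multiply by $M_f$ on the right instead, or take $\alpha=\frac1m$, $\beta=0$ as the paper does.
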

\begin{proof} 
Using Lemma \ref{asterisque pre-verification lemma} with $Q=M_{f^2},$ $\alpha=\frac{d_{\hom}}{m}$ and $\beta=0,$ we infer
$$B^pA = (P+1)^{-\frac{d_{\hom}}{m}}M_{f^2}\in\mathcal{L}_{1,\infty}.$$
This yields the condition \eqref{ast1} in Theorem \ref{asterisque key result}.

Similarly, using Lemma \ref{asterisque pre-verification lemma} with $Q=M_f,$ $\alpha=\frac1{m}$ and $\beta=0,$ we infer
$$A^{\frac12}BA^{\frac12} = M_f(P+1)^{-\frac{1}{m}}M_f\in\mathcal{L}_{d_{\hom},\infty}.$$
This yields the condition \eqref{ast3} in Theorem \ref{asterisque key result}.

Similarly, applying Lemma \ref{asterisque pre-verification lemma 2} yields
\[
B^{q-2}[B,A] = (1+P)^{-\frac{(q-2)}{m}}[(1+P)^{-\frac{1}{m}},M_f^2] \in \mathcal{L}_{\frac{d_{\hom}}{q},\infty}\subset \mathcal{L}_1 
\]
for $q>d_{\hom}.$ This yields \eqref{ast2} in Theorem \ref{asterisque key result}.
Finally, Lemma \ref{asterisque pre-verification lemma 2} implies
\[
[B,A^{\frac12}] = [(1+P)^{-\frac{1}{m}},M_f] \in \mathcal{L}_{\frac{d_{\hom}}{2},\infty}.
\]
This verifies Condition \eqref{ast4}.	
\end{proof}

\begin{lemma}\label{asterisque summary lemma} Let $P\geqslant 0$ be an uniformly Rockland order $m$ differential operator with constant coefficients. Assume that $d_{\hom}>2.$ If $0\leqslant f\in C^{\infty}_c(G),$ then the mapping
$${\rm Tr}(M_{f^{2z}}(P+1)^{-\frac{z}{m}})-{\rm Tr}\Big(\Big(M_f(P+1)^{-\frac1{m}}M_f\Big)^z\Big),\quad\Re(z)>d_{\hom},$$
extends analytically to a half-plane $\{\Re(z)>d_{\hom}-1\}$.
{Here, $P+1\geqslant 1 $ is positive definite with spectrum away from $0$, thus, by spectral theory, operator $(P+1)^{-\frac{z}{m}}$ is well-defined.}
\end{lemma}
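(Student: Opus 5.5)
This is a direct application of Theorem \ref{asterisque key result}, with the hypotheses supplied by Lemma \ref{asterisque verification lemma}. Take $p=d_{\hom}>2$, $A=M_{f^2}$ and $B=(P+1)^{-\frac1m}$.

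First I check that $A$ and $B$ are positive bounded operators. Since $0\leqslant f\in C^\infty_c(G)$, the multiplier $A=M_{f^2}$ is bounded and positive, and $A^{\frac12}=M_f$. Since $P\geqslant 0$ is self-adjoint by Theorem \ref{elliptic regularity theorem}, the spectrum of $P+1$ lies in $[1,\infty)$. By the functional calculus, $B=(P+1)^{-\frac1m}$ is therefore positive with $\|B\|_\infty\leqslant 1$. Lemma \ref{asterisque verification lemma} then gives conditions \eqref{ast1}--\eqref{ast4} of Theorem \ref{asterisque key result} for this choice of $p$, $A$, $B$.

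Next I identify the two traces in Theorem \ref{asterisque key result} with those in the statement. For complex $z$, $A^z$ is defined by the functional calculus of the positive multiplier $M_{f^2}$. This gives $A^z=M_{f^{2z}}$, where $f^{2z}=e^{2z\log f}$ on $\{f>0\}$ and $f^{2z}=0$ elsewhere, so the support convention is built in. Likewise $B^z=(P+1)^{-\frac zm}$ by the spectral theorem, since the composition of the power maps $\lambda\mapsto\lambda^{-1/m}$ and $\mu\mapsto\mu^z$ is $\lambda\mapsto\lambda^{-z/m}$ on $[1,\infty)$. Finally $A^{\frac12}BA^{\frac12}=M_f(P+1)^{-\frac1m}M_f$. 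Hence
\[
\mathrm{Tr}(A^zB^z)-\mathrm{Tr}\big((A^{\frac12}BA^{\frac12})^z\big)=\mathrm{Tr}(M_{f^{2z}}(P+1)^{-\frac zm})-\mathrm{Tr}\big((M_f(P+1)^{-\frac1m}M_f)^z\big).
\]

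Theorem \ref{asterisque key result} says that both operators are trace class for $\Re(z)>d_{\hom}$. It also says that this difference extends analytically to $\{\Re(z)>d_{\hom}-1\}$, which is the assertion.

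The only point needing care is that $A^z=M_{f^{2z}}$ is a genuine operator identity for complex $z$. This holds because the functional calculus of a multiplication operator is pointwise composition. All the analytic content is already contained in the two cited results.
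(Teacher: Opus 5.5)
Your proposal is correct and is the same argument as the paper, which proves the lemma in one line by combining Theorem \ref{asterisque key result} with Lemma \ref{asterisque verification lemma} for $p=d_{\hom}$, $A=M_{f^2}$, $B=(P+1)^{-\frac1m}$. Your extra checks (positivity and boundedness of $A,B$, and the functional-calculus identifications $A^z=M_{f^{2z}}$, $B^z=(P+1)^{-\frac zm}$, $A^{\frac12}BA^{\frac12}=M_f(P+1)^{-\frac1m}M_f$) only make explicit what the paper leaves implicit.
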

\begin{proof} The assertion follows from Theorem \ref{asterisque key result} and Lemma \ref{asterisque verification lemma}. 
\end{proof}

The following lemma implicitly shows that one of the traces in Lemma \ref{asterisque summary lemma} admits an analytic continuation, by explicit computation. This generalizes  \cite[Corollary 7.2]{MSZ-stratified-23}.

\begin{lemma}\label{wiener-ikehara compute lemma} 
Let $P\geqslant 0$ be an uniformly Rockland and homogeneous order $m$ differential operator with constant coefficients. Let $0\leqslant f\in C^{\infty}_c(G).$ We have
$${\rm Tr}(M_{f^{2z}}(P+1)^{-\frac{z}{m}})=\frac{\Gamma(\frac{z-d_{\hom}}{m})}{\Gamma(\frac{z}{m})}\cdot\int_Gf^{2z}\cdot \tau(e^{-P}),\quad \Re(z)>d_{\hom}.$$
\end{lemma}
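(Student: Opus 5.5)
The plan is to write $(P+1)^{-z/m}$ as a subordinated heat semigroup, compute the trace of $M_{f^{2z}}$ times a left-invariant operator via the Plancherel weight $\tau$, and then use homogeneity of $P$ to scale the heat kernel. First, since $P\geqslant 0$ is self-adjoint with constant coefficients, it commutes with right translations. Hence all spectral functions of $P$, in particular $e^{-tP}$ and $(P+1)^{-z/m}$, are affiliated with $\mathrm{VN}(G)$. For $\Re(z)>0$ the Gamma function identity gives
\[
(P+1)^{-\frac{z}{m}}=\frac{1}{\Gamma(\frac{z}{m})}\int_0^\infty t^{\frac{z}{m}-1}e^{-t}e^{-tP}\,dt .
\]

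The key general fact we will use is this: if $T\in\mathrm{VN}(G)$ is positive with $\tau(T)<\infty$, and $0\leqslant h\in C_c(G)$, then
\[
\mathrm{Tr}(M_h T M_h)=\|h\|_{2}^2\,\tau(T).
\]
To justify it, write $T=\lambda(k)$ with a (nice) kernel $k$. The operator $M_hTM_h$ then has integral kernel $h(x)k(xy^{-1})h(y)$, whose diagonal is $h(x)^2k(1_G)$, and $k(1_G)=\tau(T)$ by \eqref{eq-tau-lambda-f}. For general positive $T$ of finite trace we approximate via normality of $\tau$ and the trace. Applying this with $h=f^{z}$ (for complex $z$, split as $M_{f^{z}}\cdots M_{f^{z}}$, or first argue for real $z$ and then use analyticity in $z$) and $T=e^{-tP}$, together with cyclicity, gives
\[
\mathrm{Tr}(M_{f^{2z}}e^{-tP})=\int_G f^{2z}\,\tau(e^{-tP}).
\]

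Homogeneity of $P$ (here $P=P^{\rm top}$) enters through the dilation action \eqref{conjugation_action}: it gives $\alpha_s(P)=s^{m}P$ for the appropriate normalization. Hence $e^{-tP}=\alpha_{t^{1/m}}(e^{-P})$, and by \eqref{trace_scaling} we get $\tau(e^{-tP})=t^{-d_{\hom}/m}\tau(e^{-P})$. The sign and direction of the scaling must be checked carefully. Finiteness of $\tau(e^{-P})$ follows from Lemma \ref{asterisque pre-verification lemma}-type bounds, or from the Schwartz-class heat kernel of a Rockland operator. Substituting into the subordination integral and using Fubini (everything is positive for real $z$) yields
\[
\frac{1}{\Gamma(\frac{z}{m})}\int_0^\infty t^{\frac{z-d_{\hom}}{m}-1}e^{-t}\,dt\cdot\int_G f^{2z}\,\tau(e^{-P})=\frac{\Gamma(\frac{z-d_{\hom}}{m})}{\Gamma(\frac{z}{m})}\int_G f^{2z}\,\tau(e^{-P}).
\]
This integral converges precisely when $\Re(z)>d_{\hom}$.

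I expect the main obstacle to be justifying the trace-class manipulations. Specifically, one must show that $M_{f^{z}}e^{-tP}M_{f^{z}}$ is trace class with the stated trace, and that interchanging $\mathrm{Tr}$ with the $t$-integral is legitimate for complex $z$. I would handle this for real $z>d_{\hom}$ by monotone convergence, where all integrands are positive. I would then extend to $\Re(z)>d_{\hom}$ by analyticity of both sides: the left side is analytic because $M_{f^{2z}}(P+1)^{-z/m}$ is trace class by Lemma \ref{asterisque pre-verification lemma} and depends holomorphically on $z$.
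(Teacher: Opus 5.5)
Your proposal is correct and follows the paper's route: the same identity $\mathrm{Tr}(M_{f_1}TM_{f_2})=\int_G f_1f_2\cdot\tau(T)$ for $T\in L_1(\mathrm{VN}(G),\tau)$, with homogeneity entering only through $\tau(e^{-tP})=t^{-\frac{d_{\hom}}{m}}\tau(e^{-P})$. The only difference is bookkeeping. The paper applies Lemma \ref{lem:continuous_limit_formula} (which is exactly this heat-trace scaling, extended by monotone convergence) to $x\mapsto(1+x)^{-\frac{z}{m}}$ directly for complex $z$ and then evaluates a Beta integral. You instead inline that lemma via the subordination formula and a Gamma integral, and reach complex $z$ by analytic continuation from real $z>d_{\hom}$.
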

\begin{proof} { For $\Re(z)>d_{\hom},$} we write
$$M_f(P+1)^{-\frac{z}{m}}=M_f(1-\Delta)^{-\frac{z}{{ 2v}}}\cdot (1-\Delta)^{\frac{z}{{ 2v}}}(P+1)^{-\frac{z}{m}}.$$
The first factor belongs to $\mathcal{L}_1(L_2(G))$ by Lemma \ref{specific_cwikel_estimates_from_jfa_lemma}. The second factor is bounded by Lemma \ref{easy complex powers lemma}. Thus, $M_f(P+1)^{-\frac{z}{m}}\in \mathcal{L}_1(L_2(G)).$ This immediately yields $M_{f^z}(P+1)^{-\frac{z}{m}}\in \mathcal{L}_1(L_2(G)).$  

If $f_1,f_2\in L_2(G)$ and if $T\in L_1({\rm VN}(G),\tau),$ then $M_{f_1}TM_{f_2}\in\mathcal{L}_1(L_2(G))$ and
$$\tau(M_{f_1}TM_{f_2})=\int_G f_1f_2\cdot \tau(T).$$
Hence,
\begin{multline*}
    {\rm Tr}(M_{f^{2z}}(P+1)^{-\frac{z}{m}})={\rm Tr}(M_{f^z}\cdot M_{f^z}(P+1)^{-\frac{z}{m}})\\
    ={\rm Tr}(M_{f^z}(P+1)^{-\frac{z}{m}}M_{f^z})=\int_Gf^{2z}\cdot \tau((P+1)^{-\frac{z}{m}}).
\end{multline*}
Using Lemma \ref{lem:continuous_limit_formula} {in the Appendix}, we write (it is exactly at this point we are using homogeneity of the operator $P$)
\begin{align*}
\tau((P+1)^{-\frac{z}{m}})
    & =\frac1{\Gamma(\frac{d_{{\rm hom}}}{m})}\int_0^{\infty}x^{\frac{d_{{\rm hom}}}{m}-1}(x+1)^{-\frac{z}{m}}dx\cdot \tau(e^{-P})\\
    & \stackrel{x=\frac{u}{1-u}}{=}\frac1{\Gamma(\frac{d_{{\rm hom}}}{m})}\int_0^1u^{\frac{d_{{\rm hom}}}{m}-1}(1-u)^{\frac{z-d_{{\rm hom}}}{m}-1}du\cdot \tau(e^{-P})\\
    & =\frac1{\Gamma(\frac{d_{{\rm hom}}}{m})} B(\frac{d_{{\rm hom}}}{m},\frac{z-d_{{\rm hom}}}{m})\tau(e^{-P})
    =\frac{\Gamma(\frac{z-d_{\hom}}{m})}{\Gamma(\frac{z}{m})}\tau(e^{-P}). 
\end{align*}
This completes the proof.
\end{proof}

The following is the Wiener-Ikehara theorem \cite[Theorem 14.1]{Shubin-psido-2001}, stated in the language of operator theory.
\begin{theorem}\label{wiener-ikehara} Let $p>0$ and let $0\leqslant V\in\mathcal{L}_{1,\infty}.$ If there exists $c_V>0$ such that the function
$$z\to {\rm Tr}(V^z)-\frac{c_V}{z-p},\quad \Re(z)>p,$$
extends continuously to the closed half-plane $\{\Re(z)\geqslant p\},$ then 
$$\lim_{t\to\infty}t\mu(t,V)^p=\frac{c_V}{p}.$$
\end{theorem}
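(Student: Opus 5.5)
The plan is to reduce the statement to the classical Wiener--Ikehara theorem for Laplace--Stieltjes transforms of nondecreasing functions, as in \cite[Theorem 14.1]{Shubin-psido-2001}. Then we translate the resulting asymptotics of a counting function back into asymptotics of singular values.

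\textbf{Step 1: the counting function.} Since $0\leqslant V\in \mathcal{L}_{1,\infty}$, $V$ is compact and its eigenvalues are $\mu_k=\mu(k,V)\downarrow 0$, listed with multiplicity. The hypothesis implicitly includes that $V^z$ is trace class for $\Re(z)>p$, so that ${\rm Tr}(V^z)=\sum_k \mu_k^z$ makes sense. We may discard the kernel of $V$, since it contributes nothing to ${\rm Tr}(V^z)$ or to $\mu(t,V)$ for $t<{\rm rank}(V)$. If the rank is finite, ${\rm Tr}(V^z)$ is entire, so $c_V=0$, which is excluded. Hence all $\mu_k>0$.

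Define the nondecreasing, right-continuous function
\[
N(\lambda)=\#\{k\geqslant 0:\ \mu_k^{-1}\leqslant \lambda\},\qquad A(x)=N(e^x).
\]
Then
\[
{\rm Tr}(V^z)=\int_0^\infty \lambda^{-z}\,dN(\lambda)=\int_{\mathbb{R}} e^{-zx}\,dA(x),\qquad \Re(z)>p.
\]

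\textbf{Step 2: apply the Tauberian theorem.} The hypothesis says exactly that the Laplace--Stieltjes transform of $A$, minus $c_V/(z-p)$, extends continuously to $\Re(z)\geqslant p$. The classical Wiener--Ikehara theorem then gives
\[
A(x)\sim \frac{c_V}{p}e^{px},\qquad\text{that is,}\qquad N(\lambda)\sim \frac{c_V}{p}\lambda^p \quad (\lambda\to\infty).
\]
This step is purely a citation. The only thing to check is that the measure $dA$ lives on a half-line bounded below. This holds because $\mu_k\leqslant \|V\|_\infty$, so $N(\lambda)=0$ for $\lambda<\|V\|_\infty^{-1}$.

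\textbf{Step 3: invert the asymptotics.} This is the step needing the most care, because of multiplicities and ties. For each $k$, set $\lambda=\mu_k^{-1}$. Since the $\mu_j$ are nonincreasing, we have the sandwich
\[
N(\lambda-0)\leqslant k<k+1\leqslant N(\lambda).
\]
Because $N(\lambda)\sim C\lambda^p$ with $C=c_V/p$, and $\lambda^p$ is continuous, we also get $N(\lambda-0)\sim C\lambda^p$. Combining these with $\lambda=\mu_k^{-1}\to\infty$ yields $k\mu_k^p\to C$.

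Finally, $\mu(t,V)=\mu_{\lfloor t\rfloor}$ and $t/\lfloor t\rfloor\to 1$, so
\[
\lim_{t\to\infty} t\,\mu(t,V)^p=\frac{c_V}{p},
\]
which is the claim.

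I expect the main (mild) obstacle to be bookkeeping rather than analysis. One point is making the trace-class assumption for $\Re(z)>p$ explicit, since $\mathcal{L}_{1,\infty}$ alone only guarantees convergence of $\sum_k\mu_k^z$ for $\Re(z)>1$. The other is handling repeated eigenvalues in the inversion, which the one-sided limits $N(\lambda-0)$ and $N(\lambda)$ take care of.
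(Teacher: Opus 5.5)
Your proposal is correct and is essentially the paper's approach: the paper gives no separate argument and simply cites the classical Wiener--Ikehara theorem \cite[Theorem 14.1]{Shubin-psido-2001}, and your Steps 1--3 (eigenvalue counting function, Tauberian step, inversion of $N(\lambda)\sim \frac{c_V}{p}\lambda^p$ to $k\mu(k,V)^p\to\frac{c_V}{p}$) are exactly the standard translation that this citation leaves implicit. Your argument uses only compactness of $V$ and the trace-class property of $V^z$ for $\Re(z)>p$, which is what the application in Lemma \ref{wiener-ikehara summary lemma} actually requires: there $V\in\mathcal{L}_{d_{\hom},\infty}$ with $p=d_{\hom}>2$, so $V\in\mathcal{L}_{1,\infty}$ does not hold, and under that literal hypothesis the statement would in fact be vacuous for $p>1$.
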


\begin{lemma}\label{wiener-ikehara summary lemma} Let $P\geqslant 0$ be an uniformly Rockland order $m$ and homogeneous differential operator with constant coefficients. Assume that $d_{\hom}>2.$ Let $0\leqslant f\in C^{\infty}_c(G).$ Then
$$\lim_{t\to\infty}t\mu(t,M_f(P+1)^{-\frac{1}{m}}M_f)^{d_{\hom}}= 
\frac1{\Gamma(\frac{d_{\hom}}{m}+1)}\int_Gf^{2d_{\hom}}\cdot \tau(e^{-P}).$$ 
\end{lemma}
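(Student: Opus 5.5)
We apply the Wiener--Ikehara Theorem \ref{wiener-ikehara} with $V = M_f(P+1)^{-\frac1m}M_f$ and $p = d_{\hom}$. First, $V\geqslant 0$. Lemma \ref{asterisque pre-verification lemma} (with $Q=M_f$, $\alpha=\frac1m$, $\beta=0$, then multiplied on the right by the bounded $M_f$) gives $V\in\mathcal{L}_{d_{\hom},\infty}$. Hence $V^{d_{\hom}}\in\mathcal{L}_{1,\infty}$. Strictly, Theorem \ref{wiener-ikehara} is stated for $V\in\mathcal{L}_{1,\infty}$ with $\mathrm{Tr}(V^z)$, but the relevant hypothesis is only that $\mathrm{Tr}(V^z)$ converges for $\Re(z)>p$. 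This holds because $V\in \mathcal{L}_{d_{\hom},\infty}\subset\mathcal{L}_q$ for every $q>d_{\hom}$. So we apply it in this form with exponent $p=d_{\hom}$.

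Next we need the pole structure of $z\mapsto \mathrm{Tr}(V^z)$. By Lemma \ref{asterisque summary lemma}, valid since $d_{\hom}>2$,
\[
\mathrm{Tr}(V^z) = \mathrm{Tr}(M_{f^{2z}}(P+1)^{-\frac zm}) + h(z),
\]
where $h$ is analytic on $\{\Re(z)>d_{\hom}-1\}$. By Lemma \ref{wiener-ikehara compute lemma}, which uses the homogeneity of $P$,
\[
\mathrm{Tr}(M_{f^{2z}}(P+1)^{-\frac zm}) = \frac{\Gamma(\frac{z-d_{\hom}}{m})}{\Gamma(\frac zm)}\int_G f^{2z}\,\tau(e^{-P}).
\]
The function $z\mapsto \int_G f^{2z}$ is analytic for $\Re(z)>0$, since $f$ is bounded and compactly supported; for $\Re z$ near $d_{\hom}>2$ the exponent is bounded away from $0$, so differentiation under the integral is harmless. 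The function $1/\Gamma(\frac zm)$ is entire. The factor $\Gamma(\frac{z-d_{\hom}}{m})$ has a simple pole at $z=d_{\hom}$ with residue $m$, since $\Gamma(w)\sim 1/w$ and $w=\frac{z-d_{\hom}}m$. It is meromorphic with no other poles in $\Re(z)>d_{\hom}-m$. Hence, on $\{\Re(z)>\max(d_{\hom}-1,d_{\hom}-m)\}$,
\[
\mathrm{Tr}(V^z) - \frac{c_V}{z-d_{\hom}}
\]
is analytic, and in particular extends continuously to $\Re(z)\geqslant d_{\hom}$, with
\[
c_V = \frac{m}{\Gamma(\frac{d_{\hom}}{m})}\int_G f^{2d_{\hom}}\,\tau(e^{-P}).
\]
Here we use that the analytic prefactor evaluated at $z=d_{\hom}$ contributes $\int_G f^{2d_{\hom}}/\Gamma(\frac{d_{\hom}}m)$.

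Theorem \ref{wiener-ikehara} then yields
\[
\lim_{t\to\infty} t\,\mu(t,V)^{d_{\hom}} = \frac{c_V}{d_{\hom}} = \frac{m}{d_{\hom}\Gamma(\frac{d_{\hom}}{m})}\int_G f^{2d_{\hom}}\,\tau(e^{-P}).
\]
Since $\frac{d_{\hom}}{m}\Gamma(\frac{d_{\hom}}{m})=\Gamma(\frac{d_{\hom}}{m}+1)$, this is the claimed constant. If $c_V=0$, i.e. $f=0$ or $\tau(e^{-P})=0$, the conclusion still follows directly: by \eqref{trace_scaling}, $\tau(e^{-P})$ is finite and positive, and for $f=0$ both sides vanish. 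So we may assume $c_V>0$.

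The main obstacle is the bookkeeping around the application of Wiener--Ikehara, namely matching the normalisation $V\in\mathcal{L}_{1,\infty}$ in Theorem \ref{wiener-ikehara} with our $V\in\mathcal{L}_{d_{\hom},\infty}$. One option is to apply the theorem to $W=V^{d_{\hom}}\in\mathcal{L}_{1,\infty}$. Then $\mathrm{Tr}(W^s)=\mathrm{Tr}(V^{d_{\hom}s})$ has a pole at $s=1$ with residue $c_V/d_{\hom}$. This gives $\lim t\mu(t,W)=c_V/d_{\hom}$, which is the same conclusion since $\mu(t,W)=\mu(t,V)^{d_{\hom}}$. The analytic inputs were all supplied by earlier lemmas.
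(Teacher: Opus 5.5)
Your proposal is correct and follows essentially the same route as the paper: combine the explicit formula of Lemma \ref{wiener-ikehara compute lemma}, which gives a simple pole at $z=d_{\hom}$ with residue $c_V=\frac{m}{\Gamma(d_{\hom}/m)}\int_G f^{2d_{\hom}}\,\tau(e^{-P})$, with the analytic continuation of the difference $\mathrm{Tr}(M_{f^{2z}}(P+1)^{-\frac zm})-\mathrm{Tr}(V^z)$ from Lemma \ref{asterisque summary lemma}, and then apply Theorem \ref{wiener-ikehara}. Your extra bookkeeping is sound and only makes explicit what the paper leaves implicit: passing to $W=V^{d_{\hom}}$ to match the $\mathcal{L}_{1,\infty}$ normalisation, and treating $f=0$ separately. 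One small correction: positivity of $\tau(e^{-P})$ comes from faithfulness of $\tau$, not from \eqref{trace_scaling}.
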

\begin{proof} Set
$$A=M_{f^2},\quad B=(P+1)^{-\frac{1}{m}},\quad V=A^{\frac12}BA^{\frac12},\quad c_V=\frac{m}{\Gamma(\frac{d_{\hom}}{m})}\int_Gf^{2d_{\hom}}\cdot \tau(e^{-P}).$$
By Lemma \ref{wiener-ikehara compute lemma} and properties of Gamma function, the function
$$z\to{\rm Tr}(A^zB^z),\quad \Re(z)>d_{\hom},$$
admits an analytic continuation to punctured half-plane $\{\Re(z)>0\}\backslash\{d_{\rm hom}-mk, k=0,1,2,\cdots,\floor*{\frac{d_{\hom}}{m}} \}$ (in particular, to the punctured half-plane $\{\Re(z)>d_{\hom}-1\}\backslash\{d_{\hom}\}$). Also by Lemma \ref{wiener-ikehara compute lemma} and property of Gamma function, the point $z=d_{\hom}$ is a simple pole and
$${\rm Res}_{z=d_{\hom}}{\rm Tr}(A^zB^z)=c_V.$$
On the other hand, it follows from Lemma \ref{asterisque summary lemma} that the function
$$z\to{\rm Tr}(A^zB^z)-{\rm Tr}(V^z),\quad \Re(z)>d_{\hom},$$
admits an analytic continuation to half-plane $\{\Re(z)>d_{\hom}-1\}.$ Hence, the function
$$z\to {\rm Tr}(V^z),\quad \Re(z)>d_{\hom},$$
admits an analytic continuation to punctured half-plane $\{\Re(z)>d_{\hom}-1\}\backslash\{d_{\hom}\}.$ The point $z=d_{\hom}$ is a simple pole and
$${\rm Res}_{z=d_{\hom}}{\rm Tr}(V^z)=c_V.$$
Hence, the operator $V$ satisfies the assumptions in Theorem \ref{wiener-ikehara}. Applying Theorem \ref{wiener-ikehara}, we complete the proof.
\end{proof}

In the following lemmas we repeatedly use the principle that if $A$ and $B$ are positive operators then
\begin{equation}\label{general_raising_powers_principle}
A,B\in \Lc_{p,\infty},\; A-B \in (\mathcal{L}_{p,\infty})_0\Rightarrow A^\gamma-B^\gamma \in (\mathcal{L}_{\frac{p}{\gamma},\infty})_0.
\end{equation} 
If $\gamma$ is an integer, we do not need to assume that $A$ and $B$ are positive, and this follows from H\"older's inequality and elementary algebra. For the case of general $\gamma>0,$ see \cite[Lemma 1.3.18]{LMSZ-SingularTrace-2023}. Note that if $0<\gamma<1,$ then we do not need to assume that $A,B \in \Lc_{p,\infty},$ only that $A-B \in (\Lc_{p,\infty})_0$. {This is Ricard's strengthening of the Birman-Koplienko-Solomyak Theorem \cite{Birman-Koplienko-Solomjak1975,Ricard-2018}}.

\begin{lemma}\label{constant asymp aux lemma} Let $P\geqslant 0$ be an uniformly Rockland order $m$ differential operator with constant coefficients. For every positive $f\in C^{\infty}_c(G),$ $\alpha>0$ and integers $k>0,$ we have
\begin{enumerate}[{\rm (i)}]
\item{}\label{constant asymp aux lemma 1}
$$\big(M_f(P+1)^{-\frac{\alpha}{m}}M_f\big)^k-M_{f^{2k}}(P+1)^{-\frac{k\alpha}{m}}\in (\mathcal{L}_{\frac{d_{\hom}}{k\alpha},\infty})_0$$
\item{}\label{constant asymp aux lemma 2}
\[
(M_f(P+1)^{-\frac{\alpha}{m}}M_f)^k-M_{f^{k}}(P+1)^{-\frac{k\alpha}{m}}M_{f^k}\in (\mathcal{L}_{\frac{d_{\hom}}{k\alpha},\infty})_0
\]
\end{enumerate}
\end{lemma}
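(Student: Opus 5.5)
We prove (i) by induction on $k$, commuting multiplication operators past powers of $(P+1)$ with the commutator estimates of Theorem \ref{asterisque pre-verification lemma 2}, and then deduce (ii) from (i).

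Write $R=(P+1)^{-\frac{\alpha}{m}}$ and $p_k=\frac{d_{\hom}}{k\alpha}$. The basic tool is the following consequence of Theorem \ref{asterisque pre-verification lemma 2}(ii). For $g\in C^\infty_c(G)$ and $a,b\geqslant 0$,
\[
(P+1)^{-\frac{a}{m}}[(P+1)^{-\frac{b}{m}},M_g] \in \mathcal{L}_{\frac{d_{\hom}}{a+b+1},\infty}\subset (\mathcal{L}_{\frac{d_{\hom}}{a+b},\infty})_0 .
\]
This is case (ii) with $\beta=-b$, $\gamma=0$, so that $\alpha+\gamma+1-\beta=a+b+1>\beta$ holds, and it uses Remark \ref{remark-inclusion-Lpq}. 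The same bound holds with the resolvent power placed on the right. In words: each commutator gains one full order in the weak-Schatten scale, so it is negligible in $(\mathcal{L}_{p,\infty})_0$ at the critical exponent.

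\textbf{Part (i).} The case $k=1$ is $M_fRM_f-M_{f^2}R=M_f[R,M_f]$, which lies in $\mathcal{L}_{\frac{d_{\hom}}{\alpha+1},\infty}\subset(\mathcal{L}_{p_1,\infty})_0$. For the induction step, suppose (i) holds for $k$. Then
\[
(M_fRM_f)^{k+1}=(M_fRM_f)^k\cdot M_fRM_f = \big(M_{f^{2k}}(P+1)^{-\frac{k\alpha}{m}}+E_k\big)M_fRM_f ,
\]
where $E_k\in(\mathcal{L}_{p_k,\infty})_0$. Since $M_fRM_f\in\mathcal{L}_{p_1,\infty}$ by Lemma \ref{asterisque pre-verification lemma}, H\"older's inequality \eqref{eq-holder} together with the ideal property $(\Lc_{p,\infty})_0\cdot\Lc_{q,\infty}\subseteq(\Lc_{r,\infty})_0$ puts $E_kM_fRM_f$ in $(\mathcal{L}_{p_{k+1},\infty})_0$.

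For the main term, we move $M_f$ to the left past $(P+1)^{-\frac{k\alpha}{m}}$. The commutator term is $M_{f^{2k}}[(P+1)^{-\frac{k\alpha}{m}},M_f]RM_f$. We rewrite it as
\[
M_{f^{2k}}\,[(P+1)^{-\frac{k\alpha}{m}},M_f](P+1)^{\frac{\alpha}{m}}\cdot (P+1)^{-\frac{\alpha}{m}}RM_f ,
\]
or we split the commutator directly, to place it in $\mathcal{L}_{\frac{d_{\hom}}{(k+1)\alpha+1},\infty}$. Concretely, the commutator $[(P+1)^{-\frac{k\alpha}{m}},M_f]R$ is handled by Theorem \ref{asterisque pre-verification lemma 2}(ii) with $\alpha\to0$, $\beta=-k\alpha$, $\gamma=\alpha$, which gives the exponent $\frac{d_{\hom}}{(k+1)\alpha+1}$, and the multipliers are bounded. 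We then move the remaining $M_f$ to the left past $(P+1)^{-\frac{(k+1)\alpha}{m}}$ in the same way. The result is
\[
M_{f^{2k+2}}(P+1)^{-\frac{(k+1)\alpha}{m}} \quad \text{modulo } (\mathcal{L}_{p_{k+1},\infty})_0 ,
\]
which completes the induction. One small point: $R$ and $(P+1)^{-\frac{k\alpha}{m}}$ commute, so combining them into a single power is legitimate.

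\textbf{Part (ii).} Apply the same commutator principle to $M_{f^{2k}}(P+1)^{-\frac{k\alpha}{m}}-M_{f^k}(P+1)^{-\frac{k\alpha}{m}}M_{f^k}$. This difference equals $M_{f^k}[M_{f^k},(P+1)^{-\frac{k\alpha}{m}}]$, which lies in $\mathcal{L}_{\frac{d_{\hom}}{k\alpha+1},\infty}\subset(\mathcal{L}_{p_k,\infty})_0$. Combined with (i), this gives (ii). Note that $f^k\in C^\infty_c(G)$, so Theorem \ref{asterisque pre-verification lemma 2} applies to it.

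\textbf{Main obstacle.} The delicate point is the bookkeeping of the commutator exponents with non-integer powers. In each step we must arrange the resolvent powers flanking the commutator so that Theorem \ref{asterisque pre-verification lemma 2}(ii) applies with total decay $(k+1)\alpha+1$. When a power sits on the wrong side, we insert $(P+1)^{\pm\frac{s}{m}}$ and use the boundedness of case (i) of that theorem, or Lemma \ref{easy complex powers lemma}, to absorb it.
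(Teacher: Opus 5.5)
Your proof is correct and is essentially the paper's argument. You induct on $k$, absorb each commutator of a multiplier with a power of $P+1$ via Theorem \ref{asterisque pre-verification lemma 2}(ii) (which gains one order, so the commutator lands in $(\mathcal{L}_{\frac{d_{\hom}}{k\alpha},\infty})_0$), control the inductive error term by H\"older's inequality, and use the same identity $M_{f^{2k}}(P+1)^{-\frac{k\alpha}{m}}-M_{f^k}(P+1)^{-\frac{k\alpha}{m}}M_{f^k}=M_{f^k}[M_{f^k},(P+1)^{-\frac{k\alpha}{m}}]$ for part (ii). The only difference is bookkeeping: the paper first replaces $(M_f(P+1)^{-\frac{\alpha}{m}}M_f)^k$ by $(M_{f^2}(P+1)^{-\frac{\alpha}{m}})^k$ using \eqref{general_raising_powers_principle} and then inducts on the latter, whereas you induct directly and commute $M_f$ twice per step.
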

\begin{proof} 
{\bf Step 1:}
By Lemma \ref{asterisque pre-verification lemma 2}, we have
\[
M_f(P+1)^{-\frac{\alpha}{m}}M_f-M_f^2(1+P)^{-\frac{\alpha}{m}} \in (\mathcal{L}_{\frac{d_{\hom}}{\alpha},\infty})_0.
\]
Applying \eqref{general_raising_powers_principle}, we conclude that for all integers $k>0$ that
$$\big(M_f(P+1)^{-\frac{\alpha}{m}}M_f\big)^k-\big(M_{f^2}(P+1)^{-\frac{\alpha}{m}}\big)^k\in (\mathcal{L}_{\frac{d_{\hom}}{k\alpha},\infty})_0$$

{\bf Step 2:} We claim that for all integers $k>0$
$$\big(M_{f^2}(P+1)^{-\frac{\alpha}{m}}\big)^k-M_{f^{2k}}(P+1)^{-\frac{k\alpha}{m}}\in (\mathcal{L}_{\frac{d_{\hom}}{k\alpha},\infty})_0.$$

We prove the claim by induction on $k.$ The case $k=1$ is trivial.

Suppose the claim is true for $k\geqslant 1.$ We write
\begin{align*}
& \big(M_{f^2}(P+1)^{-\frac{\alpha}{m}}\big)^{k+1}-M_{f^{2k+2}}(P+1)^{-\frac{\alpha(k+1)}{m}}\\
& \quad = \big(M_{f^2}(P+1)^{-\frac{\alpha}{m}}\big)^{k+1}
- M_{f^{2k}} \big[ M_{f^{2}}, (P+1)^{-\frac{k\alpha}{m}} \big ] (P+1)^{-\frac{\alpha}{m}}\\
& \qquad 	- M_{f^{2k}} (P+1)^{-\frac{k\alpha}{m}} M_{f^{2}} (P+1)^{-\frac{\alpha}{m}}  \\
& \quad	= \big(\big(M_{f^2}(P+1)^{-\frac{\alpha}{m}}\big)^{k} - M_{f^{2k}} (P+1)^{-\frac{k\alpha}{m}} \big) M_{f^{2}} (P+1)^{-\frac{\alpha}{m}} \\
& \qquad	- M_{f^{2k}} \big[ M_{f^{2}}, (P+1)^{-\frac{k\alpha}{m}} \big ] (P+1)^{-\frac{\alpha}{m}}.
\end{align*}

The first summand belongs to $ (\mathcal{L}_{\frac{d_{\hom}}{\alpha(k+1)},\infty})_0$ by the inductive hypothesis and H\"older's inequality. 
The second summand belongs to $ (\mathcal{L}_{\frac{d_{\hom}}{\alpha(k+1)},\infty})_0$ by Lemma \ref{asterisque pre-verification lemma 2}.
Hence, the right hand side belongs to $ (\mathcal{L}_{\frac{d_{\hom}}{\alpha(k+1)},\infty})_0$ and so does the left hand side. This yields the step of induction and the claim follows.

{\bf Step 3:} Part \ref{constant asymp aux lemma 1} follows by combining Step 1 and Step 2. 

By Lemma \ref{asterisque pre-verification lemma 2}, we have
\[
M_{f^{2k}}(1+P)^{-\frac{k\alpha}{m}}-M_f^k(1+P)^{-\frac{k\alpha}{m}}M_f^k \in (\Lc_{\frac{d_{\hom}}{k\alpha},\infty})_0.
\]
Hence, we also have \ref{constant asymp aux lemma 2}.
\end{proof}

\begin{lemma}\label{constant_asymptotics_power_k}
Let $k\in \mathbb{N},$ and let $d_{\hom}>2.$ If $P\geqslant 0$ is an order $m$ homogeneous uniformly Rockland differential operator {with constant coefficients}, then 
\[
\lim_{t\to\infty}t^{\frac{m}{d_{\hom}}}\mu(t,M_f^{2m}(P+1)^{-1})=
\left(\frac1{\Gamma(\frac{d_{\hom}}{m}+1)}\int_Gf^{2d_{\hom}}\cdot \tau(e^{-P})\right)^{\frac{m}{d_{\hom}}} 
\]
\end{lemma}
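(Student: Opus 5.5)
Throughout, $f$ is understood, as in Lemma \ref{wiener-ikehara summary lemma}, to satisfy $0\leqslant f\in C^\infty_c(G)$. The plan is to deduce the statement from Lemma \ref{wiener-ikehara summary lemma} by raising to the $m$-th power, using Lemma \ref{constant asymp aux lemma} and Lemma \ref{easy_bs_lemma}.

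\textbf{Step 1 (raise to the power $m$).} Lemma \ref{wiener-ikehara summary lemma} says that $V:=M_f(P+1)^{-\frac1m}M_f$ satisfies
\[
\lim_{t\to\infty}t^{\frac{1}{d_{\hom}}}\mu(t,V)=c^{\frac1{d_{\hom}}},\qquad c:=\frac1{\Gamma(\frac{d_{\hom}}{m}+1)}\int_Gf^{2d_{\hom}}\,\tau(e^{-P}).
\]
Since $V\geqslant 0$, we have $\mu(t,V^m)=\mu(t,V)^m$. Therefore
\[
\lim_{t\to\infty} t^{\frac{m}{d_{\hom}}}\mu(t,V^m)=c^{\frac{m}{d_{\hom}}},
\]
which is exactly the right-hand side of the claim.

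\textbf{Step 2 (compare $V^m$ with the target operator).} Apply Lemma \ref{constant asymp aux lemma}\eqref{constant asymp aux lemma 1} with $\alpha=1$ and $k=m$; note that $m$ is a positive integer, being the order of a differential operator. This gives
\[
V^m-M_{f^{2m}}(P+1)^{-1}\in(\mathcal{L}_{\frac{d_{\hom}}{m},\infty})_0 .
\]
Lemma \ref{easy_bs_lemma} with $p=\frac{d_{\hom}}{m}$ then transfers the limit from Step 1 to $M_{f^{2m}}(P+1)^{-1}=M_f^{2m}(P+1)^{-1}$, which proves the statement.

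\textbf{Main obstacle.} The substantive inputs are the hypotheses under which the earlier results apply. Lemma \ref{wiener-ikehara summary lemma} needs homogeneity of $P$, so that $P^{\rm top}=P$, together with $d_{\hom}>2$; both are assumed here. Lemma \ref{constant asymp aux lemma} rests on the commutator estimates of Theorem \ref{asterisque pre-verification lemma 2}. The step that needs care is checking that every operator involved lies in $\mathcal{L}_{\frac{d_{\hom}}{m},\infty}$, so that $\mu(t,\cdot)$ is finite and Lemma \ref{easy_bs_lemma} applies. This membership follows from Lemma \ref{asterisque pre-verification lemma} with $Q=M_{f^{2m}}$, $\alpha=1$ and $\beta=0$. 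The integer $k\in\mathbb{N}$ in the statement plays no role; the exponent used is $k=m$.
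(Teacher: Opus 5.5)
Your proof is correct and follows the paper's strategy: deduce the claim from Lemma~\ref{wiener-ikehara summary lemma} by raising to an integer power, compare with the target operator via Lemma~\ref{constant asymp aux lemma}, and transfer the limit with Lemma~\ref{easy_bs_lemma}. The paper does it slightly differently: it raises to the power $2m$, uses part~(ii) of Lemma~\ref{constant asymp aux lemma} to reach the positive operator $M_{f^{2m}}(P+1)^{-2}M_{f^{2m}}$, and then takes a square root via $\mu(T^*T)=\mu(T)^2$; your use of part~(i) with $k=m$ lands on $M_{f^{2m}}(P+1)^{-1}$ directly, which is marginally shorter and equally valid because Lemma~\ref{easy_bs_lemma} requires no positivity.
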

\begin{proof}
Raising the equality in Lemma \ref{wiener-ikehara summary lemma} to the power $2m,$ we have
\[
\lim_{t\to\infty}t^{2m}\mu(t,(M_f(P+1)^{-\frac1m}M_f)^{2m})^{d_{\hom}} = \left(\frac1{\Gamma(\frac{d_{\hom}}{m}+1)}\int_Gf^{2d_{\hom}}\cdot \tau(e^{-P})\right)^{2m}.
\]
By Lemma \ref{constant asymp aux lemma}, it follows that
\[
\lim_{t\to\infty} t^{2m}\mu(t,M_{f}^{2m}(P+1)^{-2}M_f^{2m})^{d_{\hom}} = \left(\frac1{\Gamma(\frac{d_{\hom}}{m}+1)}\int_Gf^{2d_{\hom}}\cdot \tau(e^{-P})\right)^{2m}.
\]
Now raising to the power $\frac{1}{2d_{\hom}},$ and using the identity $\mu(T^*T)=\mu(T)^2,$ we conclude the argument.
\end{proof}

\begin{proof}[Proof of Theorem \ref{main theorem constant}] Choose $0\leqslant \phi\in C^\infty_c(G)$ be such that $f=\phi f$ and $\phi^{\frac{1}{2m}}$ is smooth. Set $h_{\epsilon}=(f+\epsilon\phi)^{\frac1{2m}}\in C^\infty_c(G).$ Since $P-P^{{\rm top}}$ has order $m-1,$ it follows from the resolvent identity that
\[
M_{h_{\epsilon}}^{2m}(1+P)^{-1}-M_{h_{\epsilon}}^{2m}(1+P^{{\rm top}})^{-1} \in (\Lc_{\frac{d}{m},\infty})_0.
\]
Hence, by Lemma \ref{easy_bs_lemma}, for all $f \in C^\infty_c(G)$ we have
$$\lim_{t\to\infty}t^{\frac{m}{d_{\hom}}}\mu(t,M_{h_{\epsilon}}^{2m}(P+1)^{-1})=\|h_{\epsilon}^{2m}\|_{\frac{d_{\hom}}{m}}\cdot \left(\frac1{\Gamma(\frac{d_{\hom}}{m}+1)} \tau(e^{-P^{{\rm top}}})\right)^{\frac{m}{d_{\hom}}}.$$
In other words,
$$\lim_{t\to\infty}t^{\frac{m}{d_{\hom}}}\mu(t,M_{f+\epsilon\phi}(P+1)^{-1})=\|f+\epsilon\phi\|_{\frac{d_{\hom}}{m}}\cdot \left(\frac1{\Gamma(\frac{d_{\hom}}{m}+1)} \tau(e^{-P^{{\rm top}}})\right)^{\frac{m}{d_{\hom}}}.$$
We then deduce the general result from Lemma \ref{bs convergence lemma}. 
\end{proof}

\section{Proof of spectral asymptotic theorem}\label{sec_abstract_asymptotics}


In this section, we prove an abstract theorem on continuity of singular value asymptotic. That is, if a set of operators $\{A_g\}_{g\in G}\subset\mathcal{L}_{p,\infty}$ approximates operator $A\in\mathcal{L}_{p,\infty}$ in some sense, and each $A_g$ satisfies some singular value asymptotic, then operator inherits a similar singular value asymptotic.

Throughout the section we are going to use the notation defined in the following theorem. 

\begin{theorem}\label{abstract asymptotics theorem} 
Let $0<p<\infty,$ $A\in\mathcal{L}_{p,\infty}$ and let $\{A_g\}_{g\in G}\subset\mathcal{L}_{p,\infty}.$ Let $a\in C_b(G,\mathbb{C}^{N})$, $N\geqslant 1$  be uniformly continuous and let $f\in L_p(G),$ $\alpha\in C_b(G).$ All other notations below are defined in Section \ref{sec-Graded-Lie-Group} and \ref{sec-trace-ideal}. Suppose that
\begin{enumerate}[{\rm (i)}]
\item\label{aata} for every $\phi\in C^{\infty}_c(G)$ and for every $g\in G,$ there exists a limit
$$\lim_{t\to\infty}t^{\frac1p}\mu(t,M_{\phi}A_g)=\alpha(g)\|f\phi\|_{L_p(G)}.$$
\item\label{aatb} for every $\phi\in C^{\infty}_c(G)$ and for every $g\in G,$ we have
$${\rm dist}_{\mathcal{L}_{p,\infty}}(M_{\phi}(A-A_g),(\mathcal{L}_{p,\infty})_0)\leqslant\|f\phi\|_{L_p(G)}\|a-a(g)\|_{L_{\infty}({\rm supp}(\phi))}.$$
\item\label{aatc} for every $\phi\in C^{\infty}_b(G),$ we have
$${\rm dist}_{\mathcal{L}_{p,\infty}}(M_{\phi}A,(\mathcal{L}_{p,\infty})_0)\leqslant\|f\phi\|_{L_p(G)}.$$
\item\label{aatd} for every $\phi\in C^{\infty}_c(G)$ and for every $g\in G,$ we have
$$[A,M_{\phi}],[A_g,M_{\phi}]\in(\mathcal{L}_{p,\infty})_0.$$
\end{enumerate}
Under those assumptions, there exists a limit
$$\lim_{t\to\infty}t^{\frac1p}\mu(t,A)=\|f\alpha\|_{L_p(G)}.$$	 
\end{theorem}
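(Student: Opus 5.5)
We approximate $A$ by a finite sum of localized pieces, each compared with a frozen-coefficient model $A_{g_i}$, and then pass to the limit with Lemma \ref{bs convergence lemma}.

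\textbf{Partition of unity.} Fix $\epsilon>0$. Since $a$ is uniformly continuous, there is $r>0$ with $|a(x)-a(y)|<\epsilon$ whenever $x,y$ lie in a common ball of radius $2r$ for a homogeneous quasi-metric. Apply Lemma \ref{quasi_metric_space_construction} (Haar measure of balls scales like $r^{d_{\hom}}$) to get centres $\{g_i\}_{i\in I}$. The balls $B(g_i,r)$ cover $G$ with bounded overlap. Choose a partition of unity $\{\psi_i\}$ subordinate to $\{B(g_i,2r)\}$, with $\sum_i\psi_i^2=1$ and uniformly bounded $C^k_b$ norms; these bounds come from dilating and translating a single bump.

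\textbf{Reduction to finitely many pieces.} Let $F\subset I$ be finite and put $\Psi_F=(\sum_{i\in F}\psi_i^2)^{1/2}$, so that $1-\Psi_F^2=\sum_{i\notin F}\psi_i^2$. Choose $\phi\in C^\infty_b(G)$ with $\phi=1$ on $\operatorname{supp}(1-\Psi_F^2)$ and supported where $\Psi_F$ is small. Since $f\in L_p$, hypothesis \ref{aatc} makes ${\rm dist}(M_{1-\Psi_F^2}A,(\Lc_{p,\infty})_0)$ arbitrarily small as $F$ grows. So it suffices to treat
\[
A_F:=\sum_{i\in F}M_{\psi_i}^2A .
\]
By \ref{aatd}, $A_F-\sum_{i\in F}M_{\psi_i}AM_{\psi_i}\in(\Lc_{p,\infty})_0$. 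Using \ref{aatb} for each $i$, we can replace this, modulo an error of ${\rm dist}$-size
\[
\Bigl(\sum_i\|f\psi_i\|_p^p\,\epsilon^p\Bigr)^{1/p}\lesssim\epsilon\|f\|_p ,
\]
by $B_F:=\sum_{i\in F}M_{\psi_i}A_{g_i}M_{\psi_i}$. The error estimate needs some care. The operators $M_{\psi_i}TM_{\psi_i}$ are not pairwise orthogonal, but $\{\psi_i\}$ splits into boundedly many ($\le 5^{d_{\hom}}$-ish) families of disjointly supported functions. Within each family we apply Lemma \ref{lemma-orthogonal-Pythagoras-2} (where the pieces are only orthogonal modulo $(\Lc_{p,\infty})_0$, we use \ref{aatd}). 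Across families we use the quasi-triangle inequality. This gives a bound $C\epsilon\|f\|_p$ with $C$ independent of $\epsilon$.

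\textbf{Asymptotics of $B_F$.} The same colouring argument applies. Within one colour class the pieces $M_{\psi_i}A_{g_i}M_{\psi_i}$ have disjoint supports, so they are orthogonal modulo $(\Lc_{p,\infty})_0$ after using \ref{aatd} to move multipliers, e.g. $M_{\psi_i}A_{g_i}M_{\psi_j}\in(\Lc_{p,\infty})_0$ for disjoint supports. Hypothesis \ref{aata}, together with \ref{aatd} and the identity $\mu(M_\psi TM_\psi)\approx\mu(M_{\psi^2}T)$, gives
\[
\lim_t t^{1/p}\mu(t,M_{\psi_i}A_{g_i}M_{\psi_i})=\alpha(g_i)\|f\psi_i^2\|_p .
\]
Lemma \ref{third disjoint lemma} then yields a limit for each colour class. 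However, summing different colour classes does not simply add $p$-th powers. I therefore expect the cleaner route to be different: choose the partition with genuinely disjoint supports modulo small sets. That is, take $\psi_i$ close to indicators of a disjoint partition into cells of diameter $<r$, and control the boundary layers by \ref{aatc}, since $\|f\cdot(\text{boundary cutoff})\|_p$ is small for suitable smooth cutoffs. Then Lemma \ref{third disjoint lemma} gives
\[
\lim_t t^{1/p}\mu(t,B_F)=\Bigl(\sum_{i\in F}\alpha(g_i)^p\|f\chi_i\|_p^p\Bigr)^{1/p}+O(\epsilon),
\]
and this Riemann-type sum tends to $\|f\alpha\|_p$ by continuity of $\alpha$. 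Here I would also require $r$ small enough that $\alpha$ varies little on cells, or else use boundedness together with dominated convergence.

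\textbf{Conclusion and main obstacle.} Letting $\epsilon\to0$ and $F\uparrow I$, Lemma \ref{bs convergence lemma} gives $\lim_t t^{1/p}\mu(t,A)=\|f\alpha\|_{L_p(G)}$. The main obstacle is the localisation step. We need cutoffs that are smooth, so that \ref{aatd} applies, yet essentially disjoint, so that orthogonality lemmas give exact $p$-additivity. This must be done with all error terms controlled uniformly by $\|f\,\cdot\|_p$ through \ref{aatb} and \ref{aatc}. My first attempt is to take a disjoint Borel partition from the covering lemma and smooth cutoffs $\phi_i\le\chi_i$ differing from $\chi_i$ only on thin shells whose $f$-mass is small. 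The remainder $M_{1-\sum\phi_i}A$ is then controlled by \ref{aatc}, applied to a smooth majorant of $1-\sum_i\phi_i$.
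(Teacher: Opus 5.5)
Your final route is essentially the paper's proof. It uses disjoint cells from Lemma \ref{quasi_metric_space_construction} and smooth cutoffs $\phi_i$ supported inside the cells and equal to $1$ off a set of small measure (Lemma \ref{function system lemma}); the remainder $M_{1-\sum_i\phi_i^2}A$ is controlled by applying (iii) directly to $1-\sum_i\phi_i^2\in C^\infty_b(G)$ (no majorant is needed), the local errors by (ii), (iv) and Lemma \ref{lemma-orthogonal-Pythagoras-2} (this is Lemma \ref{abstract final lemma}), the block-diagonal model by (i), (iv) and Lemma \ref{third disjoint lemma}, and the limit by Lemma \ref{bs convergence lemma} with dominated convergence; the overlapping partition of unity with colouring is an unnecessary detour which, as you noticed, would not give exact $p$-additivity.
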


Theorem \ref{abstract asymptotics theorem} is the key to our entire approach. In Section \ref{Section-special-case}, we apply it with $A = M_f(1+P)^{-1}$ where $f\in C_c^\infty(G)$ and $P\geqslant 0$ is an uniformly Rockland differential operator of order $m>0.$ The local approximation will be $A_g = M_f(1+P^{{\rm top}}_g)^{-1}.$

\subsection{Proof of abstract asymptotic theorem}

In the following lemmas, $\omega(a,\cdot)$ denotes the modulus of continuity of a uniformly continuous function $a:G\to\mathbb{C}^{N}.$ That is,
\begin{equation}\label{eq-def-onega-a}
\omega(a,h) \coloneqq \sup_{\mathrm{dist}(g_1,g_2)\leqslant h} \|a(g_1)-a(g_2)\|_{\mathbb{C}^N},\quad h>0.
\end{equation}
Here, $\mathrm{dist}$ is a translation invariant and homogeneous metric on $G.$ It is well-defined (finite) because $a$ is uniformly continuous.

Conditions \ref{aatb} asserts that we can approximate $A$ by $A_g$ locally. The following lemma shows that the family $\{A_g\}_{g\in G}$ can be patched together to an operator that globally approximates $A.$

\begin{lemma}\label{abstract final lemma} Assume the conditions in Theorem \ref{abstract asymptotics theorem} are satisfied. Let $\{ \phi_n \}_{n=1}^{N}\subset C^{\infty}_c(G)$ be a sequence of pairwise disjointly supported functions. Choose a set of points $\{ g_n \}_{n=1}^{N}\subset G$, such that each point $g_n \in {\rm supp}(\phi_n)$, $1\leqslant  n \leqslant  N$. We have
\begin{multline}\label{eq-distance-op-thm}
{\rm dist}_{\mathcal{L}_{p,\infty}}\Big(A-\sum_{n=1}^{N}M_{\phi_n}A_{g_n}M_{\phi_n},(\mathcal{L}_{p,\infty})_0\Big) \leqslant 2^{\frac1p}\Big[\|f(1-\sum_{n=1}^{N}\phi_n^2)\|_{L_p(G)}\\
+\max_{1\leqslant  n \leqslant  N}\|\phi_n\|_{L_\infty(G)}^2\cdot\|f\|_{L_p(G)}\cdot \omega\big(a,\max_{1\leqslant  n \leqslant  N}{\rm diam}({\rm supp}(\phi_n))\big)\Big].
\end{multline}
\end{lemma}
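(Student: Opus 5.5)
We split $A$ into a ``diagonal'' part living on the supports of the $\phi_n$ and a remainder. Set $\psi = \sum_{n=1}^N \phi_n^2$. We write
\[
A-\sum_{n=1}^{N}M_{\phi_n}A_{g_n}M_{\phi_n} = M_{1-\psi}A + \sum_{n=1}^N M_{\phi_n^2}A - \sum_{n=1}^N M_{\phi_n}A_{g_n}M_{\phi_n}.
\]
Using condition \ref{aatd}, $M_{\phi_n}A_{g_n}M_{\phi_n} - M_{\phi_n^2}A_{g_n} = M_{\phi_n}[A_{g_n},M_{\phi_n}]\in(\mathcal{L}_{p,\infty})_0$. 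Similarly $M_{\phi_n^2}A - M_{\phi_n}AM_{\phi_n}\in(\mathcal{L}_{p,\infty})_0$. Since the distance to $(\mathcal{L}_{p,\infty})_0$ is unchanged by adding elements of $(\mathcal{L}_{p,\infty})_0$, the left side of \eqref{eq-distance-op-thm} equals
\[
{\rm dist}_{\mathcal{L}_{p,\infty}}\Big(M_{1-\psi}A + \sum_{n=1}^N M_{\phi_n}(A-A_{g_n})M_{\phi_n},(\mathcal{L}_{p,\infty})_0\Big).
\]
The quasi-triangle inequality in $\mathcal{L}_{p,\infty}$ applies to the quotient quasinorm; the constant $2^{1/p}$ arises as $\|S+T\|\le 2^{1/p}(\|S\|+\|T\|)$, or via $\|S+T\|^p_{p,\infty}\le 2(\|S\|^p+\|T\|^p)$ and then $(x^p+y^p)^{1/p}\le x+y$ when $p\le1$, handling $p>1$ similarly. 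This splits the estimate into two terms.

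For the first term, note $1-\psi\in C^\infty_b(G)$, since $\psi\in C_c^\infty$. Condition \ref{aatc} then gives the bound $\|f(1-\psi)\|_{L_p}$ directly.

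For the second term, the operators $B_n:=M_{\phi_n}(A-A_{g_n})M_{\phi_n}$ are pairwise orthogonal because the $\phi_n$ have disjoint supports ($M_{\phi_n}M_{\phi_k}=0$ for $n\neq k$). Lemma \ref{lemma-orthogonal-Pythagoras-2} therefore gives
\[
{\rm dist}^p\Big(\sum_n B_n,(\mathcal{L}_{p,\infty})_0\Big)\le\sum_n{\rm dist}^p(B_n,(\mathcal{L}_{p,\infty})_0).
\]
Next, ${\rm dist}(B_n,\cdot)\le\|\phi_n\|_\infty\,{\rm dist}(M_{\phi_n}(A-A_{g_n}),\cdot)$, since right multiplication by a bounded operator preserves $(\mathcal{L}_{p,\infty})_0$ and obeys \eqref{eq-singular-value-multi}. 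By condition \ref{aatb} this is at most $\|\phi_n\|_\infty\|f\phi_n\|_{L_p}\|a-a(g_n)\|_{L_\infty(\mathrm{supp}\,\phi_n)}$.

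We bound $\|f\phi_n\|_{L_p}\le\|\phi_n\|_\infty\|f\chi_{\mathrm{supp}\,\phi_n}\|_{L_p}$. Because $g_n\in\mathrm{supp}\,\phi_n$, we also have $\|a-a(g_n)\|_{L_\infty(\mathrm{supp}\,\phi_n)}\le\omega(a,\max_n\mathrm{diam}\,\mathrm{supp}\,\phi_n)$. Summing the $p$-th powers and using disjointness of supports gives $\sum_n\|f\chi_{\mathrm{supp}\,\phi_n}\|_p^p\le\|f\|_p^p$. Taking the $p$-th root yields the second term of \eqref{eq-distance-op-thm}.

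The main obstacle is bookkeeping rather than depth. One point is making sure the commutator reductions via \ref{aatd} are legitimate for the full finite sums, which holds since there are finitely many terms. The other is getting exactly the constant $2^{1/p}$ from the quasi-triangle inequality for the distance to the closed subspace $(\mathcal{L}_{p,\infty})_0$.
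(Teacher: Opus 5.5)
Your proposal is correct and follows essentially the same route as the paper: reduce modulo $(\mathcal{L}_{p,\infty})_0$ using condition \ref{aatd}, split off $M_{1-\sum_n\phi_n^2}A$ and bound it by condition \ref{aatc}, and bound the diagonal sum $\sum_n M_{\phi_n}(A-A_{g_n})M_{\phi_n}$ by Lemma \ref{lemma-orthogonal-Pythagoras-2} together with condition \ref{aatb}. One slip: your alternative derivation of the constant is wrong, because $(x^p+y^p)^{1/p}\leqslant x+y$ holds for $p\geqslant 1$, not for $p\leqslant 1$; this costs nothing, since your first justification $\|S+T\|_{p,\infty}\leqslant 2^{\frac1p}(\|S\|_{p,\infty}+\|T\|_{p,\infty})$ (which follows from $\mu(t,S+T)\leqslant\mu(\frac t2,S)+\mu(\frac t2,T)$) is valid for every $p>0$ and passes directly to the distance to the subspace $(\mathcal{L}_{p,\infty})_0$.
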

\begin{proof} To lighten the notations, we write ${\rm dist}$ instead of ${\rm dist}_{\mathcal{L}_{p,\infty}}$ everywhere in this proof.

By assumption \ref{aatd}, we have

$$M_{\sum_{n=1}^{N}\phi_n^2}A-\sum_{n=1}^{N}M_{\phi_n}AM_{\phi_n}=\sum_{n= 1}^{N}M_{\phi_n}\cdot [M_{\phi_n},A] \in (\mathcal{L}_{p,\infty})_0.$$
Thus,  
\begin{multline}\label{eq-distance-op-decompose}
{\rm dist}\Big(M_{\sum_{n=1}^{N}\phi_n^2}A-\sum_{n=1}^{N}M_{\phi_n}A_{g_n}M_{\phi_n},(\mathcal{L}_{p,\infty})_0\Big)\\
={\rm dist}\Big(\sum_{n=1}^{N}M_{\phi_n}(A-A_{g_n})M_{\phi_n},(\mathcal{L}_{p,\infty})_0\Big).
\end{multline}

Since the functions $\{\phi_n\}_{n=1}^{N}$ are pairwise disjointly supported, it follows from Lemma \ref{lemma-orthogonal-Pythagoras-2} that
\begin{multline}\label{eq-distance-op-decompose-2}
\Big[{\rm dist}\Big(M_{\sum_{n=1}^{N}\phi_n^2}A-\sum_{n=1}^{N}M_{\phi_n}A_{g_n}M_{\phi_n},(\mathcal{L}_{p,\infty})_0\Big)\Big]^p \\
\leqslant \sum^{N}_{n = 1}\Big[{\rm dist}\Big(M_{\phi_n}(A-A_{g_n})M_{\phi_n},(\mathcal{L}_{p,\infty})_0\Big)\Big]^p.
\end{multline}

Now let us analyse each summand. For all $1\leqslant  n \leqslant  N$, we have
\begin{multline}\label{eq-distance-op-decompose-3}
{\rm dist}\Big(M_{\phi_n}(A-A_{g_n})M_{\phi_n},(\mathcal{L}_{p,\infty})_0\Big) \leqslant \|\phi_n\|_{L_{\infty}(G)}{\rm dist}\Big(M_{\phi_n}(A-A_g),(\mathcal{L}_{p,\infty})_0\Big)\\
\leqslant \|\phi_n\|_{L_{\infty}(G)}\cdot \|f\phi_n\|_{L_p(G)}\|a-a(g_n)\|_{L_{\infty}({\rm supp}(\phi_n))}.
\end{multline}
Here, the first inequality follows from the obvious fact
$${\rm dist}_{\mathcal{L}_{p,\infty}}\Big(XY,(\mathcal{L}_{p,\infty})_0\Big)\leqslant {\rm dist}_{\mathcal{L}_{p,\infty}}\Big(X,(\mathcal{L}_{p,\infty})_0\Big)\|Y\|_{\infty},$$
and the second inequality follows from the assumption \ref{aatb}.

Combining \eqref{eq-distance-op-decompose-2} and \eqref{eq-distance-op-decompose-3}, we obtain
\begin{multline}\label{eq-distance-op-decompose-5}
\Big[ {\rm dist}\Big(M_{\sum_{n=1}^{N}\phi_n^2}A-\sum_{n=1}^{N}M_{\phi_n}A_{g_n}M_{\phi_n},(\mathcal{L}_{p,\infty})_0\Big)\Big]^p \\
\leqslant  \max_{1\leqslant  n \leqslant  N}\|\phi_n\|_{L_{\infty}(G)}^{p}\cdot [\omega(a,\max_{1\leqslant  n \leqslant  N}{\rm diam}({\rm supp}(\phi_n)))]^p\cdot \sum_{n=1}^N \|f \phi_n \|_{L_p(G)}^p \\
\leqslant \max_{1\leqslant  n \leqslant  N}\|\phi_n\|_{L_{\infty}(G)}^{2p}\cdot [\omega(a,\max_{1\leqslant  n \leqslant  N}{\rm diam}({\rm supp}(\phi_n)))]^p\cdot \|f\|_{L_p(G)}^p.
\end{multline}

On the other hand, it follows from the assumption \ref{aatc} that
\begin{equation}\label{eq-distance-op-decompose-6}
{\rm dist}\Big(A-M_{\sum_{n=1}^{N}\phi_n^2}A,(\mathcal{L}_{p,\infty})_0\Big)\leqslant \|f(1-\sum_{n=1}^{N}\phi_n^2)\|_{L_p(G)}.
\end{equation}
Combining \eqref{eq-distance-op-decompose-5} and \eqref{eq-distance-op-decompose-6} and using the triangle inequality, we complete the proof.
\end{proof}

Naturally, we wish to have a sequence of pairwise disjointly supported functions $\{\phi_n\}_{n=1}^N\subset C^{\infty}_c(G)$ such that the right hand side in \eqref{eq-distance-op-thm} is arbitrarily small. In the next lemma, we explicitly construct such functions. Moreover, the assertion of the Lemma \ref{function system lemma} is general, that is, the construction can be made as long as the space $G$ satisfies the conditions of Lemma \ref{quasi_metric_space_construction}, not necessarily have to be nilpotent Lie group.

\begin{lemma}\label{function system lemma} There exists a sequence $\{\phi_{n,l}\}_{l\geqslant 0, 1\leqslant  n \leqslant  N_l}\subset C^{\infty}_c(G)$ such that
\begin{enumerate}[{\rm (i)}]
\item $0\leqslant \phi_{n,l}\leqslant 1$ for every $l\geqslant 0$ and for every $1\leqslant  n \leqslant  N_l.$
\item For every $l\geqslant 0,$ the functions $\{\phi_{n,l}\}_{n=1}^{N_l}$ are pairwise disjointly supported.
\item $\sum_{1\leqslant  n \leqslant  N_l}\phi_{n,l}^2\to 1$ almost everywhere as $l\to\infty.$
\item For every $l\geqslant 0,$ $1\leqslant n\leqslant N_l,$ ${\rm diam}(\supp{\phi_{n,l}})\leqslant 2^{-l}.$
\end{enumerate}
\end{lemma}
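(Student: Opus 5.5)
For each level $l$ I will apply the covering Lemma \ref{quasi_metric_space_construction} to $G$ with its homogeneous translation-invariant metric $\mathrm{dist}$. After normalising the Haar measure, balls satisfy $|B(x,r)| = c\,r^{d_{\hom}}$, and a harmless rescaling of the metric makes $c=1$. This yields a countable family of points $\{x_i\}_{i\in I}$ such that the balls $B(x_i,\epsilon_l)$ cover $G$ with bounded overlap. I take $\epsilon_l = 2^{-l-2}$.

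From this cover I build a disjoint partition. Enumerate $I=\{i_1,i_2,\dots\}$ and set $E_{i_k} = B(x_{i_k},\epsilon_l)\setminus \bigcup_{j<k} B(x_{i_j},\epsilon_l)$. The sets $E_i$ are pairwise disjoint Borel sets, they cover $G$, and each has diameter at most $2\epsilon_l\cdot C$, where $C$ is the quasi-triangle constant; since dist is a genuine metric, $C=1$. Hence each $E_i$ has diameter at most $2^{-l-1}$.

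To obtain smooth functions I shrink each set to a compact subset. By inner regularity of Haar measure, for every $i$ there is a compact $K_i\subset \mathrm{int}$-approximation of $E_i$. Concretely, I choose a compact $K_i\subset E_i$ with $|E_i\setminus K_i| \le 2^{-l}\,2^{-k}$ (where $i=i_k$). Since the $K_i$ are pairwise disjoint compact sets, they are at positive mutual distance, at least locally. I then choose smooth cutoffs $0\le \phi_i\le 1$ with $\phi_i=1$ on $K_i$ and $\phi_i$ supported in a small neighbourhood of $K_i$. Each such neighbourhood has diameter at most $2^{-l}$, and the neighbourhoods are chosen disjoint from one another.

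The main obstacle is this disjointness of the neighbourhoods, together with the requirement that $N_l$ be finite. Finiteness I handle by truncation: keep only those $i$ with $x_i$ in the ball $B(0,2^l)$. There are finitely many such $i$, by the bounded-overlap and volume count. Finitely many pairwise disjoint compact sets have positive mutual distance $\eta$, so neighbourhoods of radius $<\eta/2$ work. I also take the radius at most $2^{-l-2}$, which keeps the diameter bound $2^{-l}$ (iv).

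For (iii), $\sum_n\phi_{n,l}^2=1$ on $\bigcup_{x_i\in B(0,2^l)}K_i$. The complement of this set inside $B(0,2^{l}-1)$ has measure at most $\sum_k 2^{-l}2^{-k}\le 2^{-l}$. The functions are also disjointly supported and bounded by $1$. Therefore $\sum_n \phi_{n,l}^2 \to 1$ in measure on every compact set, with summable exceptional measure $2^{-l}$. By Borel–Cantelli, a.e. $x$ lies in the exceptional set for only finitely many $l$, which gives almost everywhere convergence. Properties (i) and (ii) hold by construction.
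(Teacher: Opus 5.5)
Your proposal is correct, and its skeleton matches the paper's. Both arguments apply Lemma \ref{quasi_metric_space_construction} at scale about $2^{-l}$, disjointify the balls, keep finitely many pieces covering a large ball, replace each piece by a smooth cutoff equal to $1$ off a set of small measure, and obtain (iii) by Borel--Cantelli.

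The one genuine difference is the cutoff step:
\begin{itemize}
\item The paper takes $\phi_{n,l}$ supported in the interior of the piece $B_{n,l}$ itself. This forces it to use $m(\partial B_{n,l})=0$, which it justifies by saying the boundary consists of finitely many smooth surface pieces.
\item You take a compact $K_i\subset E_i$ by inner regularity and use the positive separation of finitely many disjoint compact sets. The supports may then leave $E_i$ but stay pairwise disjoint.
\end{itemize}
Your route needs no information about the boundaries of metric balls, so it works verbatim for any translation-invariant homogeneous metric, including a non-smooth one.

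Your choice $\epsilon_l=2^{-l-2}$ also genuinely gives ${\rm diam}\leqslant 2^{-l}$. Pieces of balls of radius $2^{-l}$, as in the paper, only have diameter at most $2^{1-l}$. This is harmless for the application, where only ${\rm diam}\to 0$ is used.

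Two small points. First, normalise the Haar measure so that $|B(x,r)|=r^{d_{\hom}}$ rather than rescale the metric; rescaling the metric would change the diameters in (iv) by a constant, unless you adjust $\epsilon_l$ accordingly. Second, in the Borel--Cantelli step, say explicitly that you exhaust $G$ by the countably many balls $B(0,R)$, $R\in\mathbb{N}$. That gives almost-everywhere convergence on all of $G$, not just on each compact set.
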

\begin{proof}
For a given $l\geqslant 0,$
by Lemma \ref{quasi_metric_space_construction},  
we can choose a sequence $\{x_{n,l}\}_{n\geqslant 1}\subset G$, 
such that $$\bigcup_{n\geqslant 1}B(x_{n,l},2^{-l})=G.$$

By compactness of closed ball in $G$, there exists $N_l$,
such that $$B(1_G,2^l)\subset\bigcup_{1\leqslant  n \leqslant  N_l}B(x_{n,l},2^{-l}).$$
We can rewrite the right hand side to be disjoint union. 
Denote
$$B_{n,l}=B(x_{n,l},2^{-l})\big\backslash\bigcup_{1\leqslant k<n}B(x_{k,l},2^{-l}),\quad 1\leqslant  n \leqslant  N_l.$$
We have
$$B(1_G,2^l)\subset\bigcup_{1\leqslant  n \leqslant  N_l}B_{n,l}.$$

For every $1\leqslant  n \leqslant  N_l,$ choose a function $\phi_{n,l}\in C^{\infty}_c(G)$ compactly supported in the interior of  $B_{n,l}$, such that $0\leqslant \phi_{n,l}\leqslant 1$ and
\begin{equation}\label{eq-measure-partition-difference}
m(B_{n,l}\cap \{\phi_{n,l} \neq 1\})\leqslant \frac1{2^l\cdot N_l}.
\end{equation}
This is possible because {the} boundary of $B_{n,l}$ is made of finitely many pieces of smooth surfaces, thus $m(\partial B_{n,l})=0.$ Hence, for each $\varepsilon>0$, there exists a compact subset $K$ in the interior of $B_{n,l}$ with $m(B_{n,l}\setminus K) < \varepsilon.$ The conditions (i), (ii) and (iv) are automatically satisfied.

Let us show that the condition (iii) also holds. Fix $R>0$, there exists $l_R$, such that for all $l> l_R$, $2^l>R$, and 
$$B(1_G,R)\subset\bigcup_{1\leqslant  n \leqslant  N_l}B_{n,l}.$$
Denote
\begin{align*}
A_{R,l}& = B(1_G,R)\cap \big\{\sum_{1\leqslant  n \leqslant  N_l}\phi_{n,l}^2\neq 1\big\}\\
& \subseteq  \big(\bigcup_{1\leqslant  n \leqslant  N_l}B_{n,l}\big)\bigcap\big(\bigcup_{1\leqslant n \leqslant N_l}\{\phi_{n,l}\neq 1\}\big)= \bigcup_{1\leqslant  n \leqslant  N_l}\big(B_{n,l}\cap \{\phi_{n,l}\neq 1\}\big).
\end{align*}
Combining with \eqref{eq-measure-partition-difference}, we infer $m(A_{R,l})\leqslant 2^{-l}.$ By Borel-Cantelli lemma, we have
\begin{equation*}
m(\limsup_{l\to\infty}A_{R,l})=0
\end{equation*}
On the other hand, we also have
\begin{equation*}
B(1_G,R)\backslash \big\{\lim_{l\to \infty} \sum_{1\leqslant  n \leqslant  N_l}\phi_{n,l}^2=1\big\}\subset \limsup_{l\to\infty}A_{R,l}.
\end{equation*}
Since $R$ is arbitrary, this yields the condition (iii).
\end{proof}

In the following lemma, we build a sequence of functions $\{\alpha_l\}_{l\geqslant 0}$ to approximate the function $\alpha \in C_b(G)$ chosen in Theorem \ref{abstract asymptotics theorem}. This is a preparation for using Dominated Convergence Theorem in further proof.

\begin{lemma}\label{alphal lemma} Let $\{\phi_{n,l}\}_{l\geqslant 0, 1\leqslant  n \leqslant  N_l}\subset C^{\infty}_c(G)$ be as in Lemma \ref{function system lemma}. For every $l\geqslant 0$ and every $1\leqslant  n \leqslant  N_l$, pick a point $g_{n,l}\in{\rm supp}(\phi_{n,l})$.  Setting
\begin{equation}\label{eq-step-approximate-alpha-func}
\alpha_l=\sum_{n=1}^{N_l}\alpha(g_{n,l})\phi_{n,l}^2,\quad l\geqslant 0,	
\end{equation}
we have
\begin{enumerate}[{\rm (i)}]
\item $\alpha_l\to\alpha$ almost everywhere.
\item $\sup_{l\geqslant 0}\|\alpha_l(g)\|_{L_{\infty}(G)} \leqslant \Vert \alpha \Vert_{L_{\infty}(G)} $.
\end{enumerate}
\end{lemma}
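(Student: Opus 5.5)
Part (ii) is the easier half, so I will do it first. The functions $\phi_{n,l}$ are pairwise disjointly supported for fixed $l$ and satisfy $0\leqslant\phi_{n,l}\leqslant 1$ (Lemma \ref{function system lemma}). Hence at any $g\in G$ at most one term of \eqref{eq-step-approximate-alpha-func} is nonzero, and
\[
|\alpha_l(g)|\leqslant |\alpha(g_{n,l})|\phi_{n,l}(g)^2\leqslant \|\alpha\|_{L_\infty(G)}.
\]
If $g$ lies in no support, then $\alpha_l(g)=0$. This gives (ii).

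For (i), I will fix $g$ outside the null set where $\sum_{n}\phi_{n,l}^2(g)\not\to 1$; such $g$ exist almost everywhere by property (iii) of Lemma \ref{function system lemma}. For all large $l$ the sum $\sum_n\phi_{n,l}(g)^2$ is positive. Disjointness then gives a unique index $n(l)$ with $g\in\supp\phi_{n(l),l}$, and $\sum_n\phi_{n,l}(g)^2=\phi_{n(l),l}(g)^2\to 1$. Consequently
\[
\alpha_l(g)=\alpha(g_{n(l),l})\,\phi_{n(l),l}(g)^2 .
\]

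Both $g$ and $g_{n(l),l}$ lie in $\supp\phi_{n(l),l}$. By property (iv) of Lemma \ref{function system lemma} this support has diameter at most $2^{-l}$, so $\mathrm{dist}(g,g_{n(l),l})\leqslant 2^{-l}\to 0$. Continuity of $\alpha$ at $g$ then gives $\alpha(g_{n(l),l})\to\alpha(g)$. Together with $\phi_{n(l),l}(g)^2\to1$ and the boundedness of $\alpha$, this yields $\alpha_l(g)\to\alpha(g)$.

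No step is a real obstacle. The only point needing care is that convergence is claimed only almost everywhere: on the exceptional null set from Lemma \ref{function system lemma}(iii), $g$ may lie in no support infinitely often. Uniform continuity of $\alpha$ is not needed, because pointwise continuity suffices once the diameters shrink.
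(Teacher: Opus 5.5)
Your proposal is correct and follows the paper's own proof essentially step for step. For (i), you isolate the unique index with $\phi_{n(l),l}(g)\neq 0$, use the diameter bound together with continuity of $\alpha$ at $g$, and split $|\alpha_l(g)-\alpha(g)|$ into two terms; for (ii), you use disjointness of supports and $0\leqslant\phi_{n,l}\leqslant 1$. The one difference is in your favour: you restrict to all sufficiently large $l$, whereas the paper asserts the unique index exists ``for every $l$''.
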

\begin{proof} We claim that $\alpha_l(g)\to\alpha(g)$ for every $g\in G$ such that $\sum_{1\leqslant  n \leqslant  N_l}\phi_{n,l}^2(g)\to 1$ as $l\to\infty.$ Note that the latter condition is satisfied by an almost every $g\in G$ by Lemma \ref{function system lemma} (iii). Hence, the claim yields the first assertion of the lemma.

To see the claim, fix such $g\in G.$ Since $\{\phi_{n,l}\}_{n=1}^{N_l}$ are disjointly supported for any given $l\geqslant 0$, it follows that for every $l,$ there exists a unique $n_l\in[1,N_l]$ such that $\phi_{n_l,l}(g)\neq 0,$ while all other $\phi_{n,l}(g)$ are $0.$ Thus, 
$$\phi_{n_l,l}^2(g)=\sum_{1\leqslant  n \leqslant  N_l}\phi_{n,l}^2(g) \to 1,\quad l\to \infty.$$ 
It follows that $g\in{\rm supp}(\phi_{n_l,l}).$ By Lemma \ref{function system lemma} (iv), ${\rm dist}(g,g_{n_l,l})\leqslant 2^{-l}.$ Thus,
$$\alpha(g_{n,l})\to\alpha(g),\quad l\to\infty.$$
Note that
$$|\alpha_l(g)-\alpha(g)|=|\alpha(g_{n_l,l})\phi_{n_l,l}^2(g)-\alpha(g)|\leqslant\|\alpha\|_{\infty}|\phi_{n_l,l}^2(g)-1|+|\alpha(g_{n_l,l})-\alpha(g)|.$$
Passing $l\to\infty,$ we infer the claim.

Since $\{\phi_{n,l}\}_{1\leqslant n\leqslant N_l}$ are disjointly supported, $0\leqslant \phi_{n,l}\leqslant 1 $, and $\alpha\in C_b(G)$, by \eqref{eq-step-approximate-alpha-func}, for any $g\in G$,  we have 
$$\sup_{l\geqslant 0} |\alpha_l(g)|= \sup_{l\geqslant 0} |\alpha(g_{n_l,l})| \leqslant \Vert\alpha \Vert_{L_{\infty}} < \infty.$$
This yields the second assertion of the lemma.
\end{proof}

\begin{proof}[Proof of Theorem \ref{abstract asymptotics theorem}] Let $\{\phi_{n,l}\}_{l\geqslant 0, 1\leqslant  n \leqslant  N_l}\subset C^{\infty}_c(G)$ be as in Lemma \ref{function system lemma}. Let $g_{n,l}\in{\rm supp}(\phi_{n,l})$ for every $1\leqslant  n \leqslant  N_l$ and for every $l\geqslant0.$ Set
\begin{equation}\label{eq-A-l}
B_l=\sum_{n=0}^{N_l}M_{\phi_{n,l}}A_{g_{n,l}}M_{\phi_{n,l}},\quad l\geqslant 0.
\end{equation}
By assumption \ref{aata}, there exist limits
$$\lim_{t\to\infty}t^{\frac1p}\mu(t,M_{\phi_{n,l}^2}A_{g_{n,l}})=\alpha(g_{n,l})\|f\phi_{n,l}^2\|_{L_p(G)},\quad 1\leqslant  n \leqslant  N_l.$$
By assumption \ref{aatd},
$$M_{\phi_{n,l}}A_{g_{n,l}}M_{\phi_{n,l}}-M_{\phi_{n,l}^2}A_{g_{n,l}}=M_{\phi_{n,l}}\cdot [A_{g_{n,l}},M_{\phi_{n,l}}]\in(\mathcal{L}_{p,\infty})_0.$$
Hence, by \eqref{eq-singular-value-0-finite-rank-ideal}, for each summand in \eqref{eq-A-l}, there exists limit
$$\lim_{t\to\infty}t^{\frac1p}\mu(t,M_{\phi_{n,l}}A_{g_{n,l}}M_{\phi_{n,l}})=\alpha(g_{n,l})\|f\phi_{n,l}^2\|_{L_p(G)},\quad 1\leqslant  n \leqslant  N_l.$$
Since the operators
$$\{M_{f\phi_{n,l}}A_{g_{n,l}}M_{\phi_{n,l}}\}_{n=1}^{N_l}$$
are pairwise orthogonal, it follows from Lemma \ref{third disjoint lemma} that 
\begin{multline*}
\lim_{t\to\infty}t^{\frac1p}\mu(t,B_l)= \Big(\sum_{n=1}^{N_l} \alpha(g_{n,l})^p\|f\phi_{n,l}^2\|_{L_p(G)}^p\Big)^{\frac1p}\\
=\|f \cdot \big( \sum_{n=1}^{N_l} \alpha(g_{n,l}) \phi_{n,l}^2\big) \|_{L_p(G)}=\|f\alpha_l\|_{L_p(G)},
\end{multline*}
where $\alpha_l$ is defined in \eqref{eq-step-approximate-alpha-func}. The second  equality holds because $\{\phi_{n_l,l}\}_{1\leqslant n \leqslant N_l}$ are disjointly supported and $\alpha$ is positive (this is deduced from assumption (i) in Theorem \ref{abstract asymptotics theorem}). The last equality follows from the definition of $\alpha_l$ in \eqref{eq-step-approximate-alpha-func}.

Using Lemma \ref{abstract final lemma}, Lemma \ref{function system lemma} and the Dominated Convergence Theorem, we obtain
$${\rm dist}_{\mathcal{L}_{p,\infty}}\Big(A-B_l,(\mathcal{L}_{p,\infty})_0\Big)\to0,\quad l\to\infty.$$
By Lemma \ref{bs convergence lemma}, the following limits exist and are equal.
$$\lim_{t\to\infty}t^{\frac1p}\mu(t,A)=\lim_{l\to\infty}\|f\alpha_l\|_{L_p(G)}.$$
The assertion follows from the Dominated Convergence Theorem and Lemma \ref{alphal lemma}.
\end{proof}

\subsection{Proof of spectral asymptotic theorem: special case}\label{Section-special-case}

As always, $d_{\hom}$ denotes the homogeneous dimension of the group $G$ as in Section \ref{sec-intro}.

\begin{lemma}\label{lemma-bdd-prepare} 
    Let $P\geqslant 0$ be {an} uniformly Rockland order $m$ differential operator on $G.$ Let $g \in G$ be arbitrary. The following operators are bounded:
    \begin{align*}
        (1+P)^{-1} \colon & L_2(G)  \to W^m_2(G).\\
        (1+P_g^{\mathrm{top}})(1+P)^{-1} \colon & L_2(G) \to L_2(G).\\
        (1+P)^{-1} (1+P_g^{\mathrm{top}}) \colon & L_2(G) \to L_2(G).
    \end{align*}
    {
    Here, $(1+P)^{-1} (1+P_g^{\mathrm{top}})$ is \emph{a priori} defined on the Schwartz space $\Sc(G)$ and extended continuously to $L_2(G)$. Without ambiguity, we use the same notation to denote the extension operator.}
\end{lemma}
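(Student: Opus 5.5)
The plan is to deduce all three boundedness statements from the elliptic estimate of Theorem \ref{general elliptic estimate theorem} together with the boundedness of differential operators between Sobolev spaces, Lemma \ref{differential_operators_are_bounded}. Throughout, I will use that $(1+P)^{-1}$ is a bounded operator on $L_2(G)$. This holds because $P\geqslant 0$ is self-adjoint with domain $W^m_2(G)$, by Theorem \ref{elliptic regularity theorem}.

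\textbf{First operator.} Take $s=0$ in Theorem \ref{general elliptic estimate theorem}, which gives $\|u\|_{W^m_2(G)}\leqslant c_{P,0}\|(1+P)u\|_{L_2(G)}$ for $u\in\Sc(G)$. I then extend this to all $u\in W^m_2(G)=\mathrm{dom}(P)$. Since $\Sc(G)$ is dense in $W^m_2(G)$ and $P:W^m_2(G)\to L_2(G)$ is bounded, both sides are continuous in the $W^m_2$ norm, so the inequality passes to the limit. Now put $u=(1+P)^{-1}w$ with $w\in L_2(G)$. This gives $\|(1+P)^{-1}w\|_{W^m_2(G)}\leqslant c_{P,0}\|w\|_{L_2(G)}$, which is the first claim.

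\textbf{Second operator.} The frozen operator $P_g^{\mathrm{top}}$ is a constant coefficient differential operator of order $m$. By Lemma \ref{lemma-X_alpha-bdd}, $1+P_g^{\mathrm{top}}$ maps $W^m_2(G)\to L_2(G)$ boundedly. Composing this with the first assertion shows that $(1+P_g^{\mathrm{top}})(1+P)^{-1}$ is bounded on $L_2(G)$.

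\textbf{Third operator.} This one is the only delicate point, since it is only defined on $\Sc(G)$ and no ellipticity of $P_g^{\mathrm{top}}$ is needed. I will argue by duality. For $u\in\Sc(G)$ and $w\in L_2(G)$, self-adjointness of $(1+P)^{-1}$ gives
\[
\langle (1+P)^{-1}(1+P_g^{\mathrm{top}})u,w\rangle=\langle (1+P_g^{\mathrm{top}})u,(1+P)^{-1}w\rangle .
\]
Since $(1+P)^{-1}w\in W^m_2(G)$, I can move $1+P_g^{\mathrm{top}}$ across the pairing as its formal adjoint $1+(P_g^{\mathrm{top}})^{\dagger}$. This step is justified by approximating $(1+P)^{-1}w$ by Schwartz functions in $W^m_2(G)$ and using the adjoint relation together with continuity of $P_g^{\mathrm{top}}, (P_g^{\mathrm{top}})^\dagger:W^m_2\to L_2$. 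The formal adjoint $(P_g^{\mathrm{top}})^{\dagger}$ is again a constant coefficient operator of order $m$; by Lemma \ref{top dag commute} it equals $(P^\dagger)^{\mathrm{top}}_g=P^{\mathrm{top}}_g$, since $P^\dagger=P$. The pairing is therefore bounded by
\[
\|u\|_{L_2}\,\|(1+P_g^{\mathrm{top}})(1+P)^{-1}w\|_{L_2}\leqslant C\|u\|_{L_2}\|w\|_{L_2},
\]
using the second assertion. Hence $(1+P)^{-1}(1+P_g^{\mathrm{top}})$ extends to a bounded operator on $L_2(G)$; it is the adjoint of the bounded operator in the second assertion. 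Alternatively, one can view $1+P_g^{\mathrm{top}}:L_2\to W^{-m}_2$ and $(1+P)^{-1}:W^{-m}_2\to L_2$ via Theorem \ref{general elliptic estimate theorem} with $s=-m$, but the duality route avoids extending the resolvent to negative Sobolev spaces.
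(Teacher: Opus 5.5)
Your proof is correct. For the first two operators it matches the paper's argument; the two routes differ only in the third.

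The paper's proof treats $1+P$ and $1+P_g^{\mathrm{top}}$ as topological isomorphisms $W^{m+s}_2(G)\to W^s_2(G)$ for every $s\in\mathbb{R}$, citing Theorem \ref{general elliptic estimate theorem}. It then says the remaining assertions are ``similar''. For the third assertion this amounts to taking $s=-m$: first apply $1+P_g^{\mathrm{top}}\colon L_2(G)\to W^{-m}_2(G)$, then $(1+P)^{-1}\colon W^{-m}_2(G)\to L_2(G)$. That is exactly the alternative you mention at the end.

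Your duality argument instead identifies the third operator as the adjoint of the second. It therefore needs only three things: the $s=0$ estimate, self-adjointness of $(1+P)^{-1}$, and the adjoint relation extended to $W^m_2(G)$ by density.

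What your route buys:
\begin{itemize}
\item You never need surjectivity of $1+P$ onto $W^{-m}_2(G)$. Theorem \ref{general elliptic estimate theorem} is stated only as an a priori estimate on $\Sc(G)$, so upgrading it to an isomorphism at $s=-m$ needs an extra step the paper leaves implicit.
\item You never need invertibility of $1+P_g^{\mathrm{top}}$. The paper asserts it, but obtaining it from Theorem \ref{general elliptic estimate theorem} would require $P_g^{\mathrm{top}}\geqslant 0$, which is true but not proved there, and in any case unused in the conclusion.
\end{itemize}
The paper's route is shorter once the full Sobolev-scale isomorphism is granted.

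One minor remark: your appeal to Lemma \ref{top dag commute} can be dropped. $(P_g^{\mathrm{top}})^{\dagger}$ is itself a constant-coefficient operator of order $m$, so $(1+(P_g^{\mathrm{top}})^{\dagger})(1+P)^{-1}$ is bounded by the same argument as your second step.
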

\begin{proof} 
{
By Theorem \ref{general elliptic estimate theorem}, the operators
$1+P$ and $1+P_g^{\mathrm{top}}$ both extend to topological linear isomorphisms
from $W^{m+s}_2(G)$ to $W^s_2(G)$ for any $s\in \mathbb{R}.$ It follows that $(1+P_g^{\mathrm{top}})(1+P)^{-1}$ extends to a bounded operator bounded on $L_2(G),$ and the other assertions are similar.
}
\end{proof}

\begin{lemma}\label{q first lemma} Let $P\geqslant0$ be {an} uniformly Rockland order $m$ differential operator on $G.$ If $f\in C^{\infty}_c(G),$ then $M_f(1+P)^{-1}\in\mathcal{L}_{\frac{d_{\hom}}{m},\infty}.$ Furthermore, there exits a constant $c_P$ such that
$${\rm dist}_{\mathcal{L}_{\frac{d_{\hom}}{m},\infty}}\Big(M_f(1+P)^{-1}, (\mathcal{L}_{\frac{d_{\hom}}{m}})_0\Big)
\leqslant c_{P,1} \cdot \Vert f \Vert_{L_{\frac{d_{\hom}}{m}}(G)}.$$
\end{lemma}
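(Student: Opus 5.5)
We factor through powers of $1-\Delta$ and reduce to a Cwikel-type estimate for $M_f(1-\Delta)^{-\frac{m}{2v}}$. By Lemma \ref{lemma-bdd-prepare}, $(1+P)^{-1}$ maps $L_2(G)$ boundedly into $W^m_2(G)$. Since $\|u\|_{W^m_2(G)}=\|(1-\Delta)^{\frac{m}{2v}}u\|_{L_2(G)}$, the operator $T:=(1-\Delta)^{\frac{m}{2v}}(1+P)^{-1}$ is bounded on $L_2(G)$, with norm depending only on $P$. We then write
\[
M_f(1+P)^{-1} = M_f(1-\Delta)^{-\frac{m}{2v}}\cdot T .
\]
Lemma \ref{specific_cwikel_estimates_from_jfa_lemma}, applied with $\alpha=0$ and $\beta=m$, gives $M_f(1-\Delta)^{-\frac{m}{2v}}\in\mathcal{L}_{\frac{d_{\hom}}{m},\infty}$. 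Membership $M_f(1+P)^{-1}\in\mathcal{L}_{\frac{d_{\hom}}{m},\infty}$ then follows from \eqref{eq-singular-value-multi}.

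For the distance estimate, the multiplicative bound ${\rm dist}(XY,(\mathcal{L}_{p,\infty})_0)\leqslant {\rm dist}(X,(\mathcal{L}_{p,\infty})_0)\|Y\|_\infty$ (used already in the proof of Lemma \ref{abstract final lemma}) reduces the claim to
\[
{\rm dist}_{\mathcal{L}_{\frac{d_{\hom}}{m},\infty}}\big(M_f(1-\Delta)^{-\frac{m}{2v}},(\mathcal{L}_{\frac{d_{\hom}}{m},\infty})_0\big)\lesssim \|f\|_{L_{\frac{d_{\hom}}{m}}(G)},
\]
after which we take $c_{P,1}$ to be this implicit constant times $\|T\|_\infty$. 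This is exactly the quantitative Cwikel estimate of Appendix \ref{cwikel_appendix}, the graded analogue of \cite[Section 6]{MSZ-stratified-23}. The form of the bound depends on $p=\frac{d_{\hom}}{m}$.

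\emph{Case $p>2$.} The Cwikel estimate gives $\|M_f(1-\Delta)^{-\frac{m}{2v}}\|_{p,\infty}\lesssim\|f\|_{L_p(G)}$ directly. Since the distance to $(\mathcal{L}_{p,\infty})_0$ is at most the quasinorm, the case is done.

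\emph{Case $p\leqslant 2$.} The Appendix only provides the bound with an $\ell_p(L_q)(G)$ norm in place of $\|f\|_{L_p}$. Here I would use the freedom that we only need a distance estimate. Decompose $G$ into translates of a fixed ball via Lemma \ref{quasi_metric_space_construction}, and use a partition of unity $\sum_n\psi_n^2=1$. Commutators $[(1-\Delta)^{-\frac{m}{2v}},M_{\psi_n}]$ lie in $(\mathcal{L}_{p,\infty})_0$ by Theorem \ref{asterisque pre-verification lemma 2}, applied with $P=-\Delta$ raised to the appropriate power. On a single ball, $f\in C^\infty_c(G)$ is bounded, so the local piece is controlled in $\mathcal{L}_{p,\infty}$ by a local norm of $f$. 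Lemma \ref{lemma-orthogonal-Pythagoras-2} then sums the $p$-th powers of the local distances.

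To replace the local $L_q$ norm by the local $L_p$ norm, I would shrink the balls and use the dilation covariance \eqref{trace_scaling}. On a ball of radius $r$, the $\mathcal{L}_{p,\infty}$ bound scales like $r^{d_{\hom}/p}\|f\|_\infty$, which is comparable to the local $L_p$ norm when $f$ is nearly constant on the ball. Letting $r\to0$ should therefore give the $L_p$ bound modulo $(\mathcal{L}_{p,\infty})_0$, since the lower-order parts of $(1-\Delta)^{-\frac{m}{2v}}$ relative to its homogeneous part are compact of higher order.

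I expect this case $p\leqslant 2$ to be the main obstacle. If the Appendix already states the distance version of the Cwikel estimate for $f\in C^\infty_c(G)$, the lemma is immediate from the factorization above.
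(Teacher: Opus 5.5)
Your treatment of membership, and of the case $p=\frac{d_{\hom}}{m}>2$, is correct. There it is more elementary than the paper, since Lemma \ref{quantitative_cwikel_estimate}(i) bounds the full quasinorm, not just the distance.

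The case $p\leqslant 2$, however, is only sketched, and its central scaling claim is false as stated. Take $\psi_r=\psi_1\circ\delta_{1/r}$ and conjugate by the unitary dilation. This gives $\|M_{\psi_r}(1-\Delta)^{-\frac{m}{2v}}\|_{p,\infty}=r^m\|M_{\psi_1}(r^{2v}-\Delta)^{-\frac{m}{2v}}\|_{p,\infty}$. When $2m\geqslant d_{\hom}$, the last factor blows up as $r\to 0$, because $M_{\psi_1}(-\Delta)^{-\frac{m}{2v}}$ is not even bounded. This already happens on $\mathbb{R}^d$, where $|\xi|^{-m}$ is not square integrable near $0$. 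For the same reason, there is no homogeneous part to compare $(1-\Delta)^{-\frac{m}{2v}}$ with modulo compacts.

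Only the \emph{distance} to $(\mathcal{L}_{p,\infty})_0$ scales like $r^m$. Even proving that needs three things you do not supply:
\begin{enumerate}
\item showing $M_{\psi_1}\big((r^{2v}-\Delta)^{-\frac{m}{2v}}-(1-\Delta)^{-\frac{m}{2v}}\big)\in(\mathcal{L}_{p,\infty})_0$, which holds because it is $M_{\psi_1}$ times an operator of order $-(m+2v)$;
\item moving between balls by right translations, which commute with $\Delta$;
\item splitting the bounded-multiplicity cover into finitely many families of pairwise disjoint balls. The pieces $M_{\psi_n}(\cdot)M_{\psi_n}$ from an overlapping partition $\sum_n\psi_n^2=1$ are not orthogonal, so Lemma \ref{lemma-orthogonal-Pythagoras-2} does not apply to them.
\end{enumerate}
So the lemma is not proved for $p\leqslant 2$.

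The missing idea removes the case distinction altogether. For $T\in\mathcal{L}_{p,\infty}$ one has ${\rm dist}_{\mathcal{L}_{p,\infty}}(T,(\mathcal{L}_{p,\infty})_0)=\limsup_{t\to\infty}t^{\frac1p}\mu(t,T)$, and this limsup has already been computed for constant-coefficient operators in Theorem \ref{main theorem constant}.

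The paper therefore factors through an order-$m$ constant-coefficient operator instead of through $(1-\Delta)^{\frac{m}{2v}}$. The latter is not a differential operator unless $2v$ divides $m$, so Theorem \ref{main theorem constant} does not apply to it. The paper's factorization is $M_f(1+P)^{-1}=M_f(1+P_0^{\rm top})^{-1}\cdot(1+P_0^{\rm top})(1+P)^{-1}$. The second factor is bounded by Lemma \ref{lemma-bdd-prepare}. By Theorem \ref{main theorem constant}, the distance of the first factor to $(\mathcal{L}_{p,\infty})_0$ equals a constant times $\|f\|_{L_{\frac{d_{\hom}}{m}}(G)}$. This requires no case split on $p$ and no localization.
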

\begin{proof}  We write
$$M_f(1+P)^{-1}=M_f(1+P_0^{\mathrm{top}})^{-1}\cdot (P_0^{\mathrm{top}}+1)(1+P)^{-1}.$$
The first factor, $M_f(1+P_0^{\mathrm{top}})^{-1}$, belongs to $\mathcal{L}_{\frac{d_{\hom}}{m},\infty}$ by Lemma \ref{asterisque pre-verification lemma}.
The second factor, $(1+P_0^{\mathrm{top}})(1+P)^{-1}$, extends to a bounded operator on $L_2(G)$ due to Lemma \ref{lemma-bdd-prepare}.
Appealing to \eqref{eq-singular-value-multi} and \eqref{eq-quasi-norm-L-p-infity}, 
we infer that $M_f(1+P)^{-1}\in\mathcal{L}_{\frac{d_{\hom}}{m},\infty}$, and we have 
\begin{multline}\label{eq-MfP-lemma-1}
{\rm dist}_{\mathcal{L}_{\frac{d_{\hom}}{m},\infty}}\Big(M_f(1+P)^{-1}, (\mathcal{L}_{\frac{d_{\hom}}{m}})_0\Big)\\
\leqslant {\rm dist}_{\mathcal{L}_{\frac{d_{\hom}}{m},\infty}}\Big(M_f(1+P_0^{\mathrm{top}})^{-1}, (\mathcal{L}_{\frac{d_{\hom}}{m}})_0\Big)\cdot \Big\|(1+P_0^{\mathrm{top}})(1+P)^{-1}\Big\|_{\infty}.
\end{multline}
{
Note that if $T \in \mathcal{L}_{p,\infty},$ then
\[
    \mathrm{dist}_{\mathcal{L}_{p,\infty}}(T,(\mathcal{L}_{p,\infty})_0) = \limsup_{t\to\infty} t^{\frac1p}\mu(t,T).
\]
Thus it follows from Theorem \ref{main theorem constant} that}
\begin{equation}\label{eq-MfP-lemma-2}
{\rm dist}_{\mathcal{L}_{\frac{d_{\hom}}{m},\infty}}\Big(M_f(1+P_0^{\mathrm{top}})^{-1}, (\mathcal{L}_{\frac{d_{\hom}}{m}})_0\Big)=C_{m,G}\cdot \Vert f \Vert_{\frac{d_{\hom}}{m}}
\end{equation}

Combining \eqref{eq-MfP-lemma-1} and \eqref{eq-MfP-lemma-2}, we prove the assertion.
\end{proof}

\begin{lemma}\label{q second lemma} Let $P\geqslant0$ be uniformly Rockland order $m$ differential operator on $G.$ If $f\in C^{\infty}_c(G),$ then
$$[M_f,(1+P)^{-1}]\in\mathcal{L}_{\frac{d_{\hom}}{m+1},\infty}.$$
\end{lemma}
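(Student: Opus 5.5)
We would use the commutator identity
\[
[M_f,(1+P)^{-1}] = (1+P)^{-1}[P,M_f](1+P)^{-1},
\]
valid on $\Sc(G)$ and extended by continuity. Here $Q:=[P,M_f]$ is a differential operator of order at most $m-1$ with compactly supported coefficients. This holds because the top-weight part of $PM_f-M_fP$ cancels: by the Leibniz rule, $X^\alpha M_f = M_f X^\alpha + \sum M_{X^{\beta}f}X^{\beta'}$ with $\len(\beta')<\len(\alpha)$. Every coefficient of $Q$ is a product of some $a_\alpha$ with a derivative of $f$, so it lies in $C^\infty_c(G)$.

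The difficulty is that Lemma \ref{asterisque pre-verification lemma} is stated only for constant-coefficient $P$. We therefore insert the constant-coefficient operator $P_0^{\mathrm{top}}$, as in the proof of Lemma \ref{q first lemma}, and write
\[
(1+P)^{-1}Q(1+P)^{-1} = \big[(1+P)^{-1}(1+P_0^{\mathrm{top}})\big]\cdot (1+P_0^{\mathrm{top}})^{-1}Q(1+P_0^{\mathrm{top}})^{-1}\cdot\big[(1+P_0^{\mathrm{top}})(1+P)^{-1}\big].
\]
By Lemma \ref{lemma-bdd-prepare}, the two outer brackets are bounded on $L_2(G)$. To handle the middle factor we need $P_0^{\mathrm{top}}\geqslant 0$. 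We would deduce this by a scaling argument from $P\geqslant0$: conjugating by dilations centred at $0$ and letting $t\to\infty$ isolates the top-order part at the identity, and positivity survives the limit. Granting this, and noting that $P_0^{\mathrm{top}}$ is uniformly Rockland (trivially, being constant), Lemma \ref{asterisque pre-verification lemma} applies with $\alpha=\beta=1$ and $n=m-1$. Since $2m-(m-1)=m+1>0$, the middle factor lies in $\mathcal{L}_{\frac{d_{\hom}}{m+1},\infty}$, and the ideal property \eqref{eq-singular-value-multi} finishes the proof.

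The main obstacle is justifying that the commutator identity holds for the bounded extensions, with the middle factor meaningful on $L_2(G)$. We would handle this through Sobolev mapping properties. By Theorem \ref{general elliptic estimate theorem}, $(1+P)^{-1}:W^{s}_2\to W^{s+m}_2$ for every $s$. By Lemma \ref{differential_operators_are_bounded}, $Q:W^{m}_2\to W^{1}_2$. Combining these with the density of $\Sc(G)$, both sides agree on $\Sc(G)$, are bounded on $L_2(G)$, and so coincide. If the positivity of $P_0^{\mathrm{top}}$ proves awkward, we would fall back on $(1-\Delta)^{m/2v}$ in place of $1+P_0^{\mathrm{top}}$. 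That is, we would write the middle factor as $(1-\Delta)^{-\frac{m}{2v}}Q(1-\Delta)^{-\frac{m}{2v}}$ and reduce it, monomial by monomial, to $(1-\Delta)^{-\frac{m}{2v}}M_{a}(1-\Delta)^{-\frac{1}{2v}}$ times bounded operators, exactly as in the proof of Lemma \ref{asterisque pre-verification lemma}. Lemma \ref{specific_cwikel_estimates_from_jfa_lemma} then gives membership in $\mathcal{L}_{\frac{d_{\hom}}{m+1},\infty}$, while boundedness of $(1+P)^{-1}(1-\Delta)^{\frac{m}{2v}}$ follows from Theorem \ref{general elliptic estimate theorem}.
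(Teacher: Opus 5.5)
Your proof is correct. The argument you list as a fallback is exactly the paper's proof. The paper factors
\[
[M_f,(1+P)^{-1}]=(1+P)^{-1}(1-\Delta)^{\frac{m}{2v}}\cdot(1-\Delta)^{-\frac{m}{2v}}[P,M_f](1-\Delta)^{-\frac{m}{2v}}\cdot(1-\Delta)^{\frac{m}{2v}}(1+P)^{-1}
\]
and bounds the outer factors by Lemma \ref{lemma-bdd-prepare}. It then splits each monomial of the middle factor as $(1-\Delta)^{-\frac{m}{2v}}M_{a_\alpha}(1-\Delta)^{-\frac{1}{2v}}\cdot(1-\Delta)^{\frac{1}{2v}}X^{\alpha}(1-\Delta)^{-\frac{m}{2v}}$ and applies Lemma \ref{specific_cwikel_estimates_from_jfa_lemma} to the first piece.

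Your primary route, through $1+P_0^{\mathrm{top}}$ and Lemma \ref{asterisque pre-verification lemma}, also works, but it is a detour. The proof of that lemma is itself this same $(1-\Delta)$-factorisation. So inserting $P_0^{\mathrm{top}}$ buys nothing here and costs you the extra fact $P_0^{\mathrm{top}}\geqslant 0$. The direct route needs only $P\geqslant 0$ and the uniform Rockland property of $P$.

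Your dilation argument for that fact is sound. Put $U_t u=t^{d_{\hom}/2}u\circ\delta_t$, which is unitary. Then
\[
t^{-m}U_t^{-1}PU_t=\sum_{\len(\alpha)\leqslant m}t^{\len(\alpha)-m}M_{a_\alpha\circ\delta_{1/t}}X^\alpha .
\]
As $t\to\infty$, dominated convergence carries nonnegativity of the quadratic form on $\Sc(G)$ over to $P_0^{\mathrm{top}}$. The argument is worth keeping, because the paper uses positivity of $P_g^{\mathrm{top}}$ tacitly in Lemmas \ref{lemma-bdd-prepare} and \ref{q first lemma} without proving it.

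One small imprecision: $P_0^{\mathrm{top}}$ is not uniformly Rockland merely because its coefficients are constant. It is uniformly Rockland because it is homogeneous of order $m$, so its top part at every point is itself. Its required estimate is then exactly the $g=0$ instance of the uniform Rockland hypothesis on $P$.
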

\begin{proof} The operator $[P,M_f]$ is a differential operator of order $m-1$ with smooth compactly supported coefficients. We write
$$[P,M_f]=\sum_{{\rm len}(\alpha)\leqslant m-1}M_{a_{\alpha}}X^{\alpha},$$
where $a_{\alpha}\in C^{\infty}_c(G)$ for every $\alpha$ with ${\rm len}(\alpha)\leqslant m-1.$ Thus,
\begin{align*}
&(1-\Delta)^{-\frac{m}{{ 2v}}}[P,M_f](1-\Delta)^{-\frac{m}{{ 2v}}}\\
&=\sum_{{\rm len}(\alpha)\leqslant m-1}(1-\Delta)^{-\frac{m}{{ 2v}}}M_{a_{\alpha}}(1-\Delta)^{-\frac{1}{{ 2v}}}\cdot (1-\Delta)^{\frac{1}{{ 2v}}}X^{\alpha}(1-\Delta)^{-\frac{m}{{ 2v}}}.
\end{align*}
By Lemma \ref{specific_cwikel_estimates_from_jfa_lemma}, we have
$$(1-\Delta)^{-\frac{m}{{ 2v}}}M_{a_{\alpha}}(1-\Delta)^{-\frac{1}{{ 2v}}}\in\mathcal{L}_{\frac{d_{\hom}}{m+1},\infty}.$$
Clearly,
$$(1-\Delta)^{\frac{1}{{ 2v}}}X^{\alpha}(1-\Delta)^{-\frac{m}{{ 2v}}}\in\mathcal{B}(L_2(G)).$$
Thus,
\[
(1-\Delta)^{-\frac{m}{{ 2v}}}[P,M_f](1-\Delta)^{-\frac{m}{{ 2v}}}\in\mathcal{L}_{\frac{d_{\hom}}{m+1},\infty}.
\]
We decompose the commutator as follows
\begin{align*}
    &[M_f,(1+P)^{-1}]=(1+P)^{-1}[P,M_f](1+P)^{-1}\\
                    &=(1+P)^{-1}(1-\Delta)^{\frac{m}{{ 2v}}}\cdot (1-\Delta)^{-\frac{m}{{ 2v}}}[P,M_f](1-\Delta)^{-\frac{m}{{ 2v}}} \cdot (1-\Delta)^{\frac{m}{{ 2v}}}(1+P)^{-1}.
\end{align*}
By Lemma \ref{lemma-bdd-prepare}, the first factor, $(1+P)^{-1}(1-\Delta)^{\frac{m}{{ 2v}}}$, and the last factor, $(1-\Delta)^{\frac{m}{{ 2v}}}(1+P)^{-1}$ , are bounded on $L_2(G)$. The assertion follows now from the preceding paragraph.
\end{proof}

\begin{lemma}\label{q third lemma} Let $P$ and $Q$ be order $m$ differential operators on $G.$ If $P$ is uniformly Rockland and positive, then $Q(1+P)^{-1}$ is bounded.
\end{lemma}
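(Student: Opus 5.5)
The plan is to factor $Q(1+P)^{-1}$ through the Sobolev space $W^m_2(G)$ and use results already established. By Lemma \ref{lemma-bdd-prepare}, $(1+P)^{-1}$ is bounded from $L_2(G)$ to $W^m_2(G)$. This follows from Theorem \ref{general elliptic estimate theorem} with $s=0$, which gives $\|u\|_{W^m_2(G)}\leqslant c_{P,0}\|(1+P)u\|_{L_2(G)}$ for $u\in\Sc(G)$. Combined with the self-adjointness of $P$ on domain $W^m_2(G)$ (Theorem \ref{elliptic regularity theorem}), this shows that $(1+P)^{-1}$ maps $L_2(G)$ onto $W^m_2(G)$ boundedly.

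Next, Lemma \ref{differential_operators_are_bounded} with $s=0$ says that $\widetilde{Q}$, the distributional extension of $Q$, is bounded from $W^m_2(G)$ to $L_2(G)$. That lemma rests on Lemma \ref{lemma-X_alpha-bdd} for each monomial $X^\alpha$ with $\len(\alpha)\leqslant m$ and on Lemma \ref{multiplication_lemma} for the coefficients $M_{a_\alpha}$, which are bounded on $L_2(G)$ because the coefficients lie in $C^\infty_b(G)$ by the standing assumption in Definition \ref{def-differential-op}. Composing the two bounds gives
\[
\|Q(1+P)^{-1}u\|_{L_2(G)}\leqslant \|\widetilde{Q}\|_{W^m_2\to L_2}\,\|(1+P)^{-1}\|_{L_2\to W^m_2}\,\|u\|_{L_2(G)},
\]
first for $u$ in the dense subspace $(1+P)\Sc(G)$ and then for all of $L_2(G)$ by continuity.

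The only point needing care is whether $Q(1+P)^{-1}$ is well-defined, since $(1+P)^{-1}$ need not preserve $\Sc(G)$. I will read $Q(1+P)^{-1}$ as $\widetilde{Q}\circ(1+P)^{-1}$. This is legitimate because $\widetilde{Q}$ agrees with $Q$ on $\Sc(G)$ and is continuous on $W^m_2(G)$, where $\Sc(G)$ is dense. No other obstacle is expected; the argument is a direct composition of Theorem \ref{general elliptic estimate theorem} with Lemma \ref{differential_operators_are_bounded}.
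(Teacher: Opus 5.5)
Your proof is correct and is the same as the paper's: compose the bounded map $(1+P)^{-1}:L_2(G)\to W^m_2(G)$ from Lemma \ref{lemma-bdd-prepare} with the bounded map $Q:W^m_2(G)\to L_2(G)$ from Lemma \ref{differential_operators_are_bounded}. Reading $Q(1+P)^{-1}$ as $\widetilde{Q}\circ(1+P)^{-1}$ is a sensible clarification the paper leaves implicit, and the density step through $(1+P)\Sc(G)$ is not actually needed, since both factors are already bounded on the full spaces.
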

\begin{proof} By Lemma \ref{lemma-bdd-prepare}, $(1+P)^{-1}:L_2(G)\to W^m_2(G)$ is bounded. By Lemma \ref{differential_operators_are_bounded}, $Q:W^m_2(G)\to L_2(G)$ is bounded. The assertion follows.
\end{proof}

\begin{lemma}\label{uniform approximation lemma} Let $P\geqslant 0$ be uniformly Rockland order $m$ differential operator on { $G,$ and denote $\{a_{\alpha}\}$ for some choice of coefficients of $P,$ that is
\[
    P = \sum_{\len(\alpha)\leqslant m} M_{a_{\alpha}}X^{\alpha}.
\]

}There exists a constant $c_{P,2}$ such that, for every $f\in L_{\infty}(G)$ and for every $g\in G,$
$$\big\|M_f(P-P_g)(1+P^{\rm top}_g)^{-1}\big\|_{\infty}\leqslant c_{P,2}\sum_{\len(\alpha)\leqslant m}\|f\cdot (a_{\alpha}-a_{\alpha}(g))\|_{L_{\infty}(G)}.$$
\end{lemma}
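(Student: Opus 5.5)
The plan is to read $P_g$ as the operator with all coefficients frozen at $g$, that is $P_g=\sum_{\len(\alpha)\leqslant m}a_{\alpha}(g)X^{\alpha}$, whose top part is $P^{\rm top}_g$. Then
\[
M_f(P-P_g)(1+P^{\rm top}_g)^{-1}=\sum_{\len(\alpha)\leqslant m}M_{f\cdot(a_{\alpha}-a_{\alpha}(g))}\,X^{\alpha}(1+P^{\rm top}_g)^{-1}.
\]
By the triangle inequality and $\|M_h\|_\infty=\|h\|_{L_\infty(G)}$, the lemma reduces to a bound
\[
\sup_{g\in G}\ \max_{\len(\alpha)\leqslant m}\big\|X^{\alpha}(1+P^{\rm top}_g)^{-1}\big\|_{\infty}<\infty .
\]
So the whole content is a resolvent estimate that is uniform in the frozen point $g$. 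Lemma \ref{q third lemma} gives boundedness for each fixed $g$, but its constant comes from Theorem \ref{general elliptic estimate theorem}, which is not visibly uniform in $g$. I would therefore argue directly.

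\textbf{Step 1: positivity of the frozen operator.} I would first show $P^{\rm top}_g\geqslant 0$ for every $g$. Take $u\in\Sc(G)$ and the translated, dilated test functions $u_t(x)=t^{d_{\hom}/2}u(\delta_t(g^{-1}x))$, which are $L_2$-normalised. Because the $X_j$ are left-invariant-compatible and homogeneous, $X^{\alpha}$ applied to $u_t$ produces a factor $t^{\len(\alpha)}$ times the dilate of $X^{\alpha}u$. Hence
\[
t^{-m}\langle Pu_t,u_t\rangle\to\langle P^{\rm top}_g u,u\rangle \quad\text{as } t\to\infty,
\]
since the coefficients concentrate at $g$ by continuity and the lower-order terms carry strictly smaller powers of $t$. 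Positivity of $P$ then passes to the limit. Combined with Lemma \ref{top dag commute}, this also gives that $P^{\rm top}_g$ is symmetric, hence self-adjoint by Theorem \ref{elliptic regularity theorem}.

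\textbf{Step 2: uniform estimates.} For $u\in\Sc(G)$ and $T=P^{\rm top}_g\geqslant 0$,
\[
\|(1+T)u\|^2\geqslant \|u\|^2+\|Tu\|^2 ,
\]
as in \eqref{eq-P-pd-ineq}. The uniform Rockland condition (Definition \ref{definition of ellipticity}) gives $\|(-\Delta)^{\frac{m}{2v}}u\|\leqslant c_P^{-1}\|Tu\|$, with $c_P$ independent of $g$. By Lemma \ref{lemma-X_alpha-bdd} and the interpolation inequality \eqref{eq-interpolation-norm}, for $\len(\alpha)\leqslant m$,
\[
\|X^{\alpha}u\|\lesssim\|u\|_{W^{m}_2}\lesssim \|u\|+\|(-\Delta)^{\frac{m}{2v}}u\| .
\]
The implicit constants here depend only on $G$ and $m$, not on $g$. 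Putting these together,
\[
\|X^{\alpha}u\|\leqslant C(1+c_P^{-1})\|(1+T)u\| .
\]
Since $(1+T)\Sc(G)$ is dense in $L_2(G)$ (self-adjointness and $T\geqslant 0$), this yields $\|X^{\alpha}(1+P^{\rm top}_g)^{-1}\|_\infty\leqslant C'$ uniformly in $g$.

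Taking $c_{P,2}=C'$ finishes the argument. The main obstacle I expect is Step 1: one needs positivity, or at least the coercive lower bound $\|u\|\leqslant\|(1+P^{\rm top}_g)u\|$, uniformly in $g$. If the scaling argument becomes delicate, I would fall back on showing $\langle P^{\rm top}_g u,u\rangle\geqslant 0$ directly from Lemma \ref{top dag commute} and a G\aa rding-type limiting argument.
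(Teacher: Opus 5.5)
Your proposal is correct and is essentially the paper's proof. You use the same triangle-inequality reduction to a bound on $X^{\alpha}(1+P^{\rm top}_g)^{-1}$ that is uniform in $g$. The paper gets $\sup_g\|(1+P^{\rm top}_g)^{-1}\|_{L_2(G)\to W^m_2(G)}<\infty$ by quoting \cite[Lemma 4.33]{LMSZ25_elliptic}. You instead derive it directly from the uniform Rockland inequality and the positivity of $P^{\rm top}_g$, which the paper uses without comment.

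One fix is needed in Step 1, because of the paper's convention for vector fields. Here $X$ generates left translations, so $X^{\alpha}$ commutes with right translations, and you should take $u_t(x)=t^{d_{\hom}/2}u(\delta_t(xg^{-1}))$. With your choice $u(\delta_t(g^{-1}x))$, each $X_j$ is conjugated to $\mathrm{Ad}_{g^{-1}}X_j$. Its components of degree higher than $v_j$ produce larger powers of $t$, and then $t^{-m}\langle Pu_t,u_t\rangle$ no longer converges to $\langle P^{\rm top}_g u,u\rangle$.
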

\begin{proof} 
We write
\begin{multline*}
\|M_f(P-P_g)(1+P^{\rm top}_g)^{-1}\|_{\infty}\\
\leqslant \|M_f(P-P_g)\|_{W^m_2(G)\to L_2(G)}\|(1+P^{\rm top}_g)^{-1}\|_{L_2(G)\to W^m_2(G)}.
\end{multline*}

For the second factor on the right hand side, let us recall Lemma 4.33 in \cite{LMSZ25_elliptic} and Definition \ref{definition of ellipticity} (same as \cite[Definition 4.10]{LMSZ25_elliptic}) , according to which there exists a constant $c'_{P}$, uniform in $g\in G$, such that $\|(1+P^{\rm top}_g)^{-1}\|_{L_2(G)\to W^m_2(G)}\leqslant c'_P$.

Now we write
\begin{equation*}
M_f(P-P_g)=\sum_{\len(\alpha)\leqslant m}M_{f\cdot (a_{\alpha}-a_{\alpha}(g))}X^{\alpha},
\end{equation*}
where $X^{\alpha}$ is given in \eqref{eq-P-expression}.
Thus, we obtain
\begin{multline*}
\|M_f(P-P_g)\|_{W^m_2(G)\to L_2(G)}\\
\leqslant \sum_{\len(\alpha)\leqslant m}\|f\cdot (a_{\alpha}-a_{\alpha}(g))\|_{L_{\infty}(G)}\|X^{\alpha}\|_{W^m_2(G)\to L_2(G)}.
\end{multline*}

We refer to Lemma  \ref{differential_operators_are_bounded} and the claim above follows.
\end{proof}

By Lemma \ref{q first lemma}, both $M_f(1+P)^{-1}$ and $M_f(1+P^{\rm top}_g)^{-1}$ belong to $\mathcal{L}_{\frac{d_{\hom}}{m},\infty}$. Now we calculate the distance between them.

\begin{lemma}\label{s7 p minus pg locally} Let $P\geqslant 0$ be uniformly Rockland order $m$ differential operator on $G.$ There exists a constant $c_{P,3}$ such that, for every $f\in C^{\infty}_c(G)$ and for every $g\in G,$ 
\begin{multline*}
{\rm dist}_{\mathcal{L}_{\frac{d_{\hom}}{m},\infty}}
\Big(M_f(1+P)^{-1}-M_f(1+P^{\rm top}_g)^{-1},(\mathcal{L}_{\frac{d_{\hom}}{m},\infty})_0\Big)\\
\leqslant c_{P,3}\|f\|_{L_{\frac{d_{\hom}}{m}}(G)}\cdot\sum_{\len(\alpha)= m}\|a_{\alpha}-a_{\alpha}(g)\|_{L_{\infty}({\rm supp}(f))}.
\end{multline*}
\end{lemma}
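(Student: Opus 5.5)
We start from the resolvent identity
\[
M_f(1+P)^{-1}-M_f(1+P^{\rm top}_g)^{-1} = -M_f(P-P^{\rm top}_g)(1+P^{\rm top}_g)^{-1}\cdot (1+P^{\rm top}_g)(1+P)^{-1}\cdot\ldots
\]
More precisely, we write
\[
M_f(1+P)^{-1}-M_f(1+P^{\rm top}_g)^{-1} = M_f(1+P^{\rm top}_g)^{-1}(P^{\rm top}_g-P)(1+P)^{-1}.
\]
We then commute $M_f$ past the constant coefficient resolvent. That is, we replace $M_f(1+P^{\rm top}_g)^{-1}$ by $(1+P^{\rm top}_g)^{-1}M_f$. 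The error $[M_f,(1+P^{\rm top}_g)^{-1}]$ lies in $\mathcal{L}_{\frac{d_{\hom}}{m+1},\infty}\subset(\mathcal{L}_{\frac{d_{\hom}}{m},\infty})_0$ by Lemma \ref{q second lemma}. The factor $(P^{\rm top}_g-P)(1+P)^{-1}$ is bounded by Lemma \ref{q third lemma}, so this error does not affect the distance.

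Next we choose $\psi\in C^\infty_c(G)$ with $\psi=1$ on $\supp f$ and insert it, so that $M_f=M_fM_\psi$. We split $P-P^{\rm top}_g=(P-P^{\rm top})+(P^{\rm top}-P^{\rm top}_g)$. Here $P^{\rm top}=\sum_{\len(\alpha)=m}M_{a_\alpha}X^\alpha$, and the lower-order part $P-P^{\rm top}$ has order at most $m-1$. For the lower-order part, $M_\psi(P-P^{\rm top})$ has compactly supported coefficients of order $m-1$. We factor
\[
(1+P^{\rm top}_g)^{-1}M_f M_\psi(P-P^{\rm top})(1+P)^{-1}
\]
through powers of $1-\Delta$, exactly as in Lemma \ref{asterisque pre-verification lemma} and Lemma \ref{q second lemma}. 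This places the term in $\mathcal{L}_{\frac{d_{\hom}}{m+1},\infty}$ and hence in $(\mathcal{L}_{\frac{d_{\hom}}{m},\infty})_0$. This step uses Lemma \ref{specific_cwikel_estimates_from_jfa_lemma} and the boundedness statements of Lemma \ref{lemma-bdd-prepare}.

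The remaining principal term is
\[
(1+P^{\rm top}_g)^{-1}M_f\cdot M_\psi(P^{\rm top}-P^{\rm top}_g)(1+P)^{-1}.
\]
We commute $M_f$ back to the front, again at the cost of an element of $(\mathcal{L}_{\frac{d_{\hom}}{m},\infty})_0$. The distance is then bounded by
\[
{\rm dist}\big(M_f(1+P^{\rm top}_g)^{-1},(\mathcal{L}_{\frac{d_{\hom}}{m},\infty})_0\big)\cdot\|M_\psi(P^{\rm top}-P^{\rm top}_g)(1+P)^{-1}\|_\infty .
\]
The first factor is at most $C\|f\|_{L_{d_{\hom}/m}}$, uniformly in $g$. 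This follows from Theorem \ref{main theorem constant} applied to $P^{\rm top}_g$, since the limsup gives the distance. Uniformity in $g$ requires bounding $\tau(e^{-P^{\rm top}_g})$ uniformly. Alternatively, we factor through $(1+P^{\rm top}_0)^{-1}$ using the $g$-uniform bound $\|(1+P^{\rm top}_g)^{-1}\|_{L_2\to W^m_2}\le c'_P$ from Lemma 4.33 of \cite{LMSZ25_elliptic}, as in Lemma \ref{q first lemma}. The second factor we bound as in Lemma \ref{uniform approximation lemma}, writing $M_\psi(P^{\rm top}-P^{\rm top}_g)=\sum_{\len(\alpha)=m}M_{\psi(a_\alpha-a_\alpha(g))}X^\alpha$. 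This gives a bound by $\|(1+P)^{-1}\|_{L_2\to W^m_2}\sum_\alpha\|\psi(a_\alpha-a_\alpha(g))\|_\infty$.

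The main obstacle is that this yields $\|a_\alpha-a_\alpha(g)\|_{L_\infty(\supp\psi)}$ rather than $\|\cdot\|_{L_\infty(\supp f)}$. To fix this, we take $\psi$ to be a cutoff equal to $1$ on $\supp f$ with support in an $\epsilon$-neighbourhood of $\supp f$. This gives the bound with $\supp f$ replaced by its $\epsilon$-neighbourhood, with constants independent of $\epsilon$: the $\psi$-dependent errors lie in $(\mathcal{L}_{\frac{d_{\hom}}{m},\infty})_0$ and do not enter the distance, and the sup-norm bound uses only $\|\psi\|_\infty\le1$. We then let $\epsilon\to0$, using the continuity of $a_\alpha$.

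Alternatively, we avoid $\psi$ altogether. We keep $M_f$ adjacent to $(P^{\rm top}-P^{\rm top}_g)$ and note that $M_f(P^{\rm top}-P^{\rm top}_g)=\sum M_{f(a_\alpha-a_\alpha(g))}X^\alpha$. We then use $f(a_\alpha-a_\alpha(g))=f\cdot\chi_{\supp f}(a_\alpha-a_\alpha(g))$ and factor $M_{f}=M_{|f|^{1/2}}M_{{\rm sgn}f|f|^{1/2}}$ with a Hölder splitting. The cutoff route is the one we would try first.
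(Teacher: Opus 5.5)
Your proof is correct. It runs the paper's argument in mirror image.

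\textbf{The paper's route.} It writes the difference as $M_f(1+P)^{-1}(P^{\rm top}_g-P)(1+P^{\rm top}_g)^{-1}$ and commutes $M_f$ past the variable-coefficient resolvent $(1+P)^{-1}$. It then splits $P^{\rm top}_g-P=(P^{\rm top}_g-P_g)+(P_g-P)$, where $P_g=\sum_{\len(\alpha)\leqslant m}a_\alpha(g)X^\alpha$. The main term is bounded by ${\rm dist}\big((1+P)^{-1}M_f,(\Lc_{\frac{d_{\hom}}{m},\infty})_0\big)$, which is independent of $g$ by Lemma~\ref{q first lemma}, times $\|M_{\chi_{\supp f}}(P_g-P)(1+P^{\rm top}_g)^{-1}\|_\infty$. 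The $g$-uniform bound of \cite[Lemma 4.33]{LMSZ25_elliptic} enters in this second factor, via Lemma~\ref{uniform approximation lemma}.

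\textbf{Your route.} With the resolvent identity in the other order, the norm factor contains only the fixed resolvent $(1+P)^{-1}$. The $g$-uniformity moves into the distance factor for $M_f(1+P^{\rm top}_g)^{-1}$. Your route through $(1+P^{\rm top}_0)^{-1}$ handles this with the same lemma from \cite{LMSZ25_elliptic}.

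The price is that you apply Lemma~\ref{q second lemma} to $P^{\rm top}_g$, i.e.\ you use $P^{\rm top}_g\geqslant 0$. This is implicit in the statement, and the paper uses it tacitly as well (Lemma~\ref{lemma-bdd-prepare}).

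The gain is that your split $(P-P^{\rm top})+(P^{\rm top}-P^{\rm top}_g)$ confines the lower-order coefficients to the compactly supported term $M_f(P-P^{\rm top})$ of order $\leqslant m-1$. Its contribution lies in $(\Lc_{\frac{d_{\hom}}{m},\infty})_0$, so you get exactly $\sum_{\len(\alpha)=m}$ as stated. The paper's proof, through Lemma~\ref{uniform approximation lemma}, literally produces $\sum_{\len(\alpha)\leqslant m}$. That is enough for its application, where $a=(a_\alpha)_{\len(\alpha)\leqslant m}$, but slightly weaker than the statement.

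\textbf{The cutoff.} The obstacle you flag is not real. Your cutoff sits immediately to the left of the $X^\alpha$ and is never commuted with anything. So $\psi=\chi_{\supp f}$ works verbatim, as in the paper, because the estimate of Lemma~\ref{uniform approximation lemma} needs only an $L_\infty$ multiplier. The $\epsilon$-neighbourhood limit, and likewise the $|f|^{1/2}$ alternative, is valid but superfluous.
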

\begin{proof}
We have
\begin{align}\label{eq-MfP-MfP_top}
& M_f(1+P)^{-1}-M_f(1+P^{\rm top}_g)^{-1} \\
& \quad = M_f(1+P)^{-1}(P^{\rm top}_g-P)(1+P^{\rm top}_g)^{-1}\nonumber \\
& \quad = [M_f, (1+P)^{-1}](P^{\rm top}_g-P)(1+P^{\rm top}_g)^{-1}
+ (1+P)^{-1} M_f (P^{\rm top}_g-P)(1+P^{\rm top}_g)^{-1}\nonumber\\
& \quad =  [M_f, (1+P)^{-1}](P^{\rm top}_g-P)(1+P^{\rm top}_g)^{-1}\nonumber \\
& \qquad + (1+P)^{-1} M_f (P^{\rm top}_g-P_g)(1+P^{\rm top}_g)^{-1}
+ (1+P)^{-1} M_f (P_g-P)(1+P^{\rm top}_g)^{-1} \nonumber
\end{align}

For first summand, 	
by Lemma \ref{q second lemma}, $[M_f,(1+P)^{-1}]\in\mathcal{L}_{\frac{d_{\hom}}{m+1},\infty}.$
By Lemma \ref{top dag commute} and taking into account that $P=P^{\dagger}$, we have $(P^{\rm top}_g)^{\dagger}=(P^{\dagger})^{\rm top}_g = P^{\rm top}_g $. 
Thus, we can apply Lemma \ref{q third lemma} and obtain 
\begin{equation}\label{eq-P_top-P_bounded}
(P^{\rm top}_g-P)(1+P^{\rm top}_g)^{-1}\in\mathcal{L}_{\infty}.
\end{equation}
Combining with Remark \ref{remark-inclusion-Lpq}, \eqref{eq-singular-value-multi}, and \eqref{eq-P_top-P_bounded}, we have 
\begin{equation}\label{eq-Mf-1}
[M_f,(1+P)^{-1}]\cdot (P^{\rm top}_g-P)(1+P^{\rm top}_g)^{-1}\in (\mathcal{L}_{\frac{d_{\hom}}{m},\infty})_0.
\end{equation}

For the second summand in \eqref{eq-MfP-MfP_top}, we have
\begin{multline}\label{eq-Mf-2}
(1+P)^{-1} M_f (P^{\rm top}_g-P_g)(1+P^{\rm top}_g)^{-1} \\
= (1+P)^{-1} (1-\Delta)^{\frac{m}{{ 2v}}} \cdot (1-\Delta)^{-\frac{m}{{ 2v}}} M_f(1-\Delta)^{-\frac{1}{{ 2v}}} 
\cdot (1-\Delta)^{\frac{1}{{ 2v}}}(P^{\rm top}_g-P_g)(1+P^{\rm top}_g)^{-1} 
\end{multline}
As stated in the proof of Lemma \ref{q second lemma}, $(1+P)^{-1}(1-\Delta)^{\frac{m}{{ 2v}}}$ is bounded on $L_2(G)$. By the Cwikel type estimate from \cite[Theorem 5.1, (iii)]{MSZ-stratified-23}, we have $(1-\Delta)^{-\frac{m}{2}} M_f(1-\Delta)^{-\frac{1}{{ 2v}}} \in\mathcal{L}_{\frac{d_{\hom}}{m+1},\infty}$. Thus, we obtain 
\begin{equation}\label{eq-Mf-3}
(1+P)^{-1}M_f(P^{\rm top}_g-P_g)(1+P^{\rm top}_g)^{-1} \in\mathcal{L}_{\frac{d_{\hom}}{m+1},\infty} \subset \big(\mathcal{L}_{\frac{d_{\hom}}{m},\infty}\big)_0
\end{equation}

Therefore, combining \eqref{eq-MfP-MfP_top}, \eqref{eq-Mf-1} and \eqref{eq-Mf-3}, we arrive at
\begin{align*}
& {\rm dist}_{\mathcal{L}_{\frac{d_{\hom}}{m},\infty}}\Big(M_f(1+P)^{-1}-M_f(1+P^{\rm top}_g)^{-1},(\mathcal{L}_{\frac{d_{\hom}}{m},\infty})_0\Big)\\
& \quad \leqslant {\rm dist}_{\mathcal{L}_{\frac{d_{\hom}}{m},\infty}}\Big((1+P)^{-1}M_f\cdot M_{\chi_{{\rm supp}(f)}}(P_g-P)(1+P^{\rm top}_g)^{-1},(\mathcal{L}_{\frac{d_{\hom}}{m},\infty})_0\Big)\\
& \quad \leqslant {\rm dist}_{\mathcal{L}_{\frac{d_{\hom}}{m},\infty}}\Big((1+P)^{-1}M_f, (\mathcal{L}_{\frac{d_{\hom}}{m},\infty})_0\Big)\cdot\Big\|M_{\chi_{{\rm supp}(f)}}(P_g-P)(1+P^{\rm top}_g)^{-1}\Big\|_{\infty}.
\end{align*}

The assertion follows now from Lemmas \ref{q first lemma} and \ref{uniform approximation lemma}.
\end{proof}

\begin{lemma}\label{duhamel lemma} The mapping $g\mapsto e^{-P_g^{\rm top}}$ is a bounded and continuous $L_p(\mathrm{VN}(G),\tau)$-valued function for every $p>\max\{1,\frac{d_{{\rm hom}}}{m}\}.$
\end{lemma}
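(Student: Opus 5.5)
We need to prove that $g\mapsto e^{-P^{\rm top}_g}$ is bounded and continuous with values in $L_p(\mathrm{VN}(G),\tau)$ for $p>\max\{1,\frac{d_{\hom}}{m}\}$. The plan is to handle boundedness by a uniform factorisation through a fixed power of $1-\Delta$, and continuity by Duhamel's formula.

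\textbf{Boundedness.} First note that $P^{\rm top}_g$ is a homogeneous, constant-coefficient, positive, uniformly Rockland operator; positivity follows from $P\geqslant 0$ by a dilation limit. Hence $e^{-P^{\rm top}_g}$ commutes with right translations and lies in $\mathrm{VN}(G)$. Fix $s>0$ with $sp>d_{\hom}$. We factor
\[
e^{-P^{\rm top}_g}=(1-\Delta)^{-\frac{s}{2v}}\cdot(1-\Delta)^{\frac{s}{2v}}(1+P^{\rm top}_g)^{-\frac{s}{m}}\cdot(1+P^{\rm top}_g)^{\frac{s}{m}}e^{-P^{\rm top}_g}.
\]
The third factor has norm at most $\sup_{x\geqslant0}(1+x)^{s/m}e^{-x}$, by functional calculus. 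The middle factor is bounded uniformly in $g$; this comes from Lemma \ref{easy complex powers lemma}, made quantitative with constants uniform in $g$ via the uniform bound $\|(1+P^{\rm top}_g)^{-1}\|_{L_2\to W^m_2}\leqslant c'_P$ used in Lemma \ref{uniform approximation lemma} (iterate for integer powers, then interpolate). The first factor $(1-\Delta)^{-s/2v}$ lies in $L_p(\mathrm{VN}(G),\tau)$ when $sp>d_{\hom}$. To see this, use the dilation scaling \eqref{trace_scaling}: $\tau(\chi_{[0,\lambda]}(-\Delta))=c\lambda^{d_{\hom}/2v}$, so $\mu(t,(1-\Delta)^{-s/2v})\approx(1+t)^{-s/d_{\hom}}$, which is $p$-integrable. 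Noncommutative Hölder then gives $\sup_g\|e^{-P^{\rm top}_g}\|_p<\infty$.

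\textbf{Continuity.} By Duhamel's formula,
\[
e^{-P^{\rm top}_g}-e^{-P^{\rm top}_h}=\int_0^1 e^{-(1-r)P^{\rm top}_g}\,(P^{\rm top}_h-P^{\rm top}_g)\,e^{-rP^{\rm top}_h}\,dr,
\]
which is justified on $\Sc(G)$ and then extended by density. The difference $P^{\rm top}_h-P^{\rm top}_g=\sum_{\len(\alpha)=m}(a_\alpha(h)-a_\alpha(g))X^\alpha$ is an order-$m$ operator with coefficients of size $\omega(h,g):=\sum_\alpha|a_\alpha(h)-a_\alpha(g)|$, and $\omega(h,g)\to0$ as $h\to g$ by continuity of the $a_\alpha$. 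For $r\geqslant\frac12$ we bound the integrand by
\[
\|e^{-(1-r)P^{\rm top}_g}\|_\infty\,\|(P^{\rm top}_h-P^{\rm top}_g)(1+P^{\rm top}_h)^{-1}\|_\infty\,\|(1+P^{\rm top}_h)e^{-rP^{\rm top}_h}\|_p,
\]
where the middle factor is at most $C\,\omega(h,g)$ (as in Lemma \ref{uniform approximation lemma}) and the last factor is bounded uniformly by the boundedness argument above, now using the dilated semigroup at time $r\geqslant\frac12$. For $r<\frac12$ we instead place the $L_p$ norm on $e^{-(1-r)P^{\rm top}_g}$ and write $(P^{\rm top}_h-P^{\rm top}_g)e^{-rP^{\rm top}_h}=(P^{\rm top}_h-P^{\rm top}_g)(1+P^{\rm top}_h)^{-1}\cdot(1+P^{\rm top}_h)e^{-rP^{\rm top}_h}$. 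The second factor costs $\sup_x (1+x)e^{-rx}\lesssim r^{-1}$, which is not integrable at $r=0$.

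\textbf{Main obstacle.} The main obstacle is this endpoint singularity. To fix it, split the operator symmetrically: write the difference as $\sum (a_\alpha(h)-a_\alpha(g))X^\alpha$ and bound it between $(1+P^{\rm top}_g)^{1/2}$ and $(1+P^{\rm top}_h)^{1/2}$. The key input is that
\[
(1+P^{\rm top}_g)^{-1/2}X^\alpha(1+P^{\rm top}_h)^{-1/2}
\]
is bounded uniformly, since $X^\alpha$ is of order $m$ and each half-power gains $m/2$ Sobolev derivatives uniformly; here one uses the Sobolev duality and interpolation properties of Section \ref{subsec-3.1}. The two resulting semigroup factors then cost $\big((1-r)r\big)^{-1/2}$, which is integrable on $[0,1]$. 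Put the $L_p$ norm on whichever exponential has time at least $\frac12$, with its uniform bound as above. This gives
\[
\|e^{-P^{\rm top}_g}-e^{-P^{\rm top}_h}\|_p\leqslant C\,\omega(h,g)\to0 \quad\text{as } h\to g,
\]
which proves continuity.
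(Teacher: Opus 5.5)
Your argument is correct, and its skeleton is the paper's: Duhamel's formula, a split of the time interval at $\frac12$, and a factorisation through $(1-\Delta)^{-\frac{m}{2v}}\in L_p(\mathrm{VN}(G),\tau)$.

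You diverge only in the regime $r<\frac12$, and there your ``main obstacle'' is self-inflicted. The paper lets the order-$m$ difference act against the \emph{left} exponential, whose time $1-r$ is at least $\frac12$, and uses $e^{-rP^{\rm top}_h}$ only as a contraction:
\[
\|e^{-(1-r)P^{\rm top}_g}(P^{\rm top}_h-P^{\rm top}_g)e^{-rP^{\rm top}_h}\|_p\leqslant \|e^{-\frac14P^{\rm top}_g}\|_p\,\|e^{-\frac14P^{\rm top}_g}(P^{\rm top}_h-P^{\rm top}_g)\|_\infty .
\]
The last factor is bounded via the factorisation $e^{-\frac14P^{\rm top}_g}(1+P^{\rm top}_g)\cdot(1+P^{\rm top}_g)^{-1}(1-\Delta)^{\frac{m}{2v}}\cdot(1-\Delta)^{-\frac{m}{2v}}(P^{\rm top}_h-P^{\rm top}_g)$. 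This is the mirror image of your $r\geqslant\frac12$ estimate, so no singularity at $r=0$ ever appears.

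Your symmetric half-power splitting is nonetheless valid. The operator $(1+P^{\rm top}_g)^{-\frac12}X^{\alpha}(1+P^{\rm top}_h)^{-\frac12}$ is uniformly bounded: interpolate the analytic family $(1+P^{\rm top}_g)^{-z}$, $0\leqslant \Re z\leqslant 1$, against the uniform resolvent bound to get $L_2\to W^{m/2}_2$, then dualise. The resulting $r^{-\frac12}$ cost is integrable. Your route gives a single estimate for all $r\in[0,1]$, at the price of this fractional-order Sobolev input. The paper's route needs only the integer-order bound $\|(1+P^{\rm top}_g)^{-1}\|_{L_2\to W^m_2}\leqslant c'_P$ and its adjoint.

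A similar remark applies to your boundedness step. Taking $s=m$, which is exactly what $p>\frac{d_{\hom}}{m}$ permits and what the paper does, needs no iteration at all. Iterating the single $L_2\to W^m_2$ bound would not by itself give uniform $L_2\to W^{mn}_2$ bounds, because $P^{\rm top}_g$ does not commute with $\Delta$. For $s>m$ you would need the uniform Rockland property of $P^n$ from Lemma \ref{ellipticity of the product}.
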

\begin{proof}
By Duhamel's formula, for $g,h\in G,$ we have
$$e^{-P_g^{\rm top}}-e^{-P_h^{\rm top}} = \int_0^1 e^{-(1-\theta)P_g^{\rm top}}(P_h^{\rm top}-P_g^{\rm top})e^{-\theta P_h^{\rm top}}\,d\theta.$$
Thus,
$$\|e^{-P_g^{\rm top}}-e^{-P_h^{\rm top}}\|_{L_p(\mathrm{VN}(G),\tau)}\leqslant\sup_{\theta\in[0,1]}\|e^{-(1-\theta)P_g^{\rm top}}(P_h^{\rm top}-P_g^{\rm top})e^{-\theta P_h^{\rm top}}\|_{L_p(\mathrm{VN}(G),\tau)}.$$

If $\theta\in[0,\frac12],$ then
\begin{multline*}
    \|e^{-(1-\theta)P_g^{\rm top}}(P_h^{\rm top}-P_g^{\rm top})e^{-\theta P_h^{\rm top}}\|_{L_p(\mathrm{VN}(G),\tau)}\leqslant\|e^{-\frac12P_g^{\rm top}}(P_h^{\rm top}-P_g^{\rm top})\|_{L_p(\mathrm{VN}(G),\tau)} \\
    \leqslant \|e^{-\frac14P_g^{\rm top}}\|_{L_p(\mathrm{VN}(G),\tau)}\|e^{-\frac14P_g^{\rm top}}(P_h^{\rm top}-P_g^{\rm top})\|_{L_{\infty}(\mathrm{VN}(G),\tau)}.
\end{multline*}
Clearly,
$$e^{-\frac14P_g^{\rm top}}=e^{-\frac14P_g^{\rm top}}(1+P_g^{{\rm top}})\cdot (1+P_g^{{\rm top}})^{-1}(1-\Delta)^{\frac{m}{2}}\cdot (1-\Delta)^{-\frac{m}{2}}.$$
Thus,
\begin{align*}
    &\|e^{-\frac14P_g^{\rm top}}\|_{L_p(\mathrm{VN}(G),\tau)}\\
    & \qquad  \leqslant \|e^{-\frac14P_g^{\rm top}}(1+P_g^{{\rm top}})\|_{\infty}\|(1+P_g^{{\rm top}})^{-1}(1-\Delta)^{\frac{m}{2}}\|_{\infty}\|(1-\Delta)^{-\frac{m}{2}}\|_{L_p(\mathrm{VN}(G),\tau)},\\
    & \|e^{-\frac14P_g^{\rm top}}(P_h^{\rm top}-P_g^{\rm top})\|_{L_{\infty}(\mathrm{VN}(G),\tau)}\\
    & \qquad \leqslant \|e^{-\frac14P_g^{\rm top}}(1+P_g^{{\rm top}})\|_{\infty}\|(1+P_g^{{\rm top}})^{-1}(1-\Delta)^{\frac{m}{2}}\|_{\infty}\|(1-\Delta)^{-\frac{m}{2}}(P_h^{\rm top}-P_g^{\rm top})\|_{L_{\infty}(\mathrm{VN}(G),\tau)}.
\end{align*}
Consequently,
\begin{multline*}
    \sup_{\theta\in[0,\frac12]}\|e^{-(1-\theta)P_g^{\rm top}}(P_h^{\rm top}-P_g^{\rm top})e^{-\theta P_h^{\rm top}}\|_{L_p(\mathrm{VN}(G),\tau)}\\
    \leqslant c_P\|(-\Delta)^{-\frac{m}{2}}(P_h^{\rm top}-P_g^{\rm top})\|_{L_{\infty}(\mathrm{VN}(G),\tau)}.
\end{multline*}

Similarly,
\begin{multline*}
    \sup_{\theta\in[\frac12,1]}\|e^{-(1-\theta)P_g^{\rm top}}(P_h^{\rm top}-P_g^{\rm top})e^{-\theta P_h^{\rm top}}\|_{L_p(\mathrm{VN}(G),\tau)}\\
    \leqslant c_P\|(P_h^{\rm top}-P_g^{\rm top})(-\Delta)^{-\frac{m}{2}}\|_{L_{\infty}(\mathrm{VN}(G),\tau)}.
\end{multline*}
This completes the proof.
\end{proof}

\begin{lemma}\label{continuity of the weight} The mapping
$$g\to \tau(e^{-P^{\rm top}_g}),\quad g\in G,$$
is bounded and continuous.
\end{lemma}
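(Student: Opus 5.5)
The plan is to deduce both boundedness and continuity of $g\mapsto \tau(e^{-P^{\rm top}_g})$ from Lemma \ref{duhamel lemma}, by writing the heat operator as a product of two square roots and using the H\"older inequality in $L_2(\mathrm{VN}(G),\tau)$. Since $P\geqslant 0$ and $P=P^\dagger$, Lemma \ref{top dag commute} gives $(P^{\rm top}_g)^\dagger=P^{\rm top}_g$. Moreover $P^{\rm top}_g$ is positive: it is the limit of rescalings $t^{-m}\alpha_t(P)$-type dilations of $P$ localized near $g$, or one may simply use the positivity that underlies the statement of Theorem \ref{main theorem constant}. Hence $e^{-P^{\rm top}_g}\geqslant0$ is affiliated with $\mathrm{VN}(G)$, because $P^{\rm top}_g$ has constant coefficients and so commutes with right translations. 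We may therefore write
\[
\tau(e^{-P^{\rm top}_g})=\tau\big(e^{-\frac12 P^{\rm top}_g}\,e^{-\frac12 P^{\rm top}_g}\big)=\|e^{-\frac12P^{\rm top}_g}\|_{L_2(\mathrm{VN}(G),\tau)}^2 .
\]

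The key step is to apply Lemma \ref{duhamel lemma} to the operator $\frac12 P$ in place of $P$. This operator is still positive, uniformly Rockland, of order $m$, and satisfies $(\tfrac12P)^{\rm top}_g=\tfrac12P^{\rm top}_g$. The lemma needs an exponent $p>\max\{1,\frac{d_{\hom}}{m}\}$, so it applies to $p=2$ only when $d_{\hom}<2m$.

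To cover all cases, I would instead use the factorisation $e^{-P^{\rm top}_g}=(e^{-P^{\rm top}_g/k})^k$ with an integer $k>\max\{1,\frac{d_{\hom}}{m}\}$. Applying Lemma \ref{duhamel lemma} to $\frac1kP$ with exponent $p=k$ shows that $g\mapsto e^{-P^{\rm top}_g/k}$ is bounded and continuous in $L_k(\mathrm{VN}(G),\tau)$. The noncommutative H\"older inequality, together with the telescoping identity
\[
X^k-Y^k=\sum_{j=0}^{k-1}X^{j}(X-Y)Y^{k-1-j},
\]
then gives
\[
\|e^{-P^{\rm top}_g}-e^{-P^{\rm top}_h}\|_{L_1}\leqslant k\,M^{k-1}\,\|e^{-P^{\rm top}_g/k}-e^{-P^{\rm top}_h/k}\|_{L_k},
\]
where $M=\sup_g\|e^{-P^{\rm top}_g/k}\|_{L_k}<\infty$. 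The same H\"older estimate gives the uniform bound $\|e^{-P^{\rm top}_g}\|_{L_1}\leqslant M^k$. Since $\tau$ is a contraction on $L_1(\mathrm{VN}(G),\tau)$, boundedness and continuity of $g\mapsto\tau(e^{-P^{\rm top}_g})$ follow.

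The main obstacle I expect is verifying the hypotheses of Lemma \ref{duhamel lemma} for the rescaled operator. In particular, its proof relies on $\|(-\Delta)^{-m/2}(P^{\rm top}_h-P^{\rm top}_g)\|_{\infty}\to0$ as $h\to g$. This in turn requires the coefficients $a_\alpha$ with $\len(\alpha)=m$ to be continuous, together with the uniform Rockland bound on $\|(1+P^{\rm top}_g)^{-1}(1-\Delta)^{m/2v}\|$. Both facts are available from Lemma \ref{uniform approximation lemma} and Lemma 4.33 of \cite{LMSZ25_elliptic}, and they are clearly invariant under scaling $P$ by $\frac1k$.
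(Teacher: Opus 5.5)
Your proof is correct and is essentially the paper's argument: choose an integer $k>\max\{1,\frac{d_{\hom}}{m}\}$, apply Lemma \ref{duhamel lemma} to $\frac1k P$ to get that $g\mapsto e^{-P^{\rm top}_g/k}$ is bounded and continuous in $L_k(\mathrm{VN}(G),\tau)$, and then pass to $e^{-P^{\rm top}_g}=(e^{-P^{\rm top}_g/k})^k$ in $L_1$ by H\"older. Your telescoping estimate simply writes out the H\"older step that the paper leaves implicit.

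The one loose point is your justification of $P^{\rm top}_g\geqslant 0$, which the paper takes for granted. The phrase ``rescalings localized near $g$'' needs to be made precise, and appealing to Theorem \ref{main theorem constant} is circular, since that theorem assumes positivity. A clean argument is to test $P$ on $u_t(x)=t^{d_{\hom}/2}u(\delta_t(xg^{-1}))$ with $u\in\mathcal{S}(G)$. Then $t^{-m}\langle Pu_t,u_t\rangle\to\langle P^{\rm top}_g u,u\rangle$ as $t\to\infty$, so the positivity of $P$ passes to $P^{\rm top}_g$.
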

\begin{proof} Let $p\in\mathbb{N}$ be such that $p>\max\{1,\frac{d_{{\rm hom}}}{m}\}.$ Applying Lemma \ref{duhamel lemma} to the operator $p^{-1}P,$ we obtain the continuity of the mapping $g\mapsto e^{-p^{-1}P_g^{\rm top}}$ in $L_p(\mathrm{VN}(G),\tau).$ By H\"older inequality, the mapping $g\mapsto e^{-p^{-1}P_g^{\rm top}}$ is a bounded and continuous in $L_1(\mathrm{VN}(G),\tau).$ This completes the proof.
\end{proof}

\begin{proof}[Proof of Theorem \ref{main theorem special case}] Let $p=\frac{d_{{\rm hom}}}{m}$ and set
$$T=M_f(1+P)^{-1},\quad T_g=M_f(1+P_g^{\rm top})^{-1},\quad g\in G,$$	
$$a(g)=(a_{\alpha})_{\len(\alpha)\leqslant m},\quad  \alpha(g)=\Big(\frac1{\Gamma(\frac{d_{\hom}}{2m}+1)}\tau(e^{-P_g^{{\rm top}}})\Big)^{\frac{m}{d_{{\rm hom}}}},\quad g\in G.$$

That $T,T_g\in\mathcal{L}_{p,\infty}$ follows from Lemma \ref{q first lemma}. It is immediate that $a\in C_b(G,N_T)$ where $N_T=\sum_{\len(\alpha)\leqslant m}1.$ Clearly, $a\in C^{\infty}_b(G,N_T).$ By Proposition 5.4 in \cite{Folland1975}, $a$ is Lipschitz (with respect to a translation invariant homogeneous metric on $G$) and, hence, uniformly continuous. By definition, $f\in C^{\infty}_c(G)\subset L_p(G).$ That $\alpha\in C_b(G)$ follows from Lemma \ref{continuity of the weight}.

The assumption \ref{aata} in Theorem \ref{abstract asymptotics theorem} follows from Theorem \ref{main theorem constant}. The assumption \ref{aatb} in Theorem \ref{abstract asymptotics theorem} follows from Lemma \ref{s7 p minus pg locally}. The assumption \ref{aatc} in Theorem \ref{abstract asymptotics theorem} follows from Lemma \ref{q first lemma}. The assumption \ref{aatd} in Theorem \ref{abstract asymptotics theorem} follows from Lemma \ref{q second lemma}. 

The assertion follows now by applying Theorem \ref{abstract asymptotics theorem}.
\end{proof}

\subsection{A commutator lemma}
The following lemma is essentially a commutator version of \eqref{general_raising_powers_principle}.
\begin{lemma}\label{hsz lemma} Let $A,B\in\mathcal{L}_{\infty},$ $p>0,$ $\theta\in(0,1),$ and assume that $B\geqslant 0.$ Then
\begin{enumerate}
\item{} $[A,B] \in \mathcal{L}_{p,\infty}\Longrightarrow [B^{\theta},A] \in \mathcal{L}_{\frac{p}{\theta},\infty}.$
\item{} $[A,B] \in (\mathcal{L}_{p,\infty})_0 \Longrightarrow [B^\theta,A] \in (\mathcal{L}_{\frac{p}{\theta},\infty})_0.$
\end{enumerate}
If we also assume that $B \in \mathcal{L}_{p,\infty},$ then we get the same implications for $\theta>1.$
\end{lemma}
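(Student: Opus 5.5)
For the case $0<\theta<1$ I would use the integral representation of fractional powers,
\[
B^\theta = \frac{\sin(\pi\theta)}{\pi}\int_0^\infty \lambda^{\theta-1} B(B+\lambda)^{-1}\,d\lambda .
\]
It gives the resolvent commutator formula
\[
[B^\theta,A]=\frac{\sin(\pi\theta)}{\pi}\int_0^\infty \lambda^{\theta}(B+\lambda)^{-1}[B,A](B+\lambda)^{-1}\,d\lambda.
\]
I would treat this not as a norm estimate in $\mathcal{L}_{p,\infty}$, which does not interpolate directly, but as a double operator integral. The integral equals $T_{\psi}([B,A])$ with divided-difference symbol $\psi(s,t)=\frac{s^\theta-t^\theta}{s-t}$ on $\mathrm{spec}(B)\times\mathrm{spec}(B)$. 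The goal then becomes a bound of the shape
\[
\mu(\cdot,[B^\theta,A])\prec\!\prec \text{(something of order)}\ \mu(\cdot,[B,A])^{\theta}\|A\|_\infty^{1-\theta}.
\]
This is the commutator analogue of the Birman--Koplienko--Solomyak/Ricard estimate quoted after \eqref{general_raising_powers_principle}. I would try to reduce to that estimate exactly. First I reduce to self-adjoint $A$ by splitting into real and imaginary parts, since the conclusion is linear in $A$ and $[B,A^*]=-[B,A]^*$. Then I consider the $2\times2$ block operators
\[
\widetilde B=\begin{pmatrix}B&0\\0&B\end{pmatrix},\qquad U=\begin{pmatrix}\cos\epsilon A&\sin\epsilon A\\-\sin\epsilon A&\cos\epsilon A\end{pmatrix}
\]
(or simply $e^{i\epsilon A}$ acting on $H$). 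The comparison is between $\widetilde B^\theta$ and $(U\widetilde BU^*)^\theta=U\widetilde B^\theta U^*$.

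The most direct route is Ricard's theorem, which holds in quasi-normed weak ideals: $\|X^\theta-Y^\theta\|_{p/\theta,\infty}\lesssim\|X-Y\|_{p,\infty}^\theta$ for positive $X,Y$. Apply it to $X=B$ and $Y=e^{iA}Be^{-iA}$ for self-adjoint bounded $A$. Then $X-Y$ is a norm-convergent series of iterated commutators $\mathrm{ad}_A^k(B)/k!$, each in $\mathcal{L}_{p,\infty}$ with quasinorm at most $C^k\|A\|^{k-1}\|[A,B]\|_{p,\infty}$. Summability needs care in a quasi-Banach ideal, so I would rescale $A\mapsto\epsilon A$ and use the $p$-convexity of $\mathcal{L}_{p,\infty}$, which allows an Aoki--Rolewicz equivalent $r$-norm. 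This gives $e^{i\epsilon A}B^\theta e^{-i\epsilon A}-B^\theta\in\mathcal{L}_{p/\theta,\infty}$, with the $(\cdot)_0$ variant following identically. I then want to extract the first-order term $i\epsilon[A,B^\theta]$ from this, and that is the main obstacle: differentiating in $\epsilon$ is not legitimate in quasinorm. Instead I would take a finite-difference combination, namely Fourier averaging $\int e^{i\epsilon A}B^\theta e^{-i\epsilon A}\,k(\epsilon)\,d\epsilon$ with a kernel whose moments isolate the linear term. This approach loses control, however.

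Because of that, my fallback for (1) and (2) is the cleaner commutator trick. Write
\[
\widetilde A=\begin{pmatrix}0&A\\A^*&0\end{pmatrix}
\]
and note that $[\widetilde B,\widetilde A]$ is built from $[B,A]$. Apply Ricard's estimate not to a conjugate but to the operators $X=\widetilde B+\widetilde A$-type perturbations. Concretely, the known result (Potapov--Sukochev, Ricard's paper, Lemma 1.3.18-type statements in \cite{LMSZ-SingularTrace-2023}) is that $f(t)=t^\theta$ is operator $\theta$-Hölder in every $\mathcal{L}_{p,\infty}$ and $(\mathcal{L}_{p,\infty})_0$. Its standard commutator corollary reads $\|[f(B),A]\|_{p/\theta,\infty}\lesssim\|A\|^{1-\theta}\|[B,A]\|_{p,\infty}^{\theta}$, and it is obtained from the Hölder estimate by the unitary trick with $U=e^{i\epsilon A}$ combined with a homogeneity/scaling argument. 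I would cite this corollary.

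The case $\theta>1$ with $B\in\mathcal{L}_{p,\infty}$ is easier. I write $\theta=k+\theta'$ with $k\in\mathbb{N}$ and $\theta'\in[0,1)$, and use the Leibniz rule
\[
[B^\theta,A]=\sum_{j=0}^{k-1}B^{j}[B,A]B^{k-1-j}B^{\theta'}+B^k[B^{\theta'},A].
\]
In each summand in the sum, $[B,A]\in\mathcal{L}_{p,\infty}$ (respectively $(\cdot)_0$) is accompanied by $k-1$ factors of $B$, each in $\mathcal{L}_{p,\infty}$, together with $B^{\theta'}\in\mathcal{L}_{p/\theta',\infty}$. Hölder \eqref{eq-holder} then puts the summand in $\mathcal{L}_{p/\theta,\infty}$, and $(\mathcal{L}\cdot)_0$ is an ideal as noted after \eqref{eq-singular-value-0-finite-rank-ideal}. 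For the last term, the first part gives $[B^{\theta'},A]\in\mathcal{L}_{p/\theta',\infty}$, and $B^k\in\mathcal{L}_{p/k,\infty}$, so Hölder applies again.
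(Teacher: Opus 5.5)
Your proposal is correct and is essentially the paper's proof: for $0<\theta<1$ the paper also just invokes the commutator form of the operator $\theta$-H\"older estimate (\cite[Corollary 7.1]{HSZ2019}), so your exploratory unitary and averaging paragraphs can be dropped, and for $\theta>1$ it uses exactly your Leibniz decomposition of $[B^{\lfloor\theta\rfloor}B^{\theta-\lfloor\theta\rfloor},A]$ followed by H\"older's inequality. One point to tidy: in the $(\Lc_{\frac{p}{\theta},\infty})_0$ version, the term $B^k[B^{\theta'},A]$ needs part (2), not part (1), applied with exponent $\theta'$, because $B^k$ itself need not lie in $(\Lc_{\frac{p}{k},\infty})_0$.
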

\begin{proof}
The assertion for $0<\theta<1$ is essentially a very special case of \cite[Corollary 7.1]{HSZ2019}. For $\theta> 1,$ let $n = \lfloor \theta\rfloor$ and $\alpha = \theta-n$ and write
\[
[A,B^{\theta}] = B^{n}[A,B^{\alpha}]+\sum_{k=0}^{n-1} B^{n-k-1}[A,B]B^{k+\alpha}.
\]
Then we apply the $0<\theta<1$ case to $[A^{\alpha},B]$ and deduce the result for $\theta=n+\alpha$ by H\"older's inequality.
\end{proof}

\begin{lemma}\label{alt_lemma}
For all $r>1$ and $0<p\leqslant \infty,$ if $A$ and $B$ are bounded operators, then
$$\|AB\|_{rp,\infty} \leqslant e^{\frac{1}{rp}}\||A|^r|B|^r\|_{p,\infty}^{\frac1r}.$$
\end{lemma}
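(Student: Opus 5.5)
The plan is to derive the estimate from a logarithmic majorization of singular values (Horn / Araki–Lieb–Thirring type) and then convert that majorization into a weak-$L_p$ bound. The constant $e^{\frac1{rp}}$ will come from Stirling's bound $n!\geqslant (n/e)^n$.

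\textbf{Step 1 (log-majorization).} Write $C=|A|^r|B|^r$. I want, for every $n\geqslant 1$,
\[
\prod_{k=0}^{n-1}\mu(k,AB)^r\leqslant \prod_{k=0}^{n-1}\mu(k,C).
\]
Since $A=U|A|$ with $U$ a partial isometry and $U^*U|A|=|A|$, we have $\mu(AB)=\mu(|A|B)$. For positive operators $X,Y$ and $r\geqslant 1$, Araki's log-majorization is $\prod_{k<n}\mu(k,XY)^r\leqslant\prod_{k<n}\mu(k,X^rY^r)$. This follows from the Lieb–Thirring/Araki inequality applied to antisymmetric tensor powers, where $\wedge^n$ turns the top-$n$ singular value products into operator norms. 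Applying it with $X=|A|$ gives the Horn-type estimate directly.

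The main obstacle is passing from $B$ to $|B|$. Writing $B=V|B|$ gives $|A|B=|A|V|B|$, so the factor $V$ sits between $|A|$ and $|B|$. I would try first to handle this in the exterior power: $\|\wedge^n(|A|V|B|)\|$ is compared with $\|\wedge^n(|A|^r)\,\wedge^n V\,\wedge^n(|B|^r)\|^{1/r}$ via the Cordes/Heinz-type inequality $\|X^{s}ZY^{s}\|\leqslant\|XZY\|^{s}\|Z\|^{1-s}$ for $0<s\leqslant1$, $\|Z\|\leqslant 1$. Here one takes $X=\wedge^n|A|^r$, $Y=\wedge^n|B|^r$, $Z=\wedge^nV$ and $s=1/r$. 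Checking that this bound yields the product of singular values of $|A|^r|B|^r$ itself, rather than of $|A|^rV|B|^r$, is the delicate point. If it does not, I would fall back to the case where $V$ can be absorbed (for instance when $B\geqslant 0$, which is how the lemma is used later) and note that this is the case needed.

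\textbf{Step 2 (pointwise bound for $\mu(k,C)$).} If $p=\infty$ the claim is just $\|AB\|_\infty\leqslant\|C\|_\infty^{1/r}$, which is the case $n=1$ of Step 1. For $p<\infty$, since $t\mapsto\mu(t,C)$ is constant on $[k,k+1)$, the definition of $\|C\|_{p,\infty}$ gives $\mu(k,C)\leqslant\|C\|_{p,\infty}(k+1)^{-1/p}$. Hence
\[
\prod_{k=0}^{n-1}\mu(k,C)\leqslant \|C\|_{p,\infty}^n\,(n!)^{-1/p}.
\]

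\textbf{Step 3 (conclusion).} By monotonicity of $\mu$, Steps 1 and 2 give
\[
\mu(n-1,AB)^{rn}\leqslant\prod_{k<n}\mu(k,AB)^r\leqslant\|C\|_{p,\infty}^n(n!)^{-1/p}\leqslant \|C\|_{p,\infty}^n(e/n)^{n/p},
\]
so $\mu(n-1,AB)\leqslant \|C\|_{p,\infty}^{1/r}(e/n)^{\frac1{rp}}$. For $t\in[n-1,n)$ we have $t^{\frac1{rp}}\mu(t,AB)\leqslant n^{\frac1{rp}}\mu(n-1,AB)\leqslant e^{\frac1{rp}}\|C\|_{p,\infty}^{1/r}$. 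Taking the supremum over $t$ gives the lemma.
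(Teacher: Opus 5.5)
Your route is the paper's route. The paper also starts from the Araki--Lieb--Thirring log-majorization (citing Kosaki), bounds $\prod_{j\le k}\mu(j,|A|^r|B|^r)$ by $((k+1)!)^{-1/p}\||A|^r|B|^r\|_{p,\infty}^{k+1}$, and finishes with $(k+1)^{k+1}\le e^{k+1}(k+1)!$, exactly as in your Steps 2 and 3.

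The one thing to fix is the hedge in Step 1. Passing from $B$ to $|B|$ is not merely delicate; it is impossible, because the lemma is false for general bounded $B$. On $\mathbb{C}^2$ let $A$ be the projection onto $e_1$ and let $Bx=\langle x,e_2\rangle e_1$. Then $|B|$ is the projection onto $e_2$, so $|A|^r|B|^r=0$. On the other hand $AB=B$ has $\mu(0,AB)=1$, so $\|AB\|_{rp,\infty}=1$ for every $r>1$ and $p\in(0,\infty]$.

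Your Cordes--Heinz computation is correct. As you suspected, though, it only gives $\prod_{k<n}\mu(k,AB)^r\le\prod_{k<n}\mu(k,|A|^rV|B|^r)$, and the example shows $V$ cannot be dropped. The paper's proof has the same gap: it quotes $\prod_{j\le k}\mu(j,|AB|^r)\le\prod_{j\le k}\mu(j,|A|^r|B|^r)$ for arbitrary bounded $A,B$, which this example refutes at $k=0$. The Araki--Lieb--Thirring inequality concerns positive operators.

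So your fallback is the correct statement, not a weaker substitute: assume $B\geqslant 0$, with $A$ arbitrary. Then $\mu(AB)=\mu(|A|B)$, Araki's log-majorization for the positive pair $(|A|,B)$ gives Step 1, and Steps 2--3 go through verbatim.

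This is also all the paper ever uses. Each of the following has $B\geqslant0$:
\begin{itemize}
\item the remark following the lemma;
\item Lemma \ref{square_root_lemma};
\item Corollary \ref{dyadic_roots_corollary};
\item the application with $A=M_f$, $B=(1-\Delta)^{-\frac{d_{\hom}}{4v}}$ in the proof of Lemma \ref{quantitative_cwikel_estimate}.
\end{itemize}
Drop the Cordes detour and state the lemma under that hypothesis.
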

\begin{proof}
The Araki-Lieb-Thirring inequality \cite[Theorem 2]{Kosaki-alt-1992}, asserts that if $r>1,$ then for every $k\geqslant 0,$ we have
$$\prod_{j=0}^k\mu(j,|AB|^r) \leqslant \prod_{j=0}^k\mu(j,|A|^r|B|^r).$$
Taking $k=0,$ we immediately obtain the $p=\infty$ case of the lemma. Now assume that $p<\infty.$ It follows from the Araki-Lieb-Thirring inequality that for every $k\geqslant 0$ we have
$$\mu^{k+1}(k,|AB|^r) \leqslant \prod_{j=0}^k\frac{\||A|^r|B|^r\|_{p,\infty}}{(j+1)^{\frac1p}}=((k+1)!)^{-\frac1p}\||A|^r|B|^r\|_{p,\infty}^{k+1}.$$
Taking the $(k+1)$-st root, we obtain
$$\mu(k,|AB|^r) \leqslant ((k+1)!)^{-\frac{1}{p(k+1)}}\||A|^r|B|^r\|_{p,\infty}.$$
Using the numerical inequality\footnote{This can be verified by examining a Riemann sum for $\int_1^{k+1} \log(x)\,dx.$}
$$\frac{(k+1)^{k+1}}{(k+1)!}\leqslant e^{k+1},\quad k\geqslant 0,$$
we obtain
$$((k+1)!)^{-\frac{1}{p(k+1)}}\leqslant (\frac{e}{k+1})^{\frac1p},\quad k\geqslant 0.$$
Therefore, for every $k\geqslant 0,$
$$(k+1)^{\frac1p}\mu(k,|AB|^r) \leqslant e^{\frac1p}\||A|^r|B|^r\|_{p,\infty}.$$
Taking the supremum over $k\geqslant 0,$ we deduce the result from the definition of the quasi-norm in $\Lc_{pr,\infty}.$
\end{proof} 
Note that Lemma \ref{alt_lemma} implies that if $A$ and $B$ are positive bounded operators with $AB\in \Lc_{p,\infty},$ then for all $0<\theta<1,$ 
$A^{\theta}B^{\theta} \in \Lc_{\frac{p}{\theta},\infty}.$

\begin{lemma}\label{square_root_lemma}
Let $p>0$ and let $0\leqslant A,B \in \Bc(H).$ If $A^{\frac12}B \in \Lc_{p,\infty}$ and if $[B,A^{\frac14}] \in (\Lc_{p,\infty})_0,$ then 
$$A^{\frac12}BA^{\frac12}-(A^{\frac14}B^{\frac12}A^{\frac14})^2 \in (\Lc_{p,\infty})_0.$$
\end{lemma}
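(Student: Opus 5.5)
Expand the square and compare term by term. Write
\[
(A^{\frac14}B^{\frac12}A^{\frac14})^2 = A^{\frac14}B^{\frac12}A^{\frac12}B^{\frac12}A^{\frac14},
\]
so the difference we must control is
\[
A^{\frac12}BA^{\frac12}-A^{\frac14}B^{\frac12}A^{\frac12}B^{\frac12}A^{\frac14}
= A^{\frac14}\Big(A^{\frac14}BA^{\frac14} - B^{\frac12}A^{\frac12}B^{\frac12}\Big)A^{\frac14}.
\]
The plan is to move one factor $A^{\frac14}$ past $B^{\frac12}$ on each side. Each such move creates a commutator $[B^{\frac12},A^{\frac14}]$, and the remaining factors should supply enough decay to put the resulting terms in $(\Lc_{p,\infty})_0$.

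\textbf{Step 1: the commutator with $B^{\frac12}$.} Lemma~\ref{hsz lemma}(2), applied with $\theta=\frac12$ and the hypothesis $[B,A^{\frac14}]\in(\Lc_{p,\infty})_0$, gives $[B^{\frac12},A^{\frac14}]\in(\Lc_{2p,\infty})_0$.

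\textbf{Step 2: a bound on the other factor.} From $A^{\frac12}B\in\Lc_{p,\infty}$ we have $|BA^{\frac12}|=|(A^{\frac12}B)^*|$, so $BA^{\frac12}\in\Lc_{p,\infty}$. By the note after Lemma~\ref{alt_lemma}, with $\theta=\frac12$ and positive operators $A^{\frac12},B$, we get $A^{\frac14}B^{\frac12}\in\Lc_{2p,\infty}$ and also $B^{\frac12}A^{\frac14}\in\Lc_{2p,\infty}$.

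\textbf{Step 3: the expansion.} Set $C=[A^{\frac14},B^{\frac12}]$. Then
\[
A^{\frac14}B^{\frac12}A^{\frac14} = B^{\frac12}A^{\frac12}+CA^{\frac14}.
\]
Hence
\[
A^{\frac14}B^{\frac12}A^{\frac12}B^{\frac12}A^{\frac14} = (A^{\frac14}B^{\frac12}A^{\frac14})(A^{\frac14}B^{\frac12}A^{\frac14})^*.
\]
Substituting the identity into both factors,
\begin{align*}
(B^{\frac12}A^{\frac12}+CA^{\frac14})(A^{\frac12}B^{\frac12}+A^{\frac14}C^*)
&= B^{\frac12}AB^{\frac12} + B^{\frac12}A^{\frac34}C^*\\
&\quad + CA^{\frac34}B^{\frac12} + CA^{\frac12}C^*.
\end{align*}
The cross terms involve $C\in(\Lc_{2p,\infty})_0$ together with a factor $A^{\frac34}B^{\frac12}=A^{\frac12}\cdot A^{\frac14}B^{\frac12}$, which lies in $\Lc_{2p,\infty}$ by Step 2. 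The last term is a product of two $(\Lc_{2p,\infty})_0$ operators. By H\"older's inequality all three terms lie in $(\Lc_{p,\infty})_0$, so the square is congruent to $B^{\frac12}AB^{\frac12}$ modulo $(\Lc_{p,\infty})_0$.

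\textbf{Step 4: the main term, and the expected obstacle.} It remains to show that $A^{\frac12}BA^{\frac12}-B^{\frac12}AB^{\frac12}\in(\Lc_{p,\infty})_0$, and this is where I expect the difficulty. Write
\[
A^{\frac12}BA^{\frac12}=(A^{\frac12}B^{\frac12})(B^{\frac12}A^{\frac12}),\qquad B^{\frac12}AB^{\frac12}=(B^{\frac12}A^{\frac12})(A^{\frac12}B^{\frac12}),
\]
so the difference is $XX^*-X^*X$ with $X=A^{\frac12}B^{\frac12}$. Writing $A^{\frac12}=A^{\frac14}A^{\frac14}$, I would commute $B^{\frac12}$ past each factor $A^{\frac14}$ in turn. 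Every commutator produced is $C$ or $C^*$, and it is multiplied by factors of the form $A^{\frac14}B^{\frac12}$ (in $\Lc_{2p,\infty}$) or by bounded operators. When commuting, care is needed to pair each $C$ with an $\Lc_{2p,\infty}$ factor rather than only bounded ones. The simplest route is to perform the expansion directly at the level of $A^{\frac12}BA^{\frac12}=A^{\frac14}\,(A^{\frac14}B^{\frac12})(B^{\frac12}A^{\frac14})\,A^{\frac14}$, commuting the outer $A^{\frac14}$ inward, so that each error has the form (bounded)$\cdot C\cdot(\Lc_{2p,\infty})$, and then use H\"older once more.
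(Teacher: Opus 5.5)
Your argument is correct, and the step you flagged as the obstacle takes only one line. Set $X=A^{\frac12}B^{\frac12}=A^{\frac14}\cdot A^{\frac14}B^{\frac12}\in\mathcal{L}_{2p,\infty}$. Then $X-X^*=[A^{\frac12},B^{\frac12}]=A^{\frac14}C+CA^{\frac14}\in(\mathcal{L}_{2p,\infty})_0$, and H\"older's inequality gives
\[
A^{\frac12}BA^{\frac12}-B^{\frac12}AB^{\frac12}=XX^*-X^*X=X(X^*-X)+(X-X^*)X\in(\mathcal{L}_{p,\infty})_0 .
\]
This is what your commuting sketch yields once it is written out.

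Your route differs from the paper's. You pass through the intermediate operator $B^{\frac12}AB^{\frac12}$ and show that both $A^{\frac12}BA^{\frac12}$ and $(A^{\frac14}B^{\frac12}A^{\frac14})^2$ are congruent to it. This uses only $C\in(\mathcal{L}_{2p,\infty})_0$ and $A^{\frac14}B^{\frac12}\in\mathcal{L}_{2p,\infty}$. The paper instead uses the single identity
\[
A^{\frac12}BA^{\frac12}-(A^{\frac14}B^{\frac12}A^{\frac14})^2=A^{\frac12}[B,A^{\frac14}]A^{\frac14}-A^{\frac14}[B^{\frac12},A^{\frac12}]B^{\frac12}A^{\frac14}.
\]
The first term lies in $(\mathcal{L}_{p,\infty})_0$ directly by hypothesis, with no Schatten-class partner needed. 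The second lies in $(\mathcal{L}_{2p,\infty})_0\cdot\mathcal{L}_{2p,\infty}$, where $[B^{\frac12},A^{\frac12}]$ is obtained by the Leibniz rule followed by Lemma~\ref{hsz lemma}. You apply these two steps in the opposite order, which is equivalent.

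The paper's identity is shorter, with two error terms instead of your several. Yours is more symmetric and isolates a reusable principle: $XX^*\equiv X^*X$ modulo $(\mathcal{L}_{p,\infty})_0$ whenever $X\in\mathcal{L}_{2p,\infty}$ and $X-X^*\in(\mathcal{L}_{2p,\infty})_0$.
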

\begin{proof} Using Lemma \ref{alt_lemma} with $r=2$ and the assumption $A^{\frac12}B\in \Lc_{p,\infty},$ we deduce $A^{\frac14}B^{\frac12} \in \Lc_{2p,\infty}.$

Using Leibniz rule and taking into account that $A$ is bounded and that $[B,A^{\frac14}]\in (\Lc_{p,\infty})_0,$ we infer that $[B,A^{\frac12}] \in (\Lc_{p,\infty})_0.$ Applying Lemma \ref{hsz lemma} with $\theta = \frac12,$ we deduce
$$[B^{\frac12},A^{\frac12}] \in (\Lc_{2p,\infty})_0.$$
Therefore
\begin{align*}
A^{\frac12}BA^{\frac12}-(A^{\frac14}B^{\frac12}A^{\frac14})^2 &= A^{\frac12}BA^{\frac12}-A^{\frac14}B^{\frac12}A^{\frac12}B^{\frac12}A^{\frac14}\\
&= A^{\frac12}[B,A^{\frac14}]A^{\frac14}-A^{\frac14}[B^{\frac12},A^{\frac12}]B^{\frac12}A^{\frac14}\\
&\in (\Lc_{p,\infty})_0+(\Lc_{2p,\infty})_0\cdot \Lc_{2p,\infty}.
\end{align*}
H\"older's inequality implies that
$$A^{\frac12}BA^{\frac12}-(A^{\frac14}B^{\frac12}A^{\frac14})^2 \in (\Lc_{p,\infty})_0.$$
\end{proof}

\begin{corollary}\label{dyadic_roots_corollary}
Let $A$ and $B$ be positive bounded linear operators, and let $n\geqslant 0.$ If $A^{\frac12}B \in \Lc_{p,\infty}$ and if $[B,A^{2^{-n-1}}] \in (\Lc_{p,\infty})_0,$ then 
$$A^{\frac12}BA^{\frac12}-(A^{2^{-n-1}}B^{2^{-n}}A^{2^{-n-1}})^{2^n} \in (\Lc_{p,\infty})_0.$$
\end{corollary}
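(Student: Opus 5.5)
The plan is induction on $n$, using Lemma \ref{square_root_lemma} as the engine and \eqref{general_raising_powers_principle} to push errors through powers. The case $n=0$ is trivial: the statement reads $A^{\frac12}BA^{\frac12}-A^{\frac12}BA^{\frac12}\in(\Lc_{p,\infty})_0$. The case $n=1$ is exactly Lemma \ref{square_root_lemma}, since the hypothesis $[B,A^{\frac14}]\in(\Lc_{p,\infty})_0$ is the stated one.

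For the inductive step, suppose the assertion holds for $n$ and for all admissible pairs of positive operators. Assume $A^{\frac12}B\in\Lc_{p,\infty}$ and $[B,A^{2^{-n-2}}]\in(\Lc_{p,\infty})_0$.

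\textbf{Step 1: reduce the commutator index.} Write $A^{2^{-n-1}}=(A^{2^{-n-2}})^2$. The Leibniz rule then gives $[B,A^{2^{-n-1}}]\in(\Lc_{p,\infty})_0$, so the inductive hypothesis applies to the pair $(A,B)$. It yields
\[
A^{\frac12}BA^{\frac12}-(A^{2^{-n-1}}B^{2^{-n}}A^{2^{-n-1}})^{2^n}\in(\Lc_{p,\infty})_0 .
\]

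\textbf{Step 2: apply the square root lemma to the inner factor.} Set $A'=A^{2^{-n}}$ and $B'=B^{2^{-n}}$, and apply Lemma \ref{square_root_lemma} with exponent $p'=2^np$. Its first hypothesis, $A'^{\frac12}B'=A^{2^{-n-1}}B^{2^{-n}}\in\Lc_{2^np,\infty}$, follows from Lemma \ref{alt_lemma} (or the remark after it) with $\theta=2^{-n}$, applied to $A^{\frac12}B\in\Lc_{p,\infty}$. Strictly, Lemma \ref{alt_lemma} gives $|A^{\frac12}|^{\theta}|B|^{\theta}$ with $\theta=1/r$, which is exactly this operator. Its second hypothesis is $[B',A'^{\frac14}]=[B^{2^{-n}},A^{2^{-n-2}}]\in(\Lc_{2^np,\infty})_0$. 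This follows from Lemma \ref{hsz lemma}(2) with $\theta=2^{-n}$, applied to $[A^{2^{-n-2}},B]\in(\Lc_{p,\infty})_0$. Lemma \ref{square_root_lemma} then gives
\[
A^{2^{-n-1}}B^{2^{-n}}A^{2^{-n-1}}-(A^{2^{-n-2}}B^{2^{-n-1}}A^{2^{-n-2}})^2\in(\Lc_{2^np,\infty})_0 .
\]

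\textbf{Step 3: raise to the power $2^n$.} Both operators in the last display are positive. They lie in $\Lc_{2^np,\infty}$: the first because $A^{2^{-n-1}}B^{2^{-n}}\in\Lc_{2^np,\infty}$ and $A^{2^{-n-1}}$ is bounded, and the second because it differs from the first by an element of the ideal. Since $2^n$ is an integer, \eqref{general_raising_powers_principle} with $\gamma=2^n$ shows that the difference of their $2^n$-th powers lies in $(\Lc_{p,\infty})_0$. Combined with Step 1, this gives the assertion for $n+1$.

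The main obstacle is bookkeeping the Schatten exponents and checking every hypothesis at the rescaled level. The most delicate point is verifying $A'^{\frac12}B'\in\Lc_{2^np,\infty}$ through the Araki–Lieb–Thirring route; that is where I would take the most care.
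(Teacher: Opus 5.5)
Your proof is correct and is essentially the paper's argument. The paper applies Lemma \ref{square_root_lemma} to $A_k=A^{2^{-k}}$, $B_k=B^{2^{-k}}$ at exponent $2^kp$ for each $0\leqslant k\leqslant n-1$, checks the hypotheses with Lemma \ref{alt_lemma}, Lemma \ref{hsz lemma} and the Leibniz rule, raises to the power $2^k$ via \eqref{general_raising_powers_principle}, and sums the telescoping differences; your induction on $n$ is that same telescoping sum unwound one level at a time, using the same lemmas for the same hypotheses.
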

\begin{proof} For $0\leqslant k\leqslant n-1,$ set $A_k=A^{2^{-k}},$ $B_k=B^{2^{-k}}$ and $p_k=2^kp.$ Using Lemma \ref{alt_lemma} with $r=2^k,$ we obtain
$$A_k^{\frac12}B_k\in \Lc_{p_k,\infty}.$$
Using Lemma \ref{hsz lemma} with $\theta=2^{-k}$ and the assumption $[B,A^{2^{-n-1}}] \in (\Lc_{p,\infty})_0,$ we obtain
$$[B_k,A^{2^{-n-1}}] \in (\Lc_{p_k,\infty})_0.$$
It follows now from the Leibniz rule that
$$[B_k,A_k^{\frac14}] \in (\Lc_{p_k,\infty})_0.$$
In other words, the assumptions in Lemma \ref{square_root_lemma} are met for $A_k,$ $B_k$ and $p_k.$

Hence, by Lemma \ref{square_root_lemma}, we have
$$A_k^{\frac12}B_kA_k^{\frac12}-(A_k^{\frac14}B_k^{\frac12}A_k^{\frac14})^2 \in (\Lc_{p_k,\infty})_0.$$
Thus,
$$(A_k^{\frac12}B_kA_k^{\frac12})^{2^k}-(A_k^{\frac14}B_k^{\frac12}A_k^{\frac14})^{2^{k+1}}\in (\Lc_{p,\infty})_0.$$
Consequently,
$$A^{\frac12}BA^{\frac12}-(A^{2^{-n-1}}B^{2^{-n}}A^{2^{-n-1}})^{2^n}=\sum_{k=0}^{n-1}(A_k^{\frac12}B_kA_k^{\frac12})^{2^k}-(A_k^{\frac14}B_k^{\frac12}A_k^{\frac14})^{2^{k+1}}\in (\Lc_{p,\infty})_0.$$
\end{proof}

\begin{lemma}\label{PS_Lipschitz_lemma} For every $1<p<\infty,$ there exists {a universal constant $C$} such that for every bounded operator $X,$ for every bounded self-adjoint operator $Y$ with $[X,Y] \in\Lc_{p,\infty}$ and for every bounded $C^1$ function $f$ on $\R$ we have
$$\|[X,f(Y)]\|_{p,\infty} \leqslant C\|f'\|_{\infty}\|[X,Y]\|_{p,\infty}.$$
{Moreover, if} $[X,Y] \in (\Lc_{p,\infty})_0,$ then $[X,f(Y)] \in (\Lc_{p,\infty})_0.$
\end{lemma}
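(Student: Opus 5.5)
This is the operator-Lipschitz estimate for commutators in the weak Schatten class $\Lc_{p,\infty}$, $1<p<\infty$. The plan is to obtain it by real interpolation from the corresponding strong Schatten estimates. The key input is the theorem of Potapov and Sukochev: for $1<q<\infty$ every Lipschitz function $f$ on $\R$ is operator Lipschitz in $\Lc_q$, and in commutator form
\[
\|[X,f(Y)]\|_q \leqslant C_q\|f'\|_\infty\|[X,Y]\|_q
\]
for bounded self-adjoint $Y$ and bounded $X$. Since $\|f'\|_\infty<\infty$ is assumed, $f$ is Lipschitz.

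\textbf{Step 1: linearise via a double operator integral.} First I reduce to a statement about a single linear map that does not depend on $X$. Formally,
\[
[X,f(Y)] = T^{Y,Y}_{f^{[1]}}([X,Y]),
\]
where $T^{Y,Y}_{f^{[1]}}$ is the double operator integral with symbol the divided difference $f^{[1]}(\lambda,\mu)=\frac{f(\lambda)-f(\mu)}{\lambda-\mu}$. Potapov–Sukochev show that this transformer, initially defined on $\Lc_2$ (where it is bounded by $\|f'\|_\infty$ via the spectral measure of $Y$), extends boundedly to $\Lc_q$ for every $1<q<\infty$, with norm at most $C_q\|f'\|_\infty$. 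The identity $[X,f(Y)]=T_{f^{[1]}}([X,Y])$ holds whenever $[X,Y]$ lies in one of these classes. This is checked first for Hilbert–Schmidt $[X,Y]$ and then extended by continuity. The constants $C_q$ depend only on $q$.

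\textbf{Step 2: interpolate.} Given $1<p<\infty$, I pick $1<p_0<p<p_1<\infty$. The linear map $T=T^{Y,Y}_{f^{[1]}}$ is bounded on both $\Lc_{p_0}$ and $\Lc_{p_1}$, with norms at most $C\|f'\|_\infty$. Since $\Lc_{p,\infty}=(\Lc_{p_0},\Lc_{p_1})_{\theta,\infty}$ by real interpolation, $T$ is bounded on $\Lc_{p,\infty}$ with norm at most $C\|f'\|_\infty$. The resulting constant depends only on $p$, since the quasi-norm equivalence constants depend only on $p,p_0,p_1$.

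Two technical points remain here, and I expect them to be the main obstacle. First, $T$ must be defined consistently on $\Lc_{p_0}+\Lc_{p_1}\supset\Lc_{p,\infty}$; this holds because the $\Lc_q$-extensions agree on finite rank operators. Second, the commutator identity must hold for a general $[X,Y]\in\Lc_{p,\infty}$. For this I will approximate $X$ by compressions $E_nXE_n$, with $E_n$ finite-rank spectral projections of $Y$, or use that $T$ commutes with strong-operator limits in a weak$^*$ sense, and then pass to the limit using the Fatou property of $\Lc_{p,\infty}$.

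\textbf{Step 3: the separable part.} Boundedness of $T$ on $\Lc_{p,\infty}$ gives the norm estimate. The second assertion of the lemma follows because $T$ is continuous on $\Lc_{p,\infty}$ and maps finite rank operators into $\bigcap_{q>1}\Lc_q\subset(\Lc_{p,\infty})_0$. Since $(\Lc_{p,\infty})_0$ is the closure of the finite rank operators, it is mapped into itself. Alternatively, $(\Lc_{p,\infty})_0$ is itself an interpolation space for the couple $(\Lc_{p_0},\Lc_{p_1})$.
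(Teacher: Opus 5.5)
Your route is sound in outline, and for the main inequality it essentially unpacks the paper's one-line argument. The paper simply invokes Potapov--Sukochev for the $\Lc_{p,\infty}$ bound, which amounts to the $\Lc_q$-boundedness of $T=T^{Y,Y}_{f^{[1]}}$ pushed to $\Lc_{p,\infty}$ by real interpolation, as in your Step 2.

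For the ``moreover'' clause the paper argues differently. After normalising $\|Y\|_\infty\leqslant 1$, it takes polynomials $f_n\to f$ in $C^1[-1,1]$ and notes $[X,f_n(Y)]\in(\Lc_{p,\infty})_0$ by the Leibniz rule. It then concludes from the inequality applied to $f-f_n$ and the closedness of $(\Lc_{p,\infty})_0$. Your Step 3 is also fine: $T$ is continuous on $\Lc_{p,\infty}$ and $T(\text{finite rank})\subset\Lc_q\subset(\Lc_{p,\infty})_0$ for $1<q<p$. Unlike the paper's argument, however, it needs the identity $[X,f(Y)]=T([X,Y])$ for $[X,Y]\in\Lc_{p,\infty}$. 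So does your Step 2, since interpolating $T$ says nothing about commutators without it.

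That identity is where your proposal has a gap: neither justification you offer works.
\begin{itemize}
\item A bounded self-adjoint $Y$ need not have any nonzero finite-rank spectral projections (for example, if it has purely continuous spectrum).
\item Compressing by a spectral projection $E$ of $Y$ only gives $[EXE,Y]=E[X,Y]E$, which is no more Hilbert--Schmidt than $[X,Y]$.
\item ``Check for Hilbert--Schmidt $[X,Y]$ and extend by continuity'' presupposes approximating $[X,Y]$ in $\Lc_q$ by Hilbert--Schmidt commutators $[X_n,Y]$, and you have no way to produce these.
\end{itemize}

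The fix is to approximate $f$ instead of $X$, which is the paper's own device. Fix $p<p_1<\infty$, so that $\Lc_{p,\infty}\subset\Lc_{p_1}$.

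First take $f(\lambda)=\lambda^n$. Then $f^{[1]}(\lambda,\mu)=\sum_{k=0}^{n-1}\lambda^k\mu^{n-1-k}$, so $T$ coincides with $Z\mapsto\sum_{k=0}^{n-1}Y^kZY^{n-1-k}$ on finite-rank operators. By density, it coincides with this map on all of $\Lc_{p_1}$. Applied to $[X,Y]$, this is exactly the Leibniz expansion of $[X,Y^n]$, so the identity holds for polynomials.

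For general $f$, set $R=\|Y\|_\infty$ and take polynomials $f_n\to f$ in $C^1[-R,R]$. The transformer $T^{Y,Y}_{g^{[1]}}$ only sees $g$ on the spectrum of $Y$. So the Potapov--Sukochev bound, applied to a $C^1$ extension of $(f-f_n)|_{[-R,R]}$ with derivative bounded by $\sup_{[-R,R]}|f'-f_n'|$, gives $T^{Y,Y}_{f_n^{[1]}}\to T^{Y,Y}_{f^{[1]}}$ in norm on $\Lc_{p_1}$. Meanwhile $[X,f_n(Y)]\to[X,f(Y)]$ in operator norm. Hence $[X,f(Y)]=T([X,Y])$, and the rest of your argument goes through.
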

\begin{proof} Since $1<p<\infty,$ the inequality follows from \cite{PS-acta}.

To see the {``moreover" part,} we may assume without loss of generality that $\|Y\|_{\infty}\leqslant 1.$ Fix a sequence of polynomials $\{f_n\}_{n\geqslant0}$ such that $f_n\to f$ in $C^1[-1,1].$ We have
$$\|[X,f_n(Y)]-[X,f(Y)]\|_{p,\infty} \leqslant C\|f-f_n\|_{C^1[-1,1]}\|[X,Y]\|_{p,\infty}.$$
It follows from the Leibniz rule that $[X,f_n(Y)]\in (\Lc_{p,\infty})_0.$ Since $(\Lc_{p,\infty})_0$ is norm-closed in $\Lc_{p,\infty},$ the "moreover" part follows.
\end{proof}

Using an integral formula from \cite[{Theorem 5.2.1}]{SZ-Asterisque}, we have the following means of "inserting a power $q>1$" into a product 
$A^{\frac12}BA^{\frac12}.$

\begin{theorem}\label{asterisque_integral_formula}
Let $1<q<r,$ and let $A$ and $B$ be positive bounded operators obeying the following conditions:
\begin{enumerate}[{\rm (i)}]
\item{}\label{aif0} $B^{q-1}A^{q-1} \in \Lc_{\frac{r}{q-1},\infty},$
\item{}\label{aif1} $A^{\frac12}BA^{\frac12} \in \Lc_{r,\infty},$
\item{}\label{aif2} $[B,A^{\frac12}] \in (\Lc_{r,\infty})_0,$
\item{}\label{aif4} $[B^{q},A^{\frac{q}{2}}]\in (\Lc_{\frac{r}{q},\infty})_0.$
\item{}\label{aif3} $B^{q-1}[B,A^{q-1}]A \in (\Lc_{\frac{r}{q},\infty})_0.$
\end{enumerate}
Under those assumptions,
$$(A^{\frac12}BA^{\frac12})^q-A^{\frac{q}{2}}B^{q}A^{\frac{q}{2}} \in (\Lc_{\frac{r}{q},\infty})_0.$$
\end{theorem}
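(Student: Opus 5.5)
Write $V=A^{\frac12}BA^{\frac12}\geqslant 0$. The plan is to represent the difference $V^q-A^{\frac q2}B^qA^{\frac q2}$ through the integral formula of \cite[Theorem 5.2.1]{SZ-Asterisque}, and then to check that every term this formula produces lies in $(\Lc_{\frac rq,\infty})_0$, using the hypotheses \ref{aif0}--\ref{aif3} together with H\"older's inequality \eqref{eq-holder}.

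First, pass from $V^q$ to $V^{q-1}$ times $V$, and observe that the natural comparison operator for $V^q$ is $A^{\frac12}B^{q-1}A^{q-1}BA^{\frac12}$, obtained by reordering. In detail:
\[
V^q = V^{q-1}A^{\frac12}BA^{\frac12}
\quad\text{and}\quad
V^{q-1}A^{\frac12} = A^{\frac12}(BA)^{q-1},
\]
where the second identity holds by functional calculus in its intertwining form $f(A^{\frac12}BA^{\frac12})A^{\frac12}=A^{\frac12}f(BA)$, which is valid for real powers via the resolvent representation.

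The Asterisque integral formula expresses $V^q - A^{\frac12}B^{q-1}A^{q-1}BA^{\frac12}$, up to integration against an explicit kernel, as a sum of terms of the form
\[
(\text{bounded})\cdot[B,A^{\frac12}]\cdot(\text{operator in }\Lc_{\frac{r}{q-1},\infty}),
\qquad
(\text{bounded})\cdot B^{q-1}[B,A^{q-1}]A\cdot(\text{bounded}).
\]
The terms of the first kind lie in $(\Lc_{\frac rq,\infty})_0$ by \ref{aif2} and \ref{aif1}, since $(\Lc_{r,\infty})_0\cdot\Lc_{\frac r{q-1},\infty}\subseteq(\Lc_{\frac rq,\infty})_0$ by H\"older. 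The terms of the second kind lie in the same ideal directly by \ref{aif3}. The $\Lc_{\frac r{q-1},\infty}$ memberships used here come from \ref{aif1}, which gives $V^{q-1}\in\Lc_{\frac r{q-1},\infty}$, and from \ref{aif0}.

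Next, compare the reordered operator with the target:
\begin{align*}
A^{\frac12}B^{q-1}A^{q-1}BA^{\frac12}-A^{\frac q2}B^qA^{\frac q2}
={}& A^{\frac12}B^{q-1}[A^{q-1},B]A^{\frac12}\\
&+\bigl(A^{\frac12}B^{q}A^{q-\frac12}-A^{\frac q2}B^qA^{\frac q2}\bigr).
\end{align*}
The first term is controlled by \ref{aif3}, using the adjoint form $(B^{q-1}[B,A^{q-1}]A)^*$ together with a further factor $A^{-\frac12}$. That factor is not available as it stands, so I would instead keep the exact reordering supplied by the cited formula, which is designed to produce exactly $B^{q-1}[B,A^{q-1}]A$. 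The second bracket is a symmetrization error: it is generated by commutators $[B^q,A^{\frac q2}]$ and $[B^q,A^{\frac12}]$, and these are handled by \ref{aif4}. Combining the two comparisons proves the claim modulo $(\Lc_{\frac rq,\infty})_0$.

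The main obstacle is the exact bookkeeping of the integral representation for noninteger $q$. I would first try to quote \cite[Theorem 5.2.1]{SZ-Asterisque} in the form
\[
V^q-A^{\frac q2}B^qA^{\frac q2}=\int(\cdots)\,d\nu ,
\]
where each integrand contains exactly one of the commutators $[B,A^{\frac12}]$, $[B^q,A^{\frac q2}]$ or $B^{q-1}[B,A^{q-1}]A$, multiplied by operators whose norms are uniformly integrable in the parameter. Once this is in place, the result follows because $(\Lc_{\frac rq,\infty})_0$ is closed under such Bochner integrals; this is justified by norm convergence in the quasi-Banach space $\Lc_{\frac rq,\infty}$ after passing to an equivalent $p$-norm.
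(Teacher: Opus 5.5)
You have the right overall plan, namely to quote \cite[Theorem 5.2.1]{SZ-Asterisque} and check each term by H\"older. But the proposal never identifies the actual formula, and the explicit reductions you write down do not close, so there is a genuine gap.

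\textbf{What the formula actually is.} The formula used in the paper compares $V^q$ with the non-symmetric product $B^qA^q$:
$B^qA^q-V^q=T_q(0)-\int_{-\infty}^{\infty}\widehat{g}_q(s)T_q(s)\,ds$, where
\[
T_q(s)=B^{q-1}[BA^{\frac12},A^{q-\frac12+is}]V^{-is}+B^{is}[BA^{\frac12},A^{\frac12+is}]V^{q-1-is}.
\]
The Leibniz rule splits $T_q(s)$ into three terms, each matched to the hypotheses:
\begin{itemize}
\item $B^{q-1}[B,A^{q-1}]A\cdot A^{is}$, covered by (v);
\item $B^{q-1}A^{q-1}\cdot[B,A^{\frac12+is}]\cdot A^{\frac12}V^{-is}$, covered by (i) and (iii);
\item $B^{is}[B,A^{\frac12+is}]A^{\frac12}V^{q-1-is}$, covered by (ii) and (iii).
\end{itemize}
Hypothesis (iv) enters only at the very end, through the identity $B^qA^q-A^{\frac q2}B^qA^{\frac q2}=[B^q,A^{\frac q2}]A^{\frac q2}$.

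Two ingredients are missing from your sketch.
\begin{itemize}
\item The commutators that occur are $[B,A^{\frac12+is}]$, with complex exponents, not $[B,A^{\frac12}]$. You need the Potapov--Sukochev estimate (Lemma \ref{PS_Lipschitz_lemma}, which uses $r>1$) to get $[B,A^{\frac12+is}]\in(\Lc_{r,\infty})_0$ with norm $O(1+|s|)$.
\item This linear growth, set against the Schwartz decay of $\widehat{g}_q$, is what makes the integral converge as a Bochner integral in $(\Lc_{\frac rq,\infty})_0$. That space is normable, hence a separable Banach space after renorming, precisely because $\frac rq>1$. An equivalent $p$-norm on a quasi-Banach space would not support Bochner integration in general.
\end{itemize}

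\textbf{Where your own decomposition fails.} It breaks at two places.
\begin{itemize}
\item The term $A^{\frac12}B^{q-1}[A^{q-1},B]A^{\frac12}$ is not covered by (v), as you noticed yourself.
\item The ``symmetrization error'' is not handled by (iv). In fact
\[
A^{\frac12}B^qA^{q-\frac12}-A^{\frac q2}B^qA^{\frac q2}=A^{\frac12}[B^q,A^{\frac{q-1}{2}}]A^{\frac q2}=[B^q,A^{\frac q2}]A^{\frac q2}-[B^q,A^{\frac12}]A^{q-\frac12}.
\]
The first piece lies in $(\Lc_{\frac rq,\infty})_0$ by (iv). For the second, the hypotheses only give $[B^q,A^{\frac12}]\in(\Lc_{r,\infty})_0$, via (iii) and Lemma \ref{PS_Lipschitz_lemma}. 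Multiplying by the merely bounded operator $A^{q-\frac12}$ does not upgrade this to $(\Lc_{\frac rq,\infty})_0$.
\end{itemize}
Falling back to ``quote the formula in a form where each integrand contains exactly one of the commutators'' then assumes exactly the structure that has to be verified.

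Separately, $(BA)^{q-1}$ is not defined for non-integer $q-1$, since $BA$ is not normal and $0$ lies in its spectrum. So your intertwining identity $V^{q-1}A^{\frac12}=A^{\frac12}(BA)^{q-1}$ is unjustified as stated, although you never actually use it.
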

\begin{proof} Since $\frac{r}{q}>1,$ the ideal $(\Lc_{\frac{r}{q},\infty})_0$ is a separable Banach space. Hence, by the Pettis measurability theorem, in order to prove that the integral $\int_{-\infty}^{\infty} f(s)\,ds$ of a continuous $(\Lc_{\frac{r}{q},\infty})_0$-valued function converges as a Bochner integral in $(\Lc_{\frac{r}{q},\infty})_0,$ it suffices to prove that
$$\int_{-\infty}^\infty \|f(s)\|_{\frac{r}{q},\infty}\,ds < \infty.$$

We use the following integral formula, proved in \cite[Theorem 5.2.1]{SZ-Asterisque}:
$$B^qA^q-(A^{\frac12}BA^{\frac12})^q = T_q(0)-\int_{-\infty}^{\infty} \widehat{g}_q(s)T_q(s)\,ds,$$
where
$$T_q(s) = B^{q-1}[BA^{\frac12},A^{q-\frac12+is}](A^{\frac12}BA^{\frac12})^{-is}+B^{is}[BA^{\frac12},A^{\frac12+is}](A^{\frac12}BA^{\frac12})^{q-1-is}$$
and where $g_q$ is a Schwartz class function whose precise formula is not relevant here.

We show that $T_q(s)\in (\Lc_{\frac{r}{q},\infty})_0$ for all $s \in \mathbb{R},$ with norm satisfying
$$\|T_q(s)\|_{\frac{r}{q},\infty} \lesssim (1+|s|).$$
Since $\widehat{g}_q$ is Schwartz class, this suffices to conclude that $B^qA^q-(A^{\frac12}BA^{\frac12})^q \in (\Lc_{\frac{r}{q},\infty})_0.$ This implies the stated result, due to \ref{aif4}.

Applying the Leibniz rule to the first summand in the definition of $T_q(s)$ gives
\begin{align*}
T_q(s) &= B^{q-1}[B,A^{q-1}]A\cdot A^{is}(A^{\frac12}BA^{\frac12})^{-is}\\ 
&\quad +B^{q-1}A^{q-1}\cdot [B,A^{\frac12+is}]\cdot A^{\frac12}(A^{\frac12}BA^{\frac12})^{-is}\\   
&\quad +B^{is}\cdot [B,A^{\frac12+is}]\cdot A^{\frac12}\cdot (A^{\frac12}BA^{\frac12})^{q-1-is}.
\end{align*}
By Lemma \ref{PS_Lipschitz_lemma}, we have $[B,A^{\frac12+is}] \in (\Lc_{r,\infty})_0.$ Hence, by the H\"older inequality and the assumptions it follows that $T_q(s) \in (\Lc_{\frac{r}{q},\infty})_0.$

Using the triangle and H\"older inequalities, we write
\begin{align*}
\|T_q(s)\|_{\frac{r}{q},\infty}&\leqslant c_{q,r}\Big(\|B^{q-1}[B,A^{q-1}]A\|_{\frac{r}{q},\infty}\\
&\quad +\|B^{q-1}A^{q-1}\|_{\frac{r}{q-1},\infty}\|[B,A^{\frac12+is}]\|_{r,\infty}\|A\|_{\infty}^{\frac12}\\
&\quad +\|[B,A^{\frac12+is}]\|_{r,\infty}\|A\|_{\infty}^{\frac12}\|A^{\frac12}BA^{\frac12}\|_{r,\infty}^{q-1}\Big).
\end{align*}
Here we have used the fact that $B^{is},$ $A^{is}$ and $(A^{\frac12}BA^{\frac12})^{is}$ are contractions. By Lemma \ref{PS_Lipschitz_lemma}, we have
$$\|T_q(s)\|_{\frac{q}{r},\infty} \lesssim (1+|s|).$$
This concludes the proof.
\end{proof}

\begin{theorem}\label{ultimate_asterisque_esque_theorem}
Let $A$ and $B$ be positive bounded operators. If for all $\alpha,\beta>0$ we have
\begin{enumerate}[{\rm (i)}]
\item{} $A^{\alpha}B^{\beta} \in \Lc_{\frac{p}{\beta},\infty},$
\item{} $B^{\alpha}[B^{\beta},A^{\delta}]B^{\gamma} \in (\Lc_{\frac{p}{\alpha+\beta+\gamma},\infty})_0$
\end{enumerate}
then for all $q>0$ we have
$$(A^{\frac12}BA^{\frac12})^q - A^{\frac{q}{2}}B^qA^{\frac{q}{2}} \in (\Lc_{\frac{p}{q},\infty})_0.$$
\end{theorem}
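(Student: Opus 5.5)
Since the conclusion depends only on $q$, the plan is to reduce everything to the case $1<q<r$ handled by Theorem \ref{asterisque_integral_formula}, using Corollary \ref{dyadic_roots_corollary} for small exponents and the principle \eqref{general_raising_powers_principle} to move between powers. Fix $q>0$ and choose $n\geqslant 0$ so large that $q_n := 2^n q>1$.

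First, apply Corollary \ref{dyadic_roots_corollary}. Hypothesis (i) with $\alpha=\frac12,\beta=1$ gives $A^{\frac12}B\in\Lc_{p,\infty}$. Hypothesis (ii) with vanishing outer exponents gives $[B,A^{2^{-n-1}}]\in(\Lc_{p,\infty})_0$; the limiting cases $\alpha=0$ or $\gamma=0$ should be read as allowed, and I would record this convention at the start. The corollary then yields
\[
A^{\frac12}BA^{\frac12}-(A_n^{\frac12}B_nA_n^{\frac12})^{2^n}\in(\Lc_{p,\infty})_0,\qquad A_n=A^{2^{-n}},\ B_n=B^{2^{-n}}.
\]
Both terms are positive and lie in $\Lc_{p,\infty}$, so \eqref{general_raising_powers_principle} applied with exponent $q$ gives
\[
(A^{\frac12}BA^{\frac12})^q-(A_n^{\frac12}B_nA_n^{\frac12})^{q_n}\in(\Lc_{\frac pq,\infty})_0.
\]

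Second, apply Theorem \ref{asterisque_integral_formula} to the pair $(A_n,B_n)$ with exponent $q_n$ and $r=2^np$. The hypotheses become statements about powers of $A,B$, and each is an instance of the assumptions:
\begin{itemize}
\item (i) is $B^{(q_n-1)2^{-n}}A^{(q_n-1)2^{-n}}\in\Lc_{\frac{r}{q_n-1},\infty}$; take adjoints in hypothesis (i).
\item (ii), $A_n^{1/2}B_nA_n^{1/2}\in\Lc_{r,\infty}$, follows from (i) via Lemma \ref{alt_lemma}.
\item (iii) and (iv) are commutators $[B^\beta,A^\delta]$ lying in $(\Lc_{\frac{p}{\beta},\infty})_0$; these come from (ii) with trivial outer powers.
\item (v) is handled by writing $B^{\beta'}[B^{\beta},A^{\delta}]A^{\epsilon}$ and converting the trailing $A^\epsilon$ using (i) and Hölder, after moving it past $B$-factors with further commutator terms.
\end{itemize}
The conclusion is
\[
(A_n^{\frac12}B_nA_n^{\frac12})^{q_n}-A^{\frac q2}B^qA^{\frac q2}\in(\Lc_{\frac{p}{q},\infty})_0,
\]
and combining it with the first step finishes the proof. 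If $r/q_n\leqslant 1$ (the theorem requires $q_n<r$), I would either choose $n$ minimal, or first treat a small exponent $q'$ and then pass to integer multiples of $q'$ algebraically. For integer powers this passage is the Leibniz/induction argument of Lemma \ref{constant asymp aux lemma}, using (ii) to commute $A$-powers through.

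The main obstacle is verifying condition (v) of Theorem \ref{asterisque_integral_formula}. The trailing factor $A$ (rather than a $B$-power) does not match hypothesis (ii) directly, so the exponent bookkeeping must be done carefully: split $A=A^{\frac12}A^{\frac12}$ and use (i) to absorb $A^{\frac12}$ against a $B$-power on the left. A second, smaller issue is checking that $q_n<r$, i.e. $q<p$, can be arranged. For $q\geqslant p$ I would fall back on the integer-power induction described above.
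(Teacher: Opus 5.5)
Your proof is correct, but it is organised differently from the paper's.

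\textbf{The paper's route.} The paper avoids any case distinction by decoupling the root level from the exponent. It takes $j,k$ with $2^{j+k}p>2^kq>1$ and applies Theorem \ref{asterisque_integral_formula} to $(A_{j+k},B_{j+k})$ with exponent $2^kq$ and $r=2^{j+k}p$. Then $r/(2^kq)=2^jp/q>1$ for every $q>0$. The price is that the output concerns $A_j^{q/2}B_j^qA_j^{q/2}$ rather than $A^{q/2}B^qA^{q/2}$. The paper therefore has to climb back with three applications of Corollary \ref{dyadic_roots_corollary} (the last one to the pair $(A^q,B^q)$) and several uses of \eqref{general_raising_powers_principle}.

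\textbf{Your route.} You use one dyadic level for both purposes, so the integral formula lands exactly on $A^{q/2}B^qA^{q/2}$. But then $r/q_n=p/q$, so this only works for $q<p$. For $q\geqslant p$ you write $q=Nq'$ with $N\in\mathbb{N}$ and $q'<p$. You raise the $q'$-statement to the $N$-th power (the integer case of \eqref{general_raising_powers_principle}, i.e.\ H\"older). It remains to compare $(A^{q'/2}B^{q'}A^{q'/2})^N$ with $A^{q/2}B^qA^{q/2}$. That comparison does go through under (i) and (ii): with $a=A^{q'}$, $b=B^{q'}$,
\begin{align*}
a^{\frac12}ba^{\frac12}-ab&=a^{\frac12}[b,a^{\frac12}],\\
(ab)^{k+1}-a^{k+1}b^{k+1}&=\big((ab)^k-a^kb^k\big)ab+a^k[b^k,a]b,\\
a^Nb^N-a^{\frac N2}b^Na^{\frac N2}&=-a^{\frac N2}[b^N,a^{\frac N2}].
\end{align*}
The three error terms are handled as follows:
\begin{enumerate}[{\rm (1)}]
\item The first lies in $(\Lc_{\frac{p}{q'},\infty})_0$, and H\"older with $ab,\,a^{\frac12}ba^{\frac12}\in\Lc_{\frac{p}{q'},\infty}$ upgrades it to $(a^{\frac12}ba^{\frac12})^N-(ab)^N\in(\Lc_{\frac pq,\infty})_0$.
\item The second gives $(ab)^N-a^Nb^N\in(\Lc_{\frac pq,\infty})_0$ by induction on $k$, using $[B^{kq'},A^{q'}]B^{q'}\in(\Lc_{\frac{p}{(k+1)q'},\infty})_0$.
\item The third is in $(\Lc_{\frac pq,\infty})_0$ directly by (ii).
\end{enumerate}
So your route trades the paper's uniformity in $q$ for a shorter fractional step and a purely algebraic treatment of large $q$. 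Like the paper, it needs (ii) with vanishing outer exponents.

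\textbf{Corrections.} The alternative ``choose $n$ minimal'' cannot help, since $r/q_n=p/q$ does not depend on $n$; the integer-power fallback is needed exactly when $q\geqslant p$. Condition (v) of Theorem \ref{asterisque_integral_formula} is not an obstacle. Indeed, $B_n^{q_n-1}[B_n,A_n^{q_n-1}]=B^{q-2^{-n}}[B^{2^{-n}},A^{q-2^{-n}}]$ lies in $(\Lc_{\frac{p}{q},\infty})_0$ by (ii) with $\gamma=0$. Right multiplication by the bounded operator $A_n$ preserves this ideal. Likewise, condition (ii) of that theorem follows directly from hypothesis (i) with $\alpha=2^{-n-1}$, $\beta=2^{-n}$, without Lemma \ref{alt_lemma}.
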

\begin{proof} For every $l\geqslant0,$ set $A_l=A^{2^{-l}}$ and $B_l=B^{2^{-l}}.$ Let $j,k\geqslant 0$ be large enough so that $2^{j+k}p>2^kq>1.$ It is easy to check that the assumptions in Theorem \ref{asterisque_integral_formula} are met for $A_{j+k}$ and $B_{j+k}.$ Applying Theorem \ref{asterisque_integral_formula} applied to $A_{j+k}$ and $B_{j+k},$ we write
$$(A_{j+k}^{\frac12}B_{j+k}A_{j+k}^{\frac12})^{2^kq}-A_{j+k}^{2^{k-1}q}B_{j+k}^{2^kq}A_{j+k}^{2^{k-1}q} \in (\Lc_{\frac{2^{j+k}p}{2^kq},\infty})_0=(\Lc_{\frac{2^jp}{q},\infty})_0.$$
By Corollary \ref{dyadic_roots_corollary}
$$(A_{j+k}^{\frac12}B_{j+k}A_{j+k}^{\frac12})^{2^k}-A_j^{\frac12}B_jA_j^{\frac12}\in (\Lc_{2^jp,\infty})_0.$$
By \eqref{general_raising_powers_principle},
$$(A_{j+k}^{\frac12}B_{j+k}A_{j+k}^{\frac12})^{2^kq}-(A_j^{\frac12}B_jA_j^{\frac12})^q\in (\Lc_{\frac{2^jp}{q},\infty})_0.$$
Thus,
$$(A_j^{\frac12}B_jA_j^{\frac12})^q-A_j^{\frac{q}{2}}B_j^qA_j^{\frac{q}{2}} \in (\Lc_{\frac{2^jp}{q},\infty})_0.$$
Again applying \eqref{general_raising_powers_principle} with $\gamma=2^j,$
$$(A_j^{\frac12}B_jA_j^{\frac12})^{2^jq}-\big(A_j^{\frac{q}{2}}B_j^qA_j^{\frac{q}{2}}\big)^{2^j}\in (\Lc_{\frac{p}{q},\infty})_0.$$
By Corollary \ref{dyadic_roots_corollary}
$$(A_j^{\frac12}B_jA_j^{\frac12})^{2^j}-A^{\frac12}BA^{\frac12}\in (\Lc_{p,\infty})_0.$$
By \eqref{general_raising_powers_principle},
$$(A_j^{\frac12}B_jA_j^{\frac12})^{2^jq}-(A^{\frac12}BA^{\frac12})^q\in (\Lc_{\frac{p}{q},\infty})_0.$$
Thus,
$$(A^{\frac12}BA^{\frac12})^q-\big(A_j^{\frac{q}{2}}B_j^qA_j^{\frac{q}{2}}\big)^{2^j}\in (\Lc_{\frac{p}{q},\infty})_0.$$
The assertion follows now from Corollary \ref{dyadic_roots_corollary} and \eqref{general_raising_powers_principle}.
\end{proof}

\subsection{Proof of spectral asymptotic theorem: general case}\label{Section-general-case}

For this section, we need a quantitative form of Lemma \ref{specific_cwikel_estimates_from_jfa_lemma}.

The estimates for the weak Schatten $\mathcal{L}_{\frac{d_{\hom}}{\gamma},\infty}$ quasi-norm are given in terms of spaces that we denote $\ell_{p}(L_q)(G).$ These were introduced in \cite[Definition 6.3]{MSZ-stratified-23}, based on a construction of Birman and Solomyak \cite{BirmanSolomyak1969}. The spaces $\ell_{p}(L_q)(G)$ are special cases of the decomposition spaces defined by Feichtinger and Gr\"{o}bner \cite{FeichtingerGrobner1985}.

\begin{definition}\label{decomposition_spaces_definition}
    Let $\Gamma\subset G$ be a discrete subset, and let $U\subset G$ be a bounded open set. We say that $(\Gamma,U)$ is a \emph{covering of bounded multiplcity} if the set of translates $\{\gamma U\}_{\gamma\in \Gamma}$ is a cover of $G$ such that there exists $N>0$ such that every $g\in G$ is contained in at most $N$ of the sets $\{\gamma U\}_{\gamma\in \Gamma}.$ For $p,q\in (0,\infty],$ the space $\ell_p(L_2(G))$ is defined as the set of all locally integrable functions $f$ on $G$ such that
    \[
        \|f\|_{\ell_p(L_q)(G)} := \|\{\|f\chi_{\gamma U}\|_{L_p(\gamma U)}\}_{\gamma \in \Gamma}\|_{\ell_q(\Gamma)} < \infty.
    \]
\end{definition}
It was proved in Section 6 of \cite{MSZ-stratified-23} that coverings of bounded multiplicity always exist and that the space $\ell_p(L_q)(G)$ is independent of the choice of covering, up to equivalence of quasi-norms.

The following lemma is very similar to Theorem 5.1 of \cite{MSZ-stratified-23}, and so we defer the proof to Appendix \ref{cwikel_appendix}.
\begin{lemma}\label{quantitative_cwikel_estimate}
Let $f$ be a locally integrable function on $G.$ Let $\gamma>0.$
\begin{enumerate}[{\rm (i)}]
\item{} If $2<\frac{d_{\hom}}{\gamma}<\infty,$ then
\[
\|M_f(1-\Delta)^{-\frac{\gamma}{2v}}\|_{\frac{d_{\hom}}{\gamma},\infty} \leqslant C_{\gamma,G}\|f\|_{L_{\frac{d_{\hom}}{\gamma}}(G)}.
\]
\item{} If $0 < \frac{d_{\hom}}{\gamma}\leqslant 2$ and $q>2,$ then
\[
\|M_f(1-\Delta)^{-\frac{\gamma}{2v}}\|_{\frac{d_{\hom}}{\gamma},\infty} \leqslant C_{\gamma,G}\|f\|_{\ell_{\frac{d_{\hom}}{\gamma}}(L_q)(G)}.
\]
\end{enumerate}
\end{lemma}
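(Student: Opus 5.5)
Since the paper defers this to Appendix~\ref{cwikel_appendix} as a variant of \cite[Theorem 5.1]{MSZ-stratified-23}, I would reprove it by the same Cwikel-type strategy, adapted to the graded group and to the generalised Laplacian $\Delta=\Delta_G$ of order $2v$. The starting point is the $L_p$-kernel description of $(1-\Delta)^{-\frac{\gamma}{2v}}$. It is the element $\lambda(K_\gamma)$ of $\mathrm{VN}(G)$, where $K_\gamma$ is the Bessel-type kernel obtained by subordination from the heat kernel $h_t$ of $-\Delta$:
\[
K_\gamma=\frac{1}{\Gamma(\frac{\gamma}{2v})}\int_0^\infty t^{\frac{\gamma}{2v}-1}e^{-t}h_t\,dt .
\]
The heat kernel is Schwartz and satisfies $h_t=t^{-d_{\hom}/2v}\,\delta_{t^{-1/2v}}h_1$ (cf.\ \cite[Chapter 4]{FischerRuzhansky2016}). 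Equivalently, using the scaling \eqref{trace_scaling}, one gets $\mu(s,(1-\Delta)^{-\frac{\gamma}{2v}})\lesssim s^{-\gamma/d_{\hom}}$ in $\mathrm{VN}(G)$, i.e.\ $(1-\Delta)^{-\frac{\gamma}{2v}}\in L_{\frac{d_{\hom}}{\gamma},\infty}(\mathrm{VN}(G),\tau)$. The spectral bound follows from $\tau(\chi_{[0,s]}(-\Delta))=c\,s^{d_{\hom}/2v}$, which is itself a consequence of homogeneity of $\Delta$ under $\alpha_t$.

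\emph{Case (i)}, $p:=\frac{d_{\hom}}{\gamma}>2$. I would apply the abstract noncommutative Cwikel estimate (Frank's or the LSZ version, valid for $p>2$ in a semifinite setting, with $L_\infty(G)$ and $\mathrm{VN}(G)$ both acting on $L_2(G)$ and the ``trace compatibility'' $\|M_f\,\lambda(k)\|_{\mathcal{L}_2}=\|f\|_{L_2}\|k\|_{L_2}$). This gives
\[
\|M_fT\|_{p,\infty}\lesssim\|f\|_{L_p(G)}\,\|T\|_{L_{p,\infty}(\mathrm{VN}(G))},\qquad T\in \mathrm{VN}(G).
\]
Taking $T=(1-\Delta)^{-\frac{\gamma}{2v}}$ yields (i).

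\emph{Case (ii)}, $p\le 2$ and $q>2$. I would follow the Birman--Solomyak decomposition argument. Take a covering of bounded multiplicity $(\Gamma,U)$ (Definition~\ref{decomposition_spaces_definition}) and write $f=\sum_{\gamma\in\Gamma} f\chi_{\gamma U}$. By left invariance of $\Delta$ and unitarity of $\lambda(g)$, each piece reduces to a fixed bounded set $U$. There one chooses $0<\beta<\gamma$ with $\frac{d_{\hom}}{\beta}>2$ and factorises
\[
(1-\Delta)^{-\frac{\gamma}{2v}}=(1-\Delta)^{-\frac{\beta}{2v}}\cdot(1-\Delta)^{-\frac{\gamma-\beta}{2v}}.
\]
For a function supported in $U$, the estimate then follows from case (i) (or from H\"older in $L_q$) together with the fact that $M_{\chi_U}(1-\Delta)^{-\frac{\gamma-\beta}{2v}}$ lies in a Schatten class with the right exponent. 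The latter follows from $\mathcal{L}_2$ bounds via the kernel and Lemma~\ref{lemma-X_alpha-bdd}, iterated through the Sobolev scale. This gives $\|M_{f\chi_{\gamma U}}(1-\Delta)^{-\frac{\gamma}{2v}}\|_{p,\infty}\lesssim\|f\chi_{\gamma U}\|_{L_q}$.

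It remains to sum the pieces, and here lies the main obstacle: the pieces $M_{f\chi_{\gamma U}}(1-\Delta)^{-\frac{\gamma}{2v}}$ are not orthogonal, so Lemma~\ref{first disjoint lemma} does not apply directly. I would split $\Gamma$ into finitely many subfamilies (the number is controlled by the multiplicity, via Lemma~\ref{quasi_metric_space_construction}) whose translates are pairwise far apart. Within a subfamily, the left factors $M_{f\chi_{\gamma U}}$ have disjoint supports, so the operators $A_\gamma=M_{f\chi_{\gamma U}}(1-\Delta)^{-\frac{\gamma}{2v}}$ satisfy $A_\gamma A_{\gamma'}^*=0$. One then has to handle the $A_\gamma^*A_{\gamma'}$ cross terms, which is why the relevant input is $\ell_p$-summability in the $(p,\infty)$ quasinorm for $p\le 2$. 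Concretely, I would consider $(\sum_\gamma A_\gamma)(\sum_\gamma A_\gamma)^*=\sum_\gamma A_\gamma A_\gamma^*$ and use $p/2\le1$ together with the $p/2$-subadditivity of $\|\cdot\|_{p/2,\infty}^{p/2}$ up to constants (the Birman--Solomyak/Rotfel'd-type inequality for positive operators). This gives
\[
\Big\|\sum_\gamma A_\gamma\Big\|_{p,\infty}^p\lesssim\sum_\gamma\|A_\gamma\|_{p,\infty}^p\lesssim\|f\|_{\ell_p(L_q)(G)}^p,
\]
which is (ii). I expect verifying the constants in this summation for the quasi-Banach range to be the most delicate point.
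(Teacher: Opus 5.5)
Your case (i) is the paper's argument: Theorem~\ref{easy_cwikel} combined with $(1-\Delta)^{-\frac{\gamma}{2v}}\in L_{\frac{d_{\hom}}{\gamma},\infty}(\mathrm{VN}(G),\tau)$, which holds by homogeneity (Proposition~\ref{functional_calculus_is_where_it_should_be}).

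\textbf{The endpoint $\frac{d_{\hom}}{\gamma}=2$ in (ii).} This case is included in the statement, and your argument does not reach it. Your summation rests on the inequality
\[
\Big\|\sum_k B_k\Big\|_{r,\infty}^r\lesssim\sum_k\|B_k\|_{r,\infty}^r,\qquad r=\frac p2.
\]
For $r<1$ this holds for arbitrary operators. To see it, split $|B_k|$ at a level $\lambda$. The part above $\lambda$ has rank at most $\lambda^{-r}\|B_k\|_{r,\infty}^r$, and the part below has trace norm at most $\frac{1}{1-r}\lambda^{1-r}\|B_k\|_{r,\infty}^r$.

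However, the constant blows up as $r\uparrow 1$, and at $r=1$ the inequality is false. The quasi-norm $\|\cdot\|_{1,\infty}$ is not equivalent to a norm, even on commuting positive operators. For example, the $N$ cyclic shifts of the diagonal operator $(1,\frac12,\ldots,\frac1N)$ each have $\|\cdot\|_{1,\infty}\leqslant 1$. Their sum is $H_N\sim\log N$ times a rank-$N$ projection, so it has $\|\cdot\|_{1,\infty}=NH_N$.

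So this is not a matter of constants: as written, your argument proves (ii) only for $\frac{d_{\hom}}{\gamma}<2$. The missing idea is the paper's reduction of $p=2$ to $p<2$. It uses the Araki--Lieb--Thirring bound of Lemma~\ref{alt_lemma} with $r=\frac2p$:
\[
\|M_f(1-\Delta)^{-\frac{d_{\hom}}{4v}}\|_{2,\infty}\leqslant e^{\frac12}\|M_{|f|^{2/p}}(1-\Delta)^{-\frac{d_{\hom}}{2vp}}\|_{p,\infty}^{\frac p2}.
\]
It then applies the $p<2$ estimate to $|f|^{\frac2p}\in\ell_p(L_{\frac{pq}{2}})(G)$, with $p<2$ chosen close enough to $2$ that $\frac{pq}{2}>2$. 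Since $\||f|^{\frac2p}\|_{\ell_p(L_{\frac{pq}{2}})}^{\frac p2}=\|f\|_{\ell_2(L_q)}$, this gives the endpoint estimate.

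\textbf{Fixable issues in the range $p<2$.} Index the pieces by $k$ to avoid the clash with the exponent $\gamma$.

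First, the orthogonality is on the wrong side. Write $f_k=f\chi_{E_k}$ with disjoint $E_k\subseteq\gamma_kU$; a disjointification is needed, since $f=\sum f\chi_{\gamma U}$ fails for a cover of multiplicity greater than one. Then
\[
A_kA_j^*=M_{f_k}(1-\Delta)^{-\frac{\gamma}{v}}M_{\overline{f_j}}\neq 0,
\]
whereas $A_k^*A_j=0$ for $k\neq j$. So the correct identity is $T^*T=\sum_kA_k^*A_k$. It holds for any disjoint decomposition, with no cross terms and no need for far-apart subfamilies.

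Second, the local factorisation $M_f(1-\Delta)^{-\frac{\beta}{2v}}\cdot(1-\Delta)^{-\frac{\gamma-\beta}{2v}}$ does not place a cutoff next to the second factor, which on its own is not compact. The cutoff has to be inserted through a positive power, as in Lemma~\ref{rough_specific_cwikel}:
\[
M_f(1-\Delta)^{-\frac{\gamma}{2v}}=M_f(1-\Delta)^{-\frac{d_{\hom}}{2vq}}\cdot(1-\Delta)^{\frac{d_{\hom}}{2vq}}M_\psi(1-\Delta)^{-\frac{\gamma}{2v}}.
\]
Controlling the second factor requires the two-sided estimate of Lemma~\ref{specific_cwikel_estimates_from_jfa_lemma}, which is proved by commutator induction and complex interpolation. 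Hilbert--Schmidt kernel bounds alone do not give it.

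Finally, $\Delta$ is affiliated with $\mathrm{VN}(G)=\rho(G)'$, so it commutes with $\rho(g)$, not with $\lambda(g)$. Uniformity of the local constants therefore comes from right translations, not left.
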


\begin{lemma}\label{improved_cwikel_estimate}
Let $P\geqslant 0$ be an uniformly Rockland differential operator of order $m,$ and let $f$ be a locally integrable function on $G.$
\begin{enumerate}[{\rm (i)}]
\item{} If $2<\frac{d_{\hom}}{\gamma}<\infty,$ then
\[
\|M_f(1+P)^{-\frac{\gamma}{m}}\|_{\frac{d_{\hom}}{\gamma},\infty} \leqslant C_{\gamma,G,m}\|f\|_{L_{\frac{d_{\hom}}{\gamma}}(G)}.
\]
\item{} If $0 < \frac{d_{\hom}}{\gamma}\leqslant 2$ and $q>2,$ then
\[
\|M_f(1+P)^{-\frac{\gamma}{m}}\|_{\frac{d_{\hom}}{\gamma},\infty} \leqslant C_{\gamma,G,m}\|f\|_{\ell_{\frac{d_{\hom}}{\gamma}}(L_q)(G)}.
\]
\end{enumerate}
\end{lemma}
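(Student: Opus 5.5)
The plan is to reduce Lemma \ref{improved_cwikel_estimate} to Lemma \ref{quantitative_cwikel_estimate} by factoring out a bounded operator. We write
\[
M_f(1+P)^{-\frac{\gamma}{m}} = M_f(1-\Delta)^{-\frac{\gamma}{2v}}\cdot (1-\Delta)^{\frac{\gamma}{2v}}(1+P)^{-\frac{\gamma}{m}}.
\]
By \eqref{eq-singular-value-multi}, the weak Schatten quasi-norm of the left-hand side is then at most $\|M_f(1-\Delta)^{-\frac{\gamma}{2v}}\|_{\frac{d_{\hom}}{\gamma},\infty}\cdot\|(1-\Delta)^{\frac{\gamma}{2v}}(1+P)^{-\frac{\gamma}{m}}\|_{\infty}$. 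Lemma \ref{quantitative_cwikel_estimate} bounds the first factor by $C_{\gamma,G}\|f\|_{L_{\frac{d_{\hom}}{\gamma}}(G)}$ in case (i), and by $C_{\gamma,G}\|f\|_{\ell_{\frac{d_{\hom}}{\gamma}}(L_q)(G)}$ in case (ii). So everything comes down to showing that the second factor is bounded on $L_2(G)$.

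This amounts to showing that $(1+P)^{-\frac{\gamma}{m}}:L_2(G)\to W^{\gamma}_2(G)$ is bounded for variable-coefficient $P$. Lemma \ref{easy complex powers lemma} proves exactly this for constant coefficients, and its proof carries over verbatim. It uses only three ingredients, and all three hold for general uniformly Rockland $P\geqslant 0$.

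\textbf{Integer exponents.} Let $n\in\mathbb{N}$. By Lemma \ref{ellipticity of the product}, the operator $(1+P)^n$ is uniformly Rockland. It is also formally self-adjoint and nonnegative. Hence either Theorem \ref{general elliptic estimate theorem} (iterated $n$ times, with $s=0,m,\dots,(n-1)m$) or Theorem \ref{elliptic regularity theorem} shows that $(1+P)^{-n}:L_2(G)\to W^{nm}_2(G)$ is bounded. Concretely, $\|u\|_{W^{nm}_2}\lesssim\|(1+P)^n u\|_{L_2}$ on $\Sc(G)$, and density together with self-adjointness of $P$ extends this to the full domain.

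\textbf{Imaginary exponents.} The operators $(1+P)^{-i\Im z}$ are unitary by the spectral theorem.

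\textbf{Interpolation.} Consider the analytic family $z\mapsto(1-\Delta)^{\frac{mz}{2v}}(1+P)^{-z}$ on the strip $0\leqslant\Re z\leqslant n$, where $n\geqslant\gamma/m$ is an integer. On the boundary lines it is uniformly bounded: on $\Re z=0$ by unitarity, and on $\Re z=n$ by the integer case combined with unitarity of $(1-\Delta)^{i\theta}$ and $(1+P)^{-i\theta}$. The admissible growth needed for interpolation comes from the spectral theorem, exactly as in \cite[Theorem 4.2]{CCRSW_interpolation_1982}. Stein interpolation then gives boundedness at $z=\gamma/m$.

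The main obstacle is this interpolation step. Domains must be handled carefully, since the family is defined a priori only on a dense core such as $\Sc(G)$ or $\mathrm{dom}((1+P)^n)$. The required growth conditions must also be verified. In the constant-coefficient case the paper simply cites the same interpolation result, and nothing in that argument used constant coefficients beyond the integer-exponent estimate, which we have now supplied via Theorems \ref{general elliptic estimate theorem} and \ref{elliptic regularity theorem}. As a fallback, one could instead use the complex interpolation identity \eqref{interpolation} for Sobolev spaces together with Heinz--Kato-type operator monotonicity. Since $\|(1+P)^n u\|_{L_2}\gtrsim\|(1-\Delta)^{\frac{nm}{2v}}u\|_{L_2}$, the Löwner--Heinz inequality applied to the squares $(1+P)^{2n}\gtrsim(1-\Delta)^{\frac{nm}{v}}$ yields $(1+P)^{2\theta n}\gtrsim(1-\Delta)^{\frac{\theta nm}{v}}$ for $\theta\in[0,1]$. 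Choosing $\theta=\gamma/(nm)$ gives $\|(1-\Delta)^{\frac{\gamma}{2v}}(1+P)^{-\frac{\gamma}{m}}\|_\infty<\infty$ directly. I would present this Löwner--Heinz argument, as it avoids analytic-family technicalities. Combining this bound with the factorization and Lemma \ref{quantitative_cwikel_estimate} gives both (i) and (ii), with $C_{\gamma,G,m}$ absorbing the operator norm.
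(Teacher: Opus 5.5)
Your proposal is correct and is essentially the paper's proof. You factor $M_f(1+P)^{-\frac{\gamma}{m}}=M_f(1-\Delta)^{-\frac{\gamma}{2v}}\cdot(1-\Delta)^{\frac{\gamma}{2v}}(1+P)^{-\frac{\gamma}{m}}$, obtain boundedness of $(1-\Delta)^{\frac{mn}{2v}}(1+P)^{-n}$ for integer $n$ from Lemma \ref{ellipticity of the product} and Theorem \ref{elliptic regularity theorem}, pass to the exponent $\frac{\gamma}{m}$ by interpolation (the paper simply cites the Hadamard three lines theorem), and then apply Lemma \ref{quantitative_cwikel_estimate}, exactly as the paper does. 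Your preferred L\"owner--Heinz variant is a valid substitute for the interpolation step: since $\mathrm{dom}((1+P)^n)=W^{nm}_2(G)=\mathrm{dom}((1-\Delta)^{\frac{nm}{2v}})$, the Heinz inequality applies directly and spares you the analytic-family bookkeeping that the paper leaves implicit.
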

\begin{proof}
By Lemma \ref{ellipticity of the product}, $(P+1)^n$ is uniformly Rockland for every $n\in\mathbb{N}.$ By Theorem \ref{elliptic regularity theorem}, $(P+1)^n:W^{mn}_2(G)\to L_2(G)$ is self-adjoint. Thus, $(P+1)^{-n}:L_2(G)\to W^{mn}_2(G)$ is bounded. In other words, the operator $(1-\Delta)^{\frac{mn}{{ 2v}}}(P+1)^{-n}:L_2(G)\to L_2(G)$ is bounded for every $n\in\mathbb{N}.$ By the Hadamard 3 lines theorem, the operator
$$(1-\Delta)^{\frac{\gamma}{{ 2v}}}(1+P)^{-\frac{\gamma}{m}}$$
is bounded on $L_2(G)$ for all $\gamma>0.$ Hence, the factors of $(1-\Delta)^{-\frac{\gamma}{{ 2v}}}$ in \eqref{basic_cwikel_1} and \eqref{basic_cwikel_2} can be replaced by $(1+P)^{-\frac{\gamma}{m}},$ and this completes the proof.
\end{proof}

\begin{proof}[Proof of Theorem \ref{main theorem general case}] Let $0\leqslant f \in C^\infty_c(G)$ be arbitrary. Let $0\leqslant \phi\in C^\infty_c(G)$ be such that $f\phi=f,$ and $\phi^{\alpha}$ is smooth for every $\alpha>0.$ Let $\varepsilon>0$ and set $f_{\varepsilon} = (f+\varepsilon \phi)^{\frac{m}{2\gamma}}.$ So-defined $f_{\varepsilon}$ is a smooth compactly supported function such that $f_{\varepsilon}^{\alpha}$ is smooth for every $\alpha>0.$

Applying Theorem \ref{main theorem special case} to $f_{\epsilon}^2$ and using the fact that
$$M_{f_{\epsilon}}(1+P)^{-1}M_{f_{\epsilon}}-M_{f_{\epsilon}^2}(1+P)^{-1}\in(\mathcal{L}_{\frac{d_{{\rm hom}}}{m},\infty})_0,$$
we obtain
$$\lim_{t\to\infty} t^{\frac{m}{d_{\hom}}}\mu\Big(t,M_{f_{\epsilon}}(1+P)^{-1}M_{f_{\epsilon}}\Big) = \Big(\frac1{\Gamma(\frac{d_{\hom}}{m}+1)}\int_G f_{\epsilon}(g)^{\frac{2d_{\hom}}{m}}\tau(e^{-P_g^{{\rm top}}})\,dg\Big)^{\frac{m}{d_{{\rm hom}}}}.$$
Therefore,
$$\lim_{t\to\infty} t^{\frac{\gamma}{d_{\hom}}}\mu\Big(t,\big(M_{f_{\epsilon}}(1+P)^{-1}M_{f_{\epsilon}}\big)^{\frac{\gamma}{m}}\Big) = \Big(\frac1{\Gamma(\frac{d_{\hom}}{m}+1)}\int_G f_{\epsilon}(g)^{\frac{2d_{\hom}}{m}}\tau(e^{-P_g^{{\rm top}}})\,dg\Big)^{\frac{\gamma}{d_{{\rm hom}}}}.$$
Recall that $f^{\alpha} \in C^\infty_c(G)$ for every $\alpha>0.$ Applying Theorem \ref{ultimate_asterisque_esque_theorem} with $B = (1+P)^{-1},$ $A = M_{f_{\epsilon}^2}$ yields
$$(M_{f_{\epsilon}}(1+P)^{-1}M_{f_{\epsilon}})^{\frac{\gamma}{m}}-M_{f_{\epsilon}}^{\frac{\gamma}{m}}(1+P)^{-\frac{\gamma}{m}}M_{f_{\epsilon}}^{\frac{\gamma}{m}}\in (\Lc_{\frac{d_{\hom}}{\gamma},\infty})_0.$$
By Lemma \ref{easy_bs_lemma}, it follows that
$$\lim_{t\to\infty}t^{\frac{\gamma}{d_{\hom}}}\mu(t,M_{f_{\epsilon}}^{\frac{\gamma}{m}}(1+P)^{-\frac{\gamma}{m}}M_{f_{\epsilon}}^{\frac{\gamma}{m}})= \Big(\frac1{\Gamma(\frac{d_{\hom}}{m}+1)}\int_G f_{\epsilon}(g)^{\frac{2d_{\hom}}{m}}\tau(e^{-P_g^{{\rm top}}})\,dg\Big)^{\frac{\gamma}{d_{{\rm hom}}}}.$$
Taking into account that
$$M_{f_{\epsilon}}^{\frac{\gamma}{m}}(1+P)^{-\frac{\gamma}{m}}M_{f_{\epsilon}}^{\frac{\gamma}{m}}-M_{f_{\epsilon}}^{\frac{2\gamma}{m}}(1+P)^{-\frac{\gamma}{m}}\in (\Lc_{\frac{d_{\hom}}{\gamma},\infty})_0$$
and using Lemma \ref{easy_bs_lemma}, we infer 
$$\lim_{t\to\infty}t^{\frac{\gamma}{d_{\hom}}}\mu(t,M_{f_{\epsilon}}^{\frac{2\gamma}{m}}(1+P)^{-\frac{\gamma}{m}})= \Big(\frac1{\Gamma(\frac{d_{\hom}}{m}+1)}\int_G f_{\epsilon}(g)^{\frac{2d_{\hom}}{m}}\tau(e^{-P_g^{{\rm top}}})\,dg\Big)^{\frac{\gamma}{d_{{\rm hom}}}}.$$
Appealing to the definition of $f_{\epsilon},$ we write
$$\lim_{t\to\infty}t^{\frac{\gamma}{d_{\hom}}}\mu(t,M_{f+\epsilon\phi}(1+P)^{-\frac{\gamma}{m}})= \Big(\frac1{\Gamma(\frac{d_{\hom}}{m}+1)}\int_G f_{\epsilon}(g)^{\frac{d_{\hom}}{\gamma}}\tau(e^{-P_g^{{\rm top}}})\,dg\Big)^{\frac{\gamma}{d_{{\rm hom}}}}.$$

By Lemma \ref{improved_cwikel_estimate},
$$M_{f+\epsilon\phi}(1+P)^{-\frac{\gamma}{m}}\to M_f(1+P)^{-\frac{\gamma}{m}},\quad \epsilon\downarrow 0,$$
in $\Lc_{\frac{d_{\hom}}{\gamma},\infty}.$ The assertion follows now from the preceding paragraph and Lemma \ref{bs convergence lemma}.
\end{proof}

\begin{proof}[Proof of Corollary \ref{weaked_smoothness_corollary}]
Assume initially that $2<\frac{d_{\hom}}{\gamma}<\infty,$ and let $f \in L_{\frac{d_{\hom}}{\gamma}}(G).$ There exists a sequence $\{f_n\}_{n=0}^\infty\subset C^\infty_c(G)$ such that $f_n\to f$ in $L_{\frac{d_{\hom}}{\gamma}}(G),$
and hence by Lemma \ref{improved_cwikel_estimate}, the sequence
\[
\{M_{f_n}(1+P)^{-\frac{\gamma}{m}}\}_{n=0}^\infty
\]
is convergent in $\Lc_{\frac{d_{\hom}}{\gamma},\infty},$ call its limit $T.$ By Lemma \ref{bs convergence lemma} and Theorem \ref{main theorem general case}, the limits
\[
\lim_{t\to\infty} t^{\frac{\gamma}{d_{\hom}}}\mu(t,T) = \lim_{n\to\infty} \Big(\frac1{\Gamma(\frac{d_{\hom}}{m}+1)}\int_G |f_n(g)|^{\frac{d_{\hom}}{m}}\tau(e^{-P_g^{{\rm top}}})\,dg\Big)^{\frac{\gamma}{d_{{\rm hom}}}}
\]
exist and are equal. Since the function $g\mapsto \tau(e^{-P_g^{{\rm top}}})$ is uniformly bounded and continuous, we deduce that
\[
\lim_{t\to\infty} t^{\frac{\gamma}{d_{\hom}}}\mu(t,T) = \Big(\frac1{\Gamma(\frac{d_{\hom}}{m}+1)}\int_G |f(g)|^{\frac{d_{\hom}}{m}}\tau(e^{-P_g^{{\rm top}}})\,dg\Big)^{\frac{\gamma}{d_{{\rm hom}}}}
\]
{ By the Sobolev embedding theorem for graded Lie groups \cite[Theorem 4.2.25]{FischerRuzhansky2016}}, the Sobolev space $W^{\gamma}_2(G)$ embeds continuously into $L_{q}(G),$ where $\frac{1}{q}=\frac12- \frac{\gamma}{d_{\hom}}.$ By H\"older's inequality,
it follows that we have the convergence
\[
M_{f_n}\to M_f
\]
in the norm topology of $\Bc(W^{\gamma}_2(G),L_2(G)).$ Therefore, in the norm topology of $\Bc(L_2(G)),$ we have
\[
M_{f_n}(1+P)^{\frac{\gamma}{m}}\to M_f(1+P)^{-\frac{\gamma}{m}}.
\]
Therefore $T = M_f(1+P)^{-\frac{\gamma}{m}},$ and this completes the proof in the case that $2<\frac{d_{\hom}}{\gamma}<\infty.$ The case $0<\frac{d_{\hom}}{\gamma}\leqslant 2$ is similar.
\end{proof}
{
\subsection{Positive and negative eigenvalues}

Given a self-adjoint operator $T,$ denote $T_{+}$ and $T_-$ for the positive and negative parts of $T,$ repsectively. That is, $T_+ = \frac12(|T|+T)$ and $T_- = \frac12(|T|-T),$ so that $T=T_+-T_-.$ The positive eigenvalues of $T$ are simply the singular values of $T_+,$
and the negative eigenvalues are the negatives of the singular values of $T_-.$

\begin{theorem}\label{asterisque_but_now_there_are_signs}
    Let $p>0.$ Let $B\geqslant 0$ be a positive bounded operator, and let $A$ be self-adjoint. If
\begin{enumerate}[{\rm (i)}]
\item{}\label{signed_cwikel} $AB^{\frac12} \in \Lc_{2p,\infty},$
\item{}\label{signed_commutator_1} $ [B^{\frac12},A_{\pm}]B \in (\Lc_{\frac{2p}{3},\infty})_0$
\item{}\label{signed_commutator_2} $[B^{\frac32},A_{\pm}] \in (\Lc_{\frac{2p}{3},\infty})_0.$
\end{enumerate}
then
\[
    (B^{\frac12}AB^{\frac12})_{\pm} - B^{\frac{1}{2}}A_{\pm}B^{\frac{1}{2}} \in (\Lc_{p,\infty})_0.
\]
\end{theorem}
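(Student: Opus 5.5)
The plan is to compare $T:=B^{\frac12}AB^{\frac12}$ with $S:=S_+-S_-$, where $S_\pm:=B^{\frac12}A_\pm B^{\frac12}\geqslant 0$. By definition $T=S$. The point is that $S_+$ and $S_-$ are \emph{almost orthogonal}: $S_+S_-=B^{\frac12}A_+BA_-B^{\frac12}$. Since $A_+A_-=0$, we get $A_+BA_-=[A_+,B]A_-$, and the claim to check is that $S_+S_-$ is small in $(\Lc_{\frac{p}{2},\infty})_0$. Once we have $S_+S_-\in(\Lc_{\frac p2,\infty})_0$, we want to pass from almost orthogonality to the positive and negative parts. To do so we compare $T^2=S_+^2+S_-^2-(S_+S_-+S_-S_+)$ with $(S_+-S_-)^2$ computed for orthogonal summands.

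A cleaner route is as follows. Set $X=S_+-S_-=T$. Then $|T|^2=T^2=S_+^2+S_-^2+R$ with $R\in(\Lc_{\frac p2,\infty})_0$. Note that $S_\pm\in\Lc_{p,\infty}$ by (i), since $A_\pm B^{\frac12}=\chi_{\pm}(A)\,AB^{\frac12}$ up to a bounded factor ($A_\pm=A\chi_{(0,\infty)}(\pm A)$), and then by H\"older. Also $(S_++S_-)^2=S_+^2+S_-^2+R'$ with $R'\in(\Lc_{\frac p2,\infty})_0$. Hence $|T|^2-(S_++S_-)^2\in(\Lc_{\frac p2,\infty})_0$. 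Applying \eqref{general_raising_powers_principle} with $\gamma=\frac12$ (Ricard's version, valid for positive operators) yields $|T|-(S_++S_-)\in(\Lc_{p,\infty})_0$. Then $T_\pm=\frac12(|T|\pm T)$ differs from $\frac12\big((S_++S_-)\pm(S_+-S_-)\big)=S_\pm$ by an element of $(\Lc_{p,\infty})_0$, which is the assertion.

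The main obstacle is therefore proving $S_+S_-\in(\Lc_{\frac p2,\infty})_0$ using only hypotheses (ii) and (iii). Writing $S_+S_-=B^{\frac12}[A_+,B]A_-B^{\frac12}$ alone does not obviously fit them, so I would instead move powers of $B$ around. The idea is to use $A_-A_+=0$ and write
\[
B^{\frac12}A_+BA_-B^{\frac12}=B^{\frac12}A_+B\,A_-B^{\frac12},
\]
then insert $B^{\frac32}$-commutators. Concretely, consider $S_+S_-S_+=B^{\frac12}A_+BA_-BA_+B^{\frac12}$. Rewrite $BA_-B=B^{\frac12}\big(B^{\frac12}A_-B\big)$ and commute $A_-$ past $B^{\frac12}$ via (ii), i.e. $B^{\frac12}A_-B=A_-B^{\frac32}-[A_-,B^{\frac12}]B$. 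The term $A_+B^{\frac12}A_-B^{\frac32}A_+$ is then handled by commuting $B^{\frac32}$ past $A_+$ using (iii), producing $A_+B^{\frac12}A_-A_+B^{\frac32}=0$ plus an error. Each error carries a factor in $(\Lc_{\frac{2p}{3},\infty})_0$ times factors from (i) in $\Lc_{2p,\infty}$ (namely $A_\pm B^{\frac12}$ and its adjoint), so by H\"older it lies in $(\Lc_{\frac p3,\infty})_0$ or better.

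Thus I expect to get $S_+S_-S_+\in(\Lc_{\frac p3,\infty})_0$, and symmetrically $S_-S_+S_-\in(\Lc_{\frac p3,\infty})_0$. From this, $(S_+S_-)^*(S_+S_-)=S_-S_+^2S_-$ can be controlled. It suffices to show $(S_-S_+)^{3}$, or a similar product, lies in $(\Lc_{\frac{p}{6}\cdot,\infty})_0$ to get $S_+S_-\in(\Lc_{\frac p2,\infty})_0$ via $\mu(X)^2=\mu(X^*X)$ and iteration. If this bookkeeping fails, the fallback is to run the comparison of $|T|^3$ with $(S_++S_-)^3$ at the level $\Lc_{\frac p3,\infty}$, using the cubic cross terms just shown to be negligible, and then take cube roots via \eqref{general_raising_powers_principle} with $\gamma=\frac13$. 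That matches the exponent $\frac{2p}{3}$ in the hypotheses, so I would try this cubic version first.
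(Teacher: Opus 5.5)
\textbf{There is a genuine gap: the key estimate is never proved.}

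Your overall reduction is the same as the paper's. Since $S_++S_-=B^{\frac12}|A|B^{\frac12}$ and $T^2-(S_++S_-)^2=-2(S_+S_-+S_-S_+)$, everything reduces to $|T|^2-(B^{\frac12}|A|B^{\frac12})^2\in(\Lc_{\frac p2,\infty})_0$. One then applies Ricard's square-root principle and $T_\pm=\frac12(|T|\pm T)$. The step that carries all the content, $S_+S_-\in(\Lc_{\frac p2,\infty})_0$, is the one left open, and the detour you propose does not close it:
\begin{itemize}
\item In the triple product $S_+S_-S_+$, each error term you produce has one factor from (ii) or (iii) but only two factors of the form $A_\pm B^{\frac12}$ or $B^{\frac12}A_\pm$. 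For example, $B^{\frac12}A_+\cdot B^{\frac12}\cdot[A_-,B^{\frac12}]B\cdot A_+B^{\frac12}$. H\"older with these factors gives only $(\Lc_{\frac{2p}{5},\infty})_0$, not $(\Lc_{\frac p3,\infty})_0$.
\item You give no mechanism for getting back from smallness of $S_+S_-S_+$, $S_-S_+S_-$ or $(S_+S_-)^3$ to smallness of $S_+S_-$. The identity $\mu(X)^2=\mu(X^*X)$ involves $X^*X=S_-S_+^2S_-$, which is not among the products you control. Smallness of a power of a non-normal operator does not in general control the operator itself.
\item The ``cubic fallback'' is not meaningful. $|T|^3$ is not a polynomial in $S_\pm$ (only even powers of $|T|$ are), so you cannot compare it with $(S_++S_-)^3$ without already knowing $|T|\approx S_++S_-$.
\end{itemize}

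\textbf{The missing step.} The same two commutations you use on the triple product apply directly to $S_+S_-$. Commute $B^{\frac12}$ past $A_+$, then $B^{\frac32}$ past $A_+$, and use $A_+A_-=0$:
\[
S_+S_-=B^{\frac12}A_+BA_-B^{\frac12}=[B^{\frac12},A_+]B\cdot A_-B^{\frac12}+[A_+,B^{\frac32}]\cdot A_-B^{\frac12}.
\]
By (ii), (iii), (i) and H\"older, this lies in $(\Lc_{\frac{2p}{3},\infty})_0\cdot\Lc_{2p,\infty}\subseteq(\Lc_{\frac p2,\infty})_0$. Then $S_-S_+=(S_+S_-)^*$ is covered too, and your argument closes. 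This is essentially the paper's computation, written there with $A$ and $|A|$ in place of $A_\pm$.

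\textbf{A smaller error.} Your remark that $S_\pm\in\Lc_{p,\infty}$ follows from (i) is false in general: take $B=1$ and $A\in\Lc_{2p,\infty}\setminus\Lc_{p,\infty}$. It is not needed, though. The square-root step with exponent $\frac12$ uses only positivity and the difference lying in $(\Lc_{\frac p2,\infty})_0$.
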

\begin{proof}
    Note that by the polar decomposition, \ref{signed_cwikel} implies that $A_{\pm}B^{\frac12} \in \Lc_{2p,\infty}.$
    
    Using $|A|^2=A^2,$ we write
    \begin{align*}
        |B^{\frac12}AB^{\frac12}|^2-(B^{\frac12}|A|B^{\frac12})^2 &= B^{\frac12}ABAB^{\frac12}-B^{\frac12}|A|B|A|B^{\frac12}\\
                                                                &= [B^{\frac12},A]BAB^{\frac12}+AB^{\frac32}AB^{\frac12}-[B^{\frac12},|A|]B|A|B^{\frac12}\\
                                                                &\quad - |A|B^{\frac32}|A|B^{\frac12}\\
                                                                &= [B^{\frac12},A]BAB^{\frac12}+[A,B^{\frac32}]AB^{\frac12}+B^{\frac32}A^2B^{\frac12}\\
                                                                &\quad -[B^{\frac12},|A|]B|A|B^{\frac12}-B^{\frac32}|A|^2B^{\frac12}-[|A|,B^{\frac32}]|A|B^{\frac12}\\
                                                                &= [B^{\frac12},A]BAB^{\frac12}+[A,B^{\frac32}]AB^{\frac12}\\
                                                                &\quad - [B^{\frac12},|A|]B|A|B^{\frac12}-[|A|,B^{\frac32}]|A|B^{\frac12}.\\
                                                                &\in (\Lc_{\frac{2p}{3},\infty})_0\cdot \Lc_{2p,\infty}\subseteq (\Lc_{\frac{p}{2},\infty})_0.
    \end{align*}
    Taking the square root using \eqref{general_raising_powers_principle}, we have
    \[
        |B^{\frac12}AB^{\frac12}| - B^{\frac12}|A|B^{\frac12} \in (\Lc_{p,\infty})_0.
    \]
    Since $T_{\pm} = \frac12(|T|\pm T),$ it follows that
    \[
        (B^{\frac12}AB^{\frac12})_{\pm} - B^{\frac12}A_{\pm}B^{\frac12} \in (\Lc_{p,\infty})_0.
    \]
\end{proof}

\begin{lemma}\label{cut_to_negative_part}
    Let $f\in C^\infty_c(G)$ be such that $f_{\pm}\in C^\infty_c(G),$ and let $P$ be an uniformly Rockland operator of order $m$ as in Theorem \ref{main theorem general case}. Then
    \[
        ((1+P)^{-\frac{\gamma}{2m}}M_f(1+P)^{-\frac{\gamma}{2m}})_{\pm}-(1+P)^{-\frac{\gamma}{2m}}M_{f_{\pm}}(1+P)^{-\frac{\gamma}{2m}} \in (\Lc_{\frac{d_{\hom}}{\gamma},\infty})_0. 
    \]
\end{lemma}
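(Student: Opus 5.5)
The plan is to apply Theorem \ref{asterisque_but_now_there_are_signs} with $B=(1+P)^{-\frac{\gamma}{m}}$, $A=M_f$ and $p=\frac{d_{\hom}}{\gamma}$. Then $B^{\frac12}AB^{\frac12}=(1+P)^{-\frac{\gamma}{2m}}M_f(1+P)^{-\frac{\gamma}{2m}}$, and $A_{\pm}=M_{f_{\pm}}$ since $M_f$ is a multiplication operator, so $(M_f)_\pm=M_{f_\pm}$. With these choices the conclusion of that theorem is exactly the assertion of Lemma \ref{cut_to_negative_part}. What remains is to verify hypotheses \ref{signed_cwikel}, \ref{signed_commutator_1} and \ref{signed_commutator_2}, using that $f,f_+,f_-\in C^\infty_c(G)$.

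For \ref{signed_cwikel} we need $M_f(1+P)^{-\frac{\gamma}{2m}}\in\Lc_{\frac{2d_{\hom}}{\gamma},\infty}$. Write $M_f(1+P)^{-\frac{\gamma}{2m}}=M_f(1-\Delta)^{-\frac{\gamma}{4v}}\cdot(1-\Delta)^{\frac{\gamma}{4v}}(1+P)^{-\frac{\gamma}{2m}}$. The first factor lies in the required ideal by Lemma \ref{specific_cwikel_estimates_from_jfa_lemma}. The second factor is bounded by the interpolation argument in the proof of Lemma \ref{improved_cwikel_estimate}, which uses Theorem \ref{elliptic regularity theorem} and Lemma \ref{ellipticity of the product} together with the three lines theorem. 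Alternatively, one can invoke Lemma \ref{improved_cwikel_estimate} directly, since $f\in C^\infty_c(G)$ belongs to every $\ell_p(L_q)$.

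For \ref{signed_commutator_1} and \ref{signed_commutator_2} we need commutator estimates for $h=f_\pm\in C^\infty_c(G)$. Specifically, we need $[(1+P)^{-\frac{\gamma}{2m}},M_h](1+P)^{-\frac{\gamma}{m}}$ and $[(1+P)^{-\frac{3\gamma}{2m}},M_h]$ to lie in $(\Lc_{\frac{2d_{\hom}}{3\gamma},\infty})_0$. In the constant-coefficient case this is Theorem \ref{asterisque pre-verification lemma 2}(ii). Taking $\beta=-\frac{\gamma}{2}$, $\alpha=0$, $\gamma'=\gamma$ gives membership in $\Lc_{\frac{d_{\hom}}{\frac{3\gamma}{2}+1},\infty}$. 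Since $\frac{d_{\hom}}{3\gamma/2+1}<\frac{2d_{\hom}}{3\gamma}$, this ideal is contained in $(\Lc_{\frac{2d_{\hom}}{3\gamma},\infty})_0$. The second commutator is handled the same way. For variable-coefficient $P$ the same statement is needed. Here I would rely on the generalised version in Appendix \ref{psido_commutator_appendix}, which the paper announces as treating the commutator estimates in wider generality.

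Failing that, I would argue as follows. First, Lemma \ref{q second lemma} gives $[M_h,(1+P)^{-1}]\in\Lc_{\frac{d_{\hom}}{m+1},\infty}$, and its proof in fact gives the weighted bound $(1+P)^{-1}\cdot(\text{order } m-1)\cdot(1+P)^{-1}$. Second, for integer powers, expand $[M_h,(1+P)^{-k}]$ by the Leibniz rule; each term carries an extra order $-1$. Third, pass to fractional powers via the integral representation $(1+P)^{-s}=c_s\int_0^\infty\lambda^{-s}(\lambda+1+P)^{-1}\,d\lambda$. The resolvent commutator equals $(\lambda+1+P)^{-1}[P,M_h](\lambda+1+P)^{-1}$, and the uniform estimates of Theorem \ref{general elliptic estimate theorem} control its weighted norms with good decay in $\lambda$. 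This step, obtaining integrable-in-$\lambda$ bounds in weak Schatten quasinorms uniformly for the non-constant-coefficient $P$, is the main obstacle. I expect it to be handled through the appendix rather than by hand. A cruder fallback is Lemma \ref{hsz lemma}, which converts the integer-power commutator bounds into fractional ones. It does so only at worse exponents, but with $(\cdot)_0$ preserved, and that may still suffice since only membership in $(\Lc_{\frac{2d_{\hom}}{3\gamma},\infty})_0$ is required.

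With the three hypotheses verified, Theorem \ref{asterisque_but_now_there_are_signs} yields the claim.
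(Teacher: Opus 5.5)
Your proposal is correct and is essentially the paper's proof. The paper applies Theorem \ref{asterisque_but_now_there_are_signs} with $A=M_f$, $B=(1+P)^{-\frac{\gamma}{m}}$ and $p=\frac{d_{\hom}}{\gamma}$, takes hypothesis (i) from Lemma \ref{improved_cwikel_estimate}, and takes both commutator hypotheses from Theorem \ref{big_fractional_power_theorem} in Appendix \ref{psido_commutator_appendix} with $J=(1+P)^{\frac1m}$, using the same exponent count you give. That appendix theorem already holds for non-constant coefficients, so your fallback arguments via resolvent integrals or Lemma \ref{hsz lemma} are not needed.
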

\begin{proof}
    By Lemma \ref{improved_cwikel_estimate} and Theorem \ref{big_fractional_power_theorem}, we have
    
%
\[
    M_{f}(1+P)^{-\frac{\gamma}{2m}} \in \Lc_{\frac{2d_{\hom}}{\gamma},\infty}
\]
and
\[
    [(1+P)^{-\frac{3\gamma}{2m}},M_{f_{\pm}}], [(1+P)^{-\frac{\gamma}{2m}},M_{f_{\pm}}](1+P)^{-\frac{\gamma}{m}} \in (\Lc_{\frac{2d_{\hom}}{3\gamma},\infty})_0.
\]
This verifies the requirements of Theorem \ref{asterisque_but_now_there_are_signs} for $A=M_f$ and $B = (1+P)^{-\frac{\gamma}{m}}$ and $p = \frac{d_{\hom}}{\gamma}.$ 

It follows that
\[
        ((1+P)^{-\frac{\gamma}{2m}}M_f(1+P)^{-\frac{\gamma}{2m}})_{\pm}-(1+P)^{-\frac{\gamma}{2m}}M_{f_{\pm}}(1+P)^{-\frac{\gamma}{2m}} \in (\Lc_{\frac{d_{\hom}}{\gamma},\infty})_0,\quad f_{\pm} \in C^\infty_c(G).
\]
\end{proof}

Before proving our main theorem, we display an approximation lemma:
\begin{lemma}\label{lem-dense-decomposition-f-matrix}
    $\forall f\in C_c^{\infty}(G, M_n(\C))$, with $f^*(g)=f(g)$ for all $g\in G$, there exists a series of functions $\{f_{\epsilon}\}_{\epsilon > 0} \subset C_c^{\infty}(G, M_n(\C))$, such that:
    \begin{enumerate}
        \item $f_{\epsilon}^*(g) = f_{\epsilon}(g)$, $\forall g\in G, \forall \epsilon >0$.
        \item $ (f_{\epsilon})_{\pm} \in C^{\infty}_c(G, M_n(\C))$, $\forall \epsilon >0$.
        \item $ \forall p, q >0$, $f - f_{\epsilon}\to 0$, in $L_p$ and $\ell_p(L_q)(G)$, when $\epsilon \to 0$.
    \end{enumerate}
\end{lemma}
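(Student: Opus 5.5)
We build $f_\epsilon$ by regularising the eigenvalues of $f$ near $0$ with a smooth cutoff that is flat at the origin; for $n=1$ this is simply $f_\epsilon = h_\epsilon\circ f$. Fix a smooth function $h:\mathbb{R}\to\mathbb{R}$ with $h(x)=0$ for $|x|\leqslant 1$, $h(x)=x$ for $|x|\geqslant 2$, $h$ odd, and $|h(x)|\leqslant |x|$. For instance, take $h(x)=x\,\psi(x)$ with $\psi$ smooth, even, $0\leqslant\psi\leqslant 1$, $\psi=0$ on $[-1,1]$ and $\psi=1$ outside $[-2,2]$. Set $h_\epsilon(x)=\epsilon h(x/\epsilon)$ and define $f_\epsilon(g)=h_\epsilon(f(g))$ by the Hermitian functional calculus pointwise in $g$.

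Properties (1) and compact support are immediate: $h_\epsilon$ is real-valued, so $f_\epsilon(g)$ is self-adjoint, and $h_\epsilon(0)=0$, so $\supp f_\epsilon\subseteq\supp f$. For smoothness, we use that a function $F\in C^\infty(\mathbb{R})$ induces a smooth map $A\mapsto F(A)$ on Hermitian matrices. To see this, restrict to the compact set of matrices of norm $\leqslant\|f\|_\infty$ and represent $F(A)=\frac{1}{2\pi i}\oint F(z)(z-A)^{-1}dz$ after replacing $F$ by an almost-analytic extension; alternatively, apply the Daleckii--Krein theory of operator-smooth functions. Hence $f_\epsilon\in C^\infty_c(G,M_n(\C))$.

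For (2), write $(f_\epsilon)_{\pm}=(h_\epsilon)_{\pm}(f)$, where $(h_\epsilon)_+(x)=\max(h_\epsilon(x),0)$. Since $h_\epsilon$ vanishes on $[-\epsilon,\epsilon]$ and is odd with $h_\epsilon(x)\geqslant 0$ for $x>0$, the function $(h_\epsilon)_+$ equals $h_\epsilon(x)$ for $x\geqslant 0$ and $0$ for $x\leqslant 0$. This function is smooth because $h_\epsilon$ is flat on a neighbourhood of $0$. The same holds for $(h_\epsilon)_-$. The smoothness argument of the previous paragraph then shows $(f_\epsilon)_\pm\in C^\infty_c$.

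For (3), pointwise we have $\|f(g)-f_\epsilon(g)\|\leqslant \sup_x|x-h_\epsilon(x)| \leqslant 2\epsilon$, which is supported in $|x|\leqslant 2\epsilon$ with bound $|x-h_\epsilon(x)|\leqslant |x|+|h_\epsilon(x)|\leqslant 4\epsilon$. Also $\supp(f-f_\epsilon)\subseteq\supp f=:K$, which is compact. Hence $\|f-f_\epsilon\|_{L_p}\leqslant 4\epsilon\,|K|^{1/p}\to 0$ for every $p>0$, and $f-f_\epsilon\to 0$ uniformly. Since $K$ meets only finitely many translates $\gamma U$ of a covering of bounded multiplicity, the $\ell_p(L_q)(G)$ quasi-norm of $f-f_\epsilon$ is a finite $\ell_q$-sum of local $L_p$ norms, each $O(\epsilon)$, so it also tends to $0$.

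The only nontrivial point is the smoothness of the matrix functional calculus. Eigenvalues of $f(g)$ may cross, and $x\mapsto x_\pm$ itself is not smooth, which is exactly why we compose with $h_\epsilon$ first. We expect this step to be the main obstacle, and we would handle it by the holomorphic/almost-analytic Cauchy integral representation described above.
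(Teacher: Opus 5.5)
Your proof is correct and follows essentially the same route as the paper. The paper also composes $f$ with a smooth $\psi_\epsilon$ that vanishes on $(-\epsilon/2,\epsilon/2)$, equals $x$ for $|x|>\epsilon$ and has the sign of $x$; it notes that $|\psi_\epsilon|$ is smooth, so $(f_\epsilon)_\pm$ are smooth, and uses $\|f_\epsilon(g)-f(g)\|\leqslant\epsilon$ on the compact support of $f$ to get convergence in $L_p$ and $\ell_p(L_q)$. You additionally justify the smoothness of $A\mapsto F(A)$ on Hermitian matrices, which the paper takes for granted; your slip between the bounds $2\epsilon$ and $4\epsilon$ is harmless.
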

\begin{proof}
    Let $\epsilon>0$, choose a function $\psi_{\epsilon} \in C^{\infty}(\R)$, such that
    \begin{enumerate}
        \item $\psi_{\epsilon}(x)=0$, when $|x|<\epsilon/2$,
        \item $\psi_{\epsilon}(x)=x$, when $|x|>\epsilon$,
        \item $\psi_{\epsilon} \geqslant 0$, for $x>0$. 
        \item $\psi_{\epsilon} \leqslant 0$, for $x<0$.
        \item $|\psi_{\epsilon}(x)-x|\leqslant \epsilon$, for $x\in\mathbb{R}.$
    \end{enumerate}
    Since $f$ and $\psi$ are smooth and $f$ is self-adjoint valued, we define
    \begin{equation*}
        f_\epsilon  \coloneqq \psi_{\epsilon} \circ f \in  C^{\infty}_c(G, M_n(\C))
    \end{equation*}
    by functional calculus.
    Since $\psi$ is real valued, $f_{\epsilon}^* = f_{\epsilon}$.
    Notice that $|\psi_{\epsilon}| \in C^{\infty}(\R)$, thus
    \begin{equation*}
        |f_\epsilon| = |\psi_{\epsilon}| \circ f  \in  C^{\infty}_c(G, M_n(\C)),
    \end{equation*}
    Evidently, $(f_{\epsilon})_{+} = \frac{1}{2}(f_{\epsilon} + |f_{\epsilon}|)$, $(f_{\epsilon})_{-} = \frac{1}{2}(-f_{\epsilon} + |f_{\epsilon}|)$ are both smooth.
    By the construction of $\psi_{\epsilon}$, we have $|\psi_{\epsilon}(x)-x|\leqslant \epsilon$, for all $x\in \R$. Hence,
    $$\|f_{\epsilon}(g)-f(g)\|_{M_n(\mathbb{C})}\leq \epsilon,\quad g\in G.$$
    Thus,
    \begin{align*}
        \Vert f_{\epsilon} - f\Vert_{L_{p}(G)}
        & =\Big(\int_G \|(f_{\epsilon}-f)(g)\|_{L_p(M_n(\mathbb{C}),{\rm Tr})}^pdg\Big)^{\frac1p}\\
        & \leqslant n^{\frac1p}\Big(\int_G \|(f_{\epsilon}-f)(g)\|_{M_n(\mathbb{C})}^pdg\Big)^{\frac1p}\\
        & \leqslant n^{\frac1p}\cdot m({\rm supp}(f))^{\frac1p}\cdot \sup_{g\in G}\|f_{\epsilon}(g)-f(g)\|_{M_n(\mathbb{C})}\\
        & \leqslant n^{\frac1p}\cdot m({\rm supp}(f))^{\frac1p}\cdot \epsilon.
    \end{align*}
    A similar computation shows that $f_{\varepsilon}\to f$ in the $\ell_p(L_q)(G)$ norm.
\end{proof}

\begin{proof}[Proof of Theorem \ref{positive_negative_parts_theorem}]
    For $f$ such that $f_{\pm}\in C^\infty_c(G)$, this is an immediate consequence of Theorem \ref{main theorem general case}, Lemma \ref{cut_to_negative_part}    
    and Lemma \ref{easy_bs_lemma}. 
    
    By Lemma \ref{improved_cwikel_estimate}, Lemma \ref{lem-dense-decomposition-f-matrix}, and an approximation argument using Lemma \ref{easy_bs_lemma}, the result of the theorem still holds for general real $f \in C^\infty_c(G)$, and self-adjoint valued $f\in C^\infty_c(G, M_n(\C))$. 
    
    Since $C^\infty_c(G)$ itself is dense in $L_{\frac{d_{\hom}}{\gamma}}(G)$ and in $\ell_{\frac{d_{\hom}}{\gamma}}(L_q)(G),$ the same approximation argument completes the proof for general $f.$
\end{proof}
}


\appendix

\section{Computations for the group algebra trace}\label{appdx-alg-trace}
In this appendix we give computational formulas 
for $\tau(f(P)),$ where $P$ is an uniformly Rockland positive-definite element of $\Uc(\gf),$ $f$ is a function on the spectrum of $P$ and $\tau$ is the von Neumann trace on the group von Neumann algebra $\mathrm{VN}(G).$ In order to do this, we must first verify that $f(P)$ belongs to the domain of the trace. 

The key fact is that if $P\geqslant 0$ is a homogeneous uniformly Rockland operator, then the functional
\[
    f\mapsto \tau(f(P))
\]
is represented by a measure on $[0,\infty).$ Once the measure is known to exist, it can be computed up to a constant by the homogeneity of $P$ and \eqref{trace_scaling}. To prove this, we use the following Sobolev embedding result.
\begin{theorem}\label{sobolev_embedding}\cite[Theorem 4.4.25(ii)]{FischerRuzhansky2016}
    Let $G$ be a graded Lie group. For sufficiently large $N,$ depending on $G,$ we have 
    \[
        W^N_2(G)\subset C(G).
    \]
\end{theorem}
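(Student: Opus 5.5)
This statement is a cited Sobolev embedding, so I would prove it with the standard route for Sobolev embeddings on graded groups: show that $(1-\Delta)^{-\frac{N}{2v}}$ is convolution with an $L_2$ kernel. Concretely, the plan is to write $(1-\Delta)^{-\frac{N}{2v}}u = u * B_N$ for a Bessel-type kernel $B_N$. We then show $B_N \in L_2(G)$ once $N > d_{\hom}/2$. Given this, for $u\in W^N_2(G)$ we set $w=(1-\Delta)^{\frac{N}{2v}}u\in L_2(G)$, so that $u=w*B_N$. Cauchy--Schwarz then gives $|u(x)|\leqslant \|w\|_{L_2}\|B_N\|_{L_2}$. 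Continuity of $u$ follows because a convolution of two $L_2$ functions is continuous: translation is strongly continuous on $L_2(G)$.

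The key step is constructing $B_N$ and establishing its integrability. Using the spectral theorem and the Gamma-function identity, I would write
\[
(1-\Delta)^{-s}=\frac{1}{\Gamma(s)}\int_0^\infty t^{s-1}e^{-t}e^{t\Delta}\,dt,\qquad s=\tfrac{N}{2v}.
\]
Next I would invoke the heat kernel $h_t$ of the positive Rockland operator $-\Delta$ (Folland--Stein / Fischer--Ruzhansky). This kernel is Schwartz and satisfies the homogeneity relation $h_t(x)=t^{-d_{\hom}/(2v)}h_1(\delta_{t^{-1/(2v)}}x)$. Consequently $\|h_t\|_{L_2}=t^{-d_{\hom}/(4v)}\|h_1\|_{L_2}$. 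Minkowski's integral inequality then yields $\|B_N\|_{L_2}\leqslant \Gamma(s)^{-1}\|h_1\|_2\int_0^\infty t^{s-1-d_{\hom}/(4v)}e^{-t}\,dt$, which is finite precisely when $N>d_{\hom}/2$.

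An alternative that stays closer to this paper's own tools works with $\mathrm{VN}(G)$. The quantity $\sup_x|u(x)|$ is controlled by $\|(1-\Delta)^{-\frac{N}{2v}}\|_{L_2(\mathrm{VN}(G),\tau)}$. The Plancherel identity $\tau(T^*T)=\|k\|_{L_2}^2$ for the convolution kernel $k$ of $T$, together with the scaling \eqref{trace_scaling}, reduces the problem to $\tau(e^{2t\Delta})=c\,t^{-d_{\hom}/(2v)}$ and the same Gamma integral.

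The main obstacle is the existence and regularity of the heat kernel, namely that $h_1\in L_2(G)$. This rests on hypoellipticity of $\partial_t-\Delta$ (Hörmander / Helffer--Nourrigat for Rockland operators). I would cite it from \cite{FischerRuzhansky2016} rather than reprove it.
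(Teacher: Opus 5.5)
Your proof is correct, and it is the standard Bessel-potential argument for this embedding. The paper itself gives no proof and only cites \cite[Theorem 4.4.25(ii)]{FischerRuzhansky2016}. Your chain of steps is sound: subordination to the heat kernel, the scaling $\|h_t\|_{L_2}=t^{-d_{\hom}/(4v)}\|h_1\|_{L_2}$, Minkowski's inequality, then Cauchy--Schwarz and continuity of $L_2\ast L_2$ on the unimodular group $G$. It gives the explicit threshold $N>d_{\hom}/2$.

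One caution applies to your $\mathrm{VN}(G)$ variant. The paper derives Lemma \ref{lem:exponentials_are_trace_class} (finiteness of $\tau(e^{-tP})$), and hence Lemma \ref{lem:continuous_limit_formula}, from this very theorem. So in that variant the finiteness of $\tau(e^{2t\Delta})$ must come from $h_t\in L_2(G)$, as in your main route, not from the paper's own lemmas.
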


First, we need to verify that elements of $\Uc(\gf),$ identified with their image under $\lambda,$ are affiliated to $\mathrm{VN}(G).$

\begin{lemma}\label{affiliated_to_M} The hermitian elements of $\Uc(\gf),$ thought of as right invariant unbounded operators on $L_2(G),$ are affiliated with $\mathrm{VN}(G).$\footnote{Recall that a self-adjoint operator $A$ is affiliated to a von Neumann algebra $\mathcal{M}$ if the spectral projections of $A$ belong to $\mathcal{M}.$ Equivalently, $f(A) \in \mathcal{M}$ for all bounded continuous functions $f$}

\end{lemma}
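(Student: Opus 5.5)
We need to show that a hermitian element $P\in\Uc(\gf)$, viewed as an unbounded operator on $L_2(G)$ (more precisely its self-adjoint closure, e.g. $\widetilde P$ with domain $W^m_2(G)$ when $P$ is uniformly Rockland, by Theorem \ref{elliptic regularity theorem}), is affiliated with $\mathrm{VN}(G)$. Since $\mathrm{VN}(G)=\rho(G)'$, it suffices to check that every unitary $\rho(h)$, $h\in G$, commutes with $P$ in the sense of unbounded operators: $\rho(h)\,\mathrm{dom}(P)\subseteq\mathrm{dom}(P)$ and $\rho(h)P=P\rho(h)$ on $\mathrm{dom}(P)$. Indeed, the commutant of $\rho(G)$ is generated (as a von Neumann algebra) by the unitaries of $\rho(G)''$, and any unitary in $\rho(G)'' $ is a strong limit of linear combinations of $\rho(h)$; commuting with all $\rho(h)$ then passes to the whole commutant of $\mathrm{VN}(G)$ by the standard argument (or we simply use the criterion that $T$ is affiliated with $\mathcal{M}$ iff it commutes with all unitaries of $\mathcal{M}'$, and $\mathcal{M}'=\rho(G)''$).

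The first step is the algebraic commutation on test functions. For $X\in\gf$ acting as $Xu(g)=\frac{d}{dt}u(\exp(-tX)g)|_{t=0}$, we have for $u\in C^\infty_c(G)$
\[
X\rho(h)u(g)=\frac{d}{dt}u(\exp(-tX)gh)\Big|_{t=0}=(\rho(h)Xu)(g),
\]
since $X$ acts by left translations while $\rho$ acts by right translations. By iteration, $X^{\alpha}\rho(h)=\rho(h)X^{\alpha}$ on $C^\infty_c(G)$ (and on $\Sc(G)$), hence $P\rho(h)=\rho(h)P$ on this core. Since $\rho(h)$ is unitary and preserves $C^\infty_c(G)$ (right translation of a compactly supported smooth function), it maps the core onto itself.

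The second step passes to the closure: if $u\in\mathrm{dom}(\overline{P})$, pick $u_n\in C^\infty_c(G)$ with $u_n\to u$, $Pu_n\to\overline{P}u$; then $\rho(h)u_n\to\rho(h)u$ and $P\rho(h)u_n=\rho(h)Pu_n\to\rho(h)\overline{P}u$, so $\rho(h)u\in\mathrm{dom}(\overline P)$ with $\overline P\rho(h)u=\rho(h)\overline Pu$. The main (mild) obstacle is making sure the self-adjoint operator in question is this closure, i.e. that $C^\infty_c(G)$ is a core; for uniformly Rockland hermitian $P$ this follows from Theorem \ref{elliptic regularity theorem} since $C^\infty_c(G)$ is dense in $W^m_2(G)$, and in general one takes the closure (or the self-adjoint extension defined via the distributional extension $\widetilde P$, which commutes with $\rho(h)$ by duality from the identity on $\Sc(G)$, because $\rho(h)^*=\rho(h^{-1})$ also commutes with $P^\dagger$). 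Finally, for a self-adjoint $T$ commuting with a unitary $U$ in this sense, $U$ commutes with all bounded Borel functions of $T$, in particular with its spectral projections, so these lie in $\rho(G)'=\mathrm{VN}(G)$, which is the claimed affiliation.
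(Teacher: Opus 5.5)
Your proof is correct and follows the same argument as the paper: the elements of $\gf$ act by left translations, so they commute with every right translation $\rho(h)$, and therefore the spectral projections of $P$ lie in $\rho(G)'=\mathrm{VN}(G)$. You handle domains more carefully than the paper does, through the core $C^\infty_c(G)$, the passage to the closure, and the use of both $\rho(h)$ and $\rho(h^{-1})$; two of your side remarks could be tightened:
\begin{itemize}
\item The detour through $\rho(G)''$ in your first paragraph is not needed, since your last step already places the projections in $\rho(G)'$.
\item Your claim that, for a general hermitian $P$, the closure or $\widetilde P$ gives a self-adjoint operator requires essential self-adjointness. Theorem \ref{elliptic regularity theorem} supplies this exactly in the uniformly Rockland case, which is where the lemma is used.
\end{itemize}
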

\begin{proof} Let $X \in \gf$, its action $\lambda(X)$ on $u \in L_2(G)$ is defined as 
\begin{equation}\label{eq-def-right-action-Lie-alg}
    \lambda(X)u(g) = \frac{d}{dt}\Big|_{t=0} u(e^{-tX}g).
\end{equation}
Since $\lambda(X)$ is the generator of a unitary semigroup, Stone's theorem implies that $\lambda(X)$ is anti-self-adjoint. The representation $\lambda$ of $\gf$ on $L_2(G)$ extends to a representation of $\Uc(\gf)$ on the subspace of smooth vectors.

Since $\lambda(X)$ commutes with the right-action $\rho(g): u(\cdot)\to u(\cdot g)$ for $u\in L_2(G)$, it follows that for any $P \in \Uc(\gf),$ we have
$$\lambda(P)\rho(g) = \rho(g)\lambda(P).$$
Hence, $\rho(g)$ commutes with any spectral projection of $\lambda(P).$ Since $\mathrm{VN}(G)$ is the commutant of $\rho(G),$ this shows that the spectral projections of $\lambda(P)$ belong to $\mathrm{VN}(G).$
\end{proof}

\begin{lemma}\label{lem:exponentials_are_trace_class} Let $P\geqslant 0$ be a positive uniformly Rockland element of $\Uc(\gf).$ We have
$$e^{-tP} \in L_1(\mathrm{VN}(G),\tau),\quad t>0.$$
Moreover, $\exp(-tP) = \lambda(h_t)$ for some left-convolution kernel $h_t \in (L_2(G)\ast L_2(G))\cap C(G),$ and $\tau(e^{-tP}) = h_t(1_G).$
\end{lemma}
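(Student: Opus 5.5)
Since $P$ is a constant-coefficient operator, the plan is to factor the semigroup through the Sobolev scale and use the fact that $\mathrm{VN}(G)$ is the commutant of $\rho(G)$. First, $P$ is a positive, Hermitian element of $\Uc(\gf)$. By Lemma \ref{affiliated_to_M} it is affiliated with $\mathrm{VN}(G)$. Hence $e^{-tP}$ is a bounded operator belonging to $\mathrm{VN}(G)$. By Lemma \ref{ellipticity of the product} each power $(1+P)^n$ is uniformly Rockland. By Theorem \ref{elliptic regularity theorem} (or Lemma \ref{easy complex powers lemma}) the operator $(1+P)^{-n}$ maps $L_2(G)$ boundedly into $W^{mn}_2(G)$. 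Since $\sup_{\lambda\geqslant 0}(1+\lambda)^{n}e^{-t\lambda}<\infty$, the functional calculus gives that $e^{-tP}$ maps $L_2(G)$ boundedly into $W^{N}_2(G)$ for every $N$. Taking $N$ as in Theorem \ref{sobolev_embedding}, we get $e^{-tP}:L_2(G)\to C(G)$ bounded. In fact it maps into $C_b(G)$, since the embedding is continuous and $\sup$-norm controlled.

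Next I produce the kernel. The operator $T=e^{-\frac t2 P}$ lies in $\mathrm{VN}(G)$ and is bounded $L_2\to C_b$. So for each $x$ the functional $u\mapsto (Tu)(x)$ is bounded on $L_2(G)$. Therefore $(Tu)(x)=\int k_x(y)u(y)\,dy$ for some $k_x\in L_2(G)$. Right invariance, $T\rho(g)=\rho(g)T$, forces $k_x(y)=k(y^{-1}x)$ for a single $k\in L_2(G)$, i.e. $T=\lambda(k)$ acting as left convolution. This is the standard argument for bounded left-invariant maps $L_2\to L_\infty$. Self-adjointness of $T$ gives $k(y)=\overline{k(y^{-1})}$. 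Then
\[
e^{-tP}=T^2=\lambda(k)\lambda(k)=\lambda(k\ast k),
\]
so $h_t:=k\ast k\in L_2(G)\ast L_2(G)$. As a convolution of two $L_2$ functions, $h_t$ is continuous. Moreover $h_t(1_G)=\|k\|_{L_2(G)}^2$.

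Finally, the trace. Since $e^{-tP}=(e^{-\frac t2P})^*e^{-\frac t2P}=\lambda(k)^*\lambda(k)$, normality of the Plancherel weight gives $\tau(\lambda(k)^*\lambda(k))=\|k\|_{L_2}^2$. This is the standard fact that $\lambda(k)$ is in the Hilbert algebra with $\tau(\lambda(k)^*\lambda(k))=\|k\|_2^2$ exactly when $k\in L_2$ with $\lambda(k)$ bounded; see \cite{Stinespring-tams-1959}. Hence $\tau(e^{-tP})=\|k\|_2^2=h_t(1_G)<\infty$, so $e^{-tP}\in L_1(\mathrm{VN}(G),\tau)$. This is consistent with \eqref{eq-tau-lambda-f}, which treats the case $f\in C_c(G)$.

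The main obstacle is this last step. Formula \eqref{eq-tau-lambda-f} is stated only for $f\in C_c(G)$, while $h_t$ is merely a product of $L_2$ kernels. I will handle it by invoking the left Hilbert algebra description of the Plancherel weight: $\tau(x^*x)=\|\xi\|^2$ when $x=\lambda(\xi)$ is left bounded. If a more elementary route is wanted, I would approximate $k$ in $L_2$ by $C_c$ functions $k_j$ and use lower semicontinuity (normality) of $\tau$ together with \eqref{eq-tau-lambda-f} applied to $k_j^*\ast k_j$. A smaller technical point is justifying the kernel representation via right invariance. This is routine: test the identity against $\rho(g)$ and use continuity of $Tu$.
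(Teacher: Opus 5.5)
Your proof is correct and is essentially the paper's argument in dual form. The paper notes $\delta_{1_G}\in W^{-N}_2(G)$ and takes $e^{-tP}\delta_{1_G}\in L_2(G)$ as the convolution kernel via the Schwartz kernel theorem. You instead obtain the same $L_2$ kernel as the Riesz representative of the bounded functional $u\mapsto (e^{-\frac t2 P}u)(1_G)$, and you spell out the Hilbert-algebra identity $\tau(\lambda(k)^*\lambda(k))=\|k\|_2^2$ and the continuity of $h_t$, which the paper leaves implicit.

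There is one small slip. With the paper's convention $\lambda(f)u(x)=\int_G f(xy^{-1})u(y)\,dy$, commuting with $\rho(G)$ forces $k_x(y)=\kappa(xy^{-1})$ for some $\kappa\in L_2(G)$. A kernel of the form $k(y^{-1}x)$ would instead commute with left translations. This does not affect the rest of your argument.
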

\begin{proof} 
    Since $\exp(-tP) = \exp(-(t/2)P)^2,$ is suffices to show that $\exp(-tP)\in L_2(\mathrm{VN}(G),\tau)$ for all $t>0.$ Equivalently,
    $\exp(-tP) = \lambda(h_t)$ for some $h_t \in L_2(G).$ By Theorem \ref{sobolev_embedding}, for sufficienty large $N>0$ we have
    \[
        \delta_{1_G} \in W^{-N}_2(G).
    \]
    Since $P$ is self-adjoint, $\exp(-tP)$ maps $W^{-N}_2(G)$ into $L_2(G).$ Thus,
    \[
        \exp(-tP)\delta_{1_G} \in L_2(G).
    \]
    The Schwartz kernel theorem implies that
    \[
        \exp(-tP) = \lambda(\exp(-tP)\delta_{1_G}).
    \]    
    That is, $\exp(-tP)\in L_2(\mathrm{VN}(G),\tau).$

\end{proof}

\begin{corollary}
    Let $P\geqslant 0$ be uniformly Rockland of order $m>0.$ For all $f \in C_c([0,\infty)),$ $f(P) \in L_1(\VN(G),\tau).$
\end{corollary}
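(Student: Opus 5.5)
The plan is to factor $f(P)$ through a heat operator, reducing the claim to Lemma \ref{lem:exponentials_are_trace_class} and the ideal property of $L_1(\VN(G),\tau)$. Since $f\in C_c([0,\infty))$, the function
\[
k(\lambda) := f(\lambda)e^{\lambda},\qquad \lambda\geqslant 0,
\]
is bounded and continuous on $[0,\infty)$. By the Borel functional calculus for the self-adjoint operator $P\geqslant 0$, we then have
\[
f(P) = k(P)\,e^{-P}.
\]
Here $e^{-P}$ stands for $\lambda(e^{-P})$, and $P$ is a self-adjoint operator on $L_2(G)$ by Theorem \ref{elliptic regularity theorem}.

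The key steps, in order:
\begin{enumerate}[{\rm (1)}]
\item By Lemma \ref{affiliated_to_M}, $P$ is affiliated with $\VN(G)$. Hence every bounded Borel function of $P$ lies in $\VN(G)$; in particular $k(P)\in\VN(G)$, with $\|k(P)\|_\infty\leqslant \sup_{[0,\infty)}|k|<\infty$.
\item By Lemma \ref{lem:exponentials_are_trace_class} with $t=1$, $e^{-P}\in L_1(\VN(G),\tau)$.
\item The space $L_1(\VN(G),\tau)$ is a bimodule over $\VN(G)$, with the noncommutative H\"older bound $\|k(P)e^{-P}\|_1\leqslant \|k(P)\|_\infty\|e^{-P}\|_1$. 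This is the analogue of \eqref{eq-singular-value-multi} for $\tau$-measurable operators: $\mu(t,BT)\leqslant\|B\|_\infty\mu(t,T)$. Integrating in $t$ gives $f(P)\in L_1(\VN(G),\tau)$.
\end{enumerate}

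For complex-valued $f$, one may apply the argument to real and imaginary parts, though the factorization works directly anyway. If one prefers positive operators, an alternative is to write $|f(P)| = |k|(P)\,e^{-P}$, which avoids even needing polar decomposition.

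There is no real obstacle here. The only point needing a moment's care is that the functional calculus of the unbounded operator $P$ is compatible with affiliation, i.e.\ that bounded functions of $P$ genuinely belong to $\VN(G)$. This is exactly the content of the footnote to Lemma \ref{affiliated_to_M}: spectral projections of $P$ lie in $\VN(G)$, hence so do all bounded Borel functions of $P$.
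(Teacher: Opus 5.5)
Your proposal is correct and matches the paper's argument: write $f(P)=g(P)e^{-P}$ with $g(x)=f(x)e^{x}$ bounded, use Lemma \ref{affiliated_to_M} to get $g(P)\in\VN(G)$, and conclude from Lemma \ref{lem:exponentials_are_trace_class} and the $\VN(G)$-bimodule property of $L_1(\VN(G),\tau)$. As in the paper, this tacitly requires $P$ to be a constant-coefficient operator, i.e.\ an element of $\Uc(\gf)$, since both cited lemmas are stated only in that setting.
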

\begin{proof}
    Since $P$ is affiliated to $\mathrm{VN}(G)$ and the function $g(x) = f(x)e^x$ is bounded, we have $g(P)\in \mathrm{VN}(G).$ Since $\exp(-P)\in L_1(\mathrm{VN}(G),\tau),$ it follows that $f(P) = g(P)\exp(-P)\in L_1(\mathrm{VN}(G),\tau).$
\end{proof}

\begin{lemma}\label{lem:continuous_limit_formula} Let $P\geqslant 0$ be elliptic, constant coefficient order $m$ homogeneous differential operator. If $f \in L_1(\mathbb{R}_+,x^{\frac{d_{\hom}}{m}-1}dx),$ then $f(P)\in L_1(\mathrm{VN}(G),\tau)$ and
$$\tau(f(P)) = \frac{1}{\Gamma(\frac{d_{\hom}}{m})}\int_0^\infty x^{\frac{d_{\hom}}{m}-1}f(x)\,dx\cdot \tau(e^{-P}).$$
\end{lemma}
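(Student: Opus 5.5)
The plan is to show that $f\mapsto \tau(f(P))$ is a positive Borel measure $\nu$ on $[0,\infty)$, and then to pin $\nu$ down using the dilation covariance \eqref{trace_scaling}. The candidate answer is $\nu = \frac{\tau(e^{-P})}{\Gamma(d_{\hom}/m)}x^{\frac{d_{\hom}}{m}-1}dx$.

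\textbf{Step 1 (existence of the measure).} By the Corollary above, $f(P)\in L_1(\VN(G),\tau)$ for every $f\in C_c([0,\infty))$. Positivity of $\tau$ makes $f\mapsto\tau(f(P))$ a positive linear functional on $C_c([0,\infty))$. The Riesz representation theorem then gives a Radon measure $\nu$ with $\tau(f(P))=\int f\,d\nu$. Normality of $\tau$ and monotone convergence extend this identity to all Borel $f\geqslant 0$, with both sides possibly infinite. In particular, $\int e^{-x}\,d\nu(x)=\tau(e^{-P})<\infty$.

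\textbf{Step 2 (homogeneity).} $P$ is homogeneous of degree $m$, so conjugation by dilations gives $\alpha_t(P)=t^{m}P$ or $t^{-m}P$, depending on the convention in \eqref{conjugation_action}. I will check on the generators $X_j$, using $X_j(u\circ\delta_t)=t^{v_j}(X_ju)\circ\delta_t$, which sign is correct. Since $\alpha_t$ is a $*$-automorphism, it commutes with the functional calculus, so $\alpha_t(f(P))=f(t^{\pm m}P)$. Combined with \eqref{trace_scaling}, this yields $\int f(t^{m}x)\,d\nu(x)=t^{-d_{\hom}}\int f(x)\,d\nu(x)$ for $f\in C_c$, after fixing the sign (the sign must be the one consistent with positivity and finiteness near $0$). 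Thus $\nu$ is homogeneous: $\nu([0,s])=s^{d_{\hom}/m}\nu([0,1])$. In particular $\nu(\{0\})=0$, and $\nu=c\,x^{\frac{d_{\hom}}{m}-1}dx$ for some constant $c\geqslant 0$, because the distribution function of $\nu$ is a multiple of $s^{d_{\hom}/m}$.

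\textbf{Step 3 (constant and integrability).} Testing against $e^{-x}$ gives $\tau(e^{-P})=c\,\Gamma(d_{\hom}/m)$, which determines $c$. For $f\in L_1(\R_+,x^{\frac{d_{\hom}}{m}-1}dx)$, split $f$ into real and imaginary, positive and negative parts. Each part $f_\pm\geqslant0$ satisfies $\tau(f_\pm(P))=\int f_\pm\,d\nu<\infty$. Since $\tau(|f|(P))=\tau(|f(P)|)<\infty$, this gives $f(P)\in L_1(\VN(G),\tau)$, and linearity gives the formula.

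\textbf{Main obstacle.} The delicate point is Step 2: verifying rigorously that $\alpha_t$ maps the unbounded affiliated operator $P$ to $t^{\pm m}P$, including the domains. I would first establish it on $\Sc(G)$, which is invariant under $\alpha_t$ and a core for $P$ by Theorem \ref{elliptic regularity theorem}. Self-adjointness then upgrades the identity to equality of the closures, hence of the spectral measures. Getting the direction of the exponent consistent with \eqref{trace_scaling} also needs care. Any sign error would produce a measure with infinite mass near $0$ or near $\infty$, contradicting $\tau(e^{-P})<\infty$ and $\tau(\chi_{[0,1]}(P))<\infty$, and this contradiction serves as the check.
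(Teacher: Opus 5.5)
Your proof is correct. It uses the same three ingredients as the paper's proof: the covariance $\alpha_t(P)=t^mP$, the scaling law \eqref{trace_scaling}, and normality of $\tau$. The difference is in the order and in how the spectral measure $\nu=\tau\circ E_P$ is pinned down.

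\emph{The paper's route.} It applies the scaling only to the heat semigroup, obtaining $\tau(e^{-tP})=t^{-\frac{d_{\hom}}{m}}\tau(e^{-P})$. In effect this matches the Laplace transform of $\nu$ with that of $\frac{\tau(e^{-P})}{\Gamma(d_{\hom}/m)}x^{\frac{d_{\hom}}{m}-1}dx$. It then extends to general $f$ by ``monotone convergence and normality''. That step tacitly also needs a density or Laplace-uniqueness argument to get from exponentials to $C_c$ functions, e.g.\ Stone--Weierstrass for the span of $\{e^{-tx}\}_{t>0}$ applied to $f(x)e^{x}$, with error controlled by $\varepsilon e^{-x}$.

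\emph{Your route.} You build $\nu$ first, from the Corollary and Riesz, and apply the scaling law to every $f\in C_c([0,\infty))$. This identifies $\nu$ directly through its distribution function $\nu([0,s])=s^{d_{\hom}/m}\nu([0,1])$, and $e^{-x}$ is needed only to fix the constant. This is exactly the strategy announced at the start of the appendix, and it makes the measure-theoretic extension explicit. As a by-product it shows that the spectral measure of $P$ is absolutely continuous, so $f(P)$ does not depend on the chosen representative of $f\in L_1(\mathbb{R}_+,x^{\frac{d_{\hom}}{m}-1}dx)$. Your handling of domains through the dilation-invariant core $\Sc(G)$ is also more careful than the paper, which simply asserts $\alpha_t(P)=t^mP$.

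On the sign: with the convention \eqref{conjugation_action}, $X_j(u\circ\delta_t)=t^{v_j}(X_ju)\circ\delta_t$ gives $\alpha_t(X_j)=t^{v_j}X_j$, hence $\alpha_t(P)=t^mP$. Combined with \eqref{trace_scaling}, this is precisely the sign that yields $\nu([0,s])=s^{d_{\hom}/m}\nu([0,1])$.
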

\begin{proof} Recall that order $m$ homogeneity means that
\begin{equation}\label{eq-P-homogeneous-rescale}
\alpha_t(P) = t^m P,\quad t>0.
\end{equation}

By Lemma \ref{lem:exponentials_are_trace_class}, $e^{-tP}$ belongs to the domain of $\tau$ for every $t.$ Since $P$ is homogeneous of order $m$, \eqref{eq-P-homogeneous-rescale} and functional calculus implies that
\begin{equation*}
    \alpha_{t^{1/m}}(e^{-P}) = e^{-tP}.
\end{equation*} 
Taking the trace $\tau$ of both sides, \eqref{trace_scaling} implies
$$\tau(e^{-tP}) = \tau(\alpha_{t^{1/m}}(e^{-P})) = t^{-\frac{d_{\hom}}{m}}\tau(e^{-P})=\frac{1}{\Gamma(\frac{d_{\hom}}{m})}\int_0^{\infty}x^{\frac{d_{{\rm hom}}}{m}-1}e^{-tx}dx\cdot \tau(e^{-P}).$$
In particular, the assertion of the lemma holds for $f$ in the linear span of the functions $\{x\to e^{-tx}\}_{t>0}.$ By the Monotone Convergence Theorem and the normality of $\tau,$ the assertion of the lemma holds for $0\leqslant f \in L_1(\mathbb{R}_+,x^{\frac{d_{\hom}}{m}-1}dx)$ and, hence, for every $f \in L_1(\mathbb{R}_+,x^{\frac{d_{\hom}}{m}-1}dx).$
\end{proof}

The Plancherel theorem expresses the von Neumann trace $\tau$ in terms of the unitary dual of $G.$ In this case, if $A \in L_1(\mathrm{VN}(G),\tau)$ we have
\[
\tau(A) = \int_{\widehat{G}} \mathrm{Tr}_{H_{\pi}}(\pi(A))\,d\pi.
\]
Here, $\widehat{G}$ is the space of unitary irreducible unitary representations $(\pi,H_{\pi})$ of $G,$
and $d\pi$ denotes the correctly normalised Plancherel measure. This is a straightforward consequence of the corresponding assertion for $L_2(G),$ as in \cite[Section 18.8]{DixmierCStar1977}.
Hence,
\[
\tau(e^{-P}) = \int_{\widehat{G}} \mathrm{Tr}_{H_{\pi}}(e^{-\pi(A)})\,d\pi.
\]
There are a few examples where this integral can be explicitly computed.

\begin{example}\label{example-Rd-alg-trace}
Let $G=\mathbb{R}^d$ be { equipped with its usual abelian Lie group structure and trivial grading}, and let $P\in\Uc(\mathbb{R}^d)$ be a homogeneous positive  uniformly Rockland differential operator of order $m.$ If $p$ is its principal symbol, then
$$\tau(f(P))=\frac1{m(2\pi)^d}\int_0^{\infty}x^{\frac{d}{m}-1}f(x)dx\cdot \int_{\mathbb{S}^{d-1}}p(s)^{-\frac{d}{m}}ds.$$
In particular, if $m=2$ and $P=-\sum_{j,k=1}^da_{jk}\partial_j\partial_k$ then
$$\tau(f(P))= \frac{(4\pi)^{-\frac{d}{2}}}{\Gamma(\frac{d}{2})}\int_0^\infty x^{\frac{d}{2}-1}f(x)\,dx\cdot \det(A)^{-\frac12}$$
where $A = \{a_{j,k}\}_{j,k=1}^d.$
\end{example}
\begin{proof} If $G = \mathbb{R}^d,$ then $\widehat{G} = \mathbb{R}^d,$ where the irreducible representations $(\pi_{\xi},\mathbb{C})$ are labelled by $\xi \in \mathbb{R}^d$ with the Plancherel measure
$$d\pi_{\xi} = (2\pi)^{-d}d\xi.$$
We write the corresponding operator as $P=p(\nabla).$ Thus,
$$\tau(e^{-P}) = (2\pi)^{-d} \int_{\mathbb{R}^d} e^{-p(\xi)}\,d\xi.$$
Passing to polar coordinates and using the homogeneity of $p,$ we rewrite the latter equality as
$$\tau(e^{-P}) = (2\pi)^{-d} \int_0^\infty\int_{\mathbb{S}^{d-1}} r^{d-1} e^{-r^mp(s)}dsdr = \frac{\Gamma(\frac{d}{m})}{m(2\pi)^d}\int_{\mathbb{S}^{d-1}}p(s)^{-\frac{d}{m}}ds.$$
Hence, for any function $f$ satisfying the hypothesis of Lemma \ref{lem:continuous_limit_formula}, we have
$$\tau(f(P)) = \frac{1}{m(2\pi)^{d}}\int_0^\infty x^{\frac{d}{m}-1}f(x)\,dx\cdot \int_{\mathbb{S}^{d-1}} p(s)^{-\frac{d}{m}}ds.$$

If $m=2$ and $P=-\sum_{j,k=1}^da_{j,k}\partial_j\partial_k$ then
$$\tau(e^{-P})=(2\pi)^{-d}\int_{\mathbb{R}^d}e^{-p(\xi)}\,d\xi=2^{-d}\pi^{-\frac{d}{2}}\det(A)^{-\frac12}$$
and the "in particular" part again follows from Lemma \ref{lem:continuous_limit_formula}.
\end{proof}
{

\begin{example}
    Similar to the previous example, we consider $\mathbb{R}^d$ with a non-standard grading. To do so, the sphere $\mathbb{S}^{d-1}$ is replaced by an anisotropic version, and the Lebesgue measure on the sphere is changed appropriately.

    Let $G = \mathbb{R}^d$ be equipped with its usual abelian Lie group structure, but equipped with the anisotropic dilations
    \[
        \delta_t = \bigoplus_{j=1}^d t^{v_j},\quad t >0.
    \]
    where $\boldsymbol{v} := (v_1,\ldots,v_d)$ is a $d$-tuple of positive integers. This is a graded Lie group, with $\gf = \bigoplus_{k=1}^\infty \gf_k$ where $\gf_k$ is the subspace of vectors whose $j$th component vanishes whenever $v_j\neq k,$ and $d_{\hom} = v_1+\cdots+v_d.$ 
    
    Let $v = \mathrm{lcm}(v_1,\ldots,v_d)$ and define
    \[
        \mathbb{S}^{d-1}_{\boldsymbol{v}} = \{(s_1,\ldots,s_d)\in \mathbb{R}^d\;:\; \sum_{j=1}^d s_j^{\frac{2v}{v_j}}=1\}.
    \]
    Let $\mu_{\boldsymbol{v}}$ be the Borel measure on $\mathbb{S}^{d-1}_{\boldsymbol{v}}$ defined on a Borel set $A\subseteq \mathbb{S}^{d-1}_v$ by
    \[
        \mu_{\boldsymbol{v}}(A) := d_{\hom}\big|\bigcup_{0\leq t \leq 1} \delta_tA\big|
    \]
    where $|\cdot|$ is the Lebesgue measure in $\mathbb{R}^d.$
    
    Let $P\in \Uc(\gf)$ be homogeneous, positive definite and uniformly Rockland of order $m$ with respect to $\delta_t.$ If $p$ is its symbol, then
    \[
        \tau(f(P)) = \frac{1}{m(2\pi)^d}\int_0^\infty x^{\frac{d_{\hom}}{m}-1}f(x)\,dx \cdot \int_{\mathbb{S}^{d-1}_{\boldsymbol{v}}} p(s)^{-\frac{d_{\hom}}{m}}\,d\mu_v(s).
    \]
\end{example}
\begin{proof}
    This computation is very similar to Example \ref{example-Rd-alg-trace}. Since $G$ is precisely the same group as in that example, we again have $\widehat{G} = \mathbb{R}^d$ with Plancherel measure
    \[
        d\pi_{\xi} = (2\pi)^{-d}d\xi.
    \]
    Writing $P = p(\nabla),$ we again have
    \[
        \tau(e^{-P}) = (2\pi)^{-d}\int_{\mathbb{R}^d} e^{-p(\xi)}\,d\xi.
    \] 
    The measure $\mu_{\boldsymbol{v}}$ is defined so that
    \begin{align*}
        \tau(e^{-P}) &= (2\pi)^{-d}\int_0^\infty\int_{\mathbb{S}_{\boldsymbol{v}}^{d-1}} r^{d_{\hom}-1}e^{-r^mp(s)}\,d\mu_{\boldsymbol{v}}(s)dr\\
                    &= \frac{\Gamma(\frac{d_{\hom}}{m})}{m(2\pi)^d}\int_{\mathbb{S}^{d-1}_{\boldsymbol{v}}}p(s)^{-\frac{d_{\hom}}{m}}\,d\mu_{\boldsymbol{v}}(s).
    \end{align*}
    From this point, the computation proceeds exactly as in Example \ref{example-Rd-alg-trace}.
\end{proof}
}

In the following example we work with the Heisenberg group $\mathbb{H}^n.$ We follow closely the notation of \cite[Chapter 6]{FischerRuzhansky2016}. The $(2n+1)$-dimensional Heisenberg algebra is spanned by $\{X_1,\ldots,X_{2n},T\}$ satisfying $[X_j,X_{n+k}] = \delta_{j,k}T.$ with all other Lie brackets vanishing.

\begin{example} Let $G=\mathbb{H}^{2n+1}$ and let $P\in\Uc(\mathbb{H}^{2n+1})$ be a homogeneous positive uniformly Rockland differential operator of order $m.$ We have
$$\tau(f(P))=(2\pi)^{-3n-1}\frac{2}{m}\int_0^\infty x^{\frac{2n+2}{m}-1}f(x)\,dx\cdot \sum_{s\in\{1,-1\}}\mathrm{Tr}_{L_2(\mathbb{R}^n)}(\pi_s(P)^{-\frac{2n+2}{m}}).$$
In particular, if $m=2$ and $P=-\sum_{j,k=1}^{2n}a_{jk}X_jX_k,$ then
$$\tau(f(P))=\frac{(2\pi)^{-(3n+1)}}{2^{n-1}n!}\det(A)^{-\frac12}\int_0^\infty x^{\frac{d_{\hom}}{2}-1}f(x)\,dx\cdot \int_{-\infty}^\infty \det(\frac{i\Omega As}{\sinh(i\Omega As)})^{\frac12}\,ds$$
where $A = \{a_{j,k}\}_{j=1}^{2n}.$
\end{example}
\begin{proof} The unitary dual of the $(2n+1)$-dimensional Heisenberg group is, up to a Plancherel null set, parametrised by $s\in\mathbb{R}\backslash\{0\}.$ The representation $\pi_s$ corresponding to $s \in \mathbb{R}\backslash\{0\}$ is the Schr\"odinger representation on $L_2(\mathbb{R}^n)$ 
$$\pi_s(X_j)=i\sqrt{|s|}p_j,\quad \pi_s(X_{j+n}) = i\mathrm{sgn}(s)\sqrt{|s|}q_j,\quad \pi_s(T) = is,\quad 1\leqslant j\leqslant n.$$
Here, $\{q_j\}_{j=1}^n$ and $\{p_j\}_{j=1}^n$ are position and momentum operators. In this parametrisation, the Plancherel measure is given by
$$d\pi_s = (2\pi)^{-(3n+1)}|s|^n\,ds.$$
See \cite[Section 6.2.3]{FischerRuzhansky2016}.

Hence,
$$\tau(e^{-P}) = (2\pi)^{-(3n+1)}\int_{-\infty}^\infty |s|^n \mathrm{Tr}_{L_2(\mathbb{R}^n)}(e^{-\pi_s(P)})\,ds.$$
If $P$ is homogeneous of order $m$ and $m$ is even, then
$$\pi_s(P) = |s|^{\frac{m}{2}}\pi_{\mathrm{sgn}(s)}(P).$$
It follows that
\begin{align*}
& (2\pi)^{3n+1}\tau(e^{-P})\\
& \quad =\int_0^\infty s^n \mathrm{Tr}_{L_2(\mathbb{R}^n)}(e^{-s^{\frac{m}{2}}\pi_{+1}(P)})ds+\int_0^\infty s^n \mathrm{Tr}_{L_2(\mathbb{R}^n)}(e^{-s^{\frac{m}{2}}\pi_{-1}(P)})\,ds\\ 
& \quad =\mathrm{Tr}_{L_2(\mathbb{R}^n)}\Big(\int_0^\infty s^n e^{-s^{\frac{m}{2}}\pi_{+1}(P)}ds\Big)+\mathrm{Tr}_{L_2(\mathbb{R}^n)}\Big(\int_0^\infty s^n e^{-s^{\frac{m}{2}}\pi_{-1}(P)}ds\Big).
\end{align*}
Using the formula
$$\frac{m}{2}\Gamma(\frac{2n+2}{m})A^{-\frac{2n+2}{m}}=\int_0^\infty s^n e^{-s^{\frac{m}{2}}A}\,ds,$$
we obtain the first assertion of the lemma.

We now specialise to $m=2.$ Let the operator $P$ have the form $P=-\sum_{j,k=1}^{2n} a_{j,k}X_jX_k$ for some positive scalar matrix $A = \{a_{j,k}\}_{j,k=1}^n.$ By the Williamson theorem, there exists a symplectic matrix $S$ and a diagonal matrix $D$ such that $A = S^{\ast}DS.$ The entries of $D = \{\lambda_{j}\delta_{j,k}\}_{j,k=1}^{2n}$ satisfy $\lambda_j=\lambda_{j+n},$ and $\{\lambda_j\}_{j=1}^n$ are the positive eigenvalues of $i\Omega A,$ where $\Omega$ is the standard $2n\times 2n$ symplectic matrix. Since symplectic automorphisms preserve the Lie algebra of the $(2n+1)$-dimensional Heisenberg group, it follows that
\begin{multline*}
    \mathrm{Tr}_{L_2(\mathbb{R}^n)}(e^{-s\pi_{\pm1}(P)})
    =\mathrm{Tr}_{L_2(\mathbb{R}^n)}(\exp(s\sum_{k=1}^{n} \lambda_{k}\pi_{\pm1}(X_k^2+Y_k^2)))\\
    =\mathrm{Tr}_{L_2(\mathbb{R}^n)}(\exp(-s\sum_{k=1}^{n} \lambda_{k}(p_k^2+q_k^2)))
    =\prod_{k=1}^n{\rm Tr}_{L_2(\mathbb{R})}(\exp(-\lambda_k(p^2+q^2))).
\end{multline*}
As is well-known, $p^2+q^2$ has eigenvalues $\{2l+1\}_{l\geqslant 0},$ and hence
$$\mathrm{Tr}_{L_2(\mathbb{R}^n)}(e^{-s\pi_{\pm1}(P)})= \prod_{k=1}^n \left(\sum_{l=0}^\infty \exp(-s\lambda_k(2l+1))\right)=\prod_{k=1}^n \frac{1}{2\sinh(\lambda_ks)}.$$

Since $\{\lambda_k\}_{k=1}^n$ are the positive eigenvalues of $i\Omega A$ and since the negative eigenvalues of $i\Omega A$ are $\{-\lambda_k\}_{k=1}^n,$ it follows that
$$\Big(\prod_{k=1}^n\lambda_k\Big)^{-2}=(-1)^n{\rm det}(i\Omega A)=(-1)^ni^{2n}{\rm det}(\Omega){\rm det}(A)={\rm det}(A),$$
$$\Big(\prod_{k=1}^n \frac{\lambda_k}{\sinh(\lambda_ks)}\Big)^2=\det(\frac{i\Omega As}{\sinh(i\Omega As)}).$$
Thus,
$$\prod_{k=1}^n \frac{1}{\sinh(\lambda_ks)}=\Big(\prod_{k=1}^n\lambda_k\Big)^{-1}\cdot \Big(\prod_{k=1}^n \frac{\lambda_k}{\sinh(\lambda_ks)}\Big)={\rm det}(A)^{-\frac12}\cdot \det(\frac{i\Omega As}{\sinh(i\Omega As)})^{\frac12}.$$

Therefore,
\begin{multline*}
    (2\pi)^{3n+1}\tau(e^{-P})= 2\int_0^\infty s^{n}\prod_{k=1}^n \frac{1}{2\sinh(s\lambda_k)}ds\\
    =2^{1-n}\det(A)^{-\frac12}\int_{-\infty}^\infty \det(\frac{i\Omega As}{\sinh(i\Omega As)})^{\frac12}ds.
\end{multline*}
The "in particular" part again follows from Lemma \ref{lem:continuous_limit_formula}.
\end{proof}

\section{Cwikel-type estimates for graded Lie groups}\label{cwikel_appendix}
We have relied a number of times on weak-Schatten class estimates for operators of the form $M_f(1-\Delta)^{-\frac{\gamma}{2v}}$ for $\gamma>0$ and $f$ is a function on $G.$ Estimates of this type were derived in \cite[{\reva Theorem 1.1}]{MSZ-stratified-23}, but only in the special case where $G$ is stratified. Nevertheless, the main results and most of the proofs in \cite{MSZ-stratified-23} continue to hold with at most minor modification in the general case. In this appendix we explain the necessary modifications. The main goal is to prove Lemmas \ref{specific_cwikel_estimates_from_jfa_lemma} and \ref{quantitative_cwikel_estimate}. Since the proofs are similar we will omit some unnecessary details.

However, it should be noted that due to a simple argument that was overlooked in \cite{MSZ-stratified-23}, the $\gamma = \frac{d_{\hom}}{2}$ case of Lemma \ref{quantitative_cwikel_estimate} is actually stronger than the corresponding result in \cite{MSZ-stratified-23}.

\subsection{Proof of Lemma \ref{specific_cwikel_estimates_from_jfa_lemma}}
The following result is similar to Lemma 4.4 of \cite{MSZ-stratified-23}, with the key difference that we use Lemma \ref{lem:continuous_limit_formula}.
\begin{proposition}\label{functional_calculus_is_where_it_should_be}
    Let $g\in L_{1,\mathrm{loc}}(\mathbb{R}_+).$
    We have $g(-\Delta) \in L_{p,\infty}(\mathrm{VN}(G),\tau)$ if and only if $g \in L_{p,\infty}([0,\infty),x^{\frac{d_{\hom}}{2v}-1}\,dx),$ with an equality of quasinorms.
\end{proposition}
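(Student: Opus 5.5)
The plan is to reduce both directions to computing the distribution function of $g(-\Delta)$ with respect to $\tau$, using Lemma \ref{lem:continuous_limit_formula} with $P=-\Delta$. This is legitimate because $-\Delta$ is a positive, homogeneous, constant-coefficient uniformly Rockland operator of order $m=2v$. Since $-\Delta$ is affiliated with $\mathrm{VN}(G)$ (Lemma \ref{affiliated_to_M}), the operator $g(-\Delta)$ is affiliated with $\mathrm{VN}(G)$ by functional calculus, and $|g(-\Delta)|=|g|(-\Delta)$. We may therefore assume $g\geqslant 0$.

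The key step is the identity, valid for every $s>0$:
\[
\tau\big(\chi_{(s,\infty)}(g(-\Delta))\big)=\tau\big(\chi_{E_s}(-\Delta)\big)=\frac{\tau(e^{\Delta})}{\Gamma(\frac{d_{\hom}}{2v})}\int_{E_s}x^{\frac{d_{\hom}}{2v}-1}\,dx,
\]
where $E_s=\{x\geqslant 0: g(x)>s\}$. The second equality is Lemma \ref{lem:continuous_limit_formula} applied to $f=\chi_{E_s}$. If $\chi_{E_s}\notin L_1(x^{\frac{d_{\hom}}{2v}-1}dx)$, we apply it to an increasing family of truncations $\chi_{E_s\cap[0,R]}$ and pass to the limit by normality of $\tau$; both sides are then $+\infty$. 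This shows that the distribution function of $g(-\Delta)$ under $\tau$ coincides with the distribution function of $g$ under the measure $\nu=c\,x^{\frac{d_{\hom}}{2v}-1}dx$, where $c=\tau(e^{\Delta})/\Gamma(\frac{d_{\hom}}{2v})$.

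Since the singular value function is the right-continuous inverse of the distribution function, in both the von Neumann and the commutative settings, we get $\mu(t,g(-\Delta))=\mu_\nu(t,g)$ for all $t>0$. This gives
\[
\|g(-\Delta)\|_{L_{p,\infty}(\mathrm{VN}(G),\tau)}=\sup_t t^{1/p}\mu_\nu(t,g)=\|g\|_{L_{p,\infty}([0,\infty),\nu)}.
\]
Both directions of the equivalence follow at once. There is one further point: $\tau$-measurability of $g(-\Delta)$ must follow from finiteness of $\tau(\chi_{(s,\infty)})$ for large $s$, and that finiteness is exactly what membership in $L_{p,\infty}$ on either side guarantees.

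The equality of quasinorms holds once the constant $c$ is absorbed into the measure, i.e.\ it is an equality with respect to $x^{\frac{d_{\hom}}{2v}-1}dx$ rescaled by $\tau(e^{\Delta})/\Gamma(\frac{d_{\hom}}{2v})$, and an equivalence up to a constant otherwise. I would state this normalization explicitly.

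The main obstacle is justifying the use of Lemma \ref{lem:continuous_limit_formula} for indicator functions and for possibly non-integrable $f$. The lemma as stated requires $f\in L_1(x^{\frac{d_{\hom}}{m}-1}dx)$. Its proof passes through the monotone convergence theorem from exponentials, so extending it to nonnegative measurable $f$, with both sides allowed to be infinite, only requires normality of $\tau$ and approximating $\chi_{E_s}$ from below by functions in the span of exponentials, or by bounded compactly supported functions. The remaining input is $\tau(e^{\Delta})<\infty$, which is Lemma \ref{lem:exponentials_are_trace_class}.
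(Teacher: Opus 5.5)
Your proposal is correct and follows essentially the same route as the paper. The paper simply asserts that $g\mapsto g(-\Delta)$ is a trace-preserving normal $*$-homomorphism from $L_\infty$ into $\mathrm{VN}(G)$ (via Lemma \ref{lem:continuous_limit_formula}) and hence an isometry of weak $L_p$-spaces; your distribution-function computation makes exactly this explicit. Your normalisation remark is also right: the map is trace-preserving only for the measure $\frac{\tau(e^{\Delta})}{\Gamma(\frac{d_{\hom}}{2v})}x^{\frac{d_{\hom}}{2v}-1}\,dx$, so with the unscaled measure the quasinorms agree only up to the factor $\big(\tau(e^{\Delta})/\Gamma(\frac{d_{\hom}}{2v})\big)^{1/p}$, a point the paper's proof glosses over.
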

\begin{proof}
    Consider the bounded linear mapping
    \begin{equation}\label{functional_calculus}
        L_{\infty}((0,\infty),x^{\frac{d_{\hom}}{2v}-1}\,dx)\to \mathrm{VN}(G),\quad g\mapsto g(-\Delta).
    \end{equation}
    We have in Lemma \ref{lem:continuous_limit_formula} computed $\tau(f(P))$ for any homogeneous uniformly Rockland operator $P,$ for arbitrary $f\in C_c((0,\infty)).$ In particular, if $g$ is continuous and compactly supported, we have
    \[
        \tau(g(-\Delta)) = \frac{1}{\Gamma(\frac{d_{\hom}}{2v})}\int_0^\infty x^{\frac{d_{\hom}}{2v}-1}g(x)\,dx\cdot \tau(e^{\Delta}).
    \]
    Hence, the linear map in \eqref{functional_calculus} is trace-preserving. It hence induces an isometry of weak $L_p$-spaces.

\end{proof}

    In Theorem 3.3 of \cite{MSZ-stratified-23}, it is proved that if $p>2,$ $G$ is a unimodular group, $f \in L_p(G)$ and $T \in L_{p,\infty}(\mathrm{VN}(G),\tau),$ then $M_fT\in \Lc_{p,\infty}(L_2(G)).$ Applying this to $T = g(P)$ gives the following:
    \begin{theorem}\label{easy_cwikel}
        For $p>2,$ we have
        \[
            \|M_fg(-\Delta)\|_{\Lc_{p,\infty}(L_2(G))} \leqslant c_p\|f\|_{L_p(G)}\|g\|_{L_{p,\infty}([0,\infty),x^{\frac{d_{\hom}}{2v}-1}\,dx)}.
        \]
        In particular, if $p>2$ and $f \in L_p(G)$ then
        \[
            M_f(1-\Delta)^{-\frac{d_{\hom}}{2vp}} \in \Lc_{p,\infty}(L_2(G))
        \]
        with quasi-norm bounded by $\|f\|_{L_p(G)}.$
    \end{theorem}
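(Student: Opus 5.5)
The first inequality follows by combining the abstract Cwikel estimate quoted just before the statement (Theorem 3.3 of \cite{MSZ-stratified-23}) with Proposition \ref{functional_calculus_is_where_it_should_be}. Take $T = g(-\Delta)$, which is affiliated to $\mathrm{VN}(G)$ by Lemma \ref{affiliated_to_M} and bounded functional calculus. If $g \in L_{p,\infty}([0,\infty),x^{\frac{d_{\hom}}{2v}-1}\,dx)$, Proposition \ref{functional_calculus_is_where_it_should_be} gives $g(-\Delta)\in L_{p,\infty}(\mathrm{VN}(G),\tau)$, with quasi-norm equal to $\|g\|_{L_{p,\infty}([0,\infty),x^{\frac{d_{\hom}}{2v}-1}dx)}$; here the measure's normalisation absorbs the constant $\tau(e^{\Delta})/\Gamma(\frac{d_{\hom}}{2v})$. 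The abstract theorem, valid for $p>2$ on any unimodular group and in particular on $G$ with its bivariant Haar measure, then yields
\[
\|M_f g(-\Delta)\|_{p,\infty}\leqslant c_p\|f\|_{L_p(G)}\|g(-\Delta)\|_{L_{p,\infty}(\mathrm{VN}(G),\tau)}.
\]
Substituting the equality of quasi-norms gives the first claim. This is bookkeeping; the only point to check is that the isometry in Proposition \ref{functional_calculus_is_where_it_should_be} is stated for this weighted measure rather than a rescaled one, and any discrepancy is a harmless constant absorbed into $c_p$.

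For the ``in particular'' statement, take $g(x) = (1+x)^{-\frac{d_{\hom}}{2vp}}$. It remains to verify that $g\in L_{p,\infty}(\mathbb{R}_+, x^{\frac{d_{\hom}}{2v}-1}dx)$. Write $\nu$ for this measure and set $a=\frac{d_{\hom}}{2v}$, so that $\nu([0,R])=R^{a}/a$. Since $g$ is decreasing, its distribution function is
\[
\nu(\{g>s\})=\nu\bigl([0,\,s^{-p/a}-1)\bigr)\leqslant \frac{s^{-p}}{a}\quad\text{for } s<1,
\]
and it vanishes for $s\geqslant1$. Hence $\sup_s s\,\nu(\{g>s\})^{1/p}\leqslant a^{-1/p}<\infty$, which is exactly the weak-$L_p$ condition. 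The resulting bound is a constant depending only on $p$, $G$ and $v$, multiplied by $\|f\|_{L_p(G)}$.

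No step is a genuine obstacle here. The one technical point is that Proposition \ref{functional_calculus_is_where_it_should_be} extends the trace formula from $C_c$ functions to all of $L_{p,\infty}$. That extension goes through normality of $\tau$ and the fact that a trace-preserving $*$-homomorphism preserves the distribution functions of spectral projections, which is how $\mu(t,g(-\Delta))$ is identified with the decreasing rearrangement of $g$ with respect to $\nu$.
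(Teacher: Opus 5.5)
Your proposal is correct and follows the paper's argument exactly. Like the paper, you apply the abstract Cwikel estimate of \cite[Theorem 3.3]{MSZ-stratified-23} to $T=g(-\Delta)$ and convert its $L_{p,\infty}(\mathrm{VN}(G),\tau)$ quasi-norm using Proposition \ref{functional_calculus_is_where_it_should_be}, up to the normalising constant $\tau(e^{\Delta})/\Gamma(\frac{d_{\hom}}{2v})$ that you correctly flag. Your distribution-function check that $(1+x)^{-\frac{d_{\hom}}{2vp}}$ lies in $L_{p,\infty}([0,\infty),x^{\frac{d_{\hom}}{2v}-1}dx)$ supplies the step the paper leaves implicit for the ``in particular'' claim.
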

    
    We push this to $0 < p \leqslant 2$ by taking repeated commutators of $M_f$ with $(1-\Delta)^{-1}.$ A very similar argument appeared in \cite[{\reva Lemma 5.2}]{MSZ-stratified-23}.

    \begin{proof}[Proof of Lemma \ref{specific_cwikel_estimates_from_jfa_lemma}]
        The second claim is simply the statement that $M_f$ is a bounded multiplier from $W^{2v\beta}_2(G)$ to $W^{-2v\alpha}_2(G)$ when $-\alpha=\beta.$ This is a consequence of Lemma \ref{multiplication_lemma}. Hence we assume that $\alpha+\beta>0$ and focus on the first claim.
    
        \textbf{Step 1.} We show that for all 
        \[
            0 \leqslant \alpha < \frac{d_{\hom}}{4v}
        \]
        and integer $n\geqslant 0$ that
        \[
            (1-\Delta)^nM_f(1-\Delta)^{-n-\alpha} \in \Lc_{\frac{d_{\hom}}{2v\alpha},\infty}.
        \]
        For $n=0,$ this follows from Theorem \ref{easy_cwikel}, with 
        \[
            p = \frac{d_{\hom}}{2v\alpha} > 2.
        \]
        We write
        \[
            (1-\Delta)^{n+1}M_f(1-\Delta)^{-n-1-\alpha} = (1-\Delta)^nM_f(1-\Delta)^{-n-1-\alpha}+(1-\Delta)^n[P,M_f](1-\Delta)^{-n-1-\alpha}.
        \]
        The first summand belongs to $\Lc_{\frac{d_{\hom}}{2v\alpha},\infty}$ by the inductive hypothesis. The second is a linear combination of terms of the form 
        \[
            (1-\Delta)^nM_{X^{\gamma}f}(1-\Delta)^{-n-\alpha}\cdot (1-\Delta)^{n+\alpha}X^{\beta}(1-\Delta)^{-n-1-\alpha},\quad \mathrm{len}(\beta)\leqslant 2v-1.
        \]
        The product
        \[
         (1-\Delta)^{n+\alpha}X^{\beta}(1-\Delta)^{-n-1-\alpha}.
        \]
        is bounded for all $n.$ Hence, by the inductive hypothesis, we conclude that
        \[
            (1-\Delta)^{n+1}M_f(1-\Delta)^{-n-1-\alpha} \in \Lc_{\frac{d_{\hom}}{2v\alpha},\infty}.
        \]
        This completes Step 1.\\
        \textbf{Step 2.}
        We also show that for all integers $n<0$ we have
        \[
            (1-\Delta)^nM_f(1-\Delta)^{-n-\alpha} \in \Lc_{\frac{d_{\hom}}{2v\alpha},\infty}.
        \]
        This can be proved by induction on $n,$ identically to Step 1. The details are omitted.       \\
        \textbf{Step 3.}
        By the Phragmen-Lindel\"of principle, we conclude that for all $z\in \mathbb{C}$ we have
        \[
            (1-\Delta)^zM_f(1-\Delta)^{-z-\alpha} \in \Lc_{\frac{d_{\hom}}{2v\alpha},\infty}.
        \]
        \textbf{Step 4.}
        By Step 2, for all $k\geqslant 0$ and $z \in \mathbb{C}$ we have
        \[
            (1-\Delta)^{z+k\alpha}M_f(1-\Delta)^{-z-(k+1)\alpha} \in \Lc_{\frac{d_{\hom}}{2v\alpha},\infty}.
        \]        
        Writing $f = f_1\cdots f_n,,$ by H\"older's inequality we have
        \begin{align*}
            &(1-\Delta)^{z}M_f(1-\Delta)^{-z-n\alpha}\\
            &= (1-\Delta)^zM_{f_1}(1-\Delta)^{-z-\alpha}(1-\Delta)^{z+\alpha}M_{f_2}(1-\Delta)^{-z-2\alpha} \cdot (1-\Delta)^{z+2\alpha}M_{f_2}(1-\Delta)^{-z-3\alpha}\\
            &\quad\cdots (1-\Delta)^{z+(n-1)\alpha}M_{f_n}(1-\Delta)^{-z-n\alpha}\\
            &\in \Lc_{\frac{d_{\hom}}{2vn\alpha},\infty}.
        \end{align*}
        Since $0<\alpha< \frac{d_{\hom}}{4v}$ and $n\geqslant 1$ are arbitrary, we conclude the theorem.       
    \end{proof}
\subsection{Proof of Lemma \ref{quantitative_cwikel_estimate}}
    
    The easiest case of Lemma \ref{quantitative_cwikel_estimate} is when $p>2,$ which is already contained in Theorem \ref{easy_cwikel}. To move down to $0<p<2,$ we first restrict to compactly supported $f.$
    \begin{lemma}\label{rough_specific_cwikel}
        Let
        \[
            0<p \leqslant 2 < q \leqslant \infty.
        \]
        Let $K$ be a compact subset of $G.$ For all $f$ supported in $K,$ we have
        \[
            \|M_f(1-\Delta)^{-\frac{d_{\hom}}{2vp}}\|_{p,\infty} \leqslant C_{K,p,q}\|f\|_{L_q(G)}
        \]
    \end{lemma}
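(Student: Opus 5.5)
We plan to reduce Lemma \ref{rough_specific_cwikel} to the $p>2$ estimate of Theorem \ref{easy_cwikel} by factorising the weight $(1-\Delta)^{-\frac{d_{\hom}}{2vp}}$ into several pieces, each of which is paired with a ``$p>2$'' Cwikel estimate. Fix a cutoff $\chi\in C^\infty_c(G)$ with $0\leqslant\chi\leqslant 1$ and $\chi\equiv 1$ on $K$, so that $M_f=M_fM_\chi=M_fM_{\chi^n}$ for every $n$. Choose an integer $n$ so large that $r:=np>2$ and also $r\geqslant q$ is not needed; instead we will use $q>2$ directly. Set $\alpha=\frac{d_{\hom}}{2vnp}$, so $n\alpha=\frac{d_{\hom}}{2vp}$.

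We then write, exactly as in Step 4 of the proof of Lemma \ref{specific_cwikel_estimates_from_jfa_lemma},
\[
M_f(1-\Delta)^{-n\alpha}=M_f(1-\Delta)^{-\alpha}\cdot (1-\Delta)^{\alpha}M_{\chi}(1-\Delta)^{-2\alpha}\cdots(1-\Delta)^{(n-1)\alpha}M_{\chi}(1-\Delta)^{-n\alpha}.
\]
The first factor is controlled quantitatively: since $f$ is supported in $K$, H\"older gives $\|f\|_{L_{r}(G)}\leqslant |K|^{\frac1r-\frac1q}\|f\|_{L_q(G)}$ when $q\geqslant r$; if $q<r$ we instead choose $n$ with $np$ slightly above $2$ but below $q$ (possible since $q>2\geqslant p$, taking $n$ minimal with $np>2$ fails only if $np\geqslant q$, in which case we replace the equal splitting by an unequal one, putting exponent $\frac{d_{\hom}}{2vq'}$ with $2<q'\leqslant q$ on $f$ and distributing the remaining exponent among $\chi$ factors). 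In any case Theorem \ref{easy_cwikel} yields $\|M_f(1-\Delta)^{-\frac{d_{\hom}}{2vq'}}\|_{q',\infty}\leqslant c\,C_K\|f\|_{L_q(G)}$. The remaining factors $(1-\Delta)^{z}M_\chi(1-\Delta)^{-z-\beta}$ depend only on $\chi$ (hence only on $K$) and belong to $\Lc_{\frac{d_{\hom}}{2v\beta},\infty}$ by Steps 1--3 of the proof of Lemma \ref{specific_cwikel_estimates_from_jfa_lemma}; their quasi-norms are fixed constants depending on $K,p,q$.

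Finally, the weak-H\"older inequality \eqref{eq-holder} combines the exponents: the indices add up to $\frac{2v}{d_{\hom}}\cdot\frac{d_{\hom}}{2vp}=\frac1p$, giving $\|M_f(1-\Delta)^{-\frac{d_{\hom}}{2vp}}\|_{p,\infty}\leqslant C_{K,p,q}\|f\|_{L_q(G)}$. The main obstacle is bookkeeping the exponent split so that the $f$-factor uses an index in $(2,q]$ (so that only $\|f\|_{L_q}$ enters, via H\"older on the compact set $K$) while every $\chi$-factor has positive order; the analytic content is entirely in Theorem \ref{easy_cwikel} and the complex-interpolation statement of Step 3, whose uniformity in the imaginary part we take as given.
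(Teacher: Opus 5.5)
Your argument is correct and is essentially the paper's proof. The paper uses your ``unequal splitting'' with $q'=q$ and a single cutoff factor, $M_f(1-\Delta)^{-\frac{d_{\hom}}{2vp}}=M_f(1-\Delta)^{-\frac{d_{\hom}}{2vq}}\cdot(1-\Delta)^{\frac{d_{\hom}}{2vq}}M_\psi(1-\Delta)^{-\frac{d_{\hom}}{2vp}}$, with the factors in $\Lc_{q,\infty}$ (Theorem \ref{easy_cwikel}) and $\Lc_{r,\infty}$, $\frac1r=\frac1p-\frac1q$ (Lemma \ref{specific_cwikel_estimates_from_jfa_lemma}), so your equal-splitting case analysis is unnecessary; your intermediate $q'\in(2,q]$ with H\"older on $K$ does have the minor merit of covering $q=\infty$ without a separate remark.
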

    \begin{proof}
        Let $\psi\in C^\infty_c(G)$ be such that $f\psi=f.$ Let $r>0$ satisfy
        \[
                \frac{1}{r} = \frac{1}{p}-\frac{1}{q}.
        \]
        We have
        \[
            M_f(1-\Delta)^{-\frac{d_{\hom}}{2vp}} = M_f(1-\Delta)^{-\frac{d_{\hom}}{2vq}}\cdot (1-\Delta)^{\frac{d_{\hom}}{2vq}}M_\psi(1-\Delta)^{-\frac{d_{\hom}}{2vp}}.
        \]
        By Theorem \ref{easy_cwikel} and Lemma \ref{specific_cwikel_estimates_from_jfa_lemma}, this is the product of an element of $\Lc_{q,\infty}$ with an element $\Lc_{r,\infty}.$ By H\"older's inequality, we conclude that
        \[
            \|M_f(1-\Delta)^{-\frac{d_{\hom}}{2vp}}\|_{p,\infty} \lesssim_{p,q,\psi} \|f\|_{L_q(G)}.
        \]
    \end{proof}
    
    A verbatim repetition of the argument of \cite[Theorem 1.1(ii)]{MSZ-stratified-23} delivers the following:
    \begin{corollary}\label{bs_space_corollary}
        Let $0<p< 2<q\leqslant\infty,$ and $f \in \ell_{p,\infty}(L_q)(G).$ Then 
        \[
            \|M_f(1-\Delta)^{-\frac{d_{\hom}}{2vp}}\|_{p,\infty} \leq C_{p,q}\|f\|_{\ell_{p}(L_q)(G)}.
        \]
    \end{corollary}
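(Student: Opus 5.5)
We follow the Birman--Solomyak localisation scheme of \cite[Theorem 1.1(ii)]{MSZ-stratified-23}. Fix a covering of bounded multiplicity $(\Gamma,U)$ as in Definition \ref{decomposition_spaces_definition}, and choose a partition of unity $\{\chi_\gamma\}_{\gamma\in\Gamma}$ subordinate to it. We may take $\chi_\gamma=\lambda(\gamma)\chi_0$, a left translate of one bump function. Write $f=\sum_\gamma f\chi_\gamma$, so that $f_\gamma:=f\chi_\gamma$ is supported in the translate $\gamma\overline{U}$ of a fixed compact set.

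\textbf{Step 1: uniform local bound.} Left translations commute with $\Delta$, since $\Delta$ is built from right-invariant fields commuting with $\lambda(g)$. Hence $\lambda(\gamma)$ is a unitary intertwining $M_{f_\gamma}(1-\Delta)^{-\frac{d_{\hom}}{2vp}}$ with $M_{f_\gamma(\gamma\,\cdot)}(1-\Delta)^{-\frac{d_{\hom}}{2vp}}$, and the latter multiplier is supported in the fixed compact set $\overline{U}$. Lemma \ref{rough_specific_cwikel} therefore gives
\[
\|M_{f_\gamma}(1-\Delta)^{-\frac{d_{\hom}}{2vp}}\|_{p,\infty}\leqslant C_{\overline{U},p,q}\|f_\gamma\|_{L_q(G)},
\]
with a constant independent of $\gamma$.

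\textbf{Step 2: summation.} Summing these bounds directly is the main obstacle. $\Lc_{p,\infty}$ is only a quasi-Banach space, and the operators $M_{f_\gamma}(1-\Delta)^{-\frac{d_{\hom}}{2vp}}$ are not pairwise orthogonal: their left supports are disjoint only after splitting $\Gamma$ into finitely many subfamilies with disjoint supports, which bounded multiplicity allows, while their right supports overlap. We use the $p$-triangle-type estimate for sums with \emph{disjoint left supports}. If $T_\gamma=E_\gamma T_\gamma$ with pairwise orthogonal projections $E_\gamma$, then
\[
\Big|\sum T_\gamma\Big|^2=\sum T_\gamma^*T_\gamma .
\]
For $p<2$ this yields
\[
\Big\|\sum T_\gamma\Big\|_{p,\infty}^p\lesssim \sum\|T_\gamma\|_{p,\infty}^p,
\]
via the $\Lc_{p/2,\infty}$ quasi-norm estimate
\[
\Big\|\sum A_\gamma\Big\|_{r,\infty}^{r}\lesssim\sum\|A_\gamma\|_{r,\infty}^{r}
\]
for positive $A_\gamma$ and $r<1$. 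This is the Birman--Solomyak lemma used in \cite{MSZ-stratified-23}.

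\textbf{Step 3: combine.} Taking $T_\gamma=M_{f_\gamma}(1-\Delta)^{-\frac{d_{\hom}}{2vp}}$ within each of the finitely many subfamilies, Step 1 and Step 2 give
\[
\|M_f(1-\Delta)^{-\frac{d_{\hom}}{2vp}}\|_{p,\infty}^p\lesssim\sum_\gamma\|f\chi_{\gamma U}\|_{L_q}^p=\|f\|_{\ell_p(L_q)(G)}^p .
\]
The finitely many subfamilies are then recombined using the quasi-triangle inequality.
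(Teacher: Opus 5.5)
Your scheme is the Birman--Solomyak localisation argument of \cite[Theorem 1.1(ii)]{MSZ-stratified-23}, which is what the paper invokes. Step 2 is sound: operators with orthogonal ranges satisfy $|\sum_\gamma T_\gamma|^2=\sum_\gamma T_\gamma^*T_\gamma$, and $\Lc_{r,\infty}$ is indeed $r$-normable for $r<1$. The gap is in Step 1, where you have the invariance on the wrong side. In this paper $Xu(g)=\frac{d}{dt}u(\exp(-tX)g)|_{t=0}$, so the $X_j$ generate \emph{left} translations. Consequently $\lambda(h)X\lambda(h)^{-1}$ is the derivation attached to $\mathrm{Ad}_hX$, and $\lambda(\gamma)$ in general does not commute with $\Delta$. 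Right-invariant fields commute with the right translations $\rho(h)$; this is exactly why $\Delta$ is affiliated with $\mathrm{VN}(G)=\rho(G)'$ (Lemma \ref{affiliated_to_M}). Equivalently, $(1-\Delta)^{-s}$ has integral kernel $k(xy^{-1})$, which is localised on right translates $Vy$. If you conjugate by $\rho(\gamma)$ instead, a function supported in the left cell $\gamma\overline U$ becomes one supported in the conjugate $\gamma\overline U\gamma^{-1}$. On a non-abelian group the diameter of this set grows with $\gamma$, so Lemma \ref{rough_specific_cwikel} gives no constant uniform in $\gamma$.

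This cannot be patched while keeping left cells. On $\mathbb{H}^1$ take $\gamma=(0,L,0)$. The set $\gamma U\gamma^{-1}=\{(u_1,u_2,u_3-Lu_1):u\in U\}$ contains $\gtrsim L$ pieces of measure $\sim L^{-1}$ lying in pairwise far-apart right cells. Test $M_{\chi_{\gamma U\gamma^{-1}}}(1-\Delta)^{-\frac{d_{\hom}}{vp}}M_{\chi_{\gamma U\gamma^{-1}}}$ on the normalised indicators of these pieces. The kernel of $(1-\Delta)^{-\frac{d_{\hom}}{vp}}$ is positive, continuous at the identity because $p<2$, and exponentially decaying. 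This shows $\|M_{\chi_{\gamma U}}(1-\Delta)^{-\frac{d_{\hom}}{2vp}}\|_{p,\infty}\gtrsim L^{\frac1p-\frac12}\to\infty$, while the left-cell $\ell_p(L_q)$ norm of $\chi_{\gamma U}$ stays bounded.

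The fix is to use a covering by right translates $\{U\gamma\}_{\gamma\in\Gamma}$, i.e.\ to read Definition \ref{decomposition_spaces_definition} with cells adapted to the invariance of $\Delta$. Then $\rho(\gamma)$ commutes with $\Delta$, and $\rho(\gamma)M_h\rho(\gamma)^*=M_{h(\cdot\,\gamma)}$, with $h(\cdot\,\gamma)$ supported in $\overline U$ whenever $h$ is supported in $\overline U\gamma$. With this change your Steps 2--3 go through unchanged.

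A minor point: a partition of unity by exact translates of one bump function is generally not available, but you do not need one. Fix an enumeration of $\Gamma$ and set $B_\gamma=U\gamma\setminus\bigcup_{\gamma'<\gamma}U\gamma'$. The pieces $f\chi_{B_\gamma}$ already satisfy the hypotheses of Lemma \ref{rough_specific_cwikel} and give operators with pairwise orthogonal left supports, so no colouring into subfamilies is needed.
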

    The following argument simplifies and strengthens the result of \cite[{\reva Theorem 1.1(iii)}]{MSZ-stratified-23} by obtaining $p=2$ as an interpolation of Lemma \ref{easy_cwikel} and Lemma \ref{rough_specific_cwikel}. For interpolation of noncommutative $L_p$ spaces used here, we refer to \cite[Proposition 7.5.2]{DdPS-vapour}.
    
\begin{proof}[Proof of Lemma \ref{quantitative_cwikel_estimate}]
    By Theorem \ref{easy_cwikel} we have
\begin{equation}\label{basic_cwikel_1}
\|M_f(1-\Delta)^{-\frac{\gamma}{{ 2v}}}\|_{\frac{d_{\hom}}{\gamma},\infty} \leqslant C_{\gamma,G}\|f\|_{L_{\frac{d_{\hom}}{\gamma}}(G)},\quad 2< \frac{d_{\hom}}{\gamma}<\infty
\end{equation}
and by Corollary \ref{bs_space_corollary}, we also have
\begin{equation}\label{basic_cwikel_2}
\|M_f(1-\Delta)^{-\frac{\gamma}{2v}}\|_{\frac{d_{\hom}}{\gamma},\infty} \leqslant C_{\gamma,G,q}\|f\|_{\ell_{\frac{d_{\hom}}{\gamma}}(L_q)(G)},\quad 0<\frac{d_{\hom}}{\gamma}<2,\quad q>2.
\end{equation}
We can extend the above statement to the case $\frac{d_{\hom}}{\gamma}=2$ using Lemma \ref{alt_lemma}. Let $0<p<2, r=\frac{2}{p},$ $A = M_{f}$
and $B = (1-\Delta)^{-\frac{d_{\hom}}{4v}}.$ Since $r>1,$ the Lemma implies
\[
\|M_f(1-\Delta)^{-\frac{d_{\hom}}{4v}}\|_{2,\infty} = \|AB\|_{rp,\infty} \leqslant e^{\frac{1}{rp}}\||A|^r|B|^r\|_{p,\infty}^{\frac1r} = e^{\frac12}\|M_{|f|}^{\frac{2}{p}}(1-\Delta)^{-\frac{d}{2vp}}\|_{p,\infty}^{\frac{p}{2}}.
\]
Let $q>2,$ and choose $0<p<2$ sufficiently close to $2$ such that $\frac{pq}{2}>2.$ Applying \eqref{basic_cwikel_2}, with $q$ replaced by $\frac{pq}{2},$ we have
\[
\|M_f(1-\Delta)^{-\frac{d_{\hom}}{4v}}\|_{2,\infty} \leqslant C_{\gamma,G,q}\||f|^{\frac{2}{p}}\|_{\ell_p(L_{\frac{pq}{2}})(G)}^{\frac{p}{2}} \lesssim \|f\|_{\ell_2(L_q)(G)}.
\]
This extends \eqref{basic_cwikel_2} to the case $\frac{d_{\hom}}{\gamma}=2.$
\end{proof}

\section{Commutators with fractional powers of "elliptic" operators}\label{psido_commutator_appendix}
In this appendix, $P\geqslant 0$ is an uniformly Rockland differential operator on $G$ of order $m.$ We are not assuming that $P$ has constant coefficients, in fact 
\[
P = \sum_{\len(\alpha)\leqslant m} M_{a_{\alpha}}X^{\alpha}
\]
where $a_{\alpha} \in C^\infty_b(G).$ By Theorem \ref{elliptic regularity theorem}, 
$P$ is self-adjoint and we can make sense of unbounded self-adjoint operator 
\[
J = (1+P)^{\frac1m}
\]
on $L_2(G).$ 

In this appendix we prove the following theorem:
\begin{theorem}\label{big_fractional_power_theorem} Let $J$ and $m$ be as above. For all $f \in C^\infty_c(G),$ and $\alpha,\beta,\gamma\in \mathbb{R}$ we have
\begin{enumerate}[\rm (i)]
\item\label{specific_cwikel_for_J} If $\alpha+\gamma>0,$ then
$$J^{-\alpha}M_fJ^{-\gamma} \in \Lc_{\frac{d}{\alpha+\gamma},\infty}.$$
\item\label{order_zero_is_bounded_for_J} If $\alpha+\gamma+1=\beta,$ then the operator
$$J^{-\alpha}[J^{\beta},M_f]J^{-\gamma}$$
is bounded on $L_2(G).$
\item\label{order_negative_is_schatten_for_J} If $\alpha+\gamma+1>\beta,$ then 
$$J^{-\alpha}[J^{\beta},M_f]J^{-\gamma} \in \mathcal{L}_{\frac{d_{\hom}}{\alpha+\gamma+1-\beta},\infty}.$$
\end{enumerate}
\end{theorem}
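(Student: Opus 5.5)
Part \ref{specific_cwikel_for_J} follows by comparing $J$ with $(1-\Delta)^{\frac{1}{2v}}$. By Theorem \ref{general elliptic estimate theorem} and Lemma \ref{ellipticity of the product}, the operators $(1+P)^n$ are isomorphisms $W^{s+mn}_2(G)\to W^s_2(G)$ for every $s\in\R$ and $n\in\mathbb{Z}$. Complex interpolation of analytic families (as in Lemma \ref{easy complex powers lemma} and Lemma \ref{improved_cwikel_estimate}) then shows that $J^{z}(1-\Delta)^{-\frac{z}{2v}}$ and $(1-\Delta)^{\frac{z}{2v}}J^{-z}$ are bounded on $L_2(G)$ for all real $z$. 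Hence
\[
J^{-\alpha}M_fJ^{-\gamma} = J^{-\alpha}(1-\Delta)^{\frac{\alpha}{2v}}\cdot (1-\Delta)^{-\frac{\alpha}{2v}}M_f(1-\Delta)^{-\frac{\gamma}{2v}}\cdot (1-\Delta)^{\frac{\gamma}{2v}}J^{-\gamma},
\]
and Lemma \ref{specific_cwikel_estimates_from_jfa_lemma} places the middle factor in $\Lc_{\frac{d_{\hom}}{\alpha+\gamma},\infty}$. The same sandwiching reduces parts \ref{order_zero_is_bounded_for_J} and \ref{order_negative_is_schatten_for_J} to statements about $J$ conjugated by powers of $J$ itself. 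It also shows that \ref{order_negative_is_schatten_for_J} follows from \ref{order_zero_is_bounded_for_J} together with \ref{specific_cwikel_for_J}: choose $\psi\in C^\infty_c(G)$ with $\psi f=f$, write $[J^\beta,M_f]=[J^\beta,M_f]M_\psi+M_\psi[J^\beta,M_f]-[J^\beta,M_f]$, or more directly insert a cutoff $M_\psi$ by moving it across $J^{-\gamma}$ at the cost of further commutators of lower order, and then apply Hölder's inequality \eqref{eq-holder}.

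The main step is part \ref{order_zero_is_bounded_for_J}. I will first treat integer powers $\beta=km$, that is $J^\beta=(1+P)^k$. Here $[(1+P)^k,M_f]$ is a differential operator of order $km-1$ whose coefficients lie in $C^\infty_c(G)$, and boundedness of $J^{-\alpha}[(1+P)^k,M_f]J^{-\gamma}$ with $\alpha+\gamma=km-1$ follows from Lemma \ref{differential_operators_are_bounded} and the Sobolev mapping properties of $J^{s}$. For the Schatten version one writes $[(1+P)^k,M_f]=\sum M_{b_\alpha}X^\alpha$ with compactly supported $b_\alpha$ and uses \ref{specific_cwikel_for_J}, exactly as in Lemma \ref{q second lemma}. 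Negative integer $k$ are then handled by the identity $[(1+P)^{-k},M_f]=-(1+P)^{-k}[(1+P)^k,M_f](1+P)^{-k}$.

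For non-integer $\beta$ there are two routes. The first is to write $\beta=km-\theta m$ with $0<\theta<1$ and use the Balakrishnan integral
\[
(1+P)^{-\theta}=\frac{\sin\pi\theta}{\pi}\int_0^\infty \lambda^{-\theta}(\lambda+1+P)^{-1}\,d\lambda,
\]
so that
\[
[(1+P)^{-\theta},M_f]=-\frac{\sin\pi\theta}{\pi}\int_0^\infty\lambda^{-\theta}(\lambda+1+P)^{-1}[P,M_f](\lambda+1+P)^{-1}\,d\lambda.
\]
Combined with the Leibniz rule $[J^\beta,M_f]=(1+P)^k[(1+P)^{-\theta},M_f]+[(1+P)^k,M_f](1+P)^{-\theta}$, it then suffices to bound
\[
J^{-a}(\lambda+1+P)^{-1}[P,M_f](\lambda+1+P)^{-1}J^{-c}
\]
in operator norm (respectively in $\Lc_{p,\infty}$) by $(1+\lambda)^{-1-\epsilon'}$ times a power making the $\lambda$-integral converge. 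The required input is the resolvent estimate $\|J^{s}(\lambda+1+P)^{-1}J^{m-s}\|\lesssim 1$ together with $\|J^{s}(\lambda+1+P)^{-1}\|\lesssim(1+\lambda)^{-1+s/m}$ for $0\le s\le m$. Both follow from the spectral theorem, since $J$ is a function of $P$. The commutator $[P,M_f]$ has order $m-1$ and is sandwiched between $J^{-(m-1)/2}$-type factors after commuting $J$-powers past it; this commuting produces lower-order commutators, which are handled by induction on $\lceil|\alpha|+|\gamma|\rceil$.

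The alternative route is complex interpolation in $\beta$: the family $z\mapsto J^{-\alpha-z}[J^{z},M_f]J^{-\gamma}$ is bounded at the integer multiples of $m$ established above, with at most polynomial growth in $\Im z$, because $J^{it}$ is unitary and the commutator of $J^{it}$ with $M_f$ sandwiched appropriately can be controlled. The Phragmén--Lindelöf principle, as in Step 3 of the proof of Lemma \ref{specific_cwikel_estimates_from_jfa_lemma}, then yields bounded operators for all real $\beta$, with the Schatten class case treated in the same way (valid for $p>1$, and for smaller $p$ via Hölder factorization).

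I expect the main obstacle to be controlling $[J^{it},M_f]$ or, equivalently, obtaining uniform-in-$\lambda$ resolvent commutator bounds with variable coefficients. For this I would try the Balakrishnan route first, since it uses only the spectral theorem for the self-adjoint $P$ and the Sobolev isomorphisms of Theorem \ref{general elliptic estimate theorem}, exactly as in \cite[Section 5]{MSZ-stratified-23}.
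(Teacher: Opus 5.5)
Your treatment of (i), of the integer case $\beta\in m\mathbb{Z}$, and the reduction of (iii) to (i) and (ii) are fine. One correction: the identity you write for (iii) is false, since its right-hand side equals $M_\psi J^\beta M_f-M_fJ^\beta M_\psi$. Use the Leibniz rule $[J^\beta,M_{\psi f}]=M_\psi[J^\beta,M_f]+[J^\beta,M_\psi]M_f$ instead, which is what the paper does with $f=f_1f_2$.

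The genuine gap is part (ii) for $\beta\notin m\mathbb{Z}$, which is the heart of the theorem. There the order count is exact, and your Balakrishnan route breaks down. With $\beta=km-\theta m$ you need $J^{-a}[(1+P)^{-\theta},M_f]J^{-\gamma}$ bounded when $a+\gamma=-\theta m-1$. Write $R_\lambda=(\lambda+1+P)^{-1}$. Since every power of $J$ commutes with $R_\lambda$, the integrand is $\lambda^{-\theta}R_\lambda J^{t_1}KJ^{t_2}R_\lambda$. Here $K=J^{-s_1}[P,M_f]J^{-s_2}$ is merely bounded (Lemma \ref{Jm corollary}, $s_1+s_2=m-1$), and $t_1+t_2=m(1+\theta)$. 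For $0\le t\le m$ we have $\|J^tR_\lambda\|\sim(1+\lambda)^{t/m-1}$, and the norm is infinite for $t>m$. So the bounds you list give at best $\lambda^{-\theta}(1+\lambda)^{\theta-1}\sim\lambda^{-1}$ at infinity. The integral of norms therefore diverges logarithmically, and there is no spare order to produce $(1+\lambda)^{-1-\epsilon'}$. Your induction on commuting $J$-powers past $[P,M_f]$ does not help, because those powers already commute with $R_\lambda$.

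The interpolation route has the same problem. On the lines $\Re z\in m\mathbb{Z}$ it needs polynomially growing bounds for imaginary-power commutators such as $J^{1+\gamma}[J^{it},M_f]J^{-\gamma}$. That is the same critical estimate, and you give no argument for it.

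What is missing is an estimate that keeps the two resolvents coupled, instead of bounding each one separately inside the integral. The paper does this with a double operator integral (Lemma \ref{phibetam lemma} and Corollary \ref{AB_corollary}). For $0<\beta<m$,
\[
J^{-\alpha}[J^{\beta},M_f]J^{-\gamma}=\frac{1}{\sqrt{2\pi}}\int_{-\infty}^{\infty}g_{\beta,m}(s)\,J^{is}\bigl(J^{\frac{\beta-m}{2}-\alpha}[J^m,M_f]J^{\frac{\beta-m}{2}-\gamma}\bigr)J^{-is}\,ds,
\]
where $g_{\beta,m}\in L_1(\mathbb{R})$ is the Fourier transform of the Schwartz function $t\mapsto\sinh(\frac{\beta}{2}t)/\sinh(\frac{m}{2}t)$. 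The inner operator has exactly the weights of Lemma \ref{Jm corollary}, so it is bounded, and conjugation by the unitaries $J^{is}$ costs nothing. All other real $\beta$ then follow by induction with $[J^\beta,M_f]=J[J^{\beta-1},M_f]+[J,M_f]J^{\beta-1}$, and by the resolvent identity for $\beta<0$.

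Alternatively, you can repair your Balakrishnan route with Cauchy--Schwarz in $\lambda$. Take $t_1=t_2=\frac{m(1+\theta)}{2}$ and estimate
$\bigl|\int_0^\infty\lambda^{-\theta}\langle KJ^{t_2}R_\lambda u,J^{t_1}R_\lambda w\rangle\,d\lambda\bigr|\le\|K\|\bigl(\int_0^\infty\lambda^{-\theta}\|J^{t_2}R_\lambda u\|^2\,d\lambda\bigr)^{1/2}\bigl(\int_0^\infty\lambda^{-\theta}\|J^{t_1}R_\lambda w\|^2\,d\lambda\bigr)^{1/2}$.
By the spectral theorem and the substitution $\lambda=(1+x)u$, the first integral equals $\mathrm{B}(1-\theta,1+\theta)\|u\|^2$, and likewise the second. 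This gives the required operator-norm bound.
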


A visually identical theorem for a stratified Lie group, with $J = (1-\Delta)^{\frac12},$ appeared in \cite[Section 5]{MSZ-stratified-23}. Examination of the proofs there show that essentially the only features of $J$ used are that $J^2$ is a differential operator, and that $M_fJ^{-\alpha} \in \mathcal{L}_{\frac{d_{\hom}}{\alpha},\infty}.$ We can follow almost the same arguments verbatim, with the only difference being that we should use the fact that in Theorem \ref{big_fractional_power_theorem}, $J^m$ is a differential operator rather than $J^2.$

The basic scheme of the proof is the same as in \cite[Section 5]{MSZ-stratified-23}:
\begin{enumerate}[{\rm (i)}]
\item{} First we show part \ref{specific_cwikel_for_J} of Theorem \ref{big_fractional_power_theorem}, by replacing powers of $J$ by powers of $(1-\Delta)^{\frac{1}{2v}}$ using the Sobolev-mapping properties of $J.$
\item{} We then deduce part \ref{order_zero_is_bounded_for_J} for $\beta=m,$ using the fact that $J^m$ is a differential operator and $[J^m,M_f]$ is again a differential operator of order $m-1.$
\item{} We then use a double operator integral trick to deduce that $0<\beta<m$ cases of part \ref{order_zero_is_bounded_for_J} from the $\beta=m$ case.
\item{} Part \ref{order_zero_is_bounded_for_J} in full generality is deduced from the already proved cases by induction.
\item{} Part \ref{order_negative_is_schatten_for_J} is deduced from parts \ref{specific_cwikel_for_J} and \ref{order_zero_is_bounded_for_J}.
\end{enumerate}

\begin{lemma}\label{hard complex powers lemma} For all $z \in \mathbb{C}$ and $s\in \mathbb{R},$ $J^z$ is an isomorphism
$$J^z:W^{s+\Re(z)}_2(G)\to W^s_2(G).$$
\end{lemma}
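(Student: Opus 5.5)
We need to prove that $J^z=(1+P)^{z/m}$ is an isomorphism $W^{s+\Re(z)}_2(G)\to W^s_2(G)$ for all $z\in\mathbb{C}$ and $s\in\mathbb{R}$. The plan is in three stages: integer multiples of $m$, then imaginary powers, then interpolation in both $z$ and $s$.

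\textbf{Integer powers.} First we settle $z=km$ with $k\in\mathbb{Z}$, i.e. $J^{km}=(1+P)^k$. For $k\geqslant 1$, $(1+P)^k$ is a differential operator of order $km$. Iterating Lemma \ref{ellipticity of the product} shows it is uniformly Rockland. Two-sided Sobolev estimates then follow:
\begin{itemize}
\item the upper bound $\|(1+P)^ku\|_{W^s_2}\lesssim \|u\|_{W^{s+km}_2}$ is Lemma \ref{differential_operators_are_bounded};
\item the lower bound comes from applying Theorem \ref{general elliptic estimate theorem} $k$ times, which gives $\|u\|_{W^{s+km}_2}\lesssim \|(1+P)^ku\|_{W^s_2}$ for $u\in\Sc(G)$.
\end{itemize}
So $(1+P)^k$ is an injective map $W^{s+km}_2\to W^s_2$ with closed range. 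For surjectivity, the range is dense, because its annihilator in the dual $W^{-s}_2$ consists of solutions of $(1+P)w=0$ in some Sobolev space. Elliptic regularity (Theorem \ref{elliptic regularity theorem}, bootstrapped through the same estimate with negative $s$) shows such a $w$ lies in $W^m_2$. Then $\langle (1+P)w,w\rangle\geqslant\|w\|^2$ forces $w=0$. Negative $k$ follows by inverting. The resulting operators agree with the spectral-theoretic $J^{km}$ on $L_2$ by uniqueness of the inverse.

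\textbf{Imaginary powers.} Next we handle $z=it$. Here $J^{it}$ is unitary on $L_2$, so the case $s=0$ is immediate. For $s=km$ we use that $J^{it}$ commutes with $(1+P)^k$: with the integer-power isomorphism,
\[
\|J^{it}u\|_{W^{km}_2}\simeq \|(1+P)^kJ^{it}u\|_{L_2}=\|(1+P)^ku\|_{L_2}\simeq\|u\|_{W^{km}_2}.
\]
This gives isometry up to constants, uniformly in $t$, and the same holds for the inverse $J^{-it}$. Interpolating in $s$ with \eqref{interpolation} gives boundedness of $J^{it}$ on every $W^s_2$. A key requirement is that the constants grow at most exponentially in $|t|$ (in fact they stay bounded).

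\textbf{General $z$.} Finally, for fixed $s$ we consider the analytic family $z\mapsto J^{z}$ on a strip $-km\leqslant\Re z\leqslant 0$, mapping $W^{s+\Re z}_2\to W^s_2$. Equivalently, we consider $(1-\Delta)^{(s)/(2v)}J^{z}(1-\Delta)^{-(s+z)/(2v)}$ on $L_2$, which has admissible growth on the boundary lines. Stein interpolation (\cite[Theorem 4.2]{CCRSW_interpolation_1982}, as in Lemma \ref{easy complex powers lemma}) then yields boundedness $W^{s+\Re z}_2\to W^s_2$ for all $z$. The same argument applied to $J^{-z}$ gives the inverse, so $J^z$ is an isomorphism.

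The main obstacle is controlling the imaginary powers uniformly on Sobolev spaces for non-constant-coefficient $P$, since $J^{it}$ does not commute with $\Delta$. The commuting trick at $s\in m\mathbb{Z}$, followed by interpolation in $s$, is the route I expect to resolve this.
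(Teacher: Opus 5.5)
Your argument is correct. It shares the paper's skeleton (imaginary powers, then integer powers, then interpolation of an analytic family), but it does noticeably more than the paper's proof, which only says that the argument of Lemma~\ref{easy complex powers lemma} carries over.

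Taken literally, that argument uses two facts: $J^{it}$ is unitary on $L_2(G)$, and $(1+P)^n$ is self-adjoint with domain $W^{mn}_2(G)$. Even after taking adjoints, this gives only boundedness, and only for $\Re(z)\leqslant 0$ on the lines $s=-\Re(z)$ and $s=0$. The lemma, by contrast, asserts an isomorphism for every $s$ and every $z$, and this is used later. For instance, in the proof of Theorem~\ref{big_fractional_power_theorem}\ref{specific_cwikel_for_J} one of $\alpha,\gamma$ may be negative, and then one needs $J^{|\alpha|}:W^{|\alpha|}_2(G)\to L_2(G)$.

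Your two additional ingredients supply exactly what is missing.
\begin{itemize}
\item The two-sided estimates from Theorem~\ref{general elliptic estimate theorem} at every index, together with duality, make $(1+P)^k:W^{s+km}_2(G)\to W^s_2(G)$ an isomorphism also for negative $s$. There Theorem~\ref{elliptic regularity theorem}, which is stated only on $L_2$, says nothing.
\item Commuting $J^{it}$ with $(1+P)^k$ at $s\in m\mathbb{Z}$, and then interpolating in $s$, bounds $J^{it}$ on every $W^s_2(G)$ uniformly in $t$. As you correctly note, unitarity on $L_2$ cannot give this when the coefficients vary.
\end{itemize}

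One simplification: the surjectivity step needs neither elliptic regularity nor positivity. Take the a priori estimate at index $-s-m$ and extend it from $\Sc(G)$ to all of $W^{-s}_2(G)$ using density and Lemma~\ref{differential_operators_are_bounded}. This already shows that $1+P$ is injective on $W^{-s}_2(G)$, so an annihilating $w$ vanishes at once. As you phrased it, ``bootstrapping'' regularity of a distribution from an estimate on Schwartz functions would need a separate regularisation argument, which this route avoids.
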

\begin{proof} Note that the statement of the lemma is virtually identical to Lemma \ref{easy complex powers lemma}, the only difference is that here we allow $P$ to have non-constant coefficients. The proof is also the same.
\end{proof}

\begin{proof}[Proof of Theorem \ref{big_fractional_power_theorem} \ref{specific_cwikel_for_J}] Set
$$X_{\alpha,\gamma} := (1-\Delta)^{-\frac{\alpha}{2v}}M_f(1-\Delta)^{-\frac{\gamma}{2v}}.$$
If $\alpha+\gamma\geqslant0,$ then it follows from Lemma \ref{specific_cwikel_estimates_from_jfa_lemma} that $X_{\alpha,\gamma}\in\mathcal{B}(L_2(G)).$ If $\alpha+\gamma>0,$ then the same lemma gives $X_{\alpha,\gamma} \in \Lc_{\frac{d_{\hom}}{\alpha+\gamma},\infty}.$ By Lemma \ref{hard complex powers lemma}, the operators $J^{-\alpha}(1-\Delta)^{\frac{\alpha}{2v}}$ and $(1-\Delta)^{\frac{\gamma}{2v}}J^{-\gamma}$ are bounded on $L_2(G).$ Thus,
$$J^{-\alpha}M_fJ^{-\gamma} = J^{-\alpha}(1-\Delta)^{\frac{\alpha}{2v}}\cdot X_{\alpha,\beta}\cdot (1-\Delta)^{\frac{\gamma}{2v}}J^{-\gamma} \in \Bc(L_2(G))$$
if $\alpha+\gamma\geqslant0.$ Similarly, $J^{-\alpha}M_fJ^{-\gamma} \in \Lc_{\frac{d_{\hom}}{\alpha+\gamma},\infty}$ if $\alpha+\gamma>0.$
\end{proof}

The following lemma is very similar to Lemma \ref{asterisque pre-verification lemma}.  

\begin{lemma}\label{Jm corollary} Let $m\in\mathbb{Z}_+$ and let $\alpha+\gamma+1=m.$ We have
$$J^{-\alpha}[J^m,M_f]J^{-\gamma} \in \mathcal{B}(H).$$
\end{lemma}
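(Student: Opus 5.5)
Since $J^m = 1+P$, the commutator $[J^m,M_f]=[P,M_f]$ is a differential operator of order at most $m-1$ with coefficients in $C^\infty_c(G)$. Indeed, by the Leibniz rule applied to each monomial $M_{a_\alpha}X^{\alpha}$, every $X_j$ that falls on $f$ lowers the weighted length by $v_j\geqslant 1$. The coefficients are products of $a_\alpha\in C^\infty_b(G)$ with derivatives of $f$, so they are compactly supported and smooth. We therefore write
\[
[J^m,M_f]=\sum_{\len(\beta)\leqslant m-1} M_{b_\beta}X^{\beta},\qquad b_\beta\in C^\infty_c(G).
\]
The plan is to show that $J^{-\alpha}M_{b_\beta}X^\beta J^{-\gamma}$ is bounded for each word $\beta$, using only the Sobolev mapping properties already established.

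For each term, Lemma \ref{hard complex powers lemma} gives that $J^{-\gamma}:W^{s}_2(G)\to W^{s+\gamma}_2(G)$ is an isomorphism for every $s$. Lemma \ref{lemma-X_alpha-bdd} gives $X^\beta:W^{s+\gamma}_2(G)\to W^{s+\gamma-\len(\beta)}_2(G)$. Lemma \ref{multiplication_lemma} shows that $M_{b_\beta}$ preserves every $W^t_2(G)$. Finally, Lemma \ref{hard complex powers lemma} again gives $J^{-\alpha}:W^{t}_2(G)\to W^{t+\alpha}_2(G)$.

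Starting from $L_2(G)=W^0_2(G)$, the composite lands in $W^{\alpha+\gamma-\len(\beta)}_2(G)$. Since $\alpha+\gamma=m-1\geqslant\len(\beta)$, this space embeds continuously into $L_2(G)$ by the nesting property (iii) of the Sobolev scale. Hence each term is bounded on $L_2(G)$, and summing over the finitely many words $\beta$ gives the claim. Because $\alpha,\gamma$ may be negative, the Sobolev indices may be negative, but all the cited lemmas hold for every real $s$.

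The only point needing care is making sense of the operator. A priori $J^{-\alpha}[J^m,M_f]J^{-\gamma}$ is defined on $\Sc(G)$, or on a dense domain such as $J^{\gamma}W^{m}_2(G)$. I would verify the identity $[J^m,M_f]=[P,M_f]$ on $\Sc(G)$, where $J^m=1+P$ by the functional calculus together with Theorem \ref{elliptic regularity theorem}. The chain of Sobolev bounds then shows that the operator is bounded on this dense subspace, and so it extends by continuity to $L_2(G)$. I do not expect a serious obstacle; the step most worth double-checking is the order count for $[P,M_f]$ in the weighted-length sense.
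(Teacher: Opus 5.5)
Your proposal is correct and follows essentially the same route as the paper. Both arguments expand $[J^m,M_f]=[P,M_f]$ into terms $M_{b}X^{\beta}$ with $\len(\beta)\leqslant m-1$ and compactly supported coefficients, then bound each term through Sobolev mapping properties. The only cosmetic difference is that the paper inserts factors $(1-\Delta)^{\pm\frac{\alpha}{2v}}$ and $(1-\Delta)^{\pm\frac{\gamma}{2v}}$ and cites the $\alpha+\beta=0$ case of Lemma \ref{specific_cwikel_estimates_from_jfa_lemma}, which is the Sobolev boundedness of $M_b$ (Lemma \ref{multiplication_lemma}) that you use directly.
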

\begin{proof} Since $J^m = 1+P=\sum_{\len{w}\leqslant m} M_{a_{w}}X^{w},$ it follows that
\begin{multline*}
    J^{-\alpha}[J^m,M_f]J^{-\gamma} = \sum_{\len(w)\leqslant m} J^{-\alpha}M_{a_{w}}[X^{w},M_f]J^{-\gamma}\\
    =J^{-\alpha}(1-\Delta)^{\frac{\alpha}{2v}}\cdot \sum_{\len(w)\leqslant m} (1-\Delta)^{-\frac{\alpha}{2v}}M_{a_{w}}[X^{w},M_f](1-\Delta)^{-\frac{\gamma}{2v}}\cdot (1-\Delta)^{\frac{\gamma}{2v}}J^{-\gamma}.
\end{multline*}
Note that
$$[X^{w},M_f]=\sum_{{\rm len}(w')\leqslant m-1}M_{f_{w,w'}}X^{w'}$$
where $f_{w,w'}\in C^{\infty}_c(G)$ for every $(w,w').$ Thus,
\begin{align*}
    & \sum_{\len(w)\leqslant m} (1-\Delta)^{-\frac{\alpha}{2v}}M_{a_{w}}[X^{w},M_f](1-\Delta)^{-\frac{\gamma}{2v}}\\
    & \quad =\sum_{\substack{\len(w)\leqslant m\\ {\rm len}(w')\leqslant m-1}} (1-\Delta)^{-\frac{\alpha}{2v}}M_{a_{w}f_{w,w'}}X^{w'}(1-\Delta)^{-\frac{\gamma}{2v}}\\
    & \quad =\sum_{\substack{\len(w)\leqslant m\\ {\rm len}(w')\leqslant m-1}} (1-\Delta)^{-\frac{\alpha}{2v}}M_{a_{w}f_{w,w'}}(1-\Delta)^{-\frac{\gamma+1-m}{2v}}\cdot (1-\Delta)^{\frac{\gamma-m+1}{2v}} X^{w'}(1-\Delta)^{-\frac{\gamma}{2v}}.
\end{align*}
Using the second part of Lemma \ref{specific_cwikel_estimates_from_jfa_lemma}, we conclude that
$$ \sum_{\len(w)\leqslant m} (1-\Delta)^{-\frac{\alpha}{2v}}M_{a_{w}}[X^{w},M_f](1-\Delta)^{-\frac{\gamma}{2v}}\in\mathcal{B}(H).$$
This completes the proof.
\end{proof}

In order to pass from $[J^m,M_f]$ to $[J^{\beta},M_f],$ we use the following lemma which is similar to one which appeared in \cite[Lemma 5.3]{MSZ-stratified-23}.
\begin{lemma}\label{phibetam lemma} Let $0<\beta<m.$ Define the function 
$$\phi_{\beta,m}(\lambda,\mu)=\frac{\lambda^{\beta}-\mu^{\beta}}{\lambda^m-\mu^m}\lambda^{\frac{m-\beta}{2}}\mu^{\frac{m-\beta}{2}},\quad \lambda,\mu>0.$$
For any positive operator $B,$ we have
$$\|T^{B,B}_{\phi_{\beta,m}}\|_{\mathcal{B}(H)\to\mathcal{B}(H)} < \infty.$$
\end{lemma}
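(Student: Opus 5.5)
Since $T^{B,B}_\phi$ depends only on the function $\phi$ restricted to the spectrum of $B$, my plan is to show that $\phi_{\beta,m}$ admits a decomposition of the form
\[
\phi_{\beta,m}(\lambda,\mu)=\int_{\Omega} a(\lambda,\omega)\,b(\mu,\omega)\,d\nu(\omega),\qquad \int_\Omega \|a(\cdot,\omega)\|_\infty\|b(\cdot,\omega)\|_\infty\,d|\nu|(\omega)<\infty,
\]
on $(0,\infty)^2$. Such a decomposition makes $\phi_{\beta,m}$ a Schur multiplier on $\mathcal{B}(H)$ for every positive $B$ simultaneously, with norm at most the integral above. The expression $T^{B,B}_\phi(X)=\int a(B,\omega)Xb(B,\omega)\,d\nu(\omega)$ then converges in the weak operator topology.

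\textbf{Reduction to one variable.} The function $\phi_{\beta,m}$ is homogeneous of degree $0$: replacing $(\lambda,\mu)$ by $(t\lambda,t\mu)$ multiplies numerator and denominator by $t^m$. Write $\lambda=e^{x}$, $\mu=e^{y}$ and $s=x-y$. A short computation gives
\[
\phi_{\beta,m}(e^x,e^y)=\frac{e^{\beta s/2}-e^{-\beta s/2}}{e^{ms/2}-e^{-ms/2}}=\frac{\sinh(\beta s/2)}{\sinh(ms/2)}=:h(s).
\]
The point $s=0$ is a removable singularity, with $h(0)=\beta/m$.

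\textbf{Fourier representation.} If $\widehat h\in L_1(\mathbb{R})$, then
\[
h(x-y)=\int_{\mathbb{R}}\widehat h(\xi)\,e^{i\xi x}e^{-i\xi y}\,d\xi
\]
up to normalisation. This is exactly the required decomposition with $a(\lambda,\xi)=\lambda^{i\xi}$ and $b(\mu,\xi)=\mu^{-i\xi}$, both unimodular. It yields
\[
T^{B,B}_{\phi_{\beta,m}}(X)=\int \widehat h(\xi)\,B^{i\xi}XB^{-i\xi}\,d\xi
\]
with norm at most $\|\widehat h\|_{L_1}$. If $B$ has a kernel, I first restrict to the support projection of $B$, or equivalently work on $(0,\infty)$, where the spectral measure lives for the parts that matter.

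\textbf{Main obstacle.} The key step is verifying $\widehat h\in L_1$. Since $0<\beta<m$, the function $h$ is real-analytic on $\mathbb{R}$: the zeros of $\sinh(ms/2)$ at $s\in 2\pi i\mathbb{Z}/m$ off the real axis do not meet it, and the real zero is cancelled. Moreover $h$ decays like $e^{-(m-\beta)|s|/2}$, and all its derivatives decay exponentially. So $h$ belongs to the Schwartz class, hence $\widehat h$ is Schwartz and in particular integrable. Alternatively, the Fourier transform can be computed explicitly: it is a standard formula giving
\[
\widehat h(\xi)=c\,\frac{\sin(\pi\beta/m)}{\cosh(2\pi\xi/m)+\cos(\pi\beta/m)},
\]
which is visibly integrable because $\cos(\pi\beta/m)>-1$. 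Either route completes the proof. The only technical points are justifying the weak integral and handling the kernel of $B$, and both are routine.
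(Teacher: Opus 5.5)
Your proposal is correct and follows the paper's proof essentially step for step. Like the paper, you substitute $\lambda/\mu=e^{t}$ to reduce $\phi_{\beta,m}$ to $\sinh(\frac{\beta}{2}t)/\sinh(\frac{m}{2}t)$, observe that this is Schwartz (so its Fourier transform is integrable), and write $T^{B,B}_{\phi_{\beta,m}}(X)$ as an absolutely convergent integral of $B^{is}XB^{-is}$. Your explicit formula for $\widehat h$ and your remark about the kernel of $B$ are correct but optional additions that the paper does not include.
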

\begin{proof} If $t\in \mathbb{R}$ is such that $\frac{\lambda}{\mu}=e^t,$ then
$$\phi_{\beta,m}(\lambda,\mu) = \phi_{\beta,m}(e^t\mu,\mu) = \frac{\mu^{\beta}(e^{\beta t}-1)}{\mu^m(e^{mt}-1)}\mu^{m-\beta}e^{t\frac{m-\beta}{2}} = \frac{\sinh(\frac{\beta}{2}t)}{\sinh(\frac{m}{2}t)}.$$
As $\beta<m,$ the function $t\mapsto \frac{\sinh(\frac{\beta}{2}t)}{\sinh(\frac{m}{2}t)}$ belongs to the Schwartz class. Let $g_{\beta,m}$ be its Fourier transform. By Fourier inversion,
$$\phi_{\beta,m}(\lambda,\mu)=\frac1{\sqrt{2\pi}}\int_{-\infty}^\infty g_{\beta,m}(s)\lambda^{is}\mu^{-is}\,ds.$$
Therefore, for $X \in \Bc(H),$ 
$$T_{\phi_{\beta,m}}^{B,B}(X) =\frac1{\sqrt{2\pi}}\int_{-\infty}^\infty g_{\beta,m}(s)B^{is}XB^{-is}\,ds.$$
Since $g_{\beta,m}$ is integrable, the boundedness of $T^{B,B}_{\phi_{\beta,m}}$ follows.
\end{proof}

The following lemma is in close analogy to \cite[Lemma 5.4]{MSZ-stratified-23}. Its proof is omitted.
\begin{lemma} Let $A,B\in\Bc(H)$ and assume that $B$ is strictly positive, i.e. there exists a constant $c>0$ such that $B\geqslant c.$ Let $\alpha,\beta\geqslant 0,$ and $0<\beta<m.$ Define $\phi_{\beta,m}$ as in Lemma \ref{phibetam lemma}. We have
$$B^{-\alpha}[B^{\beta},A]B^{-\gamma}=T^{B,B}_{\phi_{\beta,m}}\Big(B^{\frac{\beta-m}{2}-\alpha}[B^m,A]B^{\frac{\beta-m}{2}-\gamma}\Big),\quad 0<\beta<m.$$
\end{lemma}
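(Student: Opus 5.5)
The plan is to prove the identity by double operator integral calculus, reducing it to an elementary factorisation of scalar symbols on the spectrum of $B$. Throughout, $\gamma\geqslant 0$ is taken real, like $\alpha$; the statement only quantifies $\alpha,\beta$, and in fact the argument works for all real $\alpha,\gamma$ because $B\geqslant c$. Since $c\leqslant B\leqslant \|B\|_\infty$, the spectrum $\sigma(B)$ lies in the compact interval $K=[c,\|B\|_\infty]\subset(0,\infty)$. On $K\times K$ every function we use is continuous and bounded:
\[
\lambda^{-\alpha},\quad \mu^{-\gamma},\quad \lambda^{\frac{\beta-m}{2}-\alpha},\quad \mu^{\frac{\beta-m}{2}-\gamma},\quad \phi_{\beta,m}.
\]
Here $\phi_{\beta,m}$ extends continuously to the diagonal with value $\frac{\beta}{m}\lambda^{\beta-m}\cdot\lambda^{m-\beta}=\frac{\beta}{m}$.

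\textbf{Step 1: the scalar identity.} On $K\times K$, directly from the definition of $\phi_{\beta,m}$,
\[
\lambda^{-\alpha}(\lambda^{\beta}-\mu^{\beta})\mu^{-\gamma}
=\phi_{\beta,m}(\lambda,\mu)\cdot\lambda^{\frac{\beta-m}{2}-\alpha}\,(\lambda^m-\mu^m)\,\mu^{\frac{\beta-m}{2}-\gamma}.
\]
This is just the cancellation $\lambda^{\frac{m-\beta}{2}}\lambda^{\frac{\beta-m}{2}}=1$ and the same for $\mu$.

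\textbf{Step 2: translate the identity into operators via two DOI properties.} For bounded continuous $a,b$ on $K$ we have $T^{B,B}_{a(\lambda)b(\mu)}(X)=a(B)Xb(B)$. For symbols in the algebra we use, $T^{B,B}_{fg}=T^{B,B}_f\circ T^{B,B}_g$. In particular $T^{B,B}_{\lambda^m-\mu^m}(A)=[B^m,A]$ and $T^{B,B}_{\lambda^\beta-\mu^\beta}(A)=[B^\beta,A]$. Applying these rules to the factorisation in Step 1 gives
\[
B^{-\alpha}[B^\beta,A]B^{-\gamma}=T^{B,B}_{\phi_{\beta,m}}\bigl(B^{\frac{\beta-m}{2}-\alpha}[B^m,A]B^{\frac{\beta-m}{2}-\gamma}\bigr),
\]
which is the claim.

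\textbf{Step 3: justify the multiplicativity, which is the main obstacle.} General DOI symbols need not compose multiplicatively, so I would work in a class where this is automatic. Lemma \ref{phibetam lemma} already provides the representation
\[
\phi_{\beta,m}(\lambda,\mu)=(2\pi)^{-\frac12}\int g_{\beta,m}(s)\lambda^{is}\mu^{-is}\,ds,\qquad g_{\beta,m}\in L_1,
\]
and the corresponding operator formula $T^{B,B}_{\phi_{\beta,m}}(X)=(2\pi)^{-1/2}\int g_{\beta,m}(s)B^{is}XB^{-is}\,ds$. The other factors are finite sums of products $a(\lambda)b(\mu)$, so all symbols involved lie in the integral projective tensor product class, where $T_{fg}=T_f\circ T_g$ is standard.

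To keep the argument self-contained, I would first verify the identity when $B$ has finite spectrum. In that case $B=\sum_j\lambda_jP_j$, and $T_\psi(X)=\sum_{j,k}\psi(\lambda_j,\lambda_k)P_jXP_k$ is a Schur multiplication, so the identity is entrywise exactly Step 1. Note that the diagonal blocks $j=k$ vanish on both sides, since $P_j[B^\beta,A]P_j=0=P_j[B^m,A]P_j$; hence the diagonal value of $\phi_{\beta,m}$ is irrelevant.

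For general $B$, I would approximate by $B_n=h_n(B)$, where $h_n$ are step functions of the spectral variable with $B_n\geqslant c$ and $\|B_n-B\|_\infty\to0$. All continuous functional calculi then converge in norm on $K$, so the left-hand side and the inner operator on the right converge in norm. For the outer map, the Fourier representation gives
\[
\|T^{B_n,B_n}_{\phi}(X_n)-T^{B,B}_{\phi}(X)\|_\infty\leqslant \|g_{\beta,m}\|_1\,\|X_n-X\|_\infty+\int|g_{\beta,m}(s)|\,\bigl\|B_n^{is}XB_n^{-is}-B^{is}XB^{-is}\bigr\|_\infty\,ds.
\]
The first term tends to zero. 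For the second, $B_n^{is}\to B^{is}$ in norm for each fixed $s$, so the integrand tends to zero pointwise, and it is bounded by $2\|X\|_\infty|g_{\beta,m}(s)|$; dominated convergence then makes the second term tend to zero, which completes the passage to the limit.
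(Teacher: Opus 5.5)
Your proof is correct. The paper omits this proof and only points to the analogous Lemma 5.4 of \cite{MSZ-stratified-23}. The argument intended there is presumably your Steps 1--2: the scalar factorisation of the symbol on $\sigma(B)\times\sigma(B)$, combined with the standard double operator integral rules $T^{B,B}_{a(\lambda)b(\mu)}(X)=a(B)Xb(B)$ and $T^{B,B}_{fg}=T^{B,B}_f\circ T^{B,B}_g$. Multiplicativity would be taken as known for symbols in the integral projective tensor product class.

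Your Step 3 takes a more elementary route that does not need the general multiplicativity theorem at all.
\begin{itemize}
\item You use only the Fourier-integral formula for $T^{B,B}_{\phi_{\beta,m}}$ from Lemma \ref{phibetam lemma}, which is all the paper ever uses about this transformer.
\item When $B$ has finite spectrum, you check the identity as an entrywise Schur-multiplier identity. You correctly observe that the diagonal value $\beta/m$ of $\phi_{\beta,m}$ plays no role, since the diagonal blocks of both commutators vanish.
\item You then pass to general $B$ via $B_n=h_n(B)\geqslant c$. Norm convergence of the continuous functional calculus on $[c,\|B\|_\infty]$ handles the left-hand side and the inner operator. Dominated convergence in $s$ handles the outer transformer; the integrand is norm-continuous because $\log B$ is bounded, so the Bochner integrals make sense.
\end{itemize}

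The cost is a few lines of approximation. The gain is that no external double operator integral theory is needed. As you note, the identity also holds for all real $\alpha,\gamma$ because $B\geqslant c$, which is harmless given the statement's typo (quantifying $\alpha,\beta$ instead of $\alpha,\gamma$).
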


\begin{corollary}\label{AB_corollary} Let $A$ and $B$ be positive operators, with $A:\mathrm{dom}(B^m)\mapsto \mathrm{dom}(B^m).$ Assume that $A$ is bounded and that $\ker(B) = \{0\}.$ We have
$$\|B^{-\alpha}[B^{\beta},A]B^{-\gamma}\|_{\infty} \leqslant C_{\alpha,\beta,\gamma}\|B^{\frac{\beta-m}{2}-\alpha}[B^m,A]B^{\frac{\beta-m}{2}-\gamma}\|_{\infty}.$$
\end{corollary}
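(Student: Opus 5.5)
The statement is Corollary \ref{AB_corollary}. The plan is to combine the double operator integral identity of the preceding lemma with the boundedness of $T^{B,B}_{\phi_{\beta,m}}$ on $\Bc(H)$ from Lemma \ref{phibetam lemma}.

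Concretely, set $X := B^{\frac{\beta-m}{2}-\alpha}[B^m,A]B^{\frac{\beta-m}{2}-\gamma}$. If $\|X\|_\infty=\infty$ there is nothing to prove, so assume $X$ extends to a bounded operator. The preceding lemma gives
\[
B^{-\alpha}[B^{\beta},A]B^{-\gamma}=T^{B,B}_{\phi_{\beta,m}}(X).
\]
Lemma \ref{phibetam lemma} then yields
\[
\|B^{-\alpha}[B^{\beta},A]B^{-\gamma}\|_\infty\leqslant \|T^{B,B}_{\phi_{\beta,m}}\|_{\Bc(H)\to\Bc(H)}\|X\|_\infty .
\]
From the proof of that lemma, the operator norm is at most $(2\pi)^{-1/2}\|g_{\beta,m}\|_{L_1}$. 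This depends only on $\beta,m$ (in particular not on $B$), so the constant $C_{\alpha,\beta,\gamma}$ is legitimate.

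The main obstacle is that the preceding lemma was stated for bounded $B\geqslant c>0$, whereas here $B$ may be unbounded with $\ker B=\{0\}$ (in the application $B=J$). To justify the identity in this setting, I would work directly with the integral representation. For $\xi,\eta$ in suitable dense cores, namely spectral subspaces $E_B[\epsilon,N]H$, write
\[
\langle T^{B,B}_{\phi_{\beta,m}}(X)\eta,\xi\rangle=\frac1{\sqrt{2\pi}}\int g_{\beta,m}(s)\langle XB^{-is}\eta,B^{-is}\xi\rangle\,ds .
\]
On these spectral subspaces all powers of $B$ are bounded. There the identity reduces to the scalar identity $\phi_{\beta,m}(\lambda,\mu)(\lambda^m-\mu^m)\lambda^{\frac{\beta-m}{2}}\mu^{\frac{\beta-m}{2}}=\lambda^\beta-\mu^\beta$, which can be verified pairwise on the spectral measure. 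Equivalently, apply the bounded-case lemma to the compressions $E_B[\epsilon,N]\,\cdot\,E_B[\epsilon,N]$; for this, the hypothesis that $A$ preserves $\mathrm{dom}(B^m)$ makes $[B^m,A]$ meaningful on the core.

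The resulting bound is uniform in $\epsilon,N$. Since $\ker B=\{0\}$, the projections $E_B[\epsilon,N]\to 1$ strongly as $\epsilon\to0$, $N\to\infty$. A weak-limit argument on a dense set then gives that $B^{-\alpha}[B^\beta,A]B^{-\gamma}$ extends boundedly with the same norm bound.
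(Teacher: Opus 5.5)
Your proposal is correct and follows the paper's argument. The paper also compresses to the spectral subspaces $\chi_{(\frac1n,n)}(B)H$, where $B$ is bounded and strictly positive, applies the bounded-case identity together with Lemma \ref{phibetam lemma} (both sides compress to $P_n(\cdot)P_n$ because $P_n$ commutes with every function of $B$), and concludes by the Fatou property since $P_n\to 1$ strongly; your observation that the bound $(2\pi)^{-1/2}\|g_{\beta,m}\|_{L_1}$ is independent of $B$, and hence uniform in the compression, is the point that makes this limiting step work, which the paper leaves implicit.
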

\begin{proof} For every $n\geqslant 1,$ set $P_n = \chi_{(\frac1n,n)}(B),$ $H_n=P_n(H),$ $B_n=B|_{H_n}$ and $A_n=P_nBP_n|_{H_n}.$ Clearly, $A_n$ and $B_n$ satisfy the conditions of Lemma \ref{phibetam lemma}. Hence,
$$\|B_n^{-\alpha}[B_n^{\beta},A_n]B_n^{-\gamma}\|_{\infty} \leqslant C_{\alpha,\beta,\gamma}\|B_n^{\frac{\beta-m}{2}-\alpha}[B_n^m,A_n]B_n^{\frac{\beta-m}{2}-\gamma}\|_{\infty}.$$
Clearly,
$$B_n^{-\alpha}[B_n^{\beta},A_n]B_n^{-\gamma}=P_n\cdot B^{-\alpha}[B^{\beta},A]B^{-\gamma}\cdot P_n,$$
$$B_n^{\frac{\beta-m}{2}-\alpha}[B_n^m,A_n]B_n^{\frac{\beta-m}{2}-\gamma}=P_n\cdot B^{\frac{\beta-m}{2}-\alpha}[B^m,A]B^{\frac{\beta-m}{2}-\gamma}\cdot P_n.$$
Thus,
$$\|P_n\cdot B^{-\alpha}[B^{\beta},A]B^{-\gamma}\cdot P_n\|_{\infty} \leqslant C_{\alpha,\beta,\gamma}\|B^{\frac{\beta-m}{2}-\alpha}[B^m,A]B^{\frac{\beta-m}{2}-\gamma}\|_{\infty}$$
for every $n\geqslant 1.$ The assertion follows from the Fatou property in $\mathcal{B}(H).$
\end{proof}

\begin{lemma}\label{betaltm_case} Let $0<\beta\leqslant m,$ and let $\alpha$ and $\gamma$ be such that $\alpha+\gamma-\beta+1=0.$ We have
$$J^{-\alpha}[J^{\beta},M_f]J^{-\gamma} \in \mathcal{B}(H).$$
\end{lemma}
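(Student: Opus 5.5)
The plan is to split into the cases $\beta=m$ and $0<\beta<m$. For $\beta=m$ the hypothesis $\alpha+\gamma-\beta+1=0$ is exactly $\alpha+\gamma+1=m$, so the assertion is Lemma \ref{Jm corollary}. For $0<\beta<m$ I would reduce to the $\beta=m$ case with Corollary \ref{AB_corollary}, applied to $B=J$ and $A=M_f$.

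In the case $0<\beta<m$, the operator $J=(1+P)^{\frac1m}\geqslant 1$ is positive with trivial kernel. The multiplier $M_f$ is bounded, and it maps $\mathrm{dom}(J^m)=W^m_2(G)$ into itself by Lemma \ref{multiplication_lemma} and Theorem \ref{elliptic regularity theorem}. Positivity of $A$ is not really needed, because the double operator integral representation in Lemma \ref{phibetam lemma} is linear in $A$. For complex $f$ one may anyway split $f$ into real and imaginary parts, and then add a constant multiple of a cutoff; the constant drops out of every commutator. Corollary \ref{AB_corollary} then gives
\[
\|J^{-\alpha}[J^{\beta},M_f]J^{-\gamma}\|_{\infty}\leqslant C\,\|J^{-\alpha'}[J^m,M_f]J^{-\gamma'}\|_{\infty},\qquad \alpha'=\alpha-\tfrac{\beta-m}{2},\quad \gamma'=\gamma-\tfrac{\beta-m}{2}.
\]
A direct computation gives $\alpha'+\gamma'+1=\alpha+\gamma+1+m-\beta=m$, since $\alpha+\gamma+1=\beta$. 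So the right-hand side is finite by Lemma \ref{Jm corollary}, which allows arbitrary real $\alpha',\gamma'$ with $\alpha'+\gamma'+1=m$.

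The main obstacle is rigour. The operators are a priori defined only on $\Sc(G)$, and the corollary is stated for bounded operators with constant $C_{\alpha,\beta,\gamma}$. I would apply it through the spectral truncations $P_n=\chi_{(1/n,n)}(J)$, exactly as in its proof, so that all compressions are bounded. The identity from the unnamed lemma preceding Corollary \ref{AB_corollary} holds on $P_nH$. Then I would pass to the limit using the Fatou property and density of $\Sc(G)$ in $L_2(G)$. I also need to check that Lemma \ref{Jm corollary} holds for possibly negative $\alpha'$ or $\gamma'$. Its proof uses only Lemma \ref{hard complex powers lemma}, valid for all real exponents, and the second part of Lemma \ref{specific_cwikel_estimates_from_jfa_lemma}, so this goes through.
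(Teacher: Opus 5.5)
Your proposal is correct and is essentially the paper's proof, which also sets $A=M_f$ and $B=J$ and combines Corollary \ref{AB_corollary} with Lemma \ref{Jm corollary}; your separate handling of $\beta=m$ and the check $\alpha'+\gamma'+1=m$ spell out what the paper leaves implicit. The only slip is in your optional aside: adding $c\psi$ with $\psi$ a cutoff does not drop out of the commutator (adding the constant function $c$ would), but this does not matter, since positivity of $A$ is never used in the double operator integral argument.
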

\begin{proof} Set $A=M_f$ and $B=J.$ Applying Corollary \ref{AB_corollary} and Lemma \ref{Jm corollary} delivers the result. 
\end{proof}

\begin{proof}[Proof of Theorem \ref{big_fractional_power_theorem} \ref{order_zero_is_bounded_for_J}] Lemma \ref{betaltm_case} already proves \ref{order_zero_is_bounded_for_J} for $0\leqslant \beta\leqslant m.$ Denote for brevity
$$X_{\alpha,\beta}=J^{-\alpha}[J^{\beta},M_f]J^{\alpha-\beta+1}.$$
By the Leibniz rule,
$$X_{\alpha,\beta}=X_{\alpha-1,\beta-1}+X_{\alpha-\beta+1,1}.$$
It follows by induction that $X_{\alpha,\beta}$ is bounded for every $\beta\geqslant 0.$

If $\beta\leqslant0,$ then it follows from the resolvent identity that
$$X_{\alpha,\beta}=-X_{\alpha-\beta,-\beta}$$
which is again bounded. This completes the proof.
\end{proof}

\begin{proof}[Proof of Theorem \ref{big_fractional_power_theorem} \ref{order_negative_is_schatten_for_J}]
We write $f=f_1f_2$ with $f_1,f_2\in C^{\infty}_c(G).$ By the Leibniz rule,
\begin{align*}
    J^{-\alpha}[J^{\beta},M_f]J^{-\gamma}
    & =J^{-\alpha}M_{f_1}[J^{\beta},M_{f_2}]J^{-\gamma}+J^{-\alpha}[J^{\beta},M_{f_1}]M_{f_2}J^{-\gamma}\\
    & =J^{-\alpha}M_{f_1}[J^{\beta},M_{f_2}]J^{-\gamma}+J^{-\alpha}[J^{\beta},M_{f_1}]M_{f_2}J^{-\gamma}\\
    & =J^{-\alpha}M_{f_1}J^{\beta-1-\gamma}\cdot J^{\gamma+1-\beta}[J^{\beta},M_{f_2}]J^{-\gamma}\\
    & \qquad +J^{-\alpha}[J^{\beta},M_{f_1}]J^{\alpha-\beta+1}\cdot J^{-\alpha+\beta-1}M_{f_2}J^{-\gamma}.
\end{align*}
In the first (respectively, the second) summand on the right hand side, the first (respectively, the second) factor belongs to $\Lc_{\frac{d_{\hom}}{\alpha+\gamma-\beta+1},\infty}$ by Theorem \ref{big_fractional_power_theorem} \ref{specific_cwikel_for_J} and the second (respectively, the first) factor is bounded by Theorem \ref{big_fractional_power_theorem} \ref{order_zero_is_bounded_for_J}. This completes the proof.
\end{proof}

\section{The Couchet-Yuncken residue}\label{sec-Connes-Trace}
In a recent paper, Couchet and Yuncken defined an analogy of the Wodzicki residue for pseudodifferential operators on a filtered manifold. Their construction took advantage of the groupoid techniques of van Erp and Yuncken \cite{vanErpYuncken2019}. In this section we indicate the relationship between their residue functional and the term appearing in the right hand side of Theorem \ref{main theorem general case}.

A full discussion of the residue is beyond the scope of this text, and therefore we restrict attention to the simplest possible case of translation-invariant operators on a stratified Lie group $G.$ This has the advantage that we can present a short and self-contained exposition, but leaves open the problem of relating Theorem \ref{main theorem general case} to Connes' trace theorem in general. It would be of interest to extend the results here to operators which are not translation-invariant.

We will only consider the residue for operators belonging to the following class.
\begin{definition}
Let $G$ be a stratified Lie group, with dilation semigroup $\{\delta_s\}_{s>0},$ and conjugation action $\{\alpha_s\}_{s>0}$ as in \eqref{conjugation_action}. Define the subspace
\begin{equation*}
    A(G) := \left\{T\in VN(G) \,:\, \text{For all }s>0,\ s^{d_{\hom}}\alpha_s(T) - T \in L_1(\mathrm{VN}(G), \tau) \right\}.
\end{equation*}
\end{definition}
The space $A(G)$ is an analogy of the space of symbols denoted $\Sigma^{-d_{H}}_H(X)$ in \cite{CouYuncken24}. Here, $X = G$ and we have restricted attention to operators in $\mathrm{VN}(G),$ which correspond to right-invariant operators on $G.$ A precise analogy of $\Sigma^{-d_{H}}_H(X)$ would replace $L_1(\mathrm{VN}(G),\tau)$ with the smaller set $\lambda(C^\infty_c(G)),$ but here $L_1(\mathrm{VN}(G),\tau)$ is more convenient.
\begin{lemma}\label{lem_res_is_well_defined}
    For any $T \in A(G),$ the function $a_T:(0,\infty)\to \mathbb{C}$ given by
    \[
        a_T(s) = \tau(s^{d_{\hom}}\alpha_s(T)-T)
    \]
    is continuous and satisfies
    \[
        a_T(sr) = a_T(s)+a_T(r),\quad s,r>0.
    \]
\end{lemma}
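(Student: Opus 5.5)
The proof is a telescoping computation for the additivity, followed by a continuity estimate based on the trace scaling \eqref{trace_scaling}. Two facts about the dilation action are used throughout: $\{\alpha_s\}$ is a multiplicative one-parameter group of $*$-automorphisms of $\mathrm{VN}(G)$, that is $\alpha_s\circ\alpha_r=\alpha_{sr}$, which is immediate from \eqref{conjugation_action} and $\delta_s\delta_r=\delta_{sr}$; and $\alpha_s$ maps $L_1(\mathrm{VN}(G),\tau)$ onto itself with $\tau(\alpha_s(S)) = s^{-d_{\hom}}\tau(S)$, which is \eqref{trace_scaling}.

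For additivity, fix $s,r>0$ and write
\[
(sr)^{d_{\hom}}\alpha_{sr}(T)-T = s^{d_{\hom}}\alpha_s\big(r^{d_{\hom}}\alpha_r(T)-T\big) + \big(s^{d_{\hom}}\alpha_s(T)-T\big).
\]
Since $T\in A(G)$, the element $r^{d_{\hom}}\alpha_r(T)-T$ lies in $L_1$. Its image under $\alpha_s$ is therefore also in $L_1$, and by \eqref{trace_scaling} the trace of the first summand is $s^{d_{\hom}}s^{-d_{\hom}}a_T(r)=a_T(r)$. The second summand has trace $a_T(s)$. Linearity of $\tau$ on $L_1$ then gives $a_T(sr)=a_T(s)+a_T(r)$. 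This step is routine.

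For continuity, additivity reduces the problem to continuity at $s=1$, since $a_T(s')-a_T(s) = a_T(s'/s)$ and $a_T(1)=0$. The difficulty is that $s\mapsto \alpha_s(T)$ is only strongly continuous, so one cannot directly show that $s^{d_{\hom}}\alpha_s(T)-T\to 0$ in $L_1$. The plan is to deduce continuity from measurability together with additivity. An additive function on the multiplicative group $(0,\infty)$ that is Lebesgue measurable is continuous; this is the classical Cauchy functional equation result, for instance via Steinhaus's theorem applied to $t\mapsto a_T(e^t)$, which is then linear.

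Measurability can be obtained by writing $a_T(s)=\lim_n \tau\big(e_n(s^{d_{\hom}}\alpha_s(T)-T)\big)$, where $e_n\in L_1\cap\mathrm{VN}(G)$ is an increasing approximate unit, for example $e_n=\lambda(\varphi_n)$ or spectral projections of $-\Delta$ with finite trace. For each fixed $n$, the map $s\mapsto \tau(e_n\alpha_s(T))$ is continuous. To see this, note that $\tau(e_n\alpha_s(T))=s^{-d_{\hom}}\tau(\alpha_s(\alpha_{s^{-1}}(e_n)T))$, and $s\mapsto \alpha_{s^{-1}}(e_n)$ is $L_1$-norm continuous because dilation acts continuously on $L_1(G)$ kernels in $\lambda(C_c(G))$. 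Normality of $\tau$, together with the convergence of $e_nX\to X$ in $L_1$ for $X\in L_1$, justifies the limit. This makes $a_T$ a pointwise limit of continuous functions, hence Borel measurable.

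The main obstacle is this measurability/continuity step. One must check that the approximate-unit limit really recovers $\tau(X)$ for $X=s^{d_{\hom}}\alpha_s(T)-T$ for each $s$, and that the pieces $\tau(e_n\alpha_s(T))$ and $\tau(e_nT)$ are individually well defined even though $T$ itself is not in $L_1$. The latter holds because $e_n\in L_1$ and $T$ is bounded.
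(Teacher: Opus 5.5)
Your additivity step is the paper's telescoping identity (with $s$ and $r$ interchanged) combined with \eqref{trace_scaling}. For continuity you take a genuinely different route.

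The paper argues in one line: $s\mapsto s^{d_{\hom}}\alpha_s(T)-T$ is $\sigma$-weakly continuous into $\mathrm{VN}(G)$, and $\tau$ is normal. That is shorter, but normality alone does not let one pass $\tau$ through a $\sigma$-weak limit of $\tau$-integrable operators, since it is only $\sigma$-weak lower semicontinuity on the positive cone. For example, rank-one projections onto an orthonormal sequence tend $\sigma$-weakly to $0$ while each has trace $1$. So that step needs extra control.

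Your argument sidesteps this. You only use $\sigma$-weak continuity tested against a \emph{fixed} $e_n\in L_1(\mathrm{VN}(G),\tau)$, which is exactly the predual pairing. This makes $a_T$ a pointwise limit of continuous functions, hence Borel measurable. The measurable Cauchy equation for $t\mapsto a_T(e^t)$ then upgrades this to continuity. As a bonus, it gives $a_T(s)=c\log s$ directly, which is what the subsequent definition of $\mathrm{Res}$ relies on.

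One justification needs repair. Continuity of dilations on $L_1(G)$ kernels controls only operator norms, since $\|\lambda(h)\|_\infty\leqslant\|h\|_{L_1(G)}$. It says nothing about the $L_1(\mathrm{VN}(G),\tau)$ norm. Moreover, $\lambda(C_c(G))\not\subset L_1(\mathrm{VN}(G),\tau)$ in general, so $e_n=\lambda(\varphi_n)$ with $\varphi_n\in C_c(G)$ need not be $\tau$-integrable. You would need, for instance, $\varphi_n=\psi_n^{*}\ast\psi_n$ with $\psi_n\in L_2(G)$.

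The cleanest fix avoids $\alpha_{s^{-1}}(e_n)$ altogether. Since $e_n\in L_1(\mathrm{VN}(G),\tau)$, the functional $Y\mapsto\tau(e_nY)$ is normal. Also, $s\mapsto\alpha_s(T)$ is norm-bounded and weak-operator continuous, because dilations act strongly continuously on $L_2(G)$, hence it is $\sigma$-weakly continuous. Alternatively, with $e_n=\chi_{[0,n]}(-\Delta)$ you have $\alpha_s(e_n)=\chi_{[0,ns^{-2v}]}(-\Delta)$. Its trace-norm continuity in $s$ follows from Lemma \ref{lem:continuous_limit_formula}.

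Finally, your scaling identity has the factor misplaced. It should read $\tau(e_n\alpha_s(T))=s^{-d_{\hom}}\tau(\alpha_{s^{-1}}(e_n)T)$.
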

\begin{proof}
    Since the mapping $s \mapsto u\circ \delta_s$ for $u\in L_2(G)$ is continuous from $(0,\infty)$ to $L_2(G),$ it is easy to check that the mapping $s\mapsto \alpha_s(T)$ for $T \in \mathrm{VN}(G)$ is $\sigma$-weakly continuous. Hence,
    \[
        s\mapsto s^{d_{\hom}}\alpha_s(T)-T
    \]
    is continuous from $(0,\infty)$ to $\mathrm{VN}(G)$ with the $\sigma$-weak topology. Since $\tau$ is normal, we conclude that $a$ is continuous.

    Given $s,r>0,$ we compute
    \[
        a_T(sr) = r^{d_{\hom}}\tau(\alpha_r(s^{d_{\hom}}\alpha_{s}(T)-T))+\tau(r^{d_{\hom}}\alpha_r(T)-T).
    \]
    By \eqref{trace_scaling}, we conclude
    \[
        a_T(sr) = \tau(s^{d_{\hom}}\alpha_s(T)-T)+\tau(r^{d_{\hom}}\alpha_r(T)-T) = a_T(s)+a_T(r).
    \]
\end{proof}
The observation that $a_T$ is a continuous homomorphism from $(0,\infty)$ to $\mathbb{C},$ and is hence a multiple of a logarithm, is key to Couchet and Yuncken's definition of the residue. This could be viewed as a noncommutative generalisation of a classical
integral formula of Frullani, which will become apparent in Theorem \ref{thm_fractional_powers_residue} below.

The following is Couchet and Yuncken's definition, specialised to $A(G).$
\begin{definition}
    Let $T \in A(G).$ Given $s\neq 1,$ define
    \[
        \mathrm{Res}(T) := \frac{1}{\log(s)}\tau(s^{d_{\hom}}\alpha_{s}(T)-T).
    \]
\end{definition}

\begin{theorem}\label{thm_fractional_powers_residue}
    Let $P\geq 0$ be an elliptic order $m$ homogeneous differential operator with constant coefficients. Then $(1+P)^{-\frac{d_{\hom}}{m}}\in A(G),$ and
    \[
        \Res((1+P)^{-\frac{d_{\hom}}{m}}) = \frac{d_{\hom}}{\Gamma(\frac{d_{\hom}}{m}+1)}\tau(e^{-P}).
    \]
\end{theorem}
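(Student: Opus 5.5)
The plan is to compute $s^{d_{\hom}}\alpha_s(T)-T$ explicitly for $T=(1+P)^{-\frac{d_{\hom}}{m}}$ via functional calculus, and then evaluate its trace using Lemma \ref{lem:continuous_limit_formula}.

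\textbf{Step 1: the dilation as functional calculus.} Since $P$ is homogeneous of order $m$, we have $\alpha_s(P)=s^mP$, as in \eqref{eq-P-homogeneous-rescale}. Because $\alpha_s$ is conjugation by a unitary up to scaling, it is a $*$-automorphism compatible with functional calculus. Hence
\[
s^{d_{\hom}}\alpha_s(T)-T = h_s(P),\qquad h_s(x):=s^{d_{\hom}}(1+s^mx)^{-\frac{d_{\hom}}{m}}-(1+x)^{-\frac{d_{\hom}}{m}},\quad x\geqslant 0.
\]

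\textbf{Step 2: integrability.} We check that $h_s\in L_1(\mathbb{R}_+,x^{\frac{d_{\hom}}{m}-1}dx)$. Near $x=0$ this is clear since $h_s$ is bounded. At infinity, expand
\[
s^{d_{\hom}}(s^mx)^{-\frac{d_{\hom}}{m}}(1+s^{-m}x^{-1})^{-\frac{d_{\hom}}{m}} = x^{-\frac{d_{\hom}}{m}}\bigl(1+O(x^{-1})\bigr),
\]
and similarly for the second term. The leading terms $x^{-\frac{d_{\hom}}{m}}$ cancel, so $h_s(x)=O(x^{-\frac{d_{\hom}}{m}-1})$, which is integrable against $x^{\frac{d_{\hom}}{m}-1}$. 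Lemma \ref{lem:continuous_limit_formula} then gives $h_s(P)\in L_1(\mathrm{VN}(G),\tau)$, so $T\in A(G)$. The lemma is stated for elliptic (uniformly Rockland) homogeneous constant-coefficient $P$, which matches our hypothesis.

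\textbf{Step 3: the Frullani integral.} Lemma \ref{lem:continuous_limit_formula} yields
\[
\tau(h_s(P))=\frac{\tau(e^{-P})}{\Gamma(\frac{d_{\hom}}{m})}\int_0^\infty x^{\frac{d_{\hom}}{m}-1}h_s(x)\,dx.
\]
Write $k=\frac{d_{\hom}}{m}$ and substitute $x=y^m$, so $x^{k-1}dx=m\,y^{d_{\hom}-1}dy$. The integral becomes
\[
m\int_0^\infty \bigl(F(sy)-F(y)\bigr)\frac{dy}{y},\qquad F(y):=y^{d_{\hom}}(1+y^m)^{-k}.
\]
Here $F(0)=0$ and $F(\infty)=1$. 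Frullani's formula then gives $m\,(F(\infty)-F(0))\log s=m\log s$.

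To justify this rigorously, truncate to $[\varepsilon,R]$ and write the integral as $\int_{R}^{sR}F\,\frac{dy}{y}-\int_\varepsilon^{s\varepsilon}F\,\frac{dy}{y}$. The first piece tends to $\log s$ as $R\to\infty$ and the second tends to $0$ as $\varepsilon\to 0$. Dominated convergence applies thanks to the integrability established in Step 2.

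\textbf{Conclusion.} Combining the steps,
\[
\mathrm{Res}(T)=\frac{m}{\Gamma(\frac{d_{\hom}}{m})}\tau(e^{-P})=\frac{d_{\hom}}{\Gamma(\frac{d_{\hom}}{m}+1)}\tau(e^{-P}),
\]
using $\Gamma(k+1)=k\Gamma(k)$. This is consistent with Lemma \ref{lem_res_is_well_defined}, since the answer is independent of $s$.

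The main obstacle is Step 1. Here one must confirm that $\alpha_s$ acts on the unbounded affiliated operator $P$ so that $\alpha_s(g(P))=g(s^mP)$; this is the same fact already used in the proof of Lemma \ref{lem:continuous_limit_formula}. The remaining steps are routine calculus.
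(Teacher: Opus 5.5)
Your proposal is correct and follows the paper's proof essentially line for line. You use functional calculus to write $s^{d_{\hom}}\alpha_s(T)-T=h_s(P)$, Lemma \ref{lem:continuous_limit_formula} for both membership in $L_1(\mathrm{VN}(G),\tau)$ and the trace formula, and Frullani's formula to obtain $m\log s$. The differences are cosmetic: you substitute $x=y^m$ before applying Frullani, whereas the paper writes the integrand directly as $\frac{g(s^mx)-g(x)}{x}$ with $g(x)=x^{d_{\hom}/m}(1+x)^{-d_{\hom}/m}$, and your explicit integrability and truncation checks fill in what the paper leaves as ``easily checked.''
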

\begin{proof}
    Since $P$ is homogeneous of order order $m$, we have
    \[
        \alpha_s(P) = s^{m}P
    \]
    (compare \eqref{eq-P-homogeneous-rescale})
    and therefore
    \[
        \alpha_s((1+P)^{-\frac{d_{\hom}}{m}}) = (1+s^{m}P)^{-\frac{d_{\hom}}{m}}.
    \]
    Thus
    \[
        s^{d_{\hom}}\alpha_s((1+P)^{-\frac{d_{\hom}}{m}})-(1+P)^{-\frac{d_{\hom}}{m}} = f(P).
    \]
    where
    \[
        f(x) = s^{d_{\hom}}(1+s^{m}x)^{-\frac{d_{\hom}}{m}}-(1+x)^{-\frac{d_{\hom}}{m}}.
    \]
    It is easily checked that for every $s>0$ we have
    \[
        f\in L_1((0,\infty),x^{\frac{d_{\hom}}{m}-1}\,dx)
    \]
    and hence by Lemma \ref{lem:continuous_limit_formula}, $f(P) \in L_1(\mathrm{VN}(G),\tau).$ That is, $P\in A(G).$

    We compute the residue using Lemma \ref{lem:continuous_limit_formula}. By definition,
    \[
        \Res((1+P)^{-\frac{d_{\hom}}{m}}) = \frac{1}{\log(s)}\tau(f(P)) = \frac{1}{\log(s)\Gamma(\frac{d_{\hom}}{m})}\int_0^\infty f(x)x^{\frac{d_{\hom}}{m}-1}\,dx \cdot\tau(e^{-P})
    \]
    The integral is
    \[
        \int_0^\infty x^{\frac{d_{\hom}}{m}-1}(s^{d_{\hom}}(1+s^{m}x)^{-\frac{d_{\hom}}{m}}-(1+x)^{-\frac{d_{\hom}}{m}})\,dx = \int_0^\infty \frac{g(s^{m}x)-g(x)}{x}\,dx
    \]
    where $g(x) = x^{\frac{d_{\hom}}{m}}(1+x)^{-\frac{d_{\hom}}{m}}.$ A classical integral formula due to Frullani (see e.g. \cite{Agnew-frullani-1951}) says that
    \[
        \int_0^\infty \frac{g(s^{m}x)-g(x)}{x}\,dx = (\lim_{x\to\infty} g(x)-g(0))\log(s^{m}) = m\log(s).
    \]
    Finally,
    \[
        \Res((1+P)^{-\frac{d_{\hom}}{m}}) = \frac{m}{\Gamma(\frac{d_{\hom}}{m})}\tau(e^{-P}) = \frac{d_{\hom}}{\Gamma(\frac{d_{\hom}}{m}+1)}\tau(e^{-P}).
    \]
\end{proof}

With the formula in Theorem \ref{thm_fractional_powers_residue}, we can restate Theorem \ref{main theorem general case}.
\begin{theorem}\label{thm-Connes-trace-constant}
    Let $\gamma > 0$, and let $P\geqslant 0$ be an elliptic operator of order $m.$ For all $f \in C^\infty_c(G)$ we have
    \[
    \lim_{k\to\infty}k^{\frac{\gamma}{d_{\hom}}}\mu(k,M_f(P+1)^{-\frac{\gamma}{m}})= \frac{1}{d_{\hom}}\Big(\int_G |f(g)|^{\frac{d_{\hom}}{\gamma}} \mathrm{Res}((1+P^{\mathrm{top}}_g)^{-\frac{d_{\hom}}{m}})\,dg\Big)^{\frac{\gamma}{d_{\hom}}}
    \]
\end{theorem}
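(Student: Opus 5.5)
The plan is to obtain the statement directly from Theorem \ref{main theorem general case}, by rewriting the weight $\tau(\exp(-P^{\mathrm{top}}_g))$ as a Couchet--Yuncken residue via Theorem \ref{thm_fractional_powers_residue}. No new spectral analysis is needed: the left-hand side is literally the left-hand side of Theorem \ref{main theorem general case}, so only the right-hand sides must be matched.

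\textbf{Step 1: freeze coefficients.} Fix $g\in G$. The frozen operator $P^{\mathrm{top}}_g\in\Uc(\gf)$ is a constant coefficient differential operator. It is homogeneous of order $m$, since only words with $\len(\alpha)=m$ occur, and so $\alpha_s(P^{\mathrm{top}}_g)=s^mP^{\mathrm{top}}_g$ as in \eqref{eq-P-homogeneous-rescale}. It is positive, because $P\geqslant 0$ and rescaling $\alpha_s$ (with $s\to\infty$) isolates the top-order part of the quadratic form. It is uniformly Rockland by Definition \ref{definition of ellipticity}. Hence the hypotheses of Theorem \ref{thm_fractional_powers_residue} hold for $P^{\mathrm{top}}_g$, in the stratified setting of Appendix \ref{sec-Connes-Trace}. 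That theorem gives $(1+P^{\mathrm{top}}_g)^{-\frac{d_{\hom}}{m}}\in A(G)$ and
\[
\frac{1}{\Gamma(\frac{d_{\hom}}{m}+1)}\,\tau(e^{-P^{\mathrm{top}}_g})=\frac{1}{d_{\hom}}\,\mathrm{Res}\big((1+P^{\mathrm{top}}_g)^{-\frac{d_{\hom}}{m}}\big),\quad g\in G.
\]
By Lemma \ref{continuity of the weight}, this identity also shows that $g\mapsto\mathrm{Res}((1+P^{\mathrm{top}}_g)^{-\frac{d_{\hom}}{m}})$ is bounded and continuous. The integral in the statement is therefore finite for $f\in C^\infty_c(G)$.

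\textbf{Step 2: substitute.} Inserting the identity into Theorem \ref{main theorem general case} gives
\[
\lim_{k\to\infty}k^{\frac{\gamma}{d_{\hom}}}\mu(k,M_f(P+1)^{-\frac{\gamma}{m}})=d_{\hom}^{-\frac{\gamma}{d_{\hom}}}\Big(\int_G|f(g)|^{\frac{d_{\hom}}{\gamma}}\,\mathrm{Res}\big((1+P^{\mathrm{top}}_g)^{-\frac{d_{\hom}}{m}}\big)\,dg\Big)^{\frac{\gamma}{d_{\hom}}}.
\]

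\textbf{Main obstacle: the prefactor.} The formula just derived has prefactor $d_{\hom}^{-\gamma/d_{\hom}}$, while the statement has $\frac{1}{d_{\hom}}$ outside the power. These two agree precisely when $\gamma=d_{\hom}$, which is the Connes integration case that the appendix is aimed at, and the stated formula follows verbatim there. For general $\gamma$ the factor $d_{\hom}^{-1}$ naturally sits inside the bracket. I would therefore re-check the Frullani computation in Theorem \ref{thm_fractional_powers_residue} and the normalisation of $\mathrm{Res}$, in case a convention there absorbs the difference. Failing that, I would record that the stated normalisation holds exactly in the case $\gamma=d_{\hom}$, and that for other $\gamma$ the prefactor must be read as $d_{\hom}^{-\gamma/d_{\hom}}$. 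Apart from this constant, the argument is purely the substitution above.
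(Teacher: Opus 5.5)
Your route is the same as the paper's: it gives no separate proof and obtains Theorem \ref{thm-Connes-trace-constant} by inserting the identity $\frac{1}{\Gamma(\frac{d_{\hom}}{m}+1)}\tau(e^{-P^{\mathrm{top}}_g})=\frac{1}{d_{\hom}}\mathrm{Res}((1+P^{\mathrm{top}}_g)^{-\frac{d_{\hom}}{m}})$ from Theorem \ref{thm_fractional_powers_residue} into Theorem \ref{main theorem general case}. Your diagnosis of the constant is also correct, and re-checking will not rescue the printed form: the Frullani integral gives $m\log s$ and $\frac{m}{\Gamma(d_{\hom}/m)}=\frac{d_{\hom}}{\Gamma(d_{\hom}/m+1)}$, so the substitution yields $\big(\frac{1}{d_{\hom}}\int_G|f|^{\frac{d_{\hom}}{\gamma}}\mathrm{Res}(\cdots)\,dg\big)^{\frac{\gamma}{d_{\hom}}}$, and the statement with $\frac{1}{d_{\hom}}$ outside the power is a misprint that is correct only for $\gamma=d_{\hom}$, the Connes-integration case the paper uses immediately afterwards.
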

In particular, taking $\gamma=d_{\hom},$ Theorem \ref{thm-Connes-trace-constant} implies that for any continuous normalised trace $\varphi$ on the ideal $\mathcal{L}_{1,\infty}$ and $f \in C^\infty_c(G),$ we have
\[
    \varphi(M_f(1+P)^{-\frac{d_{\hom}}{m}}) = \frac{1}{d_{\hom}}\int_G f(g)\mathrm{Res}((1+P^{\mathrm{top}}_g)^{-\frac{d_{\hom}}{m}})\,dg.
\]
It would be worthwhile to verify that the right hand side is $\frac{1}{d_{\hom}}\mathrm{Res}(M_f(1+P)^{-\frac{d_{\hom}}{m}}),$ where $\mathrm{Res}$ is the Couchet-Yuncken residue for all order $-d_{\hom}$
pseudodifferential operators on $G$ in the sense of \cite{CouYuncken24}, but that is beyond our scope.

\section*{Acknowledgement}
We would like to express our gratitude to Nigel Higson for his insight and help. Shiqi Liu, Fedor Sukochev, and Dmitriy Zanin were supported by ARC grant DP$230100434$.

\section*{Declarations}
\addtocontents{toc}{\protect\setcounter{tocdepth}{1}}
\subsection*{Financial interests}
The authors have no relevant financial or financial interests to disclose. 
\subsection*{Data Availability}
Data sharing does not apply to this article as no dataset was generated or analyzed during the
current study.
\addtocontents{toc}{\protect\setcounter{tocdepth}{2}}

\bibliographystyle{alpha}
\bibliography{bibliography}

\end{document}